\numberwithin{equation}{section}
\newcommand{\ds}{\displaystyle}
\newcommand{\bgamma}{{\boldsymbol\gamma}}
\newcommand{\bLambda}{{\boldsymbol\Lambda}}
\newcommand{\bbeta}{{\boldsymbol\eta}}
\newcommand{\bsi}{{\boldsymbol\sigma}}
\newcommand{\bSigma}{{\boldsymbol\Sigma}}
\newcommand{\bphi}{{\boldsymbol\phi}}
\newcommand{\bvarphi}{{\boldsymbol\varphi}}
\newcommand{\bpsi}{{\boldsymbol\psi}}
\newcommand{\btau}{{\boldsymbol\tau}}
\newcommand{\bzeta}{{\boldsymbol\zeta}}
\newcommand{\bchi}{{\boldsymbol\chi}}
\newcommand{\bxi}{{\boldsymbol\xi}}
\newcommand{\ubsi}{\underline{\bsi}}
\newcommand{\ubtau}{\underline{\btau}}
\newcommand{\ubu}{\underline{\bu}}
\newcommand{\ubv}{\underline{\bv}}
\newcommand{\bv}{{\mathbf{v}}}
\newcommand{\bw}{{\mathbf{w}}}
\newcommand{\f}{\mathbf{f}}
\newcommand{\bi}{\mathbf{i}}
\newcommand{\bu}{\mathbf{u}}
\newcommand{\bz}{{\mathbf{z}}}
\newcommand{\bt}{{\mathbf{t}}}
\newcommand{\bn}{{\mathbf{n}}}
\newcommand{\be}{{\mathbf{e}}}
\newcommand{\0}{{\mathbf{0}}}
\def\bA{\mathbf{A}}
\def\bF{\mathbf{F}}
\def\bG{\mathbf{G}}
\def\bK{\mathbf{K}}
\def\bI{\mathbf{I}}
\def\bX{\mathbf{X}}
\def\bV{\mathbf{V}}
\def\bW{\mathbf{W}}
\def\bM{\mathbf{M}}
\def\bT{\mathbf{T}}
\def\bP{\mathbf{P}}
\def\bQ{\mathbf{Q}}
\def\bS{\mathbf{S}}
\def\bx{\mathbf{x}}
\def\bz{\mathbf{z}}
\newcommand{\bL}{\mathbf{L}}
\newcommand\bH{\mathbf{H}}
\newcommand\bbM{\mathbb{M}}
\newcommand\bbP{\mathbb{P}}
\newcommand\bbQ{\mathbb{Q}}
\newcommand\bbH{\mathbb{H}}
\newcommand\bbX{\mathbb{X}}
\newcommand\bbL{\mathbb{L}}
\newcommand{\cA}{\mathcal{A}}
\newcommand{\cB}{\mathcal{B}}
\newcommand{\cC}{\mathcal{C}}
\newcommand{\cE}{\mathcal{E}}
\newcommand{\cF}{\mathcal{F}}
\newcommand{\cJ}{\mathcal{J}}
\newcommand{\cN}{\mathcal{N}}
\newcommand{\cM}{\mathcal{M}}
\newcommand{\cT}{\mathcal{T}}
\newcommand{\cK}{\mathcal{K}}
\newcommand{\cL}{\mathcal{L}}
\newcommand{\cD}{\mathcal{D}}
\newcommand{\cO}{\mathcal{O}}
\newcommand{\cP}{\mathcal{P}}
\newcommand{\cQ}{\mathcal{Q}}
\newcommand{\cR}{\mathcal{R}}
\def\R{\mathrm{R}}
\def\H{\mathrm{H}}
\def\L{\mathrm{L}}
\def\M{\mathrm{M}}
\def\U{\mathrm{U}}
\def\V{\mathrm{V}}
\def\W{\mathrm{W}}
\def\X{\mathrm{X}}
\def\rd{\mathrm{d}}
\def\rD{\mathrm{D}}
\def\rN{\mathrm{N}}
\def\rP{\mathrm{P}}
\def\rp{\mathrm{p}}
\def\rq{\mathrm{q}}
\def\rt{\mathrm{t}}
\def\ttd{\mathtt{d}}
\def\esssup{\mathrm{ess\,sup}}
\def\bBDM{\mathbf{BDM}}
\def\bbBDM{\mathbb{BDM}}
\def\BJS{\mathtt{BJS}}
\def\bdiv{\mathbf{div}}
\def\tr{\mathrm{tr}}
\def\div{\mathrm{div}}
\def\dist{\mathrm{dist}\,}
\def\pil{\left<}
\def\pir{\right>}
\def\sk{\mathrm{sk}}
\def\qin{{\quad\hbox{in}\quad}}
\def\qon{{\quad\hbox{on}\quad}}
\def\qan{{\quad\hbox{and}\quad}}
\def\wt{\widetilde}
\def\wh{\widehat}
\def\dt{\partial_t}
\newtheorem{thm}{Theorem}[section]
\newtheorem{rem}{Remark}[section]
\newtheorem{lem}[thm]{Lemma}
\newtheorem{cor}[thm]{Corollary}
\newtheorem{assumption}{Assumption}
\newenvironment{proof}{\noindent{\it Proof.}}{\hfill$\square$}
\numberwithin{equation}{section}
\title{A Banach space mixed formulation for the fully dynamic \\ Navier--Stokes--Biot coupled problem}
\author{{\sc Sergio Caucao}\thanks{Departamento de Matem\'atica y F\'isica Aplicadas and Grupo de Investigaci\'on en An\'alisis Num\'erico y C\'alculo Cient\'ifico (GIANuC$^2$), 
Universidad Cat\'olica de la Sant\'isima Concepci\'on, Casilla 297, Concepci\'on, Chile, 
email: {\tt scaucao@ucsc.cl}. Supported in part by ANID-Chile through the projects {\sc Centro de Mode\-lamiento Matem\'atico} (FB210005) and Fondecyt 1250937.}
\quad
{\sc Aashi Dalal}\thanks{Department of Mathematics, University of Pittsburgh, Pittsburgh, PA 15260, USA, email: {\tt aad100@pitt.edu} and Department of Mathematics and Statistics, University of Ottawa, Ottawa, ON K1N 6N5, Canada, email: {\tt adalal@uottawa.ca}. Supported in part by the Fields Institute for Research in Mathematical Sciences.}
\quad
{\sc Ivan Yotov}\thanks{Department of Mathematics, University of Pittsburgh, Pittsburgh, PA 15260, USA, email: {\tt yotov@math.pitt.edu}. The second and third authors are supported in part by NSF grants DMS-2111129 and DMS-2410686.}}
\date{\today}
\begin{document}
	
\maketitle

\begin{abstract}
\noindent 
We introduce and analyze a mixed formulation for the coupled problem
arising in the interaction between a free fluid and a poroelastic medium. 
The flows in the free fluid and poroelastic regions are governed by the Navier--Stokes and Biot equations, respectively, and the transmission conditions are given by mass conservation, balance of stresses, and the Beavers--Joseph--Saffman law. 
We apply dual-mixed formulations for both the Navier--Stokes and Darcy equations and a displacement-based formulation for the elasticity equation. We develop a weak formulation in Banach space framework where the symmetry of the Navier--Stokes pseudostress tensor is imposed in a weak sense using a vorticity Lagrange multiplier. In turn, the transmission conditions are imposed weakly by introducing the traces of the fluid velocity and the poroelastic pressure on the interface as Lagrange multipliers. Existence and uniqueness of a weak solution are established using classical results for nonlinear monotone operators, complemented by a regularization technique and the Banach fixed-point theory, under a small data assumption. We then present a semidiscrete continuous-in-time numerical approximation based on conforming finite element spaces, allowing for nonmatching grids along the interface. We establish well posedness of the numerical scheme and present error analysis with corresponding rates of convergence within the Banach space framework. Numerical experiments are presented to verify the theoretical rates of convergence and to illustrate the method's performance in an application involving flow through a filter.
\end{abstract}
	
\noindent
{\bf Key words}: Navier--Stokes--Biot; fluid--poroelastic structure interaction; Banach space formulation; mixed finite elements; pseudostress-velocity-vorticity formulation
%
	
\maketitle
	
	
\section{Introduction}

The interaction between a free fluid and a deformable porous medium, which is referred to as fluid--poroelastic structure interaction (FPSI), exhibits characteristics of both coupled free fluid--porous media flows \cite{LaySchYot,DMQ,ErvJenSun,GalSar,ry2005,gos2011,DQ-NSD,Cesm-NS-Darcy-time-dep,GirRiv-NSD,badea2010-NSD} and fluid--structure interaction \cite{galdi2010fundamental,bazilevs2013computational,bungartz2006fluid,richter2017fluid}.
The modeling of FPSI has a wide range of applications. For instance, in hydrology, it aids in tracking how pollutants discharged into ground and surface water enter the water supply and in designing strategies for cleaning water contaminants. In petroleum engineering, it is essential for predicting and optimizing the processes involved in efficient oil and gas extraction from hydraulically fractured reservoirs. In biomedicine, it plays a crucial role in modeling blood flow, simulating low-density lipoprotein (LDL) transport, and optimizing drug delivery. The free fluid region is typically modeled by the Stokes (or Navier--Stokes) equations, while the flow through the deformable porous medium is governed by the Biot system of poroelasticity. The latter combines the equations for the deformation of the elastic structure with the Darcy equations, which describe the mass conservation of the fluid flowing through the porous medium. The two regions are coupled through dynamic and kinematic interface conditions, which include balance of forces, continuity of the normal velocity, and a no-slip or slip-with-friction condition for the tangential velocity. 

The modeling of FPSI modeling has gained popularity in recent years, with most works focusing on the Stokes--Biot model \cite{FPSI-LM,show2005,byz2015,byzz2015,Buk-Yot-Zun-fracture,aeny2019,fpsi-transport,Stokes-Biot-eye,fpsi-mixed-elast,fpsi-msfmfe,Bociu-etal-2021,Cesm-Chid,HDG-SB,Boon-precond-SB,hyper-SB,SB-nonlin-geom}. Significantly less attention has been given to the Navier--Stokes--Biot model, which is better suited for fast free fluid flows, such as blood flows and flows through industrial filters. The problem is more challenging due to the presence of the nonlinear convective term in the fluid momentum equation. The first mathematical analysis for the Navier--Stokes--Biot system was presented in \cite{cesmelioglu2017analysis}, where well posedness is established for the fully dynamic model using a velocity-pressure Navier--Stokes formulation, displacement elasticity formulation, and pressure Darcy formulation. Mathematical and numerical analysis for the Navier--Stokes--Biot system is presented in \cite{augmented} for a fully mixed formulation of the quasistatic model, with pseudostress--velocity for Navier--Stokes, stress--displacement--rotation for elasticity, and velocity--pressure for Darcy. The formulation is augmented with suitable redundant Galerkin-type terms derived from the equilibrium and constitutive equations. A hybridizable discontinuous Galerkin method for the Navier--Stokes--Biot problem is studied in \cite{CLR-NSB-HDG}, based on the time-dependent Navier--Stokes equations in the velocity-pressure formulation, and the quasistatic Biot system in a displacement--total pressure--velocity--pressure formulation.
In \cite{wy2022}, a formulation for the fully dynamic Navier--Stokes--Biot system is developed, which is based on velocity--pressure for Navier--Stokes and Darcy and displacement for elasticity. Analysis for both the weak formulation and a fully discrete finite element method is presented. Computational studies using the Navier--Stokes--Biot model are presented in \cite{badia2009coupling,byz2015,Bukac-JCP,cly2020,NSB-transport,MFE-NSB}.

The goal of the present paper is to develop and analyze a new fully dynamic Banach space mixed formulation of the Navier--Stokes--Biot model and to study a suitable conforming numerical discretization. We employ a classical dual mixed velocity--pressure formulation for the Darcy flow, a displacement-based formulation for the elasticity equation, and a weakly symmetric nonlinear pseudostress--velocity--vorticity formulation for the Navier--Stokes equations. This new formulation offers several advantages, including local conservation of mass for the Darcy flow, local momentum conservation for the Navier--Stokes equations, and accurate approximations with continuous normal components across element edges or faces for the Darcy velocity and the free fluid stress.
More precisely, similarly to \cite{cot2017,cgo2021,augmented}, we introduce a nonlinear pseudostress tensor that combines the fluid stress tensor with the convective term. 
Subsequently, we eliminate the pressure unknown by utilizing the deviatoric tensor. 
To impose the weak symmetry of the Navier--Stokes pseudostress, vorticity is introduced as an additional unknown. The approximation of the fluid pseudostress in the $\bbH(\bdiv)$ space ensures compatible enforcement of momentum conservation. 
The transmission conditions, including mass conservation, momentum conservation, and the Beavers--Joseph--Saffman slip with friction condition, are imposed weakly through the incorporation of two Lagrange multipliers: the traces of the fluid velocity and the Darcy pressure on the interface.
We note that the Banach space formulation leverages the natural function spaces that arise from applying the Cauchy--Schwarz and H\"older inequalities to the terms obtained by testing and integrating by parts the equations of the model. This is in contrast to the augmented formulation developed in \cite{augmented}, where redundant terms are added in order to ensure control of the fluid velocity in the $\bH^1$ norm.

The main contributions of this paper are as follows. In the first part we establish well posedness of the weak formulation. It is a nonlinear time-dependent system, which is challenging to analyze due to the presence of the time derivative of the displacement in certain non-coercive terms and the nonlinear convective Navier--Stokes term.
To address the first difficulty, we consider an alternative mixed elasticity formulation as in \cite{show2005,aeny2019}, with the structural velocity and elastic stress as the primary variables. This approach results in a system characterized by a degenerate evolution operator in time and a nonlinear saddle-point spatial operator. 
By employing techniques from \cite{Showalter} and \cite{cmo2018}, we combine classical monotone operator theory with a suitable regularization technique in Banach spaces to establish the well-posedness of the alternative formulation. To deal with the nonlinear convective term, we show that the fluid velocity is contained in a ball with a sufficiently small radius, under a small data assumption, and employ the Banach fixed-point theory. In addition, the existence proof requires the construction of compatible initial data for all variables. We then recover a solution of the original formulation, prove its uniqueness, and obtain a stability bound. In the second part of the paper we introduce a semidiscrete continuous-in-time formulation based on stable $\bH(\div)$-conforming mixed finite element spaces for the Navier--Stokes and Darcy equations and $\bH^1$-conforming finite element space for the displacement, along with conforming finite element spaces for the Lagrange multipliers. Well-posedness and stability results are established using arguments analogous to those applied in the continuous case. Additionally, we perform error analysis and establish rates of convergence for all variables.
Finally, we present numerical experiments employing a fully discrete finite element method with backward Euler time discretization to verify the theoretical rates of convergence and illustrate the method's behavior in modeling air flow through a filter with physically realistic parameters.

The rest of this work is organized as follows. 
The remainder of this section introduces the standard notation and functional spaces employed throughout the paper. 
In Section~\ref{sec:Navier-Stokes--Biot model problem}, we present the mathematical model along with the corresponding interface, boundary, and initial conditions. 
Section~\ref{sec:variational-formulation} focuses on the continuous weak formulation, which forms the basis of the numerical method, as well as an alternative formulation required for the analysis. 
Stability properties of the associated operators are also established. 
In Section~\ref{sec:well-posedness-model}, we prove the well-posedness of both the alternative and original formulations, employing a suitable fixed-point approach to establish the existence, uniqueness, and stability of the solution. 
Section~\ref{sec:Semidiscrete continuous-in-time approximation} introduces and analyzes the semidiscrete continuous-in-time approximation, including its well-posedness, stability, and error analysis. 
Section \ref{sec:numerical} presents the fully discrete scheme, which is then used to carry out the numerical experiments. We end with some conclusions in Section~\ref{sec:concl}.


\subsection*{Notation}

Let $\cO\subseteq \R^n$, $n\in \{2,3\}$, denote a domain with Lipschitz boundary $\Gamma$.
For $s\geq 0$ and $p\in [1,+\infty]$, we denote by $\L^p(\cO)$ and $\W^{s,p}(\cO)$ the usual Lebesgue and Sobolev spaces endowed with the norms $\|\cdot\|_{\L^p(\cO)}$ and $\|\cdot\|_{\W^{s,p}(\cO)}$, respectively.
Note that $\W^{0,p}(\cO) = \L^p(\cO)$.
If $p=2$ we write $\H^s(\cO)$ in place of $\W^{s,2}(\cO)$, and denote the corresponding norm by $\|\cdot\|_{\H^s(\cO)}$.
In addition, we denote by $\H^{1/2}(\Gamma)$ the trace space of $\H^1(\cO)$, and let $\H^{-1/2}(\Gamma)$ be the dual space of $\H^{1/2}(\Gamma)$ endowed with the norms $\|\cdot\|_{\H^{1/2}(\Gamma)}$ and $\|\cdot\|_{\H^{-1/2}(\Gamma)}$, respectively.
By $\bM$ and $\bbM$ we will denote the corresponding vectorial and tensorial counterparts of the generic scalar functional space $\M$.
The $\L^2(\cO)$ inner product for scalar, vector, or tensor valued functions is denoted by $(\cdot,\cdot)_{\cO}$. For a section of the boundary $\Gamma$, the $\L^2(\Gamma)$ inner product or duality pairing is denoted by $\pil\cdot,\cdot\pir_\Gamma$. For a Banach space $\V$, we denote its dual space by $\V'$. 
For an operator $\cA:\V \to \U'$, its adjoint operator is denoted by $\cA':\U \to \V'$. 
For any vector fields $\bv=(v_i)_{i=1,n}$ and $\bw=(w_i)_{i=1,n}$, 
we set the gradient, the symmetric part of the gradient, divergence,
and tensor product operators, as
\begin{equation*}
\nabla\bv := \left(\frac{\partial v_i}{\partial x_j}\right)_{i,j=1,n},\quad 
\be(\bv) := \frac{1}{2}\left\{ \nabla\bv + (\nabla\bv)^\rt \right\},\quad
\div(\bv) := \sum_{j=1}^n \frac{\partial v_j}{\partial x_j},\qan 
\bv\otimes\bw := (v_i w_j)_{i,j=1,n} \,.
\end{equation*}
Furthermore, for any tensor field $\btau:=(\tau_{ij})_{i,j=1,n}$ and $\bzeta:=(\zeta_{ij})_{i,j=1,n}$, we let $\bdiv(\btau)$ be the divergence operator $\div$ acting along the rows of $\btau$, and define the transpose,
the trace, the tensor inner product, and the deviatoric tensor, respectively, as
\begin{equation}\label{eq:transpose-trace-tensor-deviatoric}
\btau^\rt := (\tau_{ji})_{i,j=1,n},\quad \tr(\btau):=\sum_{i=1}^n \tau_{ii},\quad \btau:\bzeta:=\sum_{i,j=1}^n \tau_{ij}\zeta_{ij},\qan \btau^\rd:=\btau-\frac{1}{n}\,\tr(\btau)\,\bI\,,
\end{equation}
where $\bI$ is the identity matrix in $\R^{n\times n}$.
In addition, we recall that
$\bH(\div;\cO):=\Big\{ \bw\in\bL^2(\cO) :\quad \div(\bw)\in \L^2(\cO) \Big\}$,
equipped with the norm $\|\bw\|^2_{\bH(\div;\cO)} := \|\bw\|^2_{\bL^2(\cO)} + \|\div(\bw)\|^2_{\L^2(\cO)}$, is a standard Hilbert space in the realm of mixed problems.
The space of matrix valued functions whose rows belong to $\bH(\div;\cO)$ will be denoted by $\bbH(\bdiv;\cO)$ and endowed with the norm $\|\btau\|^2_{\bbH(\bdiv;\cO)} := \|\btau\|^2_{\bbL^2(\cO)} + \|\bdiv(\btau)\|^2_{\bL^2(\cO)}$. 
Moreover, given a separable Banach space $\V$ endowed with the norm $\| \cdot \|_{\V}$, we introduce
the Bochner spaces $\L^2(0,T;\V)$, $\H^s(0,T;\V)$, with integer $s \ge 1$,
and $\L^{\infty}(0,T;\V)$ and $\W^{1,\infty}(0,T;\V)$, endowed with the norms
\begin{equation*}
\begin{array}{c}
\ds\|f\|^{2}_{\L^{2}(0,T;\V)} \,:=\, \int^T_0 \|f(t)\|^{2}_{\V} \,dt \,,\quad
\ds\|f\|^2_{\H^s(0,T;\V)} \,:=\, \int^T_0 \sum^{s}_{i=0} \|\partial^{i}_t f(t)\|^2_{\V}\,dt\,, \\[3ex]
\ds\|f\|_{\L^\infty(0,T;\V)} \,:=\, \mathop{\esssup}\limits_{t\in [0,T]} \|f(t)\|_{\V}\, ,\quad \|f\|_{\W^{1,\infty}(0,T;\V)} \,:=\, \mathop{\esssup}\limits_{t\in [0,T]} \big\{\|f(t)\|_{\V} + \|\partial_t f(t)\|_{\V}\big\}.
\end{array}
\end{equation*}


\section{Navier--Stokes--Biot model problem}\label{sec:Navier-Stokes--Biot model problem}

Let $\Omega\subset \R^n$, $n\in\{2,3\}$ be a Lipschitz domain, which is subdivided into two polytopal non-overlapping and possibly non-connected regions: fluid region $\Omega_f$ and poroelastic region $\Omega_p$.
Let $\Gamma_{fp} := \partial\Omega_f\cap\partial\Omega_p$ denote the (nonempty) interface between these regions and let $\Gamma_f := \partial\Omega_f\setminus\Gamma_{fp}$ and $\Gamma_p := \partial\Omega_p\setminus\Gamma_{fp}$ denote the external parts of the boundary $\partial\Omega$.
In turn, given $\star\in\{f,p\}$, let $\Gamma_\star = \Gamma^\rD_\star\cup \Gamma^\rN_\star$, with $\Gamma^\rD_\star\cap \Gamma^\rN_\star = \emptyset$ and $|\Gamma^\rD_\star|, |\Gamma^\rN_\star| > 0$. We denote by $\bn_f$ and $\bn_p$ the unit normal vectors which point outward from $\partial\Omega_f$ and $\partial\Omega_p$, respectively, noting that $\bn_f = - \bn_p$ on $\Gamma_{fp}$. Figure \ref{fig:domain_sketch} gives a schematic representation of the geometry. 
\begin{figure}[t]
\centering\includegraphics[scale=1]{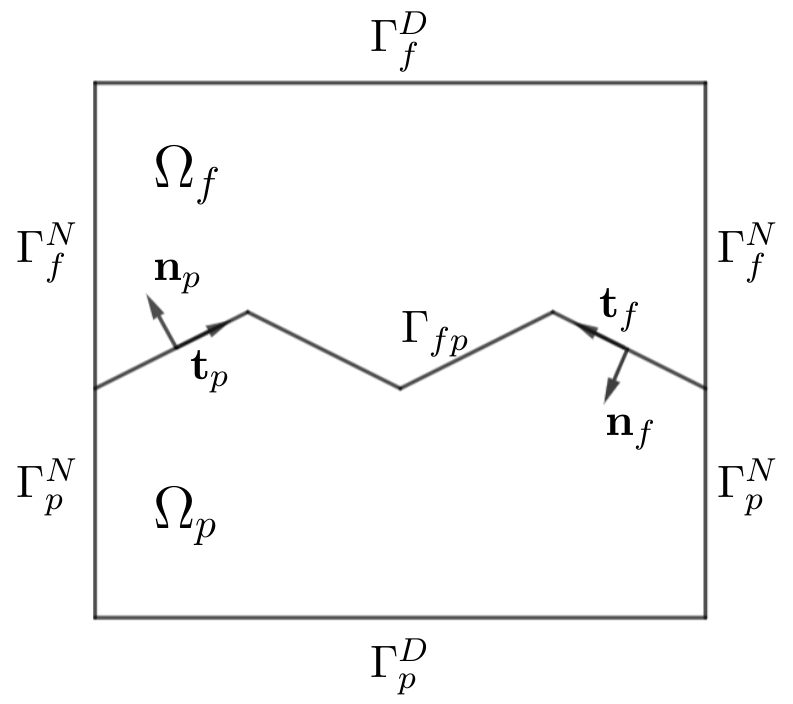}
	
\vspace{-0.2cm}
\caption{Schematic representation of a 2D computational domain.}
\label{fig:domain_sketch}
\end{figure}
Let $(\bu_\star,p_\star)$ be the velocity-pressure pair in $\Omega_\star$, and let $\bbeta_p$ be the displacement in $\Omega_p$.
Let $\mu>0$ be the fluid viscosity, let $\rho_f$ be the fluid density, let $\f_\star$ be 
body force terms, and let $q_p$ be an external source or sink term. The flow in $\Omega_f$ is governed by the Navier--Stokes equations:
\begin{align}
& \rho_f\,\left(\frac{\partial\,\bu_f}{\partial\,t} + (\nabla\bu_f)\,\bu_f\right) - \bdiv(\bT_f) \,=\, \f_f,\quad \div(\bu_f) \,=\, 0 \qin \Omega_f\times (0,T], \nonumber\\ 
&\left(\bT_f - \rho_f\,(\bu_f\otimes\bu_f) \right)\bn_f \,=\, \0 \qon \Gamma^\rN_f\times (0,T],\quad 
\bu_f \,=\, \0 \qon \Gamma^\rD_f\times (0,T],\label{eq:Navier-Stokes-1}
\end{align}
where $\bT_f := -p_f\,\bI + 2\,\mu\,\be(\bu_f)$ denotes the stress tensor.
In this work we make use of an equivalent version of \eqref{eq:Navier-Stokes-1} based on the introduction of a pseudostress tensor combining the stress tensor $\bT_f$ with the convective term.
More precisely, analogously to \cite{cgo2021}, we introduce the nonlinear-pseudostress tensor
\begin{equation}\label{eq:pseudostress-tensor-formulae-1}
\bsi_f := \bT_f - \rho_f(\bu_f\otimes\bu_f) = -p_f\,\bI + 2\,\mu\,\be(\bu_f) - \rho_f(\bu_f\otimes \bu_f) \qin \Omega_f\times (0,T]\,.
\end{equation}
In order to derive the weak formulation, similarly to \cite{cgot2016,cgos2017}, we first observe, owing to the fact that $\tr(\be(\bu_f)) = \div(\bu_f) = 0$, that
\begin{equation}\label{eq:div-tr-identities}
\bdiv(\bu_f\otimes\bu_f) \,=\, (\nabla\bu_f)\,\bu_f,\quad
\tr(\bsi_f) \,=\, -\,n\,p_f - \rho_f\,\tr(\bu_f\otimes\bu_f)\,.
\end{equation}
In particular, from the second equation in \eqref{eq:div-tr-identities} we observe that the pressure $p_f$ can be written in terms of the velocity $\bu_f$ and the nonlinear pseudostress tensor $\bsi_f$ as
\begin{equation}\label{eq:pseudostress-pressure-formulae}
p_f \,=\, -\frac{1}{n}\,\Big( \tr(\bsi_f) + \rho_f\,\tr(\bu_f\otimes\bu_f) \Big)  \qin \Omega_f\times (0,T] \,.
\end{equation}
Hence, plugging \eqref{eq:pseudostress-pressure-formulae} into \eqref{eq:pseudostress-tensor-formulae-1} and using the definition of the deviatoric operator \eqref{eq:transpose-trace-tensor-deviatoric}, we obtain
$\bsi^\rd_f \,=\, 2\,\mu\,\be(\bu_f) - \rho_f\,(\bu_f\otimes\bu_f)^\rd$.
In addition, to weakly impose the symmetry of the pseudostress tensor and apply the integration by parts formula, we introduce the additional unknown
\begin{equation*}
\bgamma_f \,:=\, \frac{1}{2}\,\left\{ \nabla\bu_f - (\nabla\bu_f)^\rt \right\}\,,
\end{equation*}
which represents the vorticity (or the skew-symmetric part of the velocity gradient). 
Thus, employing the identities \eqref{eq:div-tr-identities}, the Navier--Stokes system \eqref{eq:Navier-Stokes-1} can be rewritten as the set of equations with unknowns $\bsi_f, \bgamma_f$ and $\bu_f$, given by
\begin{equation}\label{eq:Navier-Stokes-3}
\begin{array}{c}
\ds \frac{1}{2\,\mu}\,\bsi^\rd_f = \nabla\bu_f - \bgamma_f - \frac{\rho_f}{2\,\mu}\,(\bu_f\otimes\bu_f)^\rd \,,\quad 
\rho_f\,\frac{\partial\,\bu_f}{\partial\,t} - \bdiv(\bsi_f) = \f_f \qin \Omega_f\times (0,T] \,, \\[2ex]
\ds \bsi_f\bn_f \,=\, \0 \qon \Gamma^\rN_f\times (0,T] \,,\quad 
\bu_f \,=\, \0 \qon \Gamma^\rD_f\times (0,T]\,,
\end{array}
\end{equation}
where $\bsi_f$ is a symmetric tensor in $\Omega_f\times (0,T]$.
Notice that, as suggested by \eqref{eq:pseudostress-pressure-formulae}, $p_f$ is eliminated from the present formulation and can be computed afterwards in terms of $\bsi_f$ and $\bu_f$.
In addition, the fluid stress $\bT_f$ can be recovered from \eqref{eq:pseudostress-tensor-formulae-1}.
For simplicity, we assume that $|\Gamma^\rN_f| > 0$, which allows us to control $\bsi_f$ by $\bsi^\rd_f$, cf. \eqref{eq:tau-H0div-Xf-inequality}. The case $|\Gamma^\rN_f| = 0$ can be handled as in \cite{gmor2014} and \cite{gov2020} by introducing an additional
variable corresponding to the mean value of $ \tr(\bsi_f)$. In addition, in order to simplify the characterization of the normal trace space $\bsi_f\bn_f|_{\Gamma_{fp}}$ (cf. \eqref{eq:trace-inequality-2}), we assume that $\Gamma^\rD_f$ is not adjacent to $\Gamma_{fp}$, i.e., $\dist(\Gamma^\rD_f,\Gamma_{fp}) \geq s >0$. We further note that one can also consider the boundary condition $\bT_f\bn_f = \0$ on $\Gamma^\rN_f$ in terms of the fluid stress tensor. This leads to a Robin boundary condition in terms of the pseudostress, $\bsi_f\bn_f + \rho_f(\bu_f\otimes \bu_f)\bn_f = 0$ on $\Gamma^\rN_f$. In this case, the space for $\bsi_f$ is unrestricted on $\Gamma^\rN_f$. The second term in \eqref{eq:continuous-weak-formulation-1a} becomes $\pil\btau_f\bn_f,\bvarphi \pir_{\Gamma_{fp} \cup \Gamma^\rN_f}$ and the first and third terms in \eqref{eq:continuous-weak-formulation-1h} become $\pil\bsi_f\bn_f,\bpsi\pir_{\Gamma_{fp} \cup \Gamma^\rN_f}$ and
$\rho_f\pil \bvarphi\cdot\bn_f, \bvarphi\cdot\bpsi \pir_{\Gamma_{fp} \cup \Gamma^\rN_f}$, which can be treated in a similar way. Moreover, the control of $\bsi_f$ by $\bsi_f^\rd$ can be achieved as in the case $|\Gamma^\rN_f| = 0$. In addition,
in order to avoid the issue of restricting the mean value of the fluid velocity $\bu_f$ (cf. \eqref{eq:inf-sup-vf-chif}), we assume that $|\Gamma^\rD_f| > 0$,  

In turn, in $\Omega_p$ we consider the fully dynamic Biot system \cite{b1941}:
\begin{equation}\label{eq:Biot-model}
\begin{array}{c}
\ds \rho_p\,\frac{\partial^2\bbeta_p}{\partial\,t^2} - \bdiv(\bsi_p) = \f_p,\quad 
\mu\,\bK^{-1}\bu_p + \nabla\,p_p = \0 \qin \Omega_p\times(0,T] \,, \\[2ex]
\ds \frac{\partial}{\partial\,t}\left( s_0\,p_p + \alpha_p\,\div(\bbeta_p) \right) + \div(\bu_p) = q_p \qin \Omega_p\times(0,T] \,, \\[2ex]
\ds \bu_p\cdot\bn_p \,=\, 0 \qon \Gamma^\rN_p\times (0,T],\quad 
p_p \,=\, 0 \qon \Gamma^\rD_p\times (0,T],\quad
\bbeta_p \,=\, \0 \qon \Gamma_p\times (0,T]\,,
\end{array}
\end{equation}
where $\rho_p$ represents the fluid density in the poroelastic region, 
$s_0 > 0$ is a storage coefficient, $0 < \alpha_p \leq 1$ is the Biot--Willis constant and $\bK$ the symmetric and uniformly positive definite permeability tensor, satisfying, for some constants $0< k_{\min}\leq k_{\max}$,
\begin{equation}\label{eq:permeability-bounds}
\forall\, \bw\in\R^n \quad k_{\min}\,|\bw|^2 \,\leq\, \bw^\rt\,\bK^{-1}(\bx)\bw \,\leq\, k_{\max}\,|\bw|^2 \quad \forall\, \bx\in\Omega_p \,.
\end{equation}
In addition, $\bsi_e$ and $\bsi_p$ denote the elastic and poroelastic stress tensors, respectively, and both satisfy
\begin{equation}\label{eq:bsie-bsip-definitions}
A(\bsi_e) \,= \be(\bbeta_p)  \qan
\bsi_p \,:=\, \bsi_e - \alpha_p\,p_p\,\bI \,,
\end{equation}
where $A$ is a symmetric and positive definite compliance tensor,
satisfying, for some $0 < a_{\min} \leq a_{\max} < \infty$,
\begin{equation}\label{eq:A-bounds}
\forall\, \btau\in\R^{n\times n}, \quad a_{\min}
\, \btau : \btau \, \leq \, A(\btau):\btau \, \leq \, a_{\max} \,
\btau : \btau \quad \forall\, \bx\in\Omega_p \,.
\end{equation}
In the case of an isotropic material, $A$ is given as
\begin{equation}\label{eq:elasticity-stress-isotropic}
A(\bsi_e) \,=\, \frac{1}{2\,\mu_p} \left(\bsi_e - \frac{\lambda_p}{2\,\mu_p + n\,\lambda_p}\,\tr(\bsi_e)\,\bI\right),\quad\mbox{ with }\quad 
A^{-1}(\bsi_e) \,=\, 2\,\mu_p\,\bsi_e + \lambda_p\,\tr(\bsi_e)\,\bI \,,
\end{equation}
where $\ds 0<\lambda_{\min}\leq \lambda_p(\bx)\leq \lambda_{\max}$ and 
$\ds 0<\mu_{\min} \leq\mu_p(\bx) \leq\mu_{\max}$ are the Lam\'e parameters.
In this case, 
$\bsi_e \,:=\, \lambda_p\,\div(\bbeta_p)\,\bI + 2\,\mu_p\,\be(\bbeta_p)$,  $\ds a_{\min}=1/(2 \mu_{\max} + n \, \lambda_{\max})$, and 
$\ds a_{\max}=1/(2 \mu_{\min})$.
In order to avoid the issue with restricting the mean value of the pressure $p_p$ (cf. \eqref{eq:inf-sup-qp-xi}), we assume that $|\Gamma^\rD_p| > 0$. Also, in order to simplify the characterization of the normal trace space $\bu_p\cdot\bn_p|_{\Gamma_{fp}}$ (cf. \eqref{eq:trace-inequality-1}), we assume that $\Gamma^\rD_p$ is not adjacent to $\Gamma_{fp}$, i.e., $\dist(\Gamma^\rD_p,\Gamma_{fp}) \geq s >0$, which also implies that $|\Gamma^\rN_p| > 0$.

Next, we introduce the transmission conditions on the interface $\Gamma_{fp}$:
\begin{equation}\label{eq:interface-conditions}
\begin{array}{c}
\ds \bu_f\cdot\bn_f + \left(\frac{\partial\,\bbeta_p}{\partial\,t} + \bu_p\right)\cdot\bn_p \,=\, 0,\quad 
\bT_f\bn_f + \bsi_p\bn_p \,=\, \0 \qon \Gamma_{fp}\times (0,T] \,, \\[2ex]
\ds \bT_f\bn_f + \mu\,\alpha_{\BJS}\sum^{n-1}_{j=1}\,\sqrt{\bK^{-1}_j}\left\{\left(\bu_f - \frac{\partial\,\bbeta_p}{\partial\,t}\right)\cdot\bt_{f,j}\right\}\,\bt_{f,j} \,=\, -\,p_p\bn_f \qon \Gamma_{fp}\times (0,T] \,,
\end{array}	
\end{equation}
where $\bt_{f,j}$, $1\leq j\leq n-1$, is an orthogonal system of unit tangent vectors on $\Gamma_{fp}$, $\bK_j = (\bK\,\bt_{f,j})\cdot\bt_{f,j}$, and $\alpha_{\BJS} \geq 0$ is an experimentally determined friction coefficient.
The first and second equations in \eqref{eq:interface-conditions} corresponds to mass conservation and conservation of momentum on $\Gamma_{fp}$, respectively, whereas the third one can be decomposed into its normal and tangential components, as follows:
\begin{equation}\label{eq:BJS-condition-split}
(\bT_f\bn_f)\cdot\bn_f \,=\, -\,p_p,\quad
(\bT_f\bn_f)\cdot\bt_{f,j} \,=\, -\,\mu\,\alpha_{\BJS}\,\sqrt{\bK^{-1}_j}\left(\bu_f - \frac{\partial\,\bbeta_p}{\partial\,t}\right)\cdot\bt_{f,j} \qon \Gamma_{fp}\times (0,T]\,.
\end{equation}
The first condition in \eqref{eq:BJS-condition-split} corresponds to the balance of normal stress, whereas the second one is known as the Beavers--Joseph--Saffman slip with friction condition.
Notice that the second and third equations in \eqref{eq:interface-conditions} can be rewritten in terms of tensor $\bsi_f$ as follows:
\begin{equation}\label{eq:BJS-condition-2}
\begin{array}{c}
\ds \bsi_f\bn_f + \rho_f(\bu_f\otimes \bu_f)\bn_f + \bsi_p\bn_p = \0 \,, \\[2ex]
\ds \bsi_f\bn_f + \rho_f(\bu_f\otimes \bu_f)\bn_f
+ \mu\,\alpha_{\BJS}\sum^{n-1}_{j=1}\,\sqrt{\bK^{-1}_j}\left\{\left(\bu_f - \frac{\partial\,\bbeta_p}{\partial\,t}\right)\cdot\bt_{f,j}\right\}\,\bt_{f,j} \,=\, -\,p_p\bn_f\,.
\end{array}	
\end{equation}

Finally, the above system of equations is complemented by a set of initial conditions:
\begin{equation*}
\begin{array}{c}
\ds \bu_f(\bx,0) \,=\, \bu_{f,0}(\bx) \qin \Omega_f\,,\quad	
p_p(\bx,0) \,=\, p_{p,0}(\bx) \qin \Omega_p \,, \\[1ex]
\ds \bbeta_p(\bx,0) = \bbeta_{p,0}(\bx),\qan
\partial_t\,\bbeta_p(\bx,0) = \bu_{s,0}(\bx) \qin \Omega_p,
\end{array}
\end{equation*}
where $\bu_{f,0}, p_{p,0}, \bbeta_{p,0}$, and $\bu_{s,0}$ are suitable initial data.
Conditions on these initial data are discussed later on in Lemma \ref{lem:sol0-in-M-operator}.


\section{The variational formulation}\label{sec:variational-formulation}

In this section we proceed analogously to \cite[Section~3]{aeny2019} (see also \cite{cgos2017,fpsi-msfmfe,gmor2014}) and derive a weak formulation of the coupled problem given by \eqref{eq:Navier-Stokes-3}, \eqref{eq:Biot-model}, and \eqref{eq:interface-conditions}--\eqref{eq:BJS-condition-2}.

\subsection{Preliminaries}

We first introduce further notations and definitions. In what follows, given $\star\in\big\{ f, p \big\}$, we set
\begin{equation*}
(p,q)_{\Omega_\star} \,:=\, \int_{\Omega_\star} p\,q,\quad 
(\bu,\bv)_{\Omega_\star} \,:=\, \int_{\Omega_\star} \bu\cdot\bv \qan
(\bsi,\btau)_{\Omega_\star} \,:=\, \int_{\Omega_\star} \bsi:\btau.
\end{equation*}
In addition, similarly to \cite{cgo2021} and \cite{cgos2017}, in the sequel we will employ suitable Banach spaces to deal with the nonlinear stress tensor and velocity of the Navier--Stokes equation, together with the subspace of skew-symmetric tensors of $\bbL^2(\Omega_f)$ for the vorticity, that is,
\begin{equation*}
\begin{array}{l}
\bbX_f \,:=\, \Big\{ \btau_f\in \bbL^2(\Omega_f) :\quad \bdiv(\btau_f)\in \bL^{4/3}(\Omega_f) \qan \btau_f\bn_f = \0 \qon \Gamma^\rN_f \Big\}, \\ [2ex] 
\bV_f \,:=\, \bL^4(\Omega_f)\,,\quad
\bbQ_f \,:=\, \Big\{ \bchi_f\in \bbL^2(\Omega_f) :\quad \bchi^\rt_f = - \bchi_f \Big\}\,,
\end{array}
\end{equation*}
endowed with the corresponding norms
\begin{equation*}
\|\btau_f\|^2_{\bbX_f} \,:=\, \|\btau_f\|^2_{\bbL^2(\Omega_f)} + \|\bdiv(\btau_f)\|^2_{\bL^{4/3}(\Omega_f)},\quad
\|\bv_f\|_{\bV_f} \,:=\, \|\bv_f\|_{\bL^4(\Omega_f)},\quad
\|\bchi_f\|_{\bbQ_f} \,:=\, \|\bchi_f\|_{\bbL^2(\Omega_f)}.
\end{equation*}
For the velocity, pressure and displacement in the poroelastic region $\Omega_p$ we will use Hilbert spaces, respectively,
\begin{equation*}
\begin{array}{l}
\bX_p \,:=\, \Big\{ \bv_p\in \bH(\div;\Omega_p) :\quad \bv_p\cdot\bn = 0 \qon \Gamma^\rN_p \Big\}, \\ [2ex]
\W_p \,:=\, \L^2(\Omega_p)\,,\quad
\bV_p \,:=\, \Big\{ \bxi_p\in \bH^1(\Omega_p) :\quad \bxi_p = \0 \qon \Gamma_p \Big\}, 
\end{array}
\end{equation*}
endowed with the norms
\begin{equation*}
\|\bv_p\|_{\bX_p} \,:=\, \|\bv_p\|_{\bH(\div;\Omega_p)},\quad
\|w_p\|_{\W_p} \,:=\, \|w_p\|_{\L^2(\Omega_p)},\quad
\|\bxi_p\|_{\bV_p} \,:=\, \|\bxi_p\|_{\bH^1(\Omega_p)}.
\end{equation*}
Finally, analogously to \cite{gmor2014,cgos2017} we need to introduce the spaces of traces $\Lambda_p := (\bX_p\cdot\bn_p|_{\Gamma_{fp}})'$ and $\bLambda_f := (\bbX_f\bn_f|_{\Gamma_{fp}})'$.
According to the normal trace theorem, since $\bv_p\in \bX_p\subset \bH(\div;\Omega_p)$, then $\bv_p\cdot\bn_p\in \H^{-1/2}(\partial\,\Omega_p)$.
It is shown in \cite{FPSI-LM} that, since $\bv_p\cdot\bn_p = 0$ on $\Gamma^\rN_p$ and $\dist(\Gamma^\rD_p,\Gamma_{fp}) \geq s> 0$, it holds that
$\bv_p\cdot\bn_p\in \H^{-1/2}(\Gamma_{fp})$ and
\begin{equation}\label{eq:trace-inequality-1}
\pil \bv_p \cdot \bn_p, \xi \pir_{\Gamma_{fp}} 
\,\leq\, C\,\| \bv_p \|_{\bH(\div; \Omega_p)} \| \xi \|_{\H^{1/2}(\Gamma_{fp})} \quad \forall\,\bv_p \in \bX_p, \, \xi \in \H^{1/2}(\Gamma_{fp}).
\end{equation}
Since $\dist(\Gamma^\rD_f,\Gamma_{fp}) \geq s> 0$, 
using similar arguments, in combination with \cite[Lemma~3.5]{cgo2021}, we have,
\begin{equation}\label{eq:trace-inequality-2}
\langle \btau_f \, \bn_f,\bpsi \rangle_{\Gamma_{fp}}
\le C\,\|\btau_f\|_{\bbH(\bdiv_{4/3};\Omega_f)}\|\bpsi\|_{\bH^{1/2}(\Gamma_{fp})}
\quad \forall \, \btau_f \in \bbX_f, \,
\bpsi \in \bH^{1/2}(\Gamma_{fp}).
\end{equation}
Therefore we take
$\Lambda_p = \H^{1/2}(\Gamma_{fp})$ and $\bLambda_f = \bH^{1/2}(\Gamma_{fp})$,
endowed with the norms
\begin{equation}\label{eq:H1/2-norms}
\|\xi\|_{\Lambda_p} \,:=\, \|\xi\|_{\H^{1/2}(\Gamma_{fp})} \quad \text{and} \quad
\|\bpsi\|_{\bLambda_f} \,:=\, \|\bpsi\|_{\bH^{1/2}(\Gamma_{fp})}\,.
\end{equation} 


\subsection{Lagrange multiplier formulation}

We now proceed with the derivation of our Lagrange multiplier variational formulation for the coupling of the Navier--Stokes and Biot problems.
To this end, we begin by introducing two Lagrange multipliers, which represent the Navier--Stokes velocity and Darcy pressure on the interface, respectively,
\begin{equation*}
\bvarphi := \bu_f|_{\Gamma_{fp}} \in \bLambda_f \qan 
\lambda := p_p|_{\Gamma_{fp}}\in \Lambda_p \,.
\end{equation*}
Then, similarly to \cite{cgos2017,aeny2019}, we test the first and second equation of \eqref{eq:Navier-Stokes-3}, and the first, second and third equations of \eqref{eq:Biot-model} with arbitrary $\btau_f\in \bbX_f$, $\bv_f\in \bV_f$, $\bxi_p\in \bV_p,\bv_p\in \bX_p$ and $w_p\in\W_p,$  integrate by parts, utilize the fact that $\bsi^\rd_f:\btau_f = \bsi^\rd_f:\btau^\rd_f$, and impose the remaining equations weakly, as well as the symmetry of $\bsi_f$ and the transmission conditions (cf. first equation of \eqref{eq:interface-conditions} and \eqref{eq:BJS-condition-2}) to obtain the variational problem: Given 
$\f_f : [0,T]\to \bL^2(\Omega_f)$, $\f_p : [0,T]\to \bL^2(\Omega_p)$, $q_p : [0,T]\to \L^2(\Omega_p)$
and $(\bu_{f,0},p_{p,0},\bbeta_{p,0},\bu_{s,0})\in \bV_f\times \W_p\times \bV_p\times \bV_p$\,,
find $(\bsi_f,\bu_p, \bbeta_p, \bu_f, p_p, \bgamma_f, \bvarphi, \lambda) : [0,T]\to \bbX_f\times \bX_p\times \bV_p\times \bV_f\times \W_p\times \bbQ_f\times \bLambda_f\times \Lambda_p$, such that $\bu_f(0)=\bu_{f,0}$, $p_p(0)=p_{p,0}$, $\bbeta_p(0)=\bbeta_{p,0}$ and $\partial_t\,\bbeta_p(0) = \bu_{s,0}$, and for a.e. $t\in (0,T)$:
\begin{subequations}\label{eq:continuous-weak-formulation-1}
\begin{align}
& \ds \frac{1}{2\,\mu}\,(\bsi^\rd_f,\btau^\rd_f)_{\Omega_f} - \pil\btau_f\bn_f,\bvarphi \pir_{\Gamma_{fp}} + (\bu_f,\bdiv(\btau_f))_{\Omega_f} + (\bgamma_f,\btau_f)_{\Omega_f} 
+ \frac{\rho_f}{2\,\mu}\,((\bu_f\otimes\bu_f)^\rd,\btau_f^\rd)_{\Omega_f} = 0, \label{eq:continuous-weak-formulation-1a}\\[1ex]
& \ds \rho_f\, (\partial_t\,\bu_f,\bv_f)_{\Omega_f} - (\bv_f,\bdiv(\bsi_f))_{\Omega_f} = (\f_f,\bv_f)_{\Omega_f}, \label{eq:continuous-weak-formulation-1b} \\[1ex]
& \ds -\,\,(\bsi_f,\bchi_f)_{\Omega_f} = 0, \label{eq:continuous-weak-formulation-1c} \\[1ex]
& \ds \rho_p (\partial_{tt}\bbeta_p,\bxi_p)_{\Omega_p} + 2\,\mu_p\,(\be(\bbeta_p),\be(\bxi_p))_{\Omega_p} + \lambda_p\,(\div(\bbeta_p),\div(\bxi_p))_{\Omega_p} -\,\,\alpha_p\,(p_p,\div(\bxi_p))_{\Omega_p} \nonumber \\[0.5ex] 
& \ds\quad +\pil\bxi_p\cdot\bn_p,\lambda\pir_{\Gamma_{fp}} - \mu\,\alpha_{\BJS}\,\sum_{j=1}^{n-1} \pil\sqrt{\bK^{-1}_j}\left( \bvarphi - \partial_t\,\bbeta_p \right)\cdot\bt_{f,j},\bxi_p\cdot\bt_{f,j} \pir_{\Gamma_{fp}} = (\f_p,\bxi_p)_{\Omega_p}, \label{eq:continuous-weak-formulation-1d} \\[1ex]
& \ds \mu\,(\bK^{-1}\bu_p,\bv_p)_{\Omega_p} - (p_p,\div(\bv_p))_{\Omega_p} + \pil\bv_p\cdot\bn_p,\lambda\pir_{\Gamma_{fp}} = 0, \label{eq:continuous-weak-formulation-1e} \\[1ex]
& \ds s_0\,(\partial_t\, p_p,w_p)_{\Omega_p} + \alpha_p\,(\div(\partial_t\,\bbeta_p), w_p)_{\Omega_p} + (w_p,\div(\bu_p))_{\Omega_p} = (q_p,w_p)_{\Omega_p},\label{eq:continuous-weak-formulation-1f} \\[1ex]
& \ds - \pil\bvarphi\cdot\bn_f + \left(\partial_t\,\bbeta_p + \bu_p\right)\cdot\bn_p,\xi\pir_{\Gamma_{fp}} = 0, \label{eq:continuous-weak-formulation-1g} \\[1ex]
& \ds \pil\bsi_f\bn_f,\bpsi\pir_{\Gamma_{fp}} + \mu\,\alpha_{\BJS}\,\sum_{j=1}^{n-1} \pil\sqrt{\bK^{-1}_j}\left( \bvarphi - \partial_t\,\bbeta_p \right)\cdot\bt_{f,j},\bpsi\cdot\bt_{f,j} \pir_{\Gamma_{fp}} \nonumber \\[0.5ex]
 & \ds\quad +\,\, \rho_f\pil \bvarphi\cdot\bn_f, \bvarphi\cdot\bpsi \pir_{\Gamma_{fp}} + \pil\bpsi\cdot\bn_f,\lambda\pir_{\Gamma_{fp}} = 0,\label{eq:continuous-weak-formulation-1h}
\end{align}
\end{subequations}
for all $(\btau_f, \bv_p, \bxi_p, \bv_f, w_p, \bchi_f, \bpsi, \xi)\in \bbX_f\times \bX_p\times \bV_p\times \bV_f\times \W_p\times \bbQ_f\times \bLambda_f\times \Lambda_p$.

We note that the fifth and sixth terms on the left-hand side of \eqref{eq:continuous-weak-formulation-1d} are obtained using the equation
\begin{equation}\label{eq:elasticity-stress-tensor-identity}
\pil\bsi_p\bn_p,\bxi_p\pir_{\Gamma_{fp}} 
= - \pil\bxi_p\cdot\bn_p,\lambda\pir_{\Gamma_{fp}} 
+ \mu\,\alpha_{\BJS}\,\sum_{j=1}^{n-1} \pil\sqrt{\bK^{-1}_j}\left( \bvarphi - \partial_t\,\bbeta_p \right)\cdot\bt_{f,j},\bxi_p\cdot\bt_{f,j} \pir_{\Gamma_{fp}},
\end{equation}
which follows from combining the second and third equations in \eqref{eq:interface-conditions} (or \eqref{eq:BJS-condition-2}). 
Note that \eqref{eq:continuous-weak-formulation-1a}--\eqref{eq:continuous-weak-formulation-1c} correspond to the Navier--Stokes equations, \eqref{eq:continuous-weak-formulation-1d} is the elasticity equation, \eqref{eq:continuous-weak-formulation-1e}--\eqref{eq:continuous-weak-formulation-1f} are the Darcy equations, whereas \eqref{eq:continuous-weak-formulation-1g}--\eqref{eq:continuous-weak-formulation-1h}, together with the interface terms in \eqref{eq:continuous-weak-formulation-1d}, enforce weakly the transmission conditions \eqref{eq:interface-conditions}. 
In particular, \eqref{eq:continuous-weak-formulation-1g} imposes the mass conservation, \eqref{eq:continuous-weak-formulation-1h} imposes the last equation in \eqref{eq:interface-conditions} which is a combination of balance of normal stress and the BJS condition, while the interface terms in \eqref{eq:continuous-weak-formulation-1d} imposes the conservation of momentum.

Analyzing \eqref{eq:continuous-weak-formulation-1} directly is challenging, due to the presence of $\partial_t\bbeta_p$ in several non-coercive terms (in addition to the nonlinear nature of the model). 
Motivated by \cite{ErvJenSun}, \cite{show2005}, and \cite{aeny2019}, we analyze an alternative formulation, which will then be used to establish the well-posedness of \eqref{eq:continuous-weak-formulation-1}.

 
\subsection{Alternative formulation}

We proceed analogously to \cite{aeny2019} and derive a system of evolutionary saddle point type, which fits the general framework studied in \cite{s2010}.
Following the approach from \cite{show2005}, we do this by considering a mixed elasticity formulation with the structure velocity $\bu_s := \partial_t\,\bbeta_p\in \bV_p$ and the elastic stress $\bsi_e$ (cf. \eqref{eq:bsie-bsip-definitions}) as primary variables.
The space and norm for the elastic stress are respectively
\begin{equation*}
\bSigma_e \,:=\, \Big\{ \btau_e\in \bbL^2(\Omega_p) :\quad \btau^\rt_e \,=\, \btau_e \Big\} \qan
\|\btau_e\|_{\bSigma_e} \,:=\, \|\btau_e\|_{\bbL^2(\Omega_p)} \,.
\end{equation*}
The derivation of the alternative variational formulation differs from the original one in the way the equilibrium equation $\rho_p\,\partial_{tt}\bbeta_p - \bdiv(\bsi_p) = \f_p$ (cf. \eqref{eq:Biot-model}) is handled.
As before, we multiply it by a test function $\bv_s\in \bV_p$ and integrate by parts.
However, instead of using the constitutive relation from the first equation in \eqref{eq:bsie-bsip-definitions}, we use only the second equation in \eqref{eq:bsie-bsip-definitions}, which results in
\begin{equation*}
\rho_p\,(\partial_t\bu_s,\bv_s)_{\Omega_p} + (\bsi_e, \be(\bv_s))_{\Omega_p} - \alpha_p\,(p_p,\div(\bv_s))_{\Omega_p} - \pil\bsi_p\bn_p, \bv_s\pir_{\Gamma_{fp}} \,=\, (\f_p, \bv_s)_{\Omega_p} \,,
\end{equation*}
where the term $\partial_{tt} \bbeta_p$ has been replaced by $\partial_t \bu_s$, and the last term on the left-hand side is treated as in \eqref{eq:continuous-weak-formulation-1d} by employing \eqref{eq:elasticity-stress-tensor-identity}.
Furthermore, we eliminate the displacement $\bbeta_p$ from the system by differentiating in time the first equation in \eqref{eq:bsie-bsip-definitions}. 
Multiplying by a test function $\btau_e\in \bSigma_e$ gives
\begin{equation*}
\partial_t(A(\bsi_e), \btau_e)_{\Omega_p} - (\be(\bu_s), \btau_e)_{\Omega_p} \,=\, 0 \,.
\end{equation*}
The rest of the equations are handled in the same way as in the original weak formulation \eqref{eq:continuous-weak-formulation-1}.

Next, given $\bw_f\in \bV_f$ and $\bzeta \in \bLambda_f$, we define the bilinear forms $a_f : \bbX_f\times \bbX_f\to \R, a^s_p : \bSigma_e\times \bSigma_e\to \R, \kappa_{\bw_f} : \bV_f\times \bbX_f\to \R, a^d_p : \bX_p\times \bX_p\to \R, a^e_p : \bV_p\times \bV_p\to \R, b_f : \bV_f\times \bbX_f\to \R$, $b_p : \W_p\times \bX_p\to \R, b_s : \bSigma_e \times \bV_p\to \R$, and $b_\sk : \bbQ_f\times \bbX_f\to \R$, as:
\begin{subequations}\label{eq:bilinear-forms}
\begin{align}
& a_f(\bsi_f,\btau_f) := \frac{1}{2\,\mu} (\bsi^\rd_f,\btau^\rd_f)_{\Omega_f} ,\quad a^s_p(\bsi_e,\btau_e) \,:=\, (A(\bsi_e),\btau_e)_{\Omega_p}, \label{bilinear-form-1} \\[0.5ex]
  & \ds \kappa_{\bw_f}(\bu_f,\btau_f)
  := \frac{\rho_f}{2\,\mu} ((\bw_f\otimes\bu_f)^\rd, \btau_f^\rd)_{\Omega_f},
  \quad a^d_p(\bu_p,\bv_p) := \mu (\bK^{-1}\bu_p,\bv_p)_{\Omega_p} , \label{bilinear-form-2}\\[0.5ex]
& \ds a^e_p(\bbeta_p,\bxi_p) := 2 \mu_p (\be(\bbeta_p),\be(\bxi_p))_{\Omega_p} + \lambda_p (\div(\bbeta_p),\div(\bxi_p))_{\Omega_p}, \label{bilinear-form-3}\\[0.5ex]
& \ds b_f(\btau_f,\bv_f) := (\bdiv(\btau_f),\bv_f)_{\Omega_f} ,\quad
b_p(\bv_p,w_p) := - (\div(\bv_p),w_p)_{\Omega_p} , \label{bilinear-form-4}\\[0.5ex]
& \ds b_s(\btau_e,\bv_s) := -\,(\btau_e,\be(\bv_s))_{\Omega_p},\quad
b_\sk(\btau_f,\bchi_f) := (\btau_f,\bchi_f)_{\Omega_f} , \label{bilinear-form-5}
\end{align}
\end{subequations}
and the forms on the interface $c_\BJS : (\bV_p\times \bLambda_f)\times (\bV_p\times \bLambda_f)\to \R$, $c_{\Gamma} : (\bV_p\times \bLambda_f)\times \Lambda_p\to \R$, $l_{\bzeta} : \bLambda_f \times \bLambda_f\to \R$, $b_{\bn_f} : \bbX_f\times \bLambda_f\to \R$, and $b_{\bn_p} : \bX_p\times \Lambda_p\to \R$
\begin{subequations}\label{eq:forms-on-interface-1}
\begin{align}
& c_{\BJS}(\bbeta_p, \bvarphi;\bxi_p, \bpsi) 
\,:=\, \mu\,\alpha_{\BJS}\,\sum^{n-1}_{j=1} \pil\sqrt{\bK^{-1}_j}( \bvarphi-\bbeta_p)\cdot\bt_{f,j},(\bpsi-\bxi_p)\cdot\bt_{f,j}\pir_{\Gamma_{fp}} \,, \label{interface-bilinear-form-1}\\[0.5ex]
& \ds c_{\Gamma}(\bxi_p,\bpsi;\xi) \,:=\, -\,\pil\bxi_p\cdot\bn_p,\xi\pir_{\Gamma_{fp}} - \pil\bpsi\cdot\bn_f,\xi\pir_{\Gamma_{fp}}\,,\quad
l_{\bzeta}(\bvarphi,\bpsi):=\rho_f\pil \bzeta\cdot\bn_f, \bvarphi\cdot\bpsi \pir_{\Gamma_{fp}} \,, \label{interface-bilinear-form-2}\\[0.5ex]
& \ds  b_{\bn_f}(\btau_f,\bpsi) \,:=-\, \pil\btau_f\bn_f,\bpsi\pir_{\Gamma_{fp}} \,,\quad
b_{\bn_p}(\bv_p,\xi) \,:=\, \pil\bv_p\cdot\bn_p, \xi\pir_{\Gamma_{fp}} 
\label{interface-bilinear-form-3} \,.
\end{align}
\end{subequations}
Hence the Lagrange variational formulation alternative to \eqref{eq:continuous-weak-formulation-1}, reads:
given $\f_f:[0,T]\to \bL^2(\Omega_f),\, \f_p : [0,T]\to \bL^2(\Omega_p),\, q_p:[0,T]\to \L^2(\Omega_p)$ and $(\bu_{f,0},p_{p,0},\bu_{s,0})\in \bV_f\times \W_p\times \bV_p$, find $(\bsi_f, \bu_p, \bsi_e, \bu_f, p_p, \bgamma_f, \bu_s$, $\bvarphi, \lambda) : [0,T]\to \bbX_f\times \bX_p\times \bSigma_e\times \bV_f\times \W_p\times \bbQ_f\times \bV_p\times \bLambda_f\times \Lambda_p$, such that $(\bu_f(0),p_p(0),\bu_s(0)) = (\bu_{f,0},p_{p,0},\bu_{s,0})$ and for a.e. $t\in (0,T)$:
\begin{subequations}\label{eq:continuous-alternative-weak-formulation-1}
\begin{align}
& \ds a_f(\bsi_f,\btau_f) + b_{\bn_f}(\btau_f,\bvarphi) + b_f(\btau_f,\bu_f) + b_\sk(\bgamma_f,\btau_f)+\kappa_{\bu_f}(\bu_f, \btau_f) = 0, \label{eq:continuous-alternative-weak-formulation-1a}\\[1ex]
& \ds \rho_f\, (\partial_t\,\bu_f,\bv_f)_{\Omega_f} -b_f(\bsi_f,\bv_f) = (\f_f,\bv_f)_{\Omega_f}, \label{eq:continuous-alternative-weak-formulation-1b} \\[1ex]
& \ds - b_\sk(\bsi_f,\bchi_f) = 0, \label{eq:continuous-alternative-weak-formulation-1c} \\[1ex]
& \ds \rho_p\,(\partial_t\bu_s,\bv_s)_{\Omega_p}  - b_s(\bsi_e,\bv_s)  + \alpha_p\,b_p(\bv_s,p_p) - c_{\Gamma}(\bv_s,\0;\lambda)+\,\, c_{\BJS}(\bu_s,\bvarphi;\bv_s,\0)  \nonumber \\[0.5ex] 
& \ds = (\f_p,\bv_s)_{\Omega_p},\label{eq:continuous-alternative-weak-formulation-1d1} \\[1ex]
& \ds a^s_p(\partial_t\,\bsi_e,\btau_e) + b_s(\btau_e,\bu_s)= 0, \label{eq:continuous-alternative-weak-formulation-1d2} \\[1ex]
& \ds a^d_p(\bu_p,\bv_p) + b_p(\bv_p,p_p) + b_{\bn_p}(\bv_p,\lambda) = 0, \label{eq:continuous-alternative-weak-formulation-1e} \\[1ex]
& \ds s_0\,(\partial_t\, p_p,w_p)_{\Omega_p}  - \alpha_p\,b_p(\bu_s,w_p)  - b_p(\bu_p,w_p) = (q_p,w_p)_{\Omega_p},\label{eq:continuous-alternative-weak-formulation-1f} \\[1ex]
& \ds c_{\Gamma}(\bu_s,\bvarphi;\xi) - b_{\bn_p}(\bu_p,\xi) = 0, \label{eq:continuous-alternative-weak-formulation-1g} \\[1ex]
& \ds - b_{\bn_f}(\bsi_f,\bpsi) + \,\, c_{\BJS}(\bu_s,\bvarphi;\0,\bpsi) + l_{\bvarphi}(\bvarphi,\bpsi) - c_{\Gamma}(\0,\bpsi;\lambda) = 0,\label{eq:continuous-alternative-weak-formulation-1h}
\end{align}
\end{subequations}
for all $(\btau_f, \bv_p, \btau_e, \bv_f, w_p, \bchi_f, \bv_s, \bpsi, \xi)\in \bbX_f\times \bX_p\times \bSigma_e\times \bV_f\times \W_p\times \bbQ_f\times \bV_p\times \bLambda_f\times \Lambda_p$.
We note that initial data for $\bsi_e$ and the remaining variables will be constructed in Lemma \ref{lem:sol0-in-M-operator}.

There are many different ways of ordering the variables in \eqref{eq:continuous-alternative-weak-formulation-1}.
For the sake of the subsequent analysis, we proceed as in \cite{aeny2019}, and adopt one leading to an evolution problem in a mixed form.
In particular, we group the spaces, unknowns, and test functions as follows:
\begin{equation*}
\begin{array}{c}
\ds \bQ := \bbX_f\times \bX_p\times \bSigma_e,\quad
\bS := \bV_f\times \W_p\times \bbQ_f\times \bV_p\times \bLambda_f\times \Lambda_p, \\ [1.5ex]
\ds \ubsi := (\bsi_f, \bu_p, \bsi_e)\in \bQ,\quad 
\ubu := (\bu_f, p_p, \bgamma_f, \bu_s, \bvarphi, \lambda)\in \bS, \\[1ex]
\ds \ubtau := (\btau_f, \bv_p, \btau_e)\in \bQ,\quad 
\ubv := (\bv_f, w_p, \bchi_f, \bv_s, \bpsi, \xi)\in \bS,
\end{array}
\end{equation*}
where the spaces $\bQ$ and $\bS$ are endowed with the norms, respectively,
\begin{equation}\label{norms}
\begin{split}
\|\ubtau\|^2_{\bQ} & = \|\btau_f\|^2_{\bbX_f} + \|\bv_p\|^2_{\bX_p} + \|\btau_e\|^2_{\bSigma_e}\,, \\[1ex]
\|\ubv\|^2_{\bS} & = \|\bv_f\|^2_{\bV_f} + \|w_p\|^2_{\W_p} + \|\bchi_f\|^2_{\bbQ_f} + \|\bv_s\|^2_{\bV_p} + \|\bpsi\|^2_{\bLambda_f} + \|\xi\|^2_{\Lambda_p}\,.
\end{split}
\end{equation}
Summing the equations in \eqref{eq:continuous-alternative-weak-formulation-1} corresponding to the test functions in $\bQ$ and $\bS$ results in 
\begin{align}
& a_f(\bsi_f,\btau_f) + \kappa_{\bu_f}(\bu_f, \btau_f) + a^d_p(\bu_p,\bv_p) + a^s_p(\partial_t\,\bsi_e,\btau_e) \nonumber \\[0.5ex] 
&\quad +\, b_f(\btau_f,\bu_f) + b_p(\bv_p,p_p) + b_\sk(\btau_f,\bgamma_f)
+ b_s(\btau_e,\bu_s) + b_{\bn_f}(\btau_f,\bvarphi) + b_{\bn_p}(\bv_p,\lambda)  \,=\, 0 \,,  \nonumber \\[0.5ex] 
& \rho_f\,(\partial_t\,\bu_f,\bv_f)_{\Omega_f} + s_0\,(\partial_t\,p_p,w_p)_{\Omega_p}
+ \rho_p\,(\partial_t\bu_s,\bv_s)_{\Omega_p}  \nonumber \\[0.5ex] 
&\quad +\, c_{\BJS}(\bu_s,\bvarphi;\bv_s,\bpsi) + c_{\Gamma}(\bu_s,\bvarphi;\xi) - c_{\Gamma}(\bv_s,\bpsi;\lambda) 
+ \alpha_p\,b_p(\bv_s,p_p) - \alpha_p\,b_p(\bu_s,w_p)  \nonumber \\[0.5ex] 
&\quad -\, b_f(\bsi_f,\bv_f) - b_p(\bu_p,w_p) - b_\sk(\bsi_f,\bchi_f)  
- b_s(\bsi_e,\bv_s) - b_{\bn_f}(\bsi_f,\bpsi) - b_{\bn_p}(\bu_p,\xi) \nonumber \\[0.5ex] 
&\quad +\, l_{\bvarphi}(\bvarphi,\bpsi) =\, (\f_f,\bv_f)_{\Omega_f} +(q_p,w_p)_{\Omega_p} + (\f_p,\bv_s)_{\Omega_p}\,. \label{eq:NS-Biot-formulation-2}
\end{align}
This can be expressed in operator notation as a degenerate evolution problem in mixed form:
\begin{align}
& \frac{\partial}{\partial\,t}\,\cE_1(\ubsi(t)) + \cA(\ubsi(t)) + \cB'(\ubu(t)) + \cK_{\bu_f(t)}(\ubu(t))  =  \bF(t) \qin \bQ', \nonumber\\ 
& \frac{\partial}{\partial\,t}\,\cE_2(\ubu(t)) - \cB(\ubsi(t)) + \cC(\ubu(t))+\cL_{\bvarphi(t)}(\ubu(t))  =  \bG(t) \qin \bS',\label{eq:alternative-formulation-operator-form}
\end{align}
where, given $\bu_f\in \bV_f$ and $\bvarphi \in \bLambda_f$, the operators
$\cE_1 : \bQ\to \bQ'$, $\cE_2 : \bS\to \bS'$, 
$\cA\,:\,\bQ\to \bQ'$, $\cB\,:\, \bQ\to \bS'$, $\cC\,:\,\bS\to \bS'$,  $\cK_{\bu_f}\,:\,\bS\to \bQ'$, and $\cL_{\bvarphi}\,:\,\bS\to \bS'$ are defined as follows:
\begin{subequations}\label{operators-1}
\begin{align}
& \cE_1(\ubsi)(\ubtau) \,:=\, a^s_p(\bsi_e,\btau_e)\,,\quad 
\cE_2(\ubu)(\ubv) \,:=\, \rho_f\,(\bu_f,\bv_f)_{\Omega_f} + s_0\,(p_p,w_p)_{\Omega_p}
+ \rho_p\,(\bu_s,\bv_s)_{\Omega_p}, \label{defn-E1-E2}  \\[1ex]  
&\ds \cA(\ubsi)(\ubtau) := a_f(\bsi_f,\btau_f) + a^d_p(\bu_p,\bv_p)\,, \label{defn-A} \\[1ex]
&\ds \cB(\ubtau)(\ubv) \,:=\, b_f(\btau_f,\bv_f) + b_p(\bv_p,w_p) + b_\sk(\btau_f,\bchi_f) + b_s(\btau_e,\bv_s) + b_{\bn_f}(\btau_f,\bpsi) + b_{\bn_p}(\bv_p,\xi)\,, \label{defn-B} \\[1ex]
&\ds \cC(\ubu)(\ubv) \,:=\, c_\BJS(\bu_s,\bvarphi;\bv_s,\bpsi) + c_{\Gamma}(\bu_s,\bvarphi;\xi) - c_{\Gamma}(\bv_s,\bpsi;\lambda) + \alpha_p\,b_p(p_p,\bv_s) - \alpha_p\,b_p(w_p,\bu_s)\,, \label{defn-C} \\[1ex]
&\ds \cK_{\bu_f}(\ubu)(\ubtau) := \kappa_{\bu_f}(\bu_f,\btau_f)\,,\quad
\cL_{\bvarphi}(\ubu)(\ubv) := l_{\bvarphi}(\bvarphi,\bpsi),\label{defn-K-L}
\end{align}
\end{subequations}
whereas the functionals $\bF\in \bQ'$ and $\bG\in \bS'$ are defined respectively as follows:
\begin{equation}\label{operators-3}
\bF(\ubtau) := 0 \qan
\bG(\ubv) := (\f_f,\bv_f)_{\Omega_f} + (q_p,w_p)_{\Omega_p} + (\f_p,\bv_s) \,.
\end{equation}


\subsection{Operator properties}

We next discuss boundness, continuity, coercivity, monotonicity, and inf-sup stability properties of the operators defined in \eqref{operators-1} (cf. \eqref{eq:bilinear-forms}--\eqref{eq:forms-on-interface-1}). We recall \cite[Definitions 9.39, 9.40 and 9.41]{Renardy-Rogers} that for a Banach space $\V$, an operator $\cA:\V \to \V'$ is bounded if it maps bounded sets in $\V$ to bounded sets in $\V'$. It is continuous if for every $u \in \V$,
$$
\|\cA(u) - \cA(v)\|_{\V'} \to 0 \mbox{ whenever } \|u - v\|_{\V} \to 0.
$$
It is coercive if
$$
\frac{\cA(v)(v)}{\|v\|_{\V}} \to \infty \mbox{ as } \|v\|_{\V} \to \infty,
$$
and it is monotone if
$$
(\cA(u) - \cA(v))(u - v) \ge 0 \ \ \forall\, u,v \in \V.
$$
If the operator is linear, then boundness and continuity are equivalent to $|\cA(u)(v)| \le C \|u\|_{\V}\|v\|_{\V} \ \forall\, u,v \in \V$ 
and monotonicity is equivalent to $\cA(v)(v) \ge 0 \ \forall\, v \in \V$.

We first note that for given $\bw_f$, the term $\kappa_{\bw_f}(\bu_f,\btau_f)$ requires $\bu_f$ to live in a space smaller than $\bL^2(\Omega_f)$. In particular, using Cauchy--Schwarz inequality twice gives
\begin{equation}\label{kappa-cont}
|\kappa_{\bw_f}(\bu_f,\btau_f)| 
\,\le\, \frac{\rho_f\,n^{1/2}}{2\,\mu} \|\bw_f\|_{\bL^4(\Omega_f)}\|\bu_f\|_{\bL^4(\Omega_f)}\|\btau^\rd_f\|_{\bbL^2(\Omega_f)}.
\end{equation}
Accordingly, we look for both $\bu_f, \bw_f \in \bL^4(\Omega_f)$. Furthermore, applying H\"older's inequality, we obtain
\begin{equation}\label{bf-cont}
|b_f(\btau_f,\bv_f)| \leq \|\bdiv(\btau_f)\|_{\bL^{4/3}(\Omega_f)} \|\bv_f\|_{\bL^4(\Omega_f)}.
\end{equation}
For the interface nonlinear term $l_{\bzeta}(\bvarphi,\bpsi)$, applying H\"older's inequality and using the continuous injection $\bi_\Gamma$ of $\bH^{1/2}(\partial \Omega_f)$ into $\bL^3(\partial \Omega_f)$, we have
\begin{equation}\label{eq:injection-H1/2-into-L3-interface}
\big|l_{\bzeta}(\bvarphi,\bpsi)\big|\leq\ \|\bi_{\Gamma}\|^3 \|\bzeta\|_{\bH^{1/2}(\Gamma_{fp})} \|\bvarphi\|_{\bH^{1/2}(\Gamma_{fp})} \|\bpsi\|_{\bH^{1/2}(\Gamma_{fp})}.
\end{equation}
In addition, using the continuous trace operator
$\gamma_0: \bH^1(\Omega_p) \to \bL^2(\Gamma_{fp})$, there hold
\begin{align}
  &  |\pil\bpsi\cdot\bt_{f,j},\bv_s\cdot\bt_{f,j}\pir_{\Gamma_{fp}}| \leq \|\gamma_0\|\|\bv_s\|_{\bH^1(\Omega_p)} \|\bpsi\|_{\bL^2(\Gamma_{fp})}, \label{bjs-cont} \\
  & |\pil\bv_s\cdot\bn_p,\xi\pir_{\Gamma_{fp}}| \leq \|\gamma_0\|\|\bv_s\|_{\bH^1(\Omega_p)} \|\xi\|_{\L^2(\Gamma_{fp})}. \label{cgamma-cont-1} 
\end{align}
Now, we establish continuity properties for  $\cE_1, \cE_2, \cA, \cB, \cC, \cK_{\bw_f}, \cL_{\bzeta}$, and $\bG(t)$. 
\begin{lem}\label{lem:cont}
The linear operators $\cE_1, \cE_2, \cA, \cB$ and $\cC$ are continuous. In particular, there exist positive constants $C_{\cE_1}$, $C_{\cE_2}$, $C_{\cA}$, $C_{\cB}$, and $C_f$ such that
\begin{gather}
 \big|\cE_1(\ubsi)(\ubtau)\big| \,\leq\, C_{\cE_1}\,\|\ubsi\|_{\bQ} \|\ubtau\|_{\bQ},\quad
 \big|\cE_2(\ubu)(\ubv)\big| \,\leq\, C_{\cE_2}\,\|\ubu\|_{\bS}\,\|\ubv\|_{\bS},
 \label{eq:continuity-cE1-cE2}\\[1ex]
    \big|\cA(\ubsi)(\ubtau)\big| \,\leq\, C_{\cA}\,\|\ubsi\|_{\bQ}\|\ubtau\|_{\bQ},\quad
\big|\cB(\ubtau)(\ubv)\big| \,\leq\, C_{\cB}\,\|\ubtau\|_{\bQ}\,\|\ubv\|_{\bS},
\quad \big|\cC(\ubu)(\ubv)\big| 
\,\leq\, C_f\,\|\ubu\|_{\bS}\,\|\ubv\|_{\bS}. \label{eq:continuity-cA-cB-cC}
  \end{gather}
Furthermore, given $\bw_f \in \bL^4(\Omega_f)$ and $\bzeta \in \bH^{1/2}(\Gamma_{fp})$, the operators $\cK_{\bw_f}$ and $\cL_{\bzeta}$ are continuous:
\begin{gather}
\big|\cK_{\bw_f}(\ubu)(\ubtau)\big|
\,\leq\, \frac{\rho_f\,n^{1/2}}{2\,\mu}\,\|\bw_f\|_{\bV_f}\|\ubu\|_{\bS}\,\|\ubtau\|_{\bQ},
\label{eq:continuity-cK-wf}\\[1ex]
\big|\cL_{\bzeta}(\ubu)(\ubv)\big|\leq C_{\cL}\|\bzeta\|_{\bLambda_f}\|\ubu\|_{\bS}\|\ubv\|_{\bS},
 \label{eq:continuity-cL-zeta}   
\end{gather}
and the linear functional $\bG(t) \in \bS'$ is continuous:
\begin{equation}\label{cont-F-G}  
  \big|\bG(t)(\ubv)\big| \leq C_{\bG}(t) \|\ubv\|_{\bS}.
\end{equation}
\end{lem}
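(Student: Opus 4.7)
The plan is to verify each bound by unwinding the definitions in \eqref{operators-1} and applying a standard combination of Cauchy--Schwarz, H\"older, trace estimates, and the uniform bounds on $A$ and $\bK^{-1}$. I would group the estimates so that the continuous bilinear/linear operators $\cE_1, \cE_2, \cA, \cB, \cC$ are handled first via straightforward tools, and then turn to the bounded forms $\cK_{\bw_f}$, $\cL_{\bzeta}$ and the functional $\bG(t)$, which reuse estimates already recorded in \eqref{kappa-cont}--\eqref{cgamma-cont-1}.

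For $\cE_1$, definition \eqref{defn-E1} together with the bound $a_{\max}$ from \eqref{eq:A-bounds} and Cauchy--Schwarz in $\bbL^2(\Omega_p)$ directly gives $C_{\cE_1}=a_{\max}$. For $\cE_2$, the $L^2$ inner products in \eqref{defn-E2} are bounded by Cauchy--Schwarz; the first term is slightly delicate because $\bV_f=\bL^4(\Omega_f)$, so I would invoke the continuous embedding $\bL^4(\Omega_f)\hookrightarrow\bL^2(\Omega_f)$ on the bounded domain to close the estimate with the $\bV_f$ norm. The operator $\cA$ in \eqref{defn-A} reduces to two $L^2$ pairings, bounded using $\mu^{-1}$ and the upper bound $k_{\max}$ from \eqref{eq:permeability-bounds}.

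The main technical point is $\cB$. Of its six summands in \eqref{defn-B}, the two interior divergence/skew-symmetric/$\be(\bv_s)$ pairings $b_p$, $b_{\sk}$, $b_s$ use plain Cauchy--Schwarz, while $b_f$ is bounded via H\"older $(4/3,4)$ exactly as in \eqref{bf-cont}. The two trace-pairings $b_{\bn_f}$ and $b_{\bn_p}$ on $\Gamma_{fp}$ are handled using the normal trace estimates \eqref{eq:trace-inequality-1} and \eqref{eq:trace-inequality-2}, which are precisely tailored to our Banach and Hilbert spaces $\bbX_f$ and $\bX_p$ with our choice $\bLambda_f,\Lambda_p=\bH^{1/2}(\Gamma_{fp}),\H^{1/2}(\Gamma_{fp})$. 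Summing and applying Cauchy--Schwarz on the resulting block of norms yields $C_{\cB}$. For $\cC$ in \eqref{defn-C}, the BJS term and the two $c_{\Gamma}$ terms are controlled using the trace injection $\gamma_0$ as in \eqref{bjs-cont}--\eqref{cgamma-cont-1} together with the embedding $\bH^{1/2}(\Gamma_{fp})\hookrightarrow \bL^2(\Gamma_{fp})$; the two $\alpha_p b_p$ contributions use Cauchy--Schwarz. The constant $C_f$ is then the sum of the resulting factors, involving $\mu$, $\alpha_\BJS$, $k_{\min}^{-1/2}$, $\alpha_p$, and the embedding constants.

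For the remaining estimates, $\cK_{\bw_f}$ is immediate from \eqref{kappa-cont} applied to \eqref{defn-K}, giving the stated constant $\rho_f/(2\mu)$. For $\cL_{\bzeta}$ in \eqref{defn-L}, I would combine \eqref{eq:injection-H1/2-into-L3-interface} with the fact that $\|\bvarphi\cdot\bn_f\|_{L^3}$ and $\|\bpsi\|_{L^3}$ on $\Gamma_{fp}$ are controlled by the $\bH^{1/2}$ norms via $\bi_\Gamma$, producing the trilinear bound with $C_{\cL}=\rho_f\|\bi_\Gamma\|^3$. Finally, $\bG(t)$ in \eqref{operators-3} is bounded by Cauchy--Schwarz together with $\bL^2(\Omega_f)\hookleftarrow\bL^4(\Omega_f)$ and $\bL^2(\Omega_p)\hookleftarrow\bH^1(\Omega_p)$, giving $C_{\bG}(t)$ depending on $\|\f_f(t)\|_{\bL^2(\Omega_f)}$, $\|q_p(t)\|_{\L^2(\Omega_p)}$, $\|\f_p(t)\|_{\bL^2(\Omega_p)}$ and the embedding constants. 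The main obstacle to keep an eye on is the correct choice of norms on the interface terms to avoid losing the $\bH^{1/2}$--$\bH^{-1/2}$ duality in $b_{\bn_f}$, which is exactly why the sharp estimate \eqref{eq:trace-inequality-2} on $\bbX_f$ is needed rather than a naive $\bbH(\bdiv)$ bound.
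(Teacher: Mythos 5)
Your proposal is correct and follows essentially the same route as the paper: a term-by-term application of Cauchy--Schwarz/H\"older, the uniform bounds \eqref{eq:A-bounds} and \eqref{eq:permeability-bounds}, the trace estimates \eqref{eq:trace-inequality-1}--\eqref{eq:trace-inequality-2}, and the previously recorded bounds \eqref{kappa-cont}, \eqref{bf-cont}, \eqref{eq:injection-H1/2-into-L3-interface}, \eqref{bjs-cont}--\eqref{cgamma-cont-1}. The only (harmless) deviations are cosmetic: you phrase the $|\Omega_f|^{1/2}$ factor in $C_{\cE_2}$ as an $\bL^4\hookrightarrow\bL^2$ embedding, and you carry the factor $\rho_f$ into $C_{\cL}$, which is in fact slightly more careful than the paper's statement.
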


\begin{proof}
We recall that the operators and functionals are defined in \eqref{operators-1} and \eqref{operators-3} with the associated bilinear forms defined in \eqref{eq:bilinear-forms} and \eqref{eq:forms-on-interface-1}. The continuity of $\cE_1$ follows from \eqref{eq:A-bounds} with $C_{\cE_1}:= a_{\max}$. 
For the continuity of $\cE_2$, applying the Cauchy--Schwarz and H\"older's inequalities, we obtain
\begin{equation*}
\big|\cE_2(\ubu)(\ubv)\big| 
\,\leq\, \rho_f |\Omega_f|^{1/2}\|\bu_f\|_{\bV_f}\|\bv_f\|_{\bV_f} 
+ s_0\|p_p\|_{\W_p}\|w_p\|_{\W_p} 
+ \rho_p \|\bu_s\|_{\bV_p}\|\bv_s\|_{\bV_p}
\,\leq\, C_{\cE_2}\,\|\ubu\|_\bS\,\|\ubv\|_\bS \,,
\end{equation*}
with $C_{\cE_2}:=\max\{\rho_f|\Omega_f|^{1/2},s_0,\rho_p\}$. The continuity of $\cA$ follows from \eqref{eq:permeability-bounds} with $C_{\cA}:=\max\{ \frac{1}{2\mu}, \mu\,k_{\max}\}$.
The continuity of $\cB$ follows from \eqref{eq:trace-inequality-1}, \eqref{eq:trace-inequality-2}, and \eqref{bf-cont}. The continuity of $\cC$ follows from \eqref{bjs-cont} and \eqref{cgamma-cont-1} with $C_f$ depending on $\mu$, $k_{\min}^{-1}$, $\alpha_{\BJS}$, and $\alpha_p$. Finally, the bounds for $\cK_{\bw_f}$ and $\cL_{\bzeta}$,  \eqref{eq:continuity-cK-wf} and \eqref{eq:continuity-cL-zeta}, follow from \eqref{kappa-cont} and \eqref{eq:injection-H1/2-into-L3-interface}, respectively, with $C_{\cL}:= \|\bi_{\Gamma}\|^3$, whereas the continuity of $\bG$ \eqref{cont-F-G} follows easily from its definition with $C_{\bG}(t) := \big(|\Omega_f|^{1/2}\|\f_f(t)\|^2_{\bL^2(\Omega_f)} + \|q_p(t)\|_{\L^2(\Omega_p)}^2 + \|\f_p(t)\|^2_{\bL^2(\Omega_p)}\big)^{1/2}$.
\end{proof}

Next, we establish the non-negativity of the forms $a_f$ and $a^d_p$, the positive semidefiniteness of $c_\BJS$, and the monotonicity of the operators $\cA, \cE_1, \cE_2$ and $\cC$.

\begin{lem}\label{lem:coercivity-properties-A-E2}
There hold
\begin{equation}\label{eq:coercivity-af}
a_f(\btau_f, \btau_f) 
\,\geq\, \frac{1}{2\,\mu}\,\|\btau^\rd_f\|^2_{\bbL^2(\Omega_f)} \quad \forall\,\btau_f\in \bbX_f \,,
\end{equation}
\begin{equation}\label{eq:coercivity-adp}
a^d_p(\bv_p, \bv_p) 
\,\geq\, \mu\,k_{\min}\,\|\bv_p\|^2_{\bL^2(\Omega_p)} \quad \forall\, \bv_p\in \bX_p \,,
\end{equation}
and there exists a constant $c_I > 0$, such that
\begin{equation}\label{eq:positivity-aBJS}
c_\BJS(\bv_s,\bpsi;\bv_s,\bpsi) 
\,\geq c_I\,\sum^{n-1}_{j=1} \|( \bpsi-\bv_s)\cdot\bt_{f,j}\|^2_{\L^2(\Gamma_{fp})} \quad \forall\, (\bv_s, \bpsi)\in \bV_p\times \bLambda_f \,.
\end{equation}
In addition, the operators $\cA, \cE_1, \cE_2$ and $\cC$ are monotone. 
\end{lem}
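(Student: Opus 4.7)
The approach is to establish each of the three pointwise bounds \eqref{eq:coercivity-af}--\eqref{eq:positivity-aBJS} directly from the defining formulas, and then derive the monotonicity of the four operators as consequences. Since $\cE_1$, $\cE_2$, $\cA$, and $\cC$ are all linear, the monotonicity statement $[\cT(\ubu)-\cT(\ubv)](\ubu-\ubv) \ge 0$ will reduce in each case to the positive semi-definiteness $\cT(\ubu-\ubv)(\ubu-\ubv) \ge 0$, which is what I will verify.

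For the three pointwise bounds, I would first note that \eqref{eq:coercivity-af} is actually an equality obtained by reading off the definition of $a_f$ in \eqref{bilinear-form-1}. For \eqref{eq:coercivity-adp}, I would apply the lower bound in \eqref{eq:permeability-bounds} pointwise and integrate over $\Omega_p$. For \eqref{eq:positivity-aBJS}, the first step is to observe that the two-sided bound \eqref{eq:permeability-bounds} on $\bK^{-1}$ forces the eigenvalues of $\bK$ itself to lie in $[k_{\max}^{-1}, k_{\min}^{-1}]$, so for any unit tangent $\bt_{f,j}$ the scalar $\bK_j = (\bK\,\bt_{f,j})\cdot\bt_{f,j}$ satisfies $\sqrt{\bK_j^{-1}} \ge \sqrt{k_{\min}}$ pointwise on $\Gamma_{fp}$. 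Inserting this into \eqref{interface-bilinear-form-1} yields \eqref{eq:positivity-aBJS} with $c_I := \mu\,\alpha_{\BJS}\,\sqrt{k_{\min}}$.

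The monotonicity of $\cE_1$, $\cE_2$, and $\cA$ then follows from routine computations. For $\cE_1$, I would use the lower bound in \eqref{eq:A-bounds} on the compliance tensor to produce $\cE_1(\ubsi-\ubtau)(\ubsi-\ubtau) \ge a_{\min}\,\|\bsi_e-\btau_e\|^2_{\bbL^2(\Omega_p)} \ge 0$. For $\cE_2$, the definition \eqref{defn-E2} directly gives a sum of squared $\L^2$-norms with positive coefficients $\rho_f$, $s_0$, $\rho_p$. For $\cA$, the additive structure \eqref{defn-A} combined with \eqref{eq:coercivity-af}--\eqref{eq:coercivity-adp} applied to $\bsi_f-\btau_f$ and $\bu_p-\bv_p$ delivers the result.

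The hardest step, and the one requiring the most care, will be the monotonicity of $\cC$. The key structural observation is that in the definition \eqref{defn-C} the coupling forms $c_\Gamma$ and $\alpha_p\,b_p$ enter in a skew pairing, namely $c_\Gamma(\bu_s,\bvarphi;\xi) - c_\Gamma(\bv_s,\bpsi;\lambda)$ and $\alpha_p\,b_p(\bv_s,p_p) - \alpha_p\,b_p(\bu_s,w_p)$. When I test $\cC(\ubu-\ubv)$ against $\ubu-\ubv$, these four coupling contributions cancel exactly, leaving only the diagonal term $c_\BJS(\bu_s-\bv_s,\bvarphi-\bpsi;\bu_s-\bv_s,\bvarphi-\bpsi)$, which is non-negative by \eqref{eq:positivity-aBJS}. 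The only real work here is verifying this cancellation by careful sign tracking across the mixed Biot--Darcy coupling.
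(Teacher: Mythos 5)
Your proof is correct and follows essentially the same route as the paper: each bound is read off directly from the definitions together with \eqref{eq:permeability-bounds} and \eqref{eq:A-bounds}, and monotonicity of the linear operators reduces to the positive semi-definiteness you verify (including the exact cancellation of the skew $c_\Gamma$/$b_p$ pairings in $\cC$, which the paper uses implicitly). The only cosmetic difference is the value of $c_I$: you obtain $\mu\,\alpha_{\BJS}\,\sqrt{k_{\min}}$ while the paper states $\mu\,\alpha_{\BJS}/\sqrt{k_{\max}}$; under the paper's stated convention \eqref{eq:permeability-bounds} (bounds on $\bK^{-1}$), your constant is the one that actually follows, and in any case the lemma only asserts existence of some $c_I>0$.
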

\begin{proof}
We begin by noting that the non-negativity properties \eqref{eq:coercivity-af} and \eqref{eq:coercivity-adp} follows easily from \eqref{eq:permeability-bounds}, and the definition of $a_f$ and $a^d_p$ (cf. \eqref{bilinear-form-1}, \eqref{bilinear-form-2}), respectively.
In turn, from the definition of $c_\BJS$ (cf. \eqref{interface-bilinear-form-1}), \eqref{eq:positivity-aBJS} follows with a positive constant $c_I:= \mu\,\alpha_{\BJS}/\sqrt{ k_{\max}}$.
On the other hand, combining \eqref{eq:coercivity-af} and \eqref{eq:coercivity-adp} we deduce that 
\begin{equation}\label{eq: operator A-monotone}
\cA(\ubtau)(\ubtau) 
\,\geq\, \frac{1}{2\mu}\,\|\btau^\rd_f\|^2_{\bbL^2(\Omega_f)} + \, \mu\,k_{\min}\,\|\bv_p\|^2_{\bL^2(\Omega_p)} \,,
\end{equation}
which implies the monotonicity of $\cA$. Finally, from the definition of the operators $\cE_1, \cE_2$ and $\cC$ (cf. \eqref{defn-E1-E2}, \eqref{defn-C}) we have that for all $\ubtau\in \bQ$ and for all $\ubv\in \bS$ there exist positive constants $C_A, C_{fps}>0$, such that
\begin{equation}\label{eq: operator E_1-monotone}
\cE_1(\ubtau)(\ubtau) 
\,\geq\, C_A\,\|\btau_e\|^2_{\bSigma_e} \,,
\end{equation}
\begin{equation}\label{eq: operator E_2-monotone}
\cE_2(\ubv)(\ubv) \,\geq\, C_{fps}\,\Big( \|\bv_f\|^2_{\bL^2(\Omega_f)} + \|w_p\|^2_{\W_p}+\|\bv_s\|^2_{\bL^2(\Omega_p)}  \Big)\,,
\end{equation}
with $C_A:= a_{\min}$ and $C_{fps}:=\min\{\rho_f,s_0,\rho_p\}$, and
\begin{equation}\label{eq: operator C-monotone}
\cC(\ubv)(\ubv) 
\,\geq c_I\,\sum^{n-1}_{j=1} \|( \bpsi-\bv_s)\cdot\bt_{f,j}\|^2_{\L^2(\Gamma_{fp})} \,,
\end{equation}
which implies the monotonicity of $\cE_1, \cE_2$ and $\cC$, completing the proof.
\end{proof}

We end this section by establishing the inf-sup conditions associated to the forms $b_\star$, with $\star\in \{f, p, s, \sk, \bn_f, \bn_p \}$.
\begin{lem}\label{lem:inf-sup-conditions}
There exist constants $\beta_1, \beta_2, \beta_3 > 0$ such that
\begin{equation}\label{eq:inf-sup-vs}
\sup_{\0\neq \btau_e\in \bSigma_e} \frac{b_s(\btau_e,\bv_s)}{\|\btau_e\|_{\bSigma_e}} 
\,\geq\, \beta_1\,\|\bv_s\|_{\bV_p} \quad \forall\,\bv_s\in \bV_p \,,
\end{equation}
\begin{equation}\label{eq:inf-sup-qp-xi}
\sup_{\0\neq \bv_p\in \bX_p} \frac{b_p(\bv_p,w_p) + b_{\bn_p}(\bv_p,\xi)}{\|\bv_p\|_{\bX_p}} 
\,\geq\, \beta_2\,\|(w_p,\xi)\|_{\W_p\times \Lambda_p} \quad \forall\,(w_p,\xi)\in \W_p\times \Lambda_p\,,
\end{equation}
and
\begin{equation}\label{eq:inf-sup-vf-chif}
\sup_{\0\neq \btau_f\in \bbX_f} \frac{B_f(\btau_f,(\bv_f,\bchi_f,\bpsi))}{\|\btau_f\|_{\bbX_f}} 
\,\geq\, \beta_3\,\|(\bv_f, \bchi_f, \bpsi)\|_{\bV_f\times \bbQ_f\times \bLambda_f} \quad \forall\,(\bv_f,\bchi_f,\bpsi)\in \bV_f\times \bbQ_f\times \bLambda_f\,,
\end{equation}
where 
$B_f(\btau_f,(\bv_f,\bchi_f,\bpsi)) \,:=\, b_f(\btau_f,\bv_f) + b_\sk(\btau_f,\bchi_f) + b_{\bn_f}(\btau_f,\bpsi)$.
\end{lem}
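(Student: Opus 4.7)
The plan is to establish each of the three inf-sup conditions by an explicit test function construction; each can be treated independently on its own space.

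For \eqref{eq:inf-sup-vs}, I would take $\btau_e := -\be(\bv_s)$, which lies in $\bSigma_e$ since the symmetric gradient is itself symmetric. This yields $b_s(\btau_e, \bv_s) = \|\be(\bv_s)\|^2_{\bbL^2(\Omega_p)}$ and $\|\btau_e\|_{\bSigma_e} = \|\be(\bv_s)\|_{\bbL^2(\Omega_p)}$, so the ratio collapses to $\|\be(\bv_s)\|_{\bbL^2(\Omega_p)}$. Korn's inequality on $\bV_p$---applicable because elements of $\bV_p$ vanish on $\Gamma_p$, which has positive surface measure---then gives $\|\be(\bv_s)\|_{\bbL^2(\Omega_p)} \geq C_{\mathrm{Ko}}\,\|\bv_s\|_{\bH^1(\Omega_p)}$, so $\beta_1 = C_{\mathrm{Ko}}$ works.

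For \eqref{eq:inf-sup-qp-xi}, I would construct $\bv_p := \nabla\phi_1 + \nabla\phi_2 \in \bX_p$ from two auxiliary mixed boundary value problems: $\phi_1 \in \H^1(\Omega_p)$ solves $-\Delta\phi_1 = 0$ in $\Omega_p$ with $\phi_1 = \xi$ on $\Gamma_{fp}$, $\phi_1 = 0$ on $\Gamma^\rD_p$, and $\partial_n\phi_1 = 0$ on $\Gamma^\rN_p$; and $\phi_2 \in \H^1(\Omega_p)$ solves $-\Delta\phi_2 = w_p$ in $\Omega_p$ with $\phi_2 = 0$ on $\Gamma^\rD_p$ and $\partial_n\phi_2 = 0$ on $\Gamma^\rN_p \cup \Gamma_{fp}$. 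Both problems are well posed since $|\Gamma^\rD_p| > 0$. A Green's identity applied to the harmonic $\phi_1$ together with its boundary conditions yields $\pil\bv_p\cdot\bn_p, \xi\pir_{\Gamma_{fp}} = \|\nabla\phi_1\|^2_{\bL^2(\Omega_p)}$, while $\div(\bv_p) = -w_p$ gives $b_p(\bv_p,w_p) = \|w_p\|^2_{\W_p}$. Using Poincar\'e for $\phi_1$ (valid since $|\Gamma^\rD_p| > 0$) and the minimal-extension characterization of the $\H^{1/2}$ norm, one obtains $\|\nabla\phi_1\|_{\bL^2(\Omega_p)} \geq c\,\|\xi\|_{\H^{1/2}(\Gamma_{fp})}$; combined with the upper bound $\|\bv_p\|_{\bH(\div;\Omega_p)} \leq C\,(\|w_p\|_{\W_p} + \|\xi\|_{\Lambda_p})$, this delivers the inf-sup.

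The main obstacle is \eqref{eq:inf-sup-vf-chif}, since the test function must simultaneously match three components---the $\bL^4$-valued $\bv_f$, the skew-symmetric $\bchi_f$, and the interface trace $\bpsi$---and the $\bL^4$ component forces a nonlinear construction via $\bL^4$--$\bL^{4/3}$ duality. Following the strategy of \cite{cgo2021}, I would split $\btau_f = \btau_{f,1} + \btau_{f,2}$, where the first piece is tailored to $\bv_f$ by setting $\btau_{f,1} := -\nabla\bz$, with $\bz$ solving $-\Delta\bz = |\bv_f|^2\bv_f$ componentwise in $\Omega_f$ under $\bz = \0$ on $\Gamma^\rD_f$ and $\nabla\bz\,\bn_f = \0$ on $\Gamma^\rN_f \cup \Gamma_{fp}$. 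By $W^{2,4/3}$-regularity of the scalar Poisson problem and the Sobolev embedding $W^{1,4/3} \hookrightarrow \bL^2(\Omega_f)$ (valid in $n \in \{2, 3\}$), one obtains $\|\btau_{f,1}\|_{\bbX_f} \leq C\,\|\bv_f\|^3_{\bL^4(\Omega_f)}$ together with $b_f(\btau_{f,1},\bv_f) = \|\bv_f\|^4_{\bL^4(\Omega_f)}$; crucially $\btau_{f,1}\bn_f|_{\Gamma_{fp}} = \0$, so that $b_{\bn_f}(\btau_{f,1},\bpsi) = 0$. The second piece $\btau_{f,2}$ is chosen with $\bdiv(\btau_{f,2}) = \0$ and targets $(\bchi_f, \bpsi)$: I would combine a surjective normal-trace lift of $\bpsi \in \bH^{1/2}(\Gamma_{fp})$ to $\bbX_f$ (available through \eqref{eq:trace-inequality-2}) with an Arnold--Falk--Winther--type matrix-curl construction accommodating the skew degree of freedom. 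This yields $\|\btau_{f,2}\|_{\bbX_f} \leq C\,(\|\bchi_f\|_{\bbQ_f} + \|\bpsi\|_{\bLambda_f})$ and a lower bound on $b_\sk(\btau_{f,2},\bchi_f) + b_{\bn_f}(\btau_{f,2},\bpsi)$ up to absorbable cross-terms. A Young-type inequality to absorb the residual coupling between $\btau_{f,1}$ and $\btau_{f,2}$, followed by rescaling, then delivers $\beta_3$.
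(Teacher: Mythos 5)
Your overall strategy is sound and, for \eqref{eq:inf-sup-vs} and \eqref{eq:inf-sup-qp-xi}, it coincides with what the paper does implicitly: the paper simply cites \cite{aeny2019} and \cite{gos2011}, and your constructions ($\btau_e=-\be(\bv_s)$ plus Korn, and $\bv_p=\nabla\phi_1+\nabla\phi_2$ from two mixed Poisson problems) are precisely the arguments behind those citations. For \eqref{eq:inf-sup-vf-chif} the paper takes the complementary route: it proves the $b_{\bn_f}$ inf-sup explicitly by the elasticity extension $\btau_1=\be(\bz_1)$ with prescribed normal trace $\bpsi$ on $\Gamma_{fp}$ --- which is automatically symmetric and divergence-free, hence invisible to $b_f$ and $b_\sk$ --- and cites \cite{gobs2021} for the individual inf-sups of $b_f$ and $b_\sk$, whereas you construct the $\bv_f$-component explicitly (the nonlinear test function with $\bdiv(\btau_{f,1})=|\bv_f|^2\bv_f$, which is the standard $\bL^4$--$\bL^{4/3}$ duality device from \cite{cgo2021,cmo2018}) and leave the $(\bchi_f,\bpsi)$-component to a lift-plus-curl construction. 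Both organizations are legitimate; the paper's ordering is slightly cleaner because each successive test function annihilates the forms already handled, so no cross terms arise for the $\bpsi$-piece.

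Two points in your treatment of \eqref{eq:inf-sup-vf-chif} deserve tightening. First, you do not need (and should not invoke) $\W^{2,4/3}$ regularity for the mixed Dirichlet--Neumann problem: on a Lipschitz polygonal domain with a Dirichlet--Neumann junction this can fail, since the gradient singularity $r^{-1/2}$ puts second derivatives only in $\L^p$ for $p<4/3$. The bound $\|\btau_{f,1}\|_{\bbX_f}\leq C\|\bv_f\|^3_{\bL^4(\Omega_f)}$ follows already from the $\H^1$ energy estimate for $\bz$ together with $\bL^{4/3}(\Omega_f)\hookrightarrow \bH^{-1}(\Omega_f)$ (dual to $\bH^1\hookrightarrow\bL^4$ in $n\le 3$), plus the identity $\|\bdiv(\btau_{f,1})\|_{\bL^{4/3}(\Omega_f)}=\|\bv_f\|^3_{\bL^4(\Omega_f)}$. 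Second, since $\btau_{f,1}=-\nabla\bz$ is not symmetric, plugging it into $B_f$ produces the residual $b_\sk(\btau_{f,1},\bchi_f)$, which is of size $\|\bv_f\|^3_{\bL^4(\Omega_f)}\|\bchi_f\|_{\bbQ_f}$ while the good term is $\|\bv_f\|^4_{\bL^4(\Omega_f)}$; because of this quartic-versus-quadratic homogeneity mismatch a bare Young inequality does not close. You must first normalize $\|(\bv_f,\bchi_f,\bpsi)\|_{\bV_f\times\bbQ_f\times\bLambda_f}=1$ (legitimate, as both sides of the inf-sup are $1$-homogeneous) or argue hierarchically, bounding $\|\bchi_f\|_{\bbQ_f}$ and $\|\bpsi\|_{\bLambda_f}$ first with divergence-free test functions and only then absorbing the cross term. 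With those two repairs the argument is complete.
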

\begin{proof}
For the proof of \eqref{eq:inf-sup-vs} we refer the reader to \cite[Lemma ~4.1, eq. (4.1)]{aeny2019}, whereas combining a slight adaptation of \cite[Lemma 3.6, eq. (3.5)]{gos2011} and \cite[Lemma 3.8, eq. (3.10)]{gos2011} we deduce \eqref{eq:inf-sup-qp-xi}. Next, the proof \eqref{eq:inf-sup-vf-chif} is based on a modification of the proof of \cite[Lemma 3.5]{gobs2021}. In particular, by a suitable change of the boundary conditions in the auxiliary problems \cite[(3.54) and (3.58)]{gobs2021} to construct $\btau_f \in \bbX_f$ such that $\btau_f\bn_f = 0$ on $\Gamma_{fp}$, we can show that (cf. \cite[(3.57) and (3.61)]{gobs2021})
\begin{equation}\label{bf-bsk}
  \sup_{\0\neq \btau_f\in \bbX_f,\btau_f\bn_f = 0 \text{ on } \Gamma_{fp} }
  \frac{b_f(\btau_f,\bv_f) + b_{\sk}(\btau_f,\bchi_f)}
{\|\btau_f\|_{\bbX_f}} \geq C(\|\bv_f\|_{\bL^2(\Omega_f)} + \|\bchi_f\|_{\bbL^2(\Omega_f)}),
\end{equation}
where the constraint $\btau_{f}\bn_f = 0$ on $\Gamma_f^N \cup \Gamma_{fp}$ can be handled as in \cite[Lemma~4.3]{msmfe-simpl}.

In order to control $\|\bpsi\|_{\bLambda_f}$, we proceed as in \cite[Lemma 3.8, eq. (3.9)]{gos2011}.
In particular, given $\bphi_f\in \bH^{-1/2}(\Gamma_{fp})$, we define $\btau_1 := \be(\bz_1)$ in $\Omega_f$, where $\bz_1 \in \bH^1_{\Gamma^{\rD}_f}(\Omega_f)$ is the unique weak solution of the boundary value problem
\begin{equation}\label{eq:auxiliary-problem-inf-sup}
-\bdiv(\be(\bz_1)) = \0 \qin \Omega_f,\quad 
\bz_1 = \0 \qon \Gamma^{\rD}_f,\quad 
\be(\bz_1)\bn_f = \left\{\begin{array}{l}
\0 \qon \Gamma^{\rN}_f\,, \\ 
\bphi_f \qon \Gamma_{fp}\,,
\end{array} \right.
\end{equation}
which gives $b_{\bn_f}(\btau_1,\bpsi) = -\, \pil\btau_1\bn_f,\bpsi\pir_{\Gamma_{fp}} = -\, \pil\bphi_f,\bpsi\pir_{\Gamma_{fp}}$ for each $\bpsi\in \bH^{1/2}(\Gamma_{fp})$. 
It is clear that $\btau_1\in \bbL^2(\Omega_f)$, $\bdiv(\btau_1) = \0\in \bL^{4/3}(\Omega_f)$, and $\btau_1\bn_f = \0$ on $\Gamma^{\rN}_f$, so then $\btau_1\in \bbX_f$. Let $\bbX_f^0:= \big\{\btau_f\in \bbX_f: b_f(\btau_f,\bv_f) + b_{\sk}(\btau_f,\bchi_f) = 0 \ \ \forall (\bv_f,\bchi_f) \in \bV_f\times \bbQ_f\big\}$.
Using the fact that $\be(\bz_1):\bchi_f = 0$ in $\Omega_f$, we have that $\btau_1 \in \bbX_f^0$. In addition, an energy bound for \eqref{eq:auxiliary-problem-inf-sup} implies that $\|\btau_1\|_{\bbX_f} \leq C\,\|\bphi_f\|_{\bH^{-1/2}(\Gamma_{fp})}$, and hence we deduce that
\begin{equation*}
  \sup_{\0 \neq \btau_f\in \bbX_f^0} \frac{b_{\bn_f}(\btau_f,\bpsi)}
      {\|\btau_f\|_{\bbX_f}}
  \ge \sup_{\0 \neq \bphi_f\in \bH^{-1/2}(\Gamma_{fp})} \frac{\left|\pil\bphi_f,\bpsi\pir_{\Gamma_{fp}}\right|}{\|\btau_f\|_{\bbX_f}}    
\,\geq\, \frac{1}{C}\, \sup_{\0 \neq \bphi_f\in \bH^{-1/2}(\Gamma_{fp})} \frac{\left|\pil\bphi_f,\bpsi\pir_{\Gamma_{fp}}\right|}{\|\bphi_f\|_{\bH^{-1/2}(\Gamma_{fp})}} = \frac{1}{C}\,\|\bpsi\|_{\bLambda_f},
\end{equation*}
which, combined with \eqref{bf-bsk}, implies 
\eqref{eq:inf-sup-vf-chif}.
\end{proof}


\section{Well-posedness of the model}\label{sec:well-posedness-model}

In this section we establish the solvability of \eqref{eq:continuous-weak-formulation-1}. To that end, we start with the analysis of the alternative formulation \eqref{eq:continuous-alternative-weak-formulation-1} (cf. \eqref{eq:alternative-formulation-operator-form}).

\subsection{Preliminary results}

A key result that we use to establish the existence of a solution to \eqref{eq:alternative-formulation-operator-form} is the following theorem, which is a slight modification of \cite[Theorem~IV.6.1(b)]{Showalter}. In what follows, $Rg(\cA) $ denotes the range of $\cA$.
\begin{thm}\label{thm:auxiliary-theorem}
Let the linear, symmetric and monotone operator $\cN$ be given from the real vector space $E$ to its algebraic dual $E^*$, and let $E'_b$ be the Hilbert space which is the dual of $E$ with the seminorm
\begin{equation*}
|x|_b = \big(\cN\,(x)(x)\big)^{1/2} \quad x\in E.
\end{equation*}
Let $\cM:E\to E'_b$ be an operator with domain $\cD = \Big\{ x\in E \,:\, (\cN + \cM)(x) \in E'_b \Big\}$.
Assume that $\cM$ is monotone and $Rg(\cN + \cM) = E'_b$.
Then, for each $\cF\in \W^{1,1}(0,T;E'_b)$ and for each $u_0\in \cD$, there is a solution $u$ of
\begin{equation}\label{eq:parabolic-problem-Showalter}
\frac{d}{dt}\big(\cN(u(t))\big) + \cM\big(u(t)\big) = \cF(t) \quad a.e. \, \ 0 < t < T\,,
\end{equation}
with
\begin{equation*}
\cN(u)\in \W^{1,\infty}(0,T;E'_b),\quad u(t)\in \cD,\quad \mbox{ for all }\, 0\leq t\leq T,\qan \cN(u(0)) = \cN(u_0)\,.
\end{equation*}
\end{thm}

Recalling the definition of the operators $\cE_1, \cE_2, \cA, \cB, \cC, \cK_{\bw_f}$ and $\cL_{\bzeta}$ (cf. \eqref{operators-1}), problem \eqref{eq:alternative-formulation-operator-form} can be written in the form of \eqref{eq:parabolic-problem-Showalter} with
\begin{equation}\label{eq:defn-E-N-M}
E:= \bQ\times \bS, \quad u:= 
\left(\begin{array}{c}
\ubsi \\ \ubu
\end{array}\right),\quad 
\cN:= \begin{pmatrix}
\cE_1 & \0 \\
\0 & \cE_2
\end{pmatrix},\quad 
\cM:= \begin{pmatrix}
\cA & \cB' + \cK_{\bu_f} \\
-\cB  & \cC + \cL_{\bvarphi}
\end{pmatrix}.
\end{equation}
In particular, we observe from \eqref{defn-E1-E2} that
\begin{equation*}
\cN(u)(v) := (A(\bsi_e),\btau_{e})_{\Omega_p} + \rho_f\,(\bu_{f},\bv_{f})_{\Omega_f} + s_0\,(p_{p},w_{p})_{\Omega_p} + \rho_p\,(\bu_{s},\bv_{s})_{\Omega_p}\,,
\end{equation*}
for all $v=(\ubtau,\ubv)\in \bQ\times \bS$.
This gives the semi-norm:
\begin{equation*}
\|v\|_{\cN} := \big( \cN(v)(v) \big)^{1/2} 
= \big( (A(\btau_e),\btau_{e})_{\Omega_p} + \rho_f\,\|\bv_{f}\|^2_{\bL^2(\Omega_f)} + s_0\,\|w_{p}\|^2_{\L^2(\Omega_p)} + \rho_p\,\|\bv_{s}\|^2_{\bL^2(\Omega_p)} \big)^{1/2},
\end{equation*}
which is equivalent to $\|v\|_{\cN,2} := \big( \|\btau_{e}\|^2_{\bbL^2(\Omega_p)} + \|\bv_{f}\|^2_{\bL^2(\Omega_f)} + \|w_{p}\|^2_{\L^2(\Omega_p)} + \|\bv_{s}\|^2_{\bL^2(\Omega_p)} \big)^{1/2}$. Therefore we define the space $E'_b:= \bQ_2'\times \bS_2'$ where,
\begin{equation}\label{eq:defn-E'_b-D}
\bQ_2':=\{\0\}\times \{\0\} \times\bbL^2(\Omega_p) \quad \text{and}\quad \bS_2':=\bL^2(\Omega_f) \times \L^2(\Omega_p) \times  \{\0\}\times \bL^2(\Omega_p) \times \{\0\} \times \{0\}\,.
\end{equation}
In order to establish the range condition $Rg(\cN + \cM) = E'_b$,
we consider the following resolvent system: find $(\ubsi,\ubu)\in \bQ\times \bS$, such that
\begin{align}
(\cE_1 + \cA)(\ubsi) + (\cB' + \cK_{\bu_f})(\ubu) &= \wh{\bF} \qin \bQ', \nonumber\\ 
-\,\cB(\ubsi) + (\cE_2 + \cC +\cL_{\bvarphi})(\ubu) &= \wh{\bG} \qin \bS',\label{eq:T-auxiliary-problem-operator-A}
\end{align}
for all $(\ubtau,\ubv)\in \bQ\times \bS$ where $\wh{\bF} \in \bQ_2'$ and $\wh{\bG} \in \bS_2'$ are such that
\begin{align*}
\wh{\bF}(\ubtau) &\,:=\, (\wh{\f}_e,\btau_e)_{\Omega_p} \quad \forall\,\ubtau \in \bQ\,, \nonumber\\ 
\wh{\bG}(\ubv) &\,:=\, (\wh{\f}_f,\bv_f)_{\Omega_f} + (\wh{q}_p,w_p)_{\Omega_p} + (\wh{\f}_p,\bv_s)_{\Omega_p} \quad \forall\,\ubv \in \bS\,. \nonumber
\end{align*}
Using the definition of operators $\cE_1, \cE_2, \cA, \cB, \cC, \cK_{\bu_f}$ and $\cL_{\bvarphi}$, \eqref{eq:T-auxiliary-problem-operator-A} can be rewritten as: given functionals $\wh{\f}_e \in \bL^2(\Omega_p), \wh{\f}_f \in \bL^2(\Omega_f), \wh{q}_p \in \L^2(\Omega_p), \wh{\f}_p \in \bL^2(\Omega_p)$, find $(\ubsi,\ubu)\in \bQ\times \bS$, such that for all $(\ubtau,\ubv)\in \bQ\times \bS$,
\begin{align}
&  a_f(\bsi_f,\btau_f) + \kappa_{\bu_f}(\bu_f, \btau_f) + a^d_p(\bu_p,\bv_p) + a^s_p(\bsi_e,\btau_e) +\,\, b_f(\btau_f,\bu_f)+ b_p(\bv_p,p_p) + b_\sk(\btau_f,\bgamma_f)  \nonumber \\[0.5ex] 
&\quad +\, b_s(\btau_e,\bu_s) + b_{\bn_f}(\btau_f,\bvarphi) + b_{\bn_p}(\bv_p,\lambda)  \,=\, (\wh{\f}_e,\btau_e)_{\Omega_p} \,,  \nonumber \\[0.5ex]  
& \rho_f\,(\bu_f,\bv_f)_{\Omega_f} + s_0\,(p_p,w_p)_{\Omega_p}
+ \rho_p\,(\bu_s,\bv_s)_{\Omega_p}+\,\, c_{\BJS}(\bu_s,\bvarphi;\bv_s,\bpsi) + c_{\Gamma}(\bu_s,\bvarphi;\xi)- c_{\Gamma}(\bv_s,\bpsi;\lambda)    \nonumber \\[0.5ex]  
&\quad +\, \alpha_p\,b_p(\bv_s,p_p) - \alpha_p\,b_p(\bu_s,w_p) -\,\, b_f(\bsi_f,\bv_f) - b_p(\bu_p,w_p) - b_\sk(\bsi_f,\bchi_f) - b_s(\bsi_e,\bv_s)  \nonumber \\[0.5ex]  
&\quad -\, b_{\bn_f}(\bsi_f,\bpsi) - b_{\bn_p}(\bu_p,\xi)+l_{\bvarphi}(\bvarphi,\bpsi) =\, (\wh{\f}_f,\bv_f)_{\Omega_f} + (\wh{q}_p,w_p)_{\Omega_p} + (\wh{\f}_p,\bv_s)_{\Omega_p} \,.
\label{eq:T-auxiliary-problem-operator-B}
\end{align}

Using Theorem~\ref{thm:auxiliary-theorem}, we can show that the problem \eqref{eq:alternative-formulation-operator-form} (cf. \eqref{eq:NS-Biot-formulation-2}) has a solution.
We proceed in the following manner.

\medskip

\noindent{\bf Step 1.} Introduce a fixed-point operator $\bT$ associated to problem \eqref{eq:T-auxiliary-problem-operator-A}.

\noindent{\bf Step 2.} Prove that $\bT$ is a contraction mapping and  \eqref{eq:T-auxiliary-problem-operator-A} has unique solution.

\noindent{\bf Step 3.} Show that the alternative formulation \eqref{eq:alternative-formulation-operator-form} is well posed.

\medskip

\noindent Each step will be covered in detail in a corresponding subsection.


\subsubsection{Step 1: A fixed-point approach}\label{sec:fixed-point-approach}
We begin the proof of well-posedness of \eqref{eq:T-auxiliary-problem-operator-A} by defining the operator $\bT : \bV_f \times \bLambda_f\to \bV_f\times \bLambda_f$ by
\begin{equation}\label{eq:definition-operator-T}
\bT(\bw_f,\bzeta) \,:=\, (\bu_f,\bvarphi) \quad \forall\,(\bw_f,\bzeta)\in \bV_f\times \bLambda_f,
\end{equation}
where $\ubu := (\bu_f, p_p, \bgamma_f, \bu_s, \bvarphi, \lambda)\in \bQ$ is the second component of the unique solution (to be confirmed below) of the problem:
Find $(\ubsi,\ubu)\in \bQ\times \bS$, such that
\begin{align}
(\cE_1 + \cA)(\ubsi) + (\cB' + \cK_{\bw_f})(\ubu) &= \wh{\bF} \qin \bQ'\,, \nonumber\\ 
-\,\cB(\ubsi) + (\cE_2 + \cC +\cL_{\bzeta})(\ubu) &= \wh{\bG} \qin \bS'\,. \label{eq:T-auxiliary-problem-operator}
\end{align}
Hence, it is not difficult to see that $(\ubsi,\ubu)\in \bQ\times \bS$ is a solution of \eqref{eq:T-auxiliary-problem-operator-A} if and only if $(\bu_f,\bvarphi)\in \bV_f\times \bLambda_f$ is a fixed-point of $\bT$, that is
\begin{equation*}
\bT(\bu_f,\bvarphi) \,=\, (\bu_f,\bvarphi).
\end{equation*}
In what follows we focus on proving that $\bT$ possesses a unique fixed-point.
However, we remark in advance that the definition of $\bT$ will make sense only in a closed ball of $\bV_f\times \bLambda_f$.

Before continuing with the solvability analysis of \eqref{eq:T-auxiliary-problem-operator}, we provide the hypotheses under which $\bT$ is well defined.
To that end, we introduce operators that will be used to regularize the problem \eqref{eq:T-auxiliary-problem-operator}.
Let $R_f\,:\,\bbX_f\to \bbX'_f$ and $R_p\,:\,\bX_p\to \bX'_p$ be defined by
\begin{subequations}\label{eq:Rf-Rp-definition}
\begin{align}
& \ds R_f(\bsi_f)(\btau_f) \,:=\, r_f(\bsi_f, \btau_f) 
\,=\, (|\bdiv(\bsi_f)|^{-2/3}\,\bdiv(\bsi_f),\bdiv(\btau_f))_{\Omega_f}, \label{eq: R_f defn} \\[1ex]
& \ds R_p(\bu_p)(\bv_p) \,:=\, r_p(\bu_p, \bv_p)
\,=\, (\div(\bu_p),\div(\bv_p))_{\Omega_p}. \label{eq:R_p defn} 
\end{align}
\end{subequations}

\begin{lem}\label{lem:Rf-Rp-properties}
The operators $R_f$ and $R_p$ are bounded, continuous, and monotone. They 
satisfy the following bounds:
\begin{subequations}\label{eq:R_f R_p bounded-coerc-cont-monotone}
\begin{align}
& \ds R_f(\bsi_f)(\btau_f) \,\leq\, \|\bdiv(\bsi_f)\|^{1/3}_{\bL^{4/3}(\Omega_f)} \|\bdiv(\btau_f)\|_{\bL^{4/3}(\Omega_f)}\,, 
\quad  R_f(\btau_f)(\btau_f) \,\geq\, \|\bdiv(\btau_f)\|^{4/3}_{\bL^{4/3}(\Omega_f)}\,, \label{eq: R_f bounded-coerc} \\[1ex] 
& \ds \|\cR_f(\btau_1) - \cR_f(\btau_2)\|_{\bbX'_f} 
\,\leq\, C\,\|\btau_1 - \btau_2\|^{1/3}_{\bbX_f}\,, \label{eq:R_f cont} \\[1ex]
& \ds (R_f(\btau_1) - R_f(\btau_2))(\btau_1 - \btau_2)
\,\geq\, C\,\frac{\|\bdiv(\btau_1) - \bdiv(\btau_2)\|^2_{\bL^{4/3}(\Omega_f)}}{\|\btau_1\|^{2/3}_{\bbX_f} + \|\btau_2\|^{2/3}_{\bbX_f}}\,, \label{eq:R_f monotone} \\[1ex]
& \ds R_p(\bu_p)(\bv_p) \,\leq\, \|\div(\bu_p)\|_{\bL^2(\Omega_p)} \|\div(\bv_p)\|_{\bL^2(\Omega_p)}\,, 
\quad  R_p(\bv_p)(\bv_p) \,\geq\, \|\div(\bv_p)\|^2_{\bL^2(\Omega_p)}\,, \label{eq: R_p bounded-coerc} \\[1ex]
& \ds \|R_p(\bu_1) - R_p(\bu_2)\|_{\bX'_p} 
\,\leq\, C\,\|\bu_1 - \bu_2\|_{\bX_p}\,, \label{eq:R_p cont} \\[1ex]
& \ds (R_p(\bu_1) - R_p(\bu_2))(\bu_1 - \bu_2)
\,\geq\, \|\bdiv(\bu_1) - \bdiv(\bu_2)\|^2_{\bL^{2}(\Omega_p)}\,. \label{eq:R_p monotone}
\end{align}
\end{subequations}
\end{lem}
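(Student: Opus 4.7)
The plan is to treat the two operators separately: $R_p$ reduces to a straightforward bilinear $L^2$-type estimate, while $R_f$ is a $p$-Laplacian-type nonlinearity in $\bdiv(\cdot)$ with exponent $p = 4/3$, whose four properties all follow from combining H\"older's inequality with standard pointwise inequalities for maps of the form $a \mapsto |a|^{p-2}a$.

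Since $R_p(\bu_p)(\bv_p) = (\div(\bu_p),\div(\bv_p))_{\Omega_p}$ is bilinear and symmetric, all four bounds \eqref{eq: R_p bounded-coerc}--\eqref{eq:R_p monotone} follow at once from the Cauchy--Schwarz inequality, the continuity of $\div:\bX_p \to L^2(\Omega_p)$, and the identity $(R_p(\bu_1) - R_p(\bu_2))(\bu_1 - \bu_2) = \|\div(\bu_1 - \bu_2)\|_{L^2(\Omega_p)}^2$.

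For $R_f$ the key observation is that the vector field $|\bdiv(\bsi_f)|^{-2/3}\bdiv(\bsi_f)$ has pointwise magnitude exactly $|\bdiv(\bsi_f)|^{1/3}$. H\"older's inequality with conjugate exponents $4$ and $4/3$ gives
\[
R_f(\bsi_f)(\btau_f) \,\le\, \left(\int_{\Omega_f}|\bdiv(\bsi_f)|^{4/3}\right)^{1/4}\|\bdiv(\btau_f)\|_{\bL^{4/3}(\Omega_f)} \,=\, \|\bdiv(\bsi_f)\|_{\bL^{4/3}(\Omega_f)}^{1/3}\|\bdiv(\btau_f)\|_{\bL^{4/3}(\Omega_f)},
\]
which is the boundedness bound in \eqref{eq: R_f bounded-coerc}; the coercivity follows from $R_f(\btau_f)(\btau_f) = \int_{\Omega_f}|\bdiv(\btau_f)|^{4/3} = \|\bdiv(\btau_f)\|_{\bL^{4/3}(\Omega_f)}^{4/3}$.

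The continuity \eqref{eq:R_f cont} and monotonicity \eqref{eq:R_f monotone} are the substantive parts. I would invoke the classical pointwise estimates for $a \mapsto |a|^{p-2}a$ with $p = 4/3$: for all $a,b\in\bbR^n$,
\[
\bigl||a|^{-2/3}a - |b|^{-2/3}b\bigr| \,\le\, C\,|a-b|^{1/3}, \qquad \bigl(|a|^{-2/3}a - |b|^{-2/3}b\bigr)\cdot(a-b) \,\ge\, C\,\frac{|a-b|^2}{(|a|+|b|)^{2/3}}.
\]
Applied with $a = \bdiv(\btau_1)$ and $b = \bdiv(\btau_2)$, the first inequality combined with H\"older (exponents $4$ and $4/3$) in the duality pairing for $|(R_f(\btau_1) - R_f(\btau_2))(\bv)|$ yields \eqref{eq:R_f cont} upon noting $\|\bdiv(\btau_1-\btau_2)\|_{\bL^{4/3}(\Omega_f)}^{1/3} \le \|\btau_1-\btau_2\|_{\bbX_f}^{1/3}$. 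For monotonicity, integrating the second pointwise bound over $\Omega_f$ produces a weighted term with denominator $(|\bdiv(\btau_1)|+|\bdiv(\btau_2)|)^{2/3}$; I would then recover the unweighted $\bL^{4/3}$-norm of $\bdiv(\btau_1-\btau_2)$ by a further H\"older step with exponents $3/2$ and $3$:
\[
\|\bdiv(\btau_1-\btau_2)\|_{\bL^{4/3}(\Omega_f)}^{4/3} \,\le\, \left(\int_{\Omega_f}\frac{|\bdiv(\btau_1-\btau_2)|^2}{(|\bdiv(\btau_1)|+|\bdiv(\btau_2)|)^{2/3}}\right)^{2/3}\left(\int_{\Omega_f}(|\bdiv(\btau_1)|+|\bdiv(\btau_2)|)^{4/3}\right)^{1/3},
\]
and then bounding the second factor by $C\bigl(\|\btau_1\|_{\bbX_f}^{2/3} + \|\btau_2\|_{\bbX_f}^{2/3}\bigr)$. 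The only real technical obstacle is this final H\"older manipulation with exactly the right exponents so that the denominator on the right-hand side of \eqref{eq:R_f monotone} comes out as stated; this is a well-known computation in the analysis of the $p$-Laplacian for $p\in(1,2]$ and poses no conceptual difficulty.
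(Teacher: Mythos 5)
Your proof is correct and follows essentially the same route as the paper, which simply defers the $R_f$ estimates to \cite[Lemma~3.4]{cy2021} (with $q=4/3$, $p=4$) and notes that the $R_p$ bounds are immediate from the definition; the pointwise inequalities for $a\mapsto|a|^{p-2}a$ with $p=4/3$ together with the two H\"older steps you describe are precisely the content of that cited lemma. Your version is simply more self-contained, and the exponent bookkeeping in the final H\"older manipulation checks out.
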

\begin{proof}
The proof of \eqref{eq: R_f bounded-coerc}--\eqref{eq:R_f monotone} follow from similar arguments to \cite[Lemma 3.4]{cy2021} for $\rq=4/3$ and $\rp=4$. 
Inequalities \eqref{eq: R_p bounded-coerc}--\eqref{eq:R_p monotone} follow immediately from the definition \eqref{eq:R_p defn}.
\end{proof}

\medskip
According to the above, we define the operator $\cR : \bQ \to \bQ'$ as
\begin{equation*}
\cR(\ubsi)(\ubtau) \,:=\, R_f(\bsi_f)(\btau_f) + R_p(\bu_p)(\bv_p)\,,
\end{equation*}
and notice from \eqref{eq: R_f bounded-coerc} and \eqref{eq: R_p bounded-coerc} that
\begin{subequations}\label{eq: R-continuous-coercive}
\begin{align}
& \ds \cR(\ubsi)(\ubtau) 
\,\leq\, (\,\|\bdiv(\bsi_{f})\|^{1/3}_{\bL^{4/3}(\Omega_f)}+\, \|\div(\bu_{p})\|_{\bL^2(\Omega_p)})\|\ubtau\|_{\bQ}\,, \label{eq: R-continuous} \\[1ex]
& \ds \cR(\ubtau)(\ubtau) 
\,\geq\, \|\bdiv(\btau_{f})\|^{4/3}_{\bL^{4/3}(\Omega_f)}+\, \|\div(\bv_{p})\|^2_{\bL^2(\Omega_p)}\,. \label{eq: R-coercive}
\end{align}
\end{subequations}
On the other hand, given $q \in (1,2]$, we recall that there exist constants $c_1(\Omega_f), c_2(\Omega_f) > 0$, such that
\begin{equation}\label{eq:tau-d-H0div-inequality}
c_1(\Omega_f)\,\|\btau_{f,0}\|^q_{\bbL^2(\Omega_f)} \,\leq\, \|\btau^\rd_f\|^q_{\bbL^2(\Omega_f)} + \|\bdiv(\btau_f)\|^q_{\bL^{4/3}(\Omega_f)} \quad \forall\,\btau_f = \btau_{f,0} + c\,\bI\in \bbH(\bdiv_{4/3};\Omega_f)
\end{equation}
\begin{equation}\label{eq:tau-H0div-Xf-inequality}
\mbox{and}\quad c_2(\Omega_f)\,\|\btau_f\|^q_{\bbX_f} \,\leq\, \|\btau_{f,0}\|^q_{\bbX_f} \quad \forall\,\btau_f = \btau_{f,0} + c\,\bI\in \bbX_f,
\end{equation}
where $\btau_{f,0}\in \bbH_0(\bdiv_{4/3};\Omega_f) := \Big\{ \btau_f\in \bbH(\bdiv_{4/3};\Omega_f) :\quad (\tr(\btau_f),1)_{\Omega_f} = 0 \Big\}$ and $c\in \R$.
For the proof of \eqref{eq:tau-d-H0div-inequality} we refer to \cite[Lemma 3.2]{cgo2021}, whereas \eqref{eq:tau-H0div-Xf-inequality} follows from a slight adaptation of \cite[Lemma 2.5]{Gatica} and the fact that $\btau_f\bn_f\in \bH^{-1/2}(\partial\Omega_f)$ for all $\btau_f\in \bbH(\bdiv_{4/3};\Omega_f)$ (cf. \cite[eq. (2.5)]{cmo2018}). We emphasize that the proof of \eqref{eq:tau-H0div-Xf-inequality} utilizes the boundary condition $\btau_f\bn_f = \0$ on $\Gamma^\rN_f$, with $|\Gamma^\rN_f|>0$ for $\btau_f \in \bbX_f.$

Thus, for $\epsilon > 0$, we consider a regularization of \eqref{eq:T-auxiliary-problem-operator}  defined by: Find $(\ubsi_\epsilon,\ubu_\epsilon)\in \bQ\times \bS$ satisfying
\begin{align}
(\cE_1 + \cA + \epsilon\,\cR)(\ubsi_\epsilon) + (\cB' + \cK_{\bw_f})(\ubu_\epsilon) &= \wh{\bF} \qin \bQ'\,, \nonumber\\ 
-\,\cB(\ubsi_\epsilon) + (\cE_2 + \cC+\cL_{\bzeta})(\ubu_\epsilon) &= \wh{\bG} \qin \bS'\,.\label{eq:regularization-T-auxiliary-problem-operator}
\end{align}
We will prove that \eqref{eq:regularization-T-auxiliary-problem-operator} is well-posed.
To than end, we first state the following preliminary lemma.
\begin{lem}\label{lem:auxiliary-problem-first-fow}
Given $\bw_f\in \bV_f$ and $\ubv\in \bS$, for each $\epsilon>0$, there exists a unique $\ubsi_\epsilon(\ubv)\in \bQ$, such that
\begin{equation}\label{eq:auxiliary-problem-first-row}
(\cE_1 + \cA + \epsilon\,\cR)(\ubsi_\epsilon(\ubv))(\ubtau) \,=\, \big(\wh{\bF} - (\cB' + \cK_{\bw_f})(\ubv)\big)(\ubtau) \quad \forall \, \ubtau \in \bQ.
\end{equation}
Moreover, there exists a constant $\hat{C} > 0$ depending on $\|\wh{\bF}\|_{\bQ'}$, $\|\bw_f\|_{\bV_f}$, and $\|\ubv\|_\bS$, such that
\begin{equation}\label{eq:auxiliary-bound-1}
  \|\ubsi_\epsilon(\ubv)\|_\bQ \,\leq\, \widehat{C}.
\end{equation}
Let $\beta:= \min\left\{ \beta_1, \beta_2, \beta_3\right\}$, where $\beta_i$ are the inf-sup constants in \eqref{eq:inf-sup-vs}--\eqref{eq:inf-sup-vf-chif}. In addition, let
\begin{equation}\label{eq:r_1^0 defn}
r_1^0:= \frac{\mu\,\beta}{3\,\rho_f\,n^{1/2}}.
\end{equation}
Then for each $\bw_f\in \bV_f$ satisfying $\|\bw_f\|_{\bV_f} \leq r_1^0$, there holds
\begin{equation}\label{eq:auxiliary-bound-2}
  \|\ubv\|_\bS \,\leq\, C\,\Big( \|\wh{\bF}\|_{\bQ'} + \|\ubsi_\epsilon(\ubv)\|_\bQ + \,\|\bdiv(\bsi_{f,\epsilon}(\ubv))\|^{1/3}_{\bL^{4/3}(\Omega_f)}
  \Big),
\end{equation}
and, given $\ubv_1, \ubv_2\in \bS$ for which $\ubsi_\epsilon(\ubv_1)$ and $\ubsi_\epsilon(\ubv_2)$ satisfy \eqref{eq:auxiliary-problem-first-row}, there hold
\begin{align}
  &\|\ubv_1 - \ubv_2\|_\bS \,\leq\, C\,\Big( \|\ubsi_\epsilon(\ubv_1) - \ubsi_\epsilon(\ubv_2)\|_\bQ + \,\|\bdiv(\bsi_{f,\epsilon}(\ubv_1) - \bsi_{f,\epsilon}(\ubv_2))\|^{1/3}_{\bL^{4/3}(\Omega_f)}
  \Big),\label{eq:auxiliary-bound-3}
\end{align}
\begin{equation}\label{eq:auxiliary-bound-4}
\mbox{and}\quad \|\ubsi_\epsilon(\ubv_1) - \ubsi_\epsilon(\ubv_2)\|_\bQ 
\,\leq\, C_{\ubsi_{\epsilon}}\,(1 + \|\bw_f\|_{\bV_f})\,\|\ubv_1 - \ubv_2\|_\bS\,,
\end{equation}
with $C_{\ubsi_{\epsilon}}$ depending on $\|\bsi_{f,\epsilon}(\ubv_1)\|_{\bbX_f}$ and $\|\bsi_{f,\epsilon}(\ubv_2)\|_{\bbX_f}$.
\end{lem}
\begin{proof}
We begin by noting that given $\bw_f\in \bV_f$ and $\ubv\in \bS$ the functional $\wh{\bF} - (\cB' + \cK_{\bw_f})(\ubv)$ is continuous (cf. \eqref{eq:continuity-cA-cB-cC}, \eqref{eq:continuity-cK-wf}). In addition, combining \eqref{eq:continuity-cE1-cE2}, \eqref{eq:continuity-cA-cB-cC}, and Lemmas \ref{lem:coercivity-properties-A-E2} and \ref{lem:Rf-Rp-properties}, we deduce that the operator $\cE_1 + \cA + \epsilon\,\cR$ is bounded, continuous, and monotone. 
Moreover, given $\ubtau = (\btau_f, \bv_p, \btau_e)\in \bQ$, employing the non-negativity bounds of $\cA, \cE_1$ and $ \cR$ (cf. \eqref{eq: operator A-monotone}, \eqref{eq: operator E_1-monotone}, \eqref{eq: R-coercive}), it follows that
\begin{equation}\label{eq:E1-A-epsR coercive}
(\cE_1 + \cA + \epsilon\,\cR)(\ubtau)(\ubtau) \geq \, C_1\,\Big( \|\btau^\rd_f\|^2_{\bbL^2(\Omega_f)} + \|\bdiv(\btau_f)\|^{4/3}_{\bL^{4/3}(\Omega_f)} \Big)
\,+\, C_2\,\Big(\|\bv_p\|^2_{\bX_p} + \|\btau_e\|^2_{\bSigma_e}\Big) \,,
\end{equation}
with $C_1:= \min\big\{ 1/(2\,\mu), \epsilon \big\}$ and $C_2:=\min\big\{\mu\,k_{\min},\epsilon, C_A \big\}$. In addition, inequalities \eqref{eq:tau-d-H0div-inequality}--\eqref{eq:tau-H0div-Xf-inequality} imply that
\begin{equation}\label{Q-norm}
  \|\ubtau\|_{\bQ} \le C_3\left(\|\btau^\rd_f\|_{\bbL^2(\Omega_f)} + \|\bdiv(\btau_f)\|_{\bL^{4/3}(\Omega_f)} + \|\bv_p\|_{\bX_p} + \|\btau_e\|_{\bSigma_e}\right),
\end{equation}
with $C_3$ depending on $c_1(\Omega_f), c_2(\Omega_f)$. Then
$\frac{(\cE_1 + \cA + \epsilon\,\cR)(\ubtau)(\ubtau)}{\|\ubtau\|_{\bQ}} \to \infty$ as $\|\ubtau\|_{\bQ} \to \infty$, since the numerator dominates the denominator, 
i.e., the operator $\cE_1 + \cA + \epsilon\,\cR$ is coercive.
Then, as a direct application of the Browder--Minty theorem \cite[Theorem 10.49]{Renardy-Rogers}, we obtain that problem \eqref{eq:auxiliary-problem-first-row} has a unique solution.

We proceed with deriving \eqref{eq:auxiliary-bound-1}.
Given $\ubv \in \bS$ for which $\ubsi_\epsilon(\ubv) := (\bsi_{f,\epsilon}(\ubv),\bu_{p,\epsilon}(\ubv),\bsi_{e,\epsilon}(\ubv))$ satisfies \eqref{eq:auxiliary-problem-first-row}, test with $\ubtau = (\0,\bu_{p,\epsilon}(\ubv), \bsi_{e,\epsilon}(\ubv))$. Using non-negativity bounds of $a^d_p, \cE_1$ and $\cR_p$ (cf. \eqref{eq:coercivity-adp}, \eqref{eq: operator E_1-monotone}, \eqref{eq: R_p bounded-coerc}), the continuity of $\cB$ (cf. \eqref{eq:continuity-cA-cB-cC}), and the Cauchy--Schwarz inequality, we obtain
\begin{align*}
&C_2\,\big(\|\bu_{p,\epsilon}(\ubv)\|^2_{\bX_p}
+ \|\bsi_{e,\epsilon}(\ubv)\|^2_{\bSigma_e} \big) 
\leq\, \big(\|\wh{\bF}\|_{\bQ'} + C_{\cB}\,\|\ubv\|_{\bS}\big)
\|(\0,\bu_{p,\epsilon}(\ubv),\bsi_{e,\epsilon}(\ubv))\|_{\bQ},
\end{align*} 
with $C_2$ as in \eqref{eq:E1-A-epsR coercive}, which implies
\begin{equation}\label{eq:bound-up-bsie}
\|(\0,\bu_{p,\epsilon}(\ubv),\bsi_{e,\epsilon}(\ubv))\|_{\bQ} \le \, C \,
(\|\wh{\bF}\|_{\bQ'}  + \|\ubv\|_{\bS}).
\end{equation}
In turn, testing \eqref{eq:auxiliary-problem-first-row} now with $\ubtau = (\bsi_{f,\epsilon}(\ubv), \0,\0)$, employing the non-negativity bounds of $a_f$ (cf. \eqref{eq:coercivity-af}) and $ \cR_f$ (cf. \eqref{eq: R_f bounded-coerc}), and the continuity of $\cB, \cK_{\bw_f}$ (cf. \eqref{eq:continuity-cA-cB-cC}, \eqref{eq:continuity-cK-wf}), we deduce that
\begin{align}\label{eq:bound-bsif}
  C_1\,\big(\|\bsi^\rd_{f,\epsilon}(\ubv)\|^2_{\bbL^2(\Omega_f)} + \|\bdiv(\bsi_{f,\epsilon}(\ubv))\|^{4/3}_{\bL^{4/3}(\Omega_f)}\big)
  \le \bigg(C_{\cB} + \frac{\rho_f\,n^{1/2}}{2\,\mu}\,\|\bw_f\|_{\bV_f}\bigg)
  \|\ubv\|_\bS\,\|\bsi_{f,\epsilon}(\ubv)\|_{\bbX_f},
\end{align}
with $C_1$ as in \eqref{eq:E1-A-epsR coercive}. If $\|\bsi^\rd_{f,\epsilon}(\ubv)\|_{\bbL^2(\Omega_f)} \ge 1$, then
$\|\bsi^\rd_{f,\epsilon}(\ubv)\|^2_{\bbL^2(\Omega_f)} \ge \|\bsi^\rd_{f,\epsilon}(\ubv)\|^{4/3}_{\bbL^2(\Omega_f)}$, hence \eqref{eq:bound-bsif} implies
\begin{equation}\label{eq:bound-bsif-1}
\|\bsi_{f,\epsilon}(\ubv)\|_{\bbX_f}^{1/3} \le C(1+ \|\bw_f\|_{\bV_f})\|\ubv\|_\bS.
\end{equation}
If $\|\bsi^\rd_{f,\epsilon}(\ubv)\|_{\bbL^2(\Omega_f)} \le 1$, then \eqref{eq:bound-bsif} implies
$$
\|\bdiv(\bsi_{f,\epsilon}(\ubv))\|^{4/3}_{\bL^{4/3}(\Omega_f)} \le C
(1+ \|\bw_f\|_{\bV_f})\|\ubv\|_\bS
(1+\|\bdiv(\bsi_{f,\epsilon}(\ubv))\|_{\bL^{4/3}(\Omega_f)}),
$$
If $\|\bdiv(\bsi_{f,\epsilon}(\ubv))\|_{\bL^{4/3}(\Omega_f)} \ge 1$, the above inequality implies
$$
\|\bdiv(\bsi_{f,\epsilon}(\ubv))\|^{1/3}_{\bL^{4/3}(\Omega_f)} \le C(1+ \|\bw_f\|_{\bV_f})\|\ubv\|_\bS\,,
$$
which holds trivially with $C=1$ if $\|\bdiv(\bsi_{f,\epsilon}(\ubv))\|_{\bL^{4/3}(\Omega_f)} \le 1$. The above inequality, combined with the assumption $\|\bsi^\rd_{f,\epsilon}(\ubv)\|_{\bbL^2(\Omega_f)} \le 1$, implies that \eqref{eq:bound-bsif-1} also holds in this case. A combination of \eqref{eq:bound-up-bsie} and \eqref{eq:bound-bsif-1} implies \eqref{eq:auxiliary-bound-1}.

Next, we establish \eqref{eq:auxiliary-bound-2}. We have from the inf-sup condition of $\cB$ (cf. Lemma \ref{lem:inf-sup-conditions}) and the continuity of $\cK_{\bw_f}$ (cf. \eqref{eq:continuity-cK-wf}), that
\begin{equation*}
\sup_{\underline{\0} \neq\ubtau\in \bQ} \frac{(\cB' + \cK_{\bw_f})(\ubv)(\ubtau)}{\|\ubtau\|_\bQ} 
\,\geq\, \Big( \beta - \frac{\rho_f\,n^{1/2}}{2\,\mu}\,\|\bw_f\|_{\bV_f} \Big)\,\|\ubv\|_\bS,
\end{equation*}
where $\beta := \min\big\{\beta_1, \beta_2, \beta_3\big\}$.
Consequently, by requiring that $\|\bw_f\|_{\bV_f} \leq r_1^0$,
and using \eqref{eq:auxiliary-problem-first-row} and the continuity of $\wh{\bF}, \cE_1, \cA$ and $\cR$ (cf. \eqref{eq:continuity-cE1-cE2}, \eqref{eq:continuity-cA-cB-cC}, \eqref{eq: R-continuous}), we obtain
\begin{align}
& \frac{5\beta}{6}\,\|\ubv\|_\bS 
\,\leq\, \sup_{\underline{\0} \neq\ubtau\in \bQ} \frac{\wh{\bF}(\ubtau) - (\cE_1 + \cA + \epsilon\,\cR)(\ubsi_\epsilon(\ubv))(\ubtau)}{\|\ubtau\|_\bQ} \nonumber\\ 
&\quad \leq \, \|\wh{\bF}\|_{\bQ'} + (C_{\cE_1} + C_{\cA})\,\|\ubsi_\epsilon(\ubv)\|_\bQ + \epsilon\,\|\bdiv(\bsi_{f,\epsilon}(\ubv))\|^{1/3}_{\bL^{4/3}(\Omega_f)}+\epsilon\, \|\div(\bu_{p,\epsilon}(\ubv))\|_{\bL^2(\Omega_p)}\,.\label{eq:inf-sup-B-1}
\end{align}
Recognizing that the last term on the right-hand side above is included in the second term, we 
conclude \eqref{eq:auxiliary-bound-2}. Next, we derive \eqref{eq:auxiliary-bound-3}. Given $\ubv_1, \ubv_2\in \bS$ for which $\ubsi_\epsilon(\ubv_1)$ and $\ubsi_\epsilon(\ubv_2)$ satisfy \eqref{eq:auxiliary-problem-first-row}, we deduce that
for all $\ubtau \in \bQ$ there holds
\begin{equation}\label{eq:identity-E1-A-R-2}
(\cB' + \cK_{\bw_f})(\ubv_2 - \ubv_1)(\ubtau) \,=\, (\cE_1 + \cA)(\ubsi_\epsilon(\ubv_1) - \ubsi_\epsilon(\ubv_2))(\ubtau) + \epsilon\,\big(\cR(\ubsi_\epsilon(\ubv_1)) - \cR(\ubsi_\epsilon(\ubv_2))\big)(\ubtau)\,.
\end{equation}
Bound \eqref{eq:auxiliary-bound-3} then follows by proceeding analogously to \eqref{eq:auxiliary-bound-2}.

Finally, we derive \eqref{eq:auxiliary-bound-4}. Given $\ubv_i\in \bS$ for which $\ubsi_\epsilon(\ubv_i) := (\bsi_{f,\epsilon}(\ubv_i),\bu_{p,\epsilon}(\ubv_i),\bsi_{e,\epsilon}(\ubv_i))$, with $i\in \{1,2\}$, satisfy \eqref{eq:auxiliary-problem-first-row}, we subtract the problems and then test with $\ubtau = (\0,\bu_{p,\epsilon}(\ubv_1) - \bu_{p,\epsilon}(\ubv_2), \bsi_{e,\epsilon}(\ubv_1) - \bsi_{e,\epsilon}(\ubv_2))$, 
and use non-negativity bounds of $a^d_p, \cE_1$ and $\cR_p$ (cf. \eqref{eq:coercivity-adp}, \eqref{eq: operator E_1-monotone}, \eqref{eq: R_p bounded-coerc}), the continuity of $\cB$ (cf. \eqref{eq:continuity-cA-cB-cC}), and the Cauchy--Schwarz inequality, to obtain
\begin{align*}
&C_2\,\Big(\|\bu_{p,\epsilon}(\ubv_1) - \bu_{p,\epsilon}(\ubv_2)\|^2_{\bX_p}
+ \|\bsi_{e,\epsilon}(\ubv_1) - \bsi_{e,\epsilon}(\ubv_2)\|^2_{\bSigma_e} \Big) \nonumber \\
&\quad \leq\, C_{\cB}\,\|\big(\0,\bu_{p,\epsilon}(\ubv_1) - \bu_{p,\epsilon}(\ubv_2),\bsi_{e,\epsilon}(\ubv_1) - \bsi_{e,\epsilon}(\ubv_2)\big)\|_{\bQ}\|\ubv_1 - \ubv_2\|_{\bS} \,, 
\end{align*} 
with $C_2$ as in \eqref{eq:E1-A-epsR coercive}, which implies
\begin{equation}\label{eq:bound-monotonicity-up-bsie}
\|\big(\0,\bu_{p,\epsilon}(\ubv_1) - \bu_{p,\epsilon}(\ubv_2),\bsi_{e,\epsilon}(\ubv_1) - \bsi_{e,\epsilon}(\ubv_2)\big)\|_{\bQ} \,\leq\, C\,\|\ubv_1 - \ubv_2\|_{\bS} \,.
\end{equation}
In turn, testing \eqref{eq:auxiliary-problem-first-row} now with $\ubtau = (\bsi_{f,\epsilon}(\ubv_1) - \bsi_{f,\epsilon}(\ubv_2), \0,\0)$, employing the monotonicity of $a_f$ (cf. \eqref{eq:coercivity-af}) and $\cR_f$ (cf. \eqref{eq:R_f monotone}), and the continuity of $\cB, \cK_{\bw_f}$ (cf. \eqref{eq:continuity-cA-cB-cC}, \eqref{eq:continuity-cK-wf}), we deduce that
\begin{align}
&\frac{1}{2\,\mu}\,\|\bsi^\rd_{f,\epsilon}(\ubv_1) - \bsi^\rd_{f,\epsilon}(\ubv_2)\|^2_{\bbL^2(\Omega_f)} + \epsilon\,C\,\frac{\|\bdiv(\bsi_{f,\epsilon}(\ubv_1)) - \bdiv(\bsi_{f,\epsilon}(\ubv_2))\|^2_{\bL^{4/3}(\Omega_f)}}{\|\bsi_{f,\epsilon}(\ubv_1)\|^{2/3}_{\bbX_f} + \|\bsi_{f,\epsilon}(\ubv_2)\|^{2/3}_{\bbX_f}} \nonumber\\ 
&\quad \, \leq \, \left(C_{\cB} + \frac{\rho_f\,n^{1/2}}{2\,\mu}\,\|\bw_f\|_{\bV_f}\right)\|\ubv_1 - \ubv_2\|_\bS\,\|\bsi_{f,\epsilon}(\ubv_1) - \bsi_{f,\epsilon}(\ubv_2)\|_{\bbX_f} \,,
\label{eq:bound-continuity-sigf}
\end{align} 
and, using \eqref{eq:tau-d-H0div-inequality}--\eqref{eq:tau-H0div-Xf-inequality}, with exponent $q=2$, to bound the left-hand side of \eqref{eq:bound-continuity-sigf}, we derive
\begin{equation}\label{eq:bound-sigf-2}
\|\bsi_{f,\epsilon}(\ubv_1) - \bsi_{f,\epsilon}(\ubv_2)\|_{\bbX_f} 
\,\leq\, C\,(1 + \|\bw_f\|_{\bV_f})\,\|\ubv_1 - \ubv_2\|_\bS \,,
\end{equation}
with $C$ depending on $\epsilon$, $\|\bsi_{f,\epsilon}(\ubv_1)\|_{\bbX_f}$ and $\|\bsi_{f,\epsilon}(\ubv_2)\|_{\bbX_f}$.
Finally, combining \eqref{eq:bound-monotonicity-up-bsie} and \eqref{eq:bound-sigf-2}, we obtain \eqref{eq:auxiliary-bound-4} and conclude the proof.
\end{proof}

\begin{cor}
The solution to \eqref{eq:auxiliary-problem-first-row} defines a one-to-one operator.
\end{cor}

\begin{proof}
If we assume that $\bsi_\epsilon(\ubv_1) = \bsi_\epsilon(\ubv_2)$, \eqref{eq:auxiliary-bound-3} implies that $\ubv_1 = \ubv_2$.
Equivalently, this shows that given $\ubv_1, \ubv_2\in \bS$ with $\ubv_1 \neq \ubv_2$, then $\bsi_\epsilon(\ubv_1) \neq \bsi_\epsilon(\ubv_2)$.
\end{proof}

We next consider the problem associated with the second equation in \eqref{eq:regularization-T-auxiliary-problem-operator}:
Find $\ubu_\epsilon\in \bS$ such that
\begin{equation}\label{eq:operator-J}
\cJ_{\bw_f,\bzeta}(\ubu_\epsilon)(\ubv) 
\,:=\, -\cB(\ubsi_\epsilon(\ubu_\epsilon))(\ubv) + (\cE_2 + \cC + \cL_{\bzeta})(\ubu_\epsilon)(\ubv) 
\,=\, \wh{\bG}(\ubv) \quad \forall\,\ubv\in \bS,
\end{equation}
where $\ubsi_\epsilon(\ubu_\epsilon)\in \bQ$ is the solution of \eqref{eq:auxiliary-problem-first-row}. In the next lemma we establish the properties of $\cJ_{\bw_f,\bzeta}$.

\begin{lem}\label{lem:J-bijective}
Let $\|\bw_f\|_{\bV_f}\leq r_1^0$ (cf. \eqref{eq:r_1^0 defn}) and $\|\bzeta\|_{\bLambda_f} \leq r_2^0$,  where
\begin{equation}\label{eq:r20-def}
r_2^0 :=\frac{\mu \beta^2}{12\,C_{\cL}} \,,
\end{equation}
where $\beta$ is defined in Lemma~\ref{lem:auxiliary-problem-first-fow} and $C_{\cL}$ is the continuity constant for operator $\cL_{\bzeta}$. Then, the operator $\cJ_{\bw_f,\bzeta}$ is bounded, continuous, coercive and monotone.
\end{lem}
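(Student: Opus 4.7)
The plan is to verify the four properties in turn, with a common substitution trick based on the first row of \eqref{eq:auxiliary-problem-first-row} serving as the unifying device for monotonicity and coercivity.

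\textbf{Boundedness and continuity} would follow by combining the continuity bounds of $\cB$, $\cE_2$, $\cC$, and $\cL_{\bzeta}$ from Lemma~\ref{lem:cont} with the a priori bound \eqref{eq:auxiliary-bound-1} for $\|\ubsi_\epsilon(\ubu)\|_\bQ$ (for boundedness) and with the Lipschitz-type estimate \eqref{eq:auxiliary-bound-4} for $\ubsi_\epsilon(\cdot)$ (for continuity); both estimates inherit the smallness condition $\|\bw_f\|_{\bV_f}\le r_1^0$ from Lemma~\ref{lem:auxiliary-problem-first-fow}.

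For \textbf{monotonicity}, I would set $\Delta\ubu = \ubu_1 - \ubu_2$ and $\Delta\ubsi = \ubsi_\epsilon(\ubu_1) - \ubsi_\epsilon(\ubu_2)$, subtract the two copies of \eqref{eq:auxiliary-problem-first-row}, and test with $\ubtau = \Delta\ubsi$, obtaining
\begin{equation*}
-\cB(\Delta\ubsi)(\Delta\ubu) - \cK_{\bw_f}(\Delta\ubu)(\Delta\ubsi) = (\cE_1+\cA)(\Delta\ubsi)(\Delta\ubsi) + \epsilon\bigl(\cR(\ubsi_\epsilon(\ubu_1)) - \cR(\ubsi_\epsilon(\ubu_2))\bigr)(\Delta\ubsi).
\end{equation*}
Using the linearity of $\cE_2$, $\cC$, and of $l_{\bzeta}$ in the first argument, the difference $(\cJ_{\bw_f,\bzeta}(\ubu_1) - \cJ_{\bw_f,\bzeta}(\ubu_2))(\Delta\ubu)$ decomposes into four non-negative pieces (contributions from $\cE_1+\cA$, $\cR$, $\cE_2$, and $\cC$, by Lemmas~\ref{lem:coercivity-properties-A-E2} and~\ref{lem:Rf-Rp-properties}) plus the two indefinite terms $\cK_{\bw_f}(\Delta\ubu)(\Delta\ubsi)$ and $\cL_{\bzeta}(\Delta\ubu)(\Delta\ubu)$. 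The plan is to absorb these via \eqref{eq:continuity-cK-wf}--\eqref{eq:continuity-cL-zeta}, Young's inequality, and the smallness assumptions $\|\bw_f\|_{\bV_f}\le r_1^0$, $\|\bzeta\|_{\bLambda_f}\le r_2^0$, which are tuned precisely so that the resulting negative coefficients are strictly smaller than the positive ones.

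For \textbf{coercivity}, the analogous substitution $\ubtau = \ubsi_\epsilon(\ubu)$ in \eqref{eq:auxiliary-problem-first-row} yields
\begin{equation*}
\cJ_{\bw_f,\bzeta}(\ubu)(\ubu) = (\cE_1+\cA+\epsilon\cR)(\ubsi_\epsilon(\ubu))(\ubsi_\epsilon(\ubu)) - \wh\bF(\ubsi_\epsilon(\ubu)) + \cK_{\bw_f}(\ubu)(\ubsi_\epsilon(\ubu)) + (\cE_2+\cC+\cL_{\bzeta})(\ubu)(\ubu),
\end{equation*}
after which I would apply the coercivity bound \eqref{eq:coercivity-E1-A-R}, the lower bounds from Lemma~\ref{lem:coercivity-properties-A-E2}, and the same smallness-based absorption to produce a lower bound involving $\|\ubsi_\epsilon(\ubu)\|_\bQ^{4/3}$ and partial $\bL^2$-type norms of the components of $\ubu$. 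The principal obstacle arises here: the $\bV_f = \bL^4(\Omega_f)$ norm in $\|\ubu\|_\bS$ is strictly stronger than what $\cE_2$ alone can control, so coercivity cannot be read off directly from the $\ubu$-terms. The workaround is to route the growth through the stress variable via the inf-sup--type estimate \eqref{eq:auxiliary-bound-2}, which converts $\|\ubsi_\epsilon(\ubu)\|_\bQ$-growth into $\|\ubu\|_\bS$-growth, with the fractional exponent $4/3$ inherited from the $R_f$ regularizer. This should deliver $\cJ_{\bw_f,\bzeta}(\ubu)(\ubu)/\|\ubu\|_\bS \to \infty$ as $\|\ubu\|_\bS \to \infty$, which is sufficient for the Browder--Minty step in the subsequent well-posedness argument.
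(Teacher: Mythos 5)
Your proposal is correct and follows essentially the same route as the paper: boundedness and continuity from \eqref{eq:auxiliary-bound-1} and \eqref{eq:auxiliary-bound-4}, and monotonicity/coercivity from substituting the first-row equation tested with $\ubsi_\epsilon(\cdot)$, followed by absorption of the $\cK_{\bw_f}$ and $\cL_{\bzeta}$ contributions under the smallness assumptions and a two-case treatment of the $4/3$-exponent via \eqref{eq:auxiliary-bound-2}. The one point to make explicit is that the absorption step (in monotonicity as well as coercivity) requires the inf-sup-based bounds of the type \eqref{eq:J-coercive-vf-bound}--\eqref{eq:J-coercive-psi-bound}, which control $\|\bv_f\|_{\bV_f}$ and $\|\bpsi\|_{\bLambda_f}$ by $\|\bsi^\rd_{f,\epsilon}(\ubv)\|_{\bbL^2(\Omega_f)}$; Young's inequality alone leaves $\bL^4$- and $\bH^{1/2}$-norms that the coercive terms do not control, and this is precisely why $r_1^0$ and $r_2^0$ involve the inf-sup constant $\beta$.
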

\begin{proof}
We first prove that $\cJ_{\bw_f,\bzeta}$ is bounded.
In fact, given $\ubv\in \bS$, from the definition of $\cJ_{\bw_f,\bzeta}$ (cf. \eqref{eq:operator-J}), the continuity of $\cE_2, \cB, \cC$ and $\cL_{\bzeta}$ (cf. \eqref{eq:continuity-cE1-cE2}, \eqref{eq:continuity-cA-cB-cC}, \eqref{eq:continuity-cL-zeta}), in combination with \eqref{eq:auxiliary-bound-1}, yields
\begin{equation*}
\|\cJ_{\bw_f,\bzeta}(\ubv)\|_{\bS'}
\, \leq \, \left( C_{\cB}\,\wh{C} + C_{\cE_2} + C_f+ C_{\cL}\|\bzeta\|_{\bLambda_f}\right)\,\|\ubv\|_\bS\,,
\end{equation*}
which implies that $\cJ_{\bw_f,\bzeta}$ is bounded.
Next, we provide the continuity of $\cJ_{\bw_f,\bzeta}$. Let $\ubv_i \in \bS$, $i = 1,2$. Using again the definition of $\cJ_{\bw_f,\bzeta}$, the continuity of $\cB, \cE_2$ and $\cC$ (cf. \eqref{eq:continuity-cA-cB-cC}, \eqref{eq:continuity-cE1-cE2}), in  combination with \eqref{eq:auxiliary-bound-4}, it follows that
\begin{equation*}
\|\cJ_{\bw_f,\bzeta}(\ubv_1) - \cJ_{\bw_f,\bzeta}(\ubv_2)\|_{\bS'}
\,\leq \,  \Big( C_{\ubsi_{\epsilon}}\,C_{\cB}\,(1 + \|\bw_f\|_{\bV_f}) + C_{\cE_2} + C_f+C_{\cL}\|\bzeta\|_{\bLambda_f} \Big)\,\|\ubv_1 - \ubv_2\|_\bS \,.
\end{equation*}
Due to \eqref{eq:auxiliary-bound-1}, for any fixed $\ubv_1 \in \bS$, the above inequality implies that $\|\cJ_{\bw_f,\bzeta}(\ubv_1) - \cJ_{\bw_f,\bzeta}(\ubv_2)\|_{\bS'} \to 0$ as $\|\ubv_1 - \ubv_2\|_\bS \to 0$,
which proves the continuity of $\cJ_{\bw_f,\bzeta}$.

Now, for the coercivity of $\cJ_{\bw_f,\bzeta}$, we first notice from \eqref{eq:auxiliary-bound-2} that
\begin{equation}\label{eq:bount-to-coercivity}
  \|\ubv\|_\bS \,\leq\, C\,\max\big\{\|\wh{\bF}\|_{\bQ'},1\big\}\,\Big( 1 + \|\ubsi_\epsilon(\ubv)\|_\bQ + \,\|\bdiv(\bsi_{f,\epsilon}(\ubv))\|^{1/3}_{\bL^{4/3}(\Omega_f)}
  \Big)\,,
\end{equation}
and then it is clear that $\|\ubsi_\epsilon(\ubv)\|_\bQ \to \infty$ as $\|\ubv\|_\bS \to \infty$.
Next, using again the definition of $\cJ_{\bw_f,\bzeta}$ (cf. \eqref{eq:operator-J}), the monotonicity of $\cE_2$ and  $\cC$ (cf. Lemma \ref{lem:coercivity-properties-A-E2}) and the identity \eqref{eq:auxiliary-problem-first-row} with $\ubtau = \ubsi_{\epsilon}(\ubv) := (\bsi_{f,\epsilon}(\ubv),\bu_{p,\epsilon}(\ubv),\bsi_{e,\epsilon}(\ubv))$, we find that
\begin{align*}
& \cJ_{\bw_f,\bzeta}(\ubv)(\ubv) \,=\, -\,\cB(\bsi_\epsilon(\ubv))(\ubv) + (\cE_2 + \cC+\cL_{\bzeta})(\ubv)(\ubv) \,\geq\, -\,\cB'(\ubv)(\bsi_\epsilon(\ubv))+\cL_{\bzeta}(\ubv)(\ubv) \nonumber\\ 
& \quad \,=\, (\cE_1 + \cA + \epsilon\,\cR)(\ubsi_\epsilon(\ubv))(\ubsi_\epsilon(\ubv)) + \cK_{\bw_f}(\ubv)(\ubsi_\epsilon(\ubv))+\cL_{\bzeta}(\ubv)(\ubv) - \wh{\bF}(\ubsi_\epsilon(\ubv))\,,
\end{align*}
which combined with the  non-negativity bounds of $\cE_1, \cA$ and $\cR$ (cf. \eqref{eq: operator E_1-monotone}, \eqref{eq: operator A-monotone}, \eqref{eq: R-coercive}), and the continuity of $\cK_{\bw_f},\cL_{\bzeta}$ (cf. \eqref{eq:continuity-cK-wf}, \eqref{eq:continuity-cL-zeta}) and $\wh{\bF}$, yields
\begin{align}
&\cJ_{\bw_f,\bzeta}(\ubv)(\ubv) 
\,\geq\, \frac{1}{2\,\mu} \|\bsi^\rd_{f,\epsilon}(\ubv)\|^2_{\bbL^2(\Omega_f)} 
+ \epsilon\,\|\bdiv(\bsi_{f,\epsilon}(\ubv))\|^{4/3}_{\bL^{4/3}(\Omega_f)}
+ C_2\,\Big( \|\bu_{p,\epsilon}(\ubv)\|^2_{\bX_p} 
+ \|\bsi_{e,\epsilon}(\ubv)\|^2_{\bSigma_e} \Big) \nonumber\\
& \qquad- \frac{\rho_f\,n^{1/2}}{2\,\mu}\,\|\bw_f\|_{\bV_f}\,\|\bv_f\|_{\bV_f}\,\|\bsi^\rd_{f,\epsilon}(\ubv)\|_{\bbL^2(\Omega_f)}
- C_{\cL}\|\bzeta\|_{\bLambda_f}\|\bpsi\|^2_{\bLambda_f}
- \|\wh{\bF}\|_{\bQ'}\,\|\ubsi_\epsilon(\ubv)\|_{\bQ} \,, \label{eq:J-coercivity1}
\end{align}
with $C_2$ as in \eqref{eq:E1-A-epsR coercive}.
To control the terms involving $\|\bw_f\|_{\bV_f}$ and $\|\bzeta\|_{\bLambda_f}$, and  similarly to \eqref{eq:inf-sup-B-1}, we use the inf-sup condition \eqref{eq:inf-sup-vf-chif} along with the equation in \eqref{eq:auxiliary-problem-first-row} with test function $\ubtau=(\btau_f,\0,\0)$, to obtain
\begin{align}
&\frac{5\beta}{6}\,\|(\bv_f, \bchi_f, \bpsi)\|_{\bV_f\times \bbQ_f\times \bLambda_f}  \,\leq\, \sup_{\0 \neq\btau_f\in \bbX_f}\frac{-a_f(\bsi_f(\ubv),\btau_f) - \epsilon\,R_f(\bsi_f(\ubv),\btau_f)}{\|\btau_f\|_{\bchi_f}}\nonumber\\
&\qquad \leq\, \frac{1}{2\,\mu} \|\bsi^\rd_f(\ubv)\|_{\bbL^2(\Omega_f)}
+ \epsilon\,\|\bdiv(\bsi_f(\ubv))\|^{1/3}_{\bL^{4/3}(\Omega_f)} \,, \label{eq:bount-to-coercivity1}
\end{align}
and deduce that both $\|\bv_f\|_{\bV_f}$ and $\|\bpsi\|_{\bLambda_f}$ are bounded by
\begin{equation*}
\frac{3}{5\mu\,\beta}\|\bsi^\rd_{f,\epsilon}(\ubv)\|_{\bbL^2(\Omega_f)} + \frac{6\,\epsilon}{5\beta}\,\|\bdiv(\bsi_{f,\epsilon}(\ubv))\|^{1/3}_{\bL^{4/3}(\Omega_f)}\,.
\end{equation*}
%
Then, using the facts that $\|\bw_f\|_{\bV_f} \leq r_1^0$ (cf. \eqref{eq:r_1^0 defn}) and $\|\bzeta\|_{\bLambda_f} \leq r_2^0$ (cf. \eqref{eq:r20-def})  and applying Young's inequality, we get 
\begin{equation}\label{eq:kappa-continuity bound}
\frac{\rho_f\,n^{1/2}}{2\,\mu}\,\|\bw_f\|_{\bV_f}\,\|\bv_f\|_{\bV_f}\,\|\bsi^\rd_{f,\epsilon}(\ubv)\|_{\bbL^2(\Omega_f)} \leq \frac{1}{10}\left(\frac{1}{\mu}+\epsilon\right)\,\|\bsi^\rd_{f,\epsilon}(\ubv)\|^2_{\bbL^2(\Omega_f)} + \frac{\epsilon}{10}\|\bdiv(\bsi_{f,\epsilon}(\ubv))\|^{2/3}_{\bL^{4/3}(\Omega_f)}\,,
\end{equation}
\begin{equation}\label{eq:operator L continuity bound}
\mbox{and}\quad C_{\cL}\|\bzeta\|_{\bLambda_f}\|\bpsi\|^2_{\bLambda_f} \leq \frac {3}{50\mu}\,\|\bsi^\rd_{f,\epsilon}(\ubv)\|^2_{\bbL^2(\Omega_f)}
+ \frac{6\mu\epsilon^2}{25}\|\bdiv(\bsi_{f,\epsilon}(\ubv))\|^{2/3}_{\bL^{4/3}(\Omega_f)} \,.
\end{equation}
Replacing back \eqref{eq:kappa-continuity bound} and \eqref{eq:operator L continuity bound} into \eqref{eq:J-coercivity1}, using the definition of $r_2^0$ (cf. \eqref{eq:r20-def}), choosing
\begin{equation}\label{eq:epsilon-bound}
0<\epsilon \leq \frac{5}{12\mu} \,,
\end{equation} 
and after some algebraic manipulations, we deduce that
\begin{align}
&\cJ_{\bw_f,\bzeta}(\ubv)(\ubv) 
\,\geq \frac{179}{600\mu}\|\bsi^\rd_{f,\epsilon}(\ubv)\|^2_{\bbL^2(\Omega_f)} + C_2\,\Big( \|\bu_{p,\epsilon}(\ubv)\|^2_{\bX_p} + \|\bsi_{e,\epsilon}(\ubv)\|^2_{\bSigma_e} \Big) \nonumber\\
&\qquad +\,\, \epsilon\,\|\bdiv(\bsi_{f,\epsilon}(\ubv))\|^{4/3}_{\bL^{4/3}(\Omega_f)} 
- \frac{\epsilon}{5}\|\bdiv(\bsi_{f,\epsilon}(\ubv))\|^{2/3}_{\bL^{4/3}(\Omega_f)} - \|\wh{\bF}\|_{\bQ'}\,\|\ubsi_\epsilon(\ubv)\|_{\bQ}.\label{eq:J-coercivity3a}
\end{align}
If $\|\bdiv(\bsi_{f,\epsilon}(\ubv))\|_{\bL^{4/3}(\Omega_f)} \geq 1$
then the terms multiplied by $\epsilon$ on right-hand side of \eqref{eq:J-coercivity3a} are replaced by  $\frac{4\,\epsilon}{5}\|\bdiv(\bsi_{f,\epsilon}(\ubv))\|^{4/3}_{\bL^{4/3}(\Omega_f)}$.
Using \eqref{eq:bount-to-coercivity}, we obtain
\begin{align}
  & \frac{\cJ_{\bw_f,\bzeta}(\ubv)(\ubv)}{\|\ubv\|_\bS}
 \geq\, C\,\left(
 \frac{\|\bsi^\rd_{f,\epsilon}(\ubv)\|_{\bbL^2(\Omega_f)}^2
 + \|\bdiv(\bsi_{f,\epsilon}(\ubv))\|_{\bL^{4/3}(\Omega_f)}^{4/3}
 + \|\bu_{p,\epsilon}(\ubv)\|_{\bX_p}^2
 + \|\bsi_{e,\epsilon}(\ubv)\|_{\bSigma_e}^2}
 {1 + \|\ubsi_\epsilon(\ubv)\|_\bQ + \|\bdiv(\bsi_{f,\epsilon}(\ubv))\|^{1/3}_{\bL^{4/3}(\Omega_f)}}
\,-\, 1 \right)\,. \label{eq:J-coercivity-case-1}
\end{align}
If $\|\bdiv(\bsi_{f,\epsilon}(\ubv))\|_{\bL^{4/3}(\Omega_f)} < 1$, then \eqref{eq:J-coercivity3a} implies
\begin{equation}\label{eq:J-coercivity-case-2}
\frac{\cJ_{\bw_f,\bzeta}(\ubv)(\ubv)}{\|\ubv\|_\bS} 
\,\geq\, C \left(\bA(\ubsi_\epsilon(\ubv))
- \frac{1}{1 + \|\ubsi_\epsilon(\ubv)\|_\bQ + \|\bdiv(\bsi_{f,\epsilon}(\ubv))\|^{1/3}_{\bL^{4/3}(\Omega_f)}}
  \right),
\end{equation}
where $\bA(\ubsi_\epsilon(\ubv))$ denotes the quantity on the right-hand side of \eqref{eq:J-coercivity-case-1}.
We now take $\|\ubv\|_\bS\to \infty$ in both \eqref{eq:J-coercivity-case-1} and \eqref{eq:J-coercivity-case-2}. Using that $\|\ubsi_\epsilon(\ubv)\|_\bQ \to \infty$ as $\|\ubv\|_\bS \to \infty$ (cf. \eqref{eq:bount-to-coercivity}) and \eqref{Q-norm}, we conclude that $\frac{\cJ_{\bw_f,\bzeta}(\ubv)(\ubv)}{\|\ubv\|_\bS} \to \infty$, since the numerator dominates the denominator, hence 
$\cJ_{\bw_f,\bzeta}$ is coercive.

Finally, employing the identity \eqref{eq:identity-E1-A-R-2} with $\ubtau = \ubsi_\epsilon(\ubv_1) - \ubsi_\epsilon(\ubv_2)\in \bQ$ and the monotonicity of $\cE_2 + \cC$, we deduce
\begin{equation*}
\begin{array}{l}
\ds (\cJ_{\bw_f,\bzeta}(\ubv_1) - \cJ_{\bw_f,\bzeta}(\ubv_2))(\ubv_1 - \ubv_2) 
\,\geq\, \cB'(\ubv_2 - \ubv_1)(\ubsi_\epsilon(\ubv_1) - \ubsi_\epsilon(\ubv_2))+\cL_{\bzeta}(\ubv_1 - \ubv_2) (\ubv_1 - \ubv_2)  \\ [2ex]
\ds\quad \,=\, (\cE_1 + \cA)(\ubsi_\epsilon(\ubv_1) - \ubsi_\epsilon(\ubv_2))(\ubsi_\epsilon(\ubv_1) - \ubsi_\epsilon(\ubv_2)) + \cK_{\bw_f}(\ubv_1 - \ubv_2)(\ubsi_\epsilon(\ubv_1) - \ubsi_\epsilon(\ubv_2)) \\ [2ex]
\ds\quad \,+\,\,\epsilon\,(\cR(\ubsi_\epsilon(\ubv_1)) - \cR(\ubsi_\epsilon(\ubv_2)))(\ubsi_\epsilon(\ubv_1) - \ubsi_\epsilon(\ubv_2))+\cL_{\bzeta}(\ubv_1 - \ubv_2) (\ubv_1 - \ubv_2) ,
\end{array}
\end{equation*}
which together with the monotonicity of $\cE_1, \cA, \cR$ (cf. \eqref{eq: operator A-monotone}, \eqref{eq: operator E_1-monotone}, \eqref{eq:R_f monotone}, \eqref{eq:R_p monotone}), the continuity of $\cK_{\bw_f}$ and $\cL_{\bzeta}$ (cf. \eqref{eq:continuity-cK-wf}, \eqref{eq:continuity-cL-zeta}) with $\|\bw_f\|_{\bV_f} \leq r_1^0$ and $\|\bzeta\|_{\bLambda_f} \leq r_2^0$ (cf. \eqref{eq:r_1^0 defn}, \eqref{eq:r20-def}), and similar arguments to the coercivity bound (cf. \eqref{eq:J-coercivity-case-1}, \eqref{eq:J-coercivity-case-2}), we conclude the monotonicity of $\cJ_{\bw_f,\bzeta}$ and complete the proof.
\end{proof}

\begin{cor}\label{regularized-well-posed}
For each $(\bw_f,\bzeta)\in \bV_f\times \bLambda_f$ such that $\|\bw_f\|_{\bV_f} \leq r_1^0$ and $\|\bzeta\|_{\bLambda_f} \leq r_2^0$ (cf. \eqref{eq:r_1^0 defn}, \eqref{eq:r20-def}), the regularized problem \eqref{eq:regularization-T-auxiliary-problem-operator} has a unique solution.
\end{cor}

\begin{proof}
  Problem \eqref{eq:regularization-T-auxiliary-problem-operator} is equivalent to \eqref{eq:operator-J}.   More precisely, given $\ubsi_\epsilon(\ubu_\epsilon)\in \bQ$, solution of \eqref{eq:auxiliary-problem-first-row}, with $\ubu_\epsilon\in \bS$, solution of \eqref{eq:operator-J}, the vector $(\ubsi_\epsilon,\ubu_\epsilon) = (\ubsi_\epsilon(\ubu_\epsilon),\ubu_\epsilon)\in \bQ\times \bS$ solves \eqref{eq:regularization-T-auxiliary-problem-operator}. The converse is straightforward. Existence and uniqueness of a solution to \eqref{eq:operator-J}
follows from Lemma~\ref{lem:J-bijective} in combination with the Browder--Minty theorem \cite[Theorem 10.49]{Renardy-Rogers}.
\end{proof}

We are now ready to prove the well-posedness of problem \eqref{eq:T-auxiliary-problem-operator}.

\begin{lem}\label{thm:well-posedness-1}
For each $\wh{\f}_f\in \bL^2(\Omega_f), \wh{\f}_p\in \bL^2(\Omega_p), \wh{\f}_e \in \bbL^2(\Omega_p)$, and $\wh{q}_p\in \L^2(\Omega_p)$, the problem \eqref{eq:T-auxiliary-problem-operator} has a unique solution $(\ubsi,\ubu)\in \bQ\times \bS$ for each $(\bw_f,\bzeta)\in \bV_f\times \bLambda_f$ such that $\|\bw_f\|_{\bV_f} \leq r_1^0$ and $\|\bzeta\|_{\bLambda_f} \leq r_2^0$ (cf. \eqref{eq:r_1^0 defn}, \eqref{eq:r20-def}).
Moreover, there exists a constant $c_\bT > 0$, independent of $\bw_f,\bzeta$ and the data $\wh{\f}_f, \wh{\f}_p, \wh{\f}_e $, and $\wh{q}_p$, such that
\begin{equation}\label{eq:ubsi-ubu-bound-solution}
\, \|(\ubsi, \ubu)\|_{\bQ\times \bS} 
\,\leq\, {c_\bT\,\Big\{ \|\wh{\f}_f\|_{\bL^2(\Omega_f)} +  \|\wh{\f}_e\|_{\bbL^2(\Omega_p)}+ \|\wh{\f}_p\|_{\bL^2(\Omega_p)} + \| \wh{q}_p\|_{\L^2(\Omega_p)}\Big\}}.
\end{equation}
\end{lem}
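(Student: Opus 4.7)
\textbf{Strategy.} The plan is to first solve the regularized problem \eqref{eq:regularization-T-auxiliary-problem-operator} for fixed $\epsilon>0$ via a Browder--Minty type argument, then pass to the limit $\epsilon\to 0^+$ by means of $\epsilon$--independent a priori bounds, and finally establish uniqueness and the data estimate for the limiting problem \eqref{eq:T-auxiliary-problem-operator}.

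\textbf{Step 1 (Existence for the regularized problem).} Given $(\bw_f,\bzeta)$ with $\|\bw_f\|_{\bV_f}\le r_1^0$, $\|\bzeta\|_{\bLambda_f}\le r_2^0$ and $\epsilon\in(0,5/(24\mu)]$, Lemma~\ref{lem:J-bijective} states that $\cJ_{\bw_f,\bzeta}$ is bounded, continuous, monotone, and coercive on the reflexive Banach space $\bS$. The classical Browder--Minty surjectivity theorem for monotone operators therefore yields $\ubu_\epsilon\in\bS$ with $\cJ_{\bw_f,\bzeta}(\ubu_\epsilon)=\wh\bG$. Combining this with the well-posedness of \eqref{eq:auxiliary-problem-first-row} from Lemma~\ref{lem:auxiliary-problem-first-fow} produces a unique pair $(\ubsi_\epsilon,\ubu_\epsilon)\in\bQ\times\bS$ solving \eqref{eq:regularization-T-auxiliary-problem-operator}.

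\textbf{Step 2 ($\epsilon$--independent estimates).} Test the first and second rows of \eqref{eq:regularization-T-auxiliary-problem-operator} with $\ubsi_\epsilon$ and $\ubu_\epsilon$ respectively, and add. The $\cB,\cB'$ contributions cancel, yielding a one-line energy identity. Invoking the non-negativity bounds \eqref{eq:coercivity-af}--\eqref{eq:positivity-aBJS}, \eqref{eq: operator E_1-monotone}--\eqref{eq: operator C-monotone}, \eqref{eq: R-coercive}, together with \eqref{eq:tau-d-H0div-inequality}--\eqref{eq:tau-H0div-Xf-inequality} with $q=4/3$, the inf-sup argument \eqref{eq:J-coercive-vf-bound}--\eqref{eq:J-coercive-psi-bound} to control $\|\bv_f\|_{\bV_f}$ and $\|\bpsi\|_{\bLambda_f}$ in terms of $\|\bsi^\rd_{f,\epsilon}\|_{\bbL^2(\Omega_f)}$, and the smallness of $\bw_f,\bzeta$ exactly as in \eqref{eq:kappa-continuity bound}--\eqref{eq:operator L continuity bound} to absorb $\cK_{\bw_f}$ and $\cL_{\bzeta}$, we obtain a constant $C>0$ independent of $\epsilon$ such that
\begin{equation*}
\|\ubsi_\epsilon\|_\bQ + \|\ubu_\epsilon\|_\bS + \epsilon^{3/4}\|\bdiv(\bsi_{f,\epsilon})\|_{\bL^{4/3}(\Omega_f)} + \epsilon^{1/2}\|\div(\bu_{p,\epsilon})\|_{\L^2(\Omega_p)} \le C\big(\|\wh\bF\|_{\bQ'}+\|\wh\bG\|_{\bS'}\big).
\end{equation*}

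\textbf{Step 3 (Passage to the limit).} By reflexivity of $\bQ$ and $\bS$, a subsequence (still denoted by $\epsilon$) converges weakly to $(\ubsi,\ubu)\in\bQ\times\bS$. The bounds in Step 2 and \eqref{eq: R-continuous} give $\epsilon\cR(\ubsi_\epsilon)(\ubtau)\to 0$ for every fixed $\ubtau\in\bQ$, since $\epsilon\|\bdiv(\bsi_{f,\epsilon})\|_{\bL^{4/3}}^{1/3}=\epsilon^{3/4}(\epsilon^{3/4}\|\bdiv(\bsi_{f,\epsilon})\|_{\bL^{4/3}})^{1/3}\to 0$, and analogously for $R_p$. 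All remaining operators $\cE_1,\cA,\cB,\cE_2,\cC$, as well as the linear operators $\cK_{\bw_f}$ and $\cL_{\bzeta}$ (linear because $\bw_f$ and $\bzeta$ are fixed), are continuous linear maps between reflexive Banach spaces and hence weakly continuous. Passing to the limit term by term in \eqref{eq:regularization-T-auxiliary-problem-operator} shows that $(\ubsi,\ubu)$ solves \eqref{eq:T-auxiliary-problem-operator}.

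\textbf{Step 4 (Uniqueness and stability).} If $(\ubsi_1,\ubu_1)$ and $(\ubsi_2,\ubu_2)$ both solve \eqref{eq:T-auxiliary-problem-operator}, subtracting the equations, testing with the difference $(\ubsi_1-\ubsi_2,\ubu_1-\ubu_2)$, adding, and using the cancellation of the $\pm\cB$ terms reduces to the coercive estimate used in Lemma~\ref{lem:J-bijective}, together with the inf-sup bound \eqref{eq:inf-sup-B}. Since $\|\bw_f\|_{\bV_f}\le r_1^0$ and $\|\bzeta\|_{\bLambda_f}\le r_2^0$ ensure the nonlinear terms remain subordinate, we conclude $\ubsi_1=\ubsi_2$ and then $\ubu_1=\ubu_2$. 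The a priori estimate from Step 2 survives the weak limit by lower semicontinuity of the norms, and specializing to the definitions of $\wh\bF$ and $\wh\bG$ in terms of $\wh\f_f,\wh\f_p,\wh\f_e,\wh q_p$ gives \eqref{eq:ubsi-ubu-bound-solution}.

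\textbf{Main obstacle.} The subtle point is Step 3: the bound on $\|\bdiv(\bsi_{f,\epsilon})\|_{\bL^{4/3}}$ is only of order $\epsilon^{-3/4}$, so the regularization term $\epsilon R_f(\bsi_{f,\epsilon})$ is not a priori small in $\bbX_f'$. One must carefully balance the power of $\epsilon$ against the growth of $\bdiv(\bsi_{f,\epsilon})$ using the precise coercivity exponent $4/3$ of $R_f$, as was done already in the coercivity analysis of Lemma~\ref{lem:J-bijective}. This is the step that dictates the choice of the exponents in the regularization.
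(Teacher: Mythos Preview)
Your overall architecture matches the paper's: solve the regularized problem via Browder--Minty (using Lemmas~\ref{lem:auxiliary-problem-first-fow}--\ref{lem:J-bijective}), derive $\epsilon$--independent bounds, pass to a weak limit, then prove uniqueness. However, there is a genuine gap, and your own write-up exposes it as an internal inconsistency.

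In Step~2 you assert that $\|\ubsi_\epsilon\|_\bQ$ is bounded independently of $\epsilon$. Since the $\bQ$-norm already contains $\|\bdiv(\bsi_{f,\epsilon})\|_{\bL^{4/3}(\Omega_f)}$ and $\|\div(\bu_{p,\epsilon})\|_{\L^2(\Omega_p)}$, this would make the additional weighted terms in your display redundant and would render your ``Main obstacle'' moot. But in that paragraph you then say the divergence bound is only $O(\epsilon^{-3/4})$, which contradicts the Step~2 claim. The energy identity by itself indeed gives only $\epsilon\|\bdiv(\bsi_{f,\epsilon})\|_{\bL^{4/3}}^{4/3}\le C$, i.e.\ an $\epsilon^{-3/4}$ bound, and with that alone you cannot extract a weakly convergent subsequence in $\bQ$; your Step~3 would not deliver a limit $\ubsi\in\bQ$.

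The missing ingredient, which the paper supplies, is to go back to the \emph{second} row of \eqref{eq:regularization-T-auxiliary-problem-operator} with test functions $\bv_{f}\in\bV_f$ and $w_{p}\in\W_p$. Because $\bdiv(\bbX_f)=(\bV_f)'$ and $\div(\bX_p)=(\W_p)'$, this yields directly
\[
\|\bdiv(\bsi_{f,\epsilon})\|_{\bL^{4/3}(\Omega_f)}+\|\div(\bu_{p,\epsilon})\|_{\L^2(\Omega_p)}
\le C\big(\|\wh\f_f\|_{\bL^2}+\|\wh q_p\|_{\L^2}+\|\bu_{f,\epsilon}\|_{\bL^2}+s_0\|p_{p,\epsilon}\|_{\W_p}+\|\bu_{s,\epsilon}\|_{\bV_p}\big),
\]
cf.\ \eqref{eq:bound-solution-data-2a}. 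The right-hand side is controlled by the energy identity and the inf-sup bound \eqref{eq:ubu-bound}, so the divergences are in fact uniformly bounded. Once you insert this step, the full $\bQ$-bound is legitimate, weak compactness in $\bQ$ holds, and the vanishing of $\epsilon\cR(\ubsi_\epsilon)$ is immediate without any delicate balancing of exponents. Your uniqueness sketch is on the right track but also underspecified: after the energy identity kills $\bsi_f^\rd,\bu_p,\bsi_e,p_p,\bu_f|_{\bL^2},\bu_s|_{\bL^2}$, you still need the inf-sup \eqref{eq:inf-sup-qp-xi}--\eqref{eq:inf-sup-vf-chif} to kill $\bgamma_f,\bvarphi,\lambda$, and then the second-row argument again (with $\bdiv(\bbX_f)=(\bV_f)'$) together with \eqref{eq:tau-d-H0div-inequality}--\eqref{eq:tau-H0div-Xf-inequality} to conclude $\bsi_f=\0$.
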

\begin{proof}
In Corollary~\ref{regularized-well-posed} we established the existence and uniqueness of a solution $(\ubsi_\epsilon, \ubu_\epsilon)\in \bQ\times \bS$ to the regularized problem \eqref{eq:regularization-T-auxiliary-problem-operator}, where $\ubsi_\epsilon = (\bsi_{f,\epsilon}, \bu_{p,\epsilon}, \bsi_{e,\epsilon})$ and $\ubu_\epsilon = (\bu_{f,\epsilon}, p_{p,\epsilon}, \bvarphi_{\epsilon}, \bu_{s,\epsilon}, \bgamma_{f,\epsilon}, \lambda_{\epsilon})$. In order to bound $\|\ubsi_\epsilon\|_\bQ$ and $\|\ubu_\epsilon\|_\bS$ independently of $\epsilon$, we proceed similarly to \cite[Lemma~4.6]{aeny2019}. To that end, we begin testing \eqref{eq:regularization-T-auxiliary-problem-operator} with $\ubtau = \ubsi_\epsilon$ and $\ubv = \ubu_\epsilon$, to obtain
\begin{equation*}
(\cE_1 + \cA + \epsilon\,\cR)(\ubsi_\epsilon)(\ubsi_\epsilon) + \cK_{\bw_f}(\ubu_\epsilon)(\ubsi_\epsilon) + (\cE_2 + \cC +\cL_{\bzeta})(\ubu_\epsilon)(\ubu_\epsilon) = \wh{\bF}(\ubsi_\epsilon) + \wh{\bG}(\ubu_\epsilon)\,,
\end{equation*}
which, together with the non-negativity and coercivity estimates of the operators $\cE_1, \cA, \cR, \cE_2, \cC$ (cf. \eqref{eq: operator E_1-monotone}, \eqref{eq: operator A-monotone}, \eqref{eq: R-coercive}, \eqref{eq: operator E_2-monotone}, \eqref{eq: operator C-monotone}), and the continuity of $\cK_{\bw_f}$ and $\cL_{\bzeta}$ (cf. \eqref{eq:continuity-cK-wf}, \eqref{eq:continuity-cL-zeta}), yields
\begin{align}
& \frac{1}{2\,\mu} \|\bsi^\rd_{f,\epsilon}\|^2_{\bbL^2(\Omega_f)} 
+ \mu\,k_{\min} \|\bu_{p,\epsilon}\|^2_{\bL^2(\Omega_p)} 
+ C_A \|\bsi_{e,\epsilon}\|^2_{\bSigma_e} 
+ \epsilon\,\|\bdiv(\bsi_{f,\epsilon})\|^{4/3}_{\bL^{4/3}(\Omega_f)}  
+ \epsilon\,\|\div(\bu_{p,\epsilon})\|^2_{\L^2(\Omega_p)}
\nonumber \\
&\quad +\, \rho_f\|\bu_{f,\epsilon}\|^2_{\bL^2(\Omega_f)} + s_0 \|p_{p,\epsilon}\|^2_{\W_p}
+ \rho_p\|\bu_{s,\epsilon}\|^2_{\bL^2(\Omega_p)} 
+ c_I\,\sum^{n-1}_{j=1} \|( \bvarphi_{\epsilon}-\bu_{s,\epsilon})\cdot\bt_{f,j}\|^2_{\L^2(\Gamma_{fp})} \nonumber \\
&\quad -\, \frac{\rho_f\,n^{1/2}}{2\,\mu}\,\|\bw_f\|_{\bV_f}\,\|\bu_{f,\epsilon}\|_{\bV_f}\,\|\bsi^\rd_{f,\epsilon}\|_{\bbL^2(\Omega_f)}
- C_{\cL}\|\bzeta\|_{\bLambda_f}\|\bvarphi_{\epsilon}\|_{\bLambda_f}^2 \nonumber \\
& \leq\,   
\|\wh{\f}_f\|_{\bL^2(\Omega_f)} \,\|\bu_{f,\epsilon}\|_{\bL^2(\Omega_f)} + \|\wh{\f}_p\|_{\bL^2(\Omega_p)} \|\bu_{s,\epsilon}\|_{\bL^2(\Omega_p)}\, + \,\,\|\wh{q}_p\|_{\L^2(\Omega_p)}\|p_{p,\epsilon}\|_{\W_p}+\|\wh{\f}_e\|_{\bbL^2(\Omega_p)}\,\|\bsi_{e,\epsilon}\|_{\bSigma_e}. \label{eq:bound-solution-data-1}
\end{align}
Then, choosing $\epsilon$ as in \eqref{eq:epsilon-bound} and using \eqref{eq:kappa-continuity bound} and \eqref{eq:operator L continuity bound},
we deduce that
\begin{align}
& \frac{179}{600\mu}\|\bsi^\rd_{f,\epsilon}\|^2_{\bbL^2(\Omega_f)} 
+ \mu\,k_{\min} \|\bu_{p,\epsilon}\|^2_{\bL^2(\Omega_p)}
+ C_A\,\|\bsi_{e,\epsilon}\|^2_{\bSigma_e}
+ \epsilon\,\|\bdiv(\bsi_{f,\epsilon})\|^{4/3}_{\bL^{4/3}(\Omega_f)} 
- \frac{\epsilon}{5}\,\|\bdiv(\bsi_{f,\epsilon})\|^{2/3}_{\bL^{4/3}(\Omega_f)}
\nonumber \\
&\quad +\, \epsilon\,\|\div(\bu_{p,\epsilon})\|^2_{\L^2(\Omega_p)}
+ \rho_f \|\bu_{f,\epsilon}\|^2_{\bL^2(\Omega_f)} 
+ s_0\|p_{p,\epsilon}\|^2_{\W_p}
+ \rho_p\|\bu_{s,\epsilon}\|^2_{\bL^2(\Omega_p)} 
\nonumber \\
& \leq  \|\wh{\f}_f\|_{\bL^2(\Omega_f)} \|\bu_{f,\epsilon}\|_{\bL^2(\Omega_f)} 
+ \|\wh{\f}_p\|_{\bL^2(\Omega_p)} \|\bu_{s,\epsilon}\|_{\bL^2(\Omega_p)}
+ \|\wh{q}_p\|_{\L^2(\Omega_p)}\|p_{p,\epsilon}\|_{\W_p} 
+ \|\wh{\f}_e\|_{\bbL^2(\Omega_p)}\,\|\bsi_{e,\epsilon}\|_{\bSigma_e} \,.
\label{eq:bound-solution-data-2}
\end{align}

On the other hand, from the first row of \eqref{eq:regularization-T-auxiliary-problem-operator}, employing inf-sup condition of $\cB$ (cf. Lemma ~\ref{lem:inf-sup-conditions}), continuity of $\cK_{\bw_f}$ (cf. \eqref{eq:continuity-cK-wf}) along with $\|\bw_f\|_{\bV_f}\leq r_1^0$ (cf. \eqref{eq:r_1^0 defn}) and continuity of $\cR$ (cf. \eqref{eq: R-continuous}), we deduce that
\begin{align}
&\|\ubu_\epsilon\|_\bS
\, \leq \, C\,\Big( \|\bsi^\rd_{f,\epsilon}\|_{\bbL^2(\Omega_f)} + \|\bu_{p,\epsilon}\|_{\bL^2(\Omega_p)} + \|\bsi_{e,\epsilon}\|_{\bSigma_e} + \|\wh{\f}_e\|_{\bbL^2(\Omega_p)}  \nonumber \\ 
&\qquad +\, \epsilon\,\|\bdiv(\bsi_{f,\epsilon})\|^{1/3}_{\bL^{4/3}(\Omega_f)}
+ \epsilon\,\|\div(\bu_{p,\epsilon})\|_{\bL^2(\Omega_p)} \Big)\,.\label{eq:ubu-bound}
\end{align}
In turn, from the second row of \eqref{eq:regularization-T-auxiliary-problem-operator} and recalling that $\bdiv(\bbX_f) = (\bV_f)'$ and $\div(\bX_p) = (\W_p)'$, it follows that
\begin{align}
& \|\bdiv(\bsi_{f,\epsilon})\|_{\bL^{4/3}(\Omega_f)} + \|\div(\bu_{p,\epsilon})\|_{\L^2(\Omega_p)} \nonumber \\ 
&\quad \leq\, C\Big(\|\wh{\f}_f\|_{\bL^2(\Omega_f)} + \|\wh{q}_p\|_{\L^2(\Omega_p)} + \|\bu_{f,\epsilon}\|_{\bL^2(\Omega_f)} + s_0\|p_{p,\epsilon}\|_{\W_p} + \|\bu_{s,\epsilon}\|_{\bV_p} \Big).\label{eq:bound-solution-data-2a}
\end{align}
Next, as in Lemma \ref{lem:J-bijective}, we consider the following two cases:

\medskip
\noindent {\bf Case 1:} If $\|\bdiv(\bsi_{f,\epsilon}(\ubv))\|_{\bL^{4/3}(\Omega_f)} \geq 1$
then the fourth and fifth terms on right-hand side of \eqref{eq:bound-solution-data-2} are replaced by  $\frac{4\,\epsilon}{5}\|\bdiv(\bsi_{f,\epsilon}(\ubv))\|^{4/3}_{\bL^{4/3}(\Omega_f)}$.
Thus, using similar arguments to the ones employed in Lemma \ref{lem:J-bijective}, Young's inequality with appropriate weights on right-hand side of \eqref{eq:bound-solution-data-2} in combination with \eqref{eq:ubu-bound} and \eqref{eq:bound-solution-data-2a}, and estimates \eqref{eq:tau-d-H0div-inequality}--\eqref{eq:tau-H0div-Xf-inequality}, we obtain
\begin{align}
& \|\bsi_{f,\epsilon}\|^2_{\bbX_f} 
+ \|\bu_{p,\epsilon}\|^2_{\bX_p}
+ \|\bsi_{e,\epsilon}\|^2_{\bSigma_e}
+ \|\ubu_\epsilon\|^2_\bS 
+ s_0\,\|p_{p,\epsilon}\|^2_{\W_p} 
+ \epsilon\,\|\bdiv(\bsi_{f,\epsilon})\|^{4/3}_{\bL^{4/3}(\Omega_f)}
+ \epsilon\,\|\div(\bu_{p,\epsilon})\|^2_{\L^2(\Omega_p)} 
\nonumber \\
&\quad \,\leq\,  C\big(\|\wh{\f}_f\|^2_{\bL^2(\Omega_f)}  + \|\wh{\f}_p\|^2_{\bL^2(\Omega_p)} + \,\, \|\wh{q}_p\|^2_{\L^2(\Omega_p)}+ \|\wh{\f}_e\|^2_{\bbL^2(\Omega_p)}\big) \,.
\label{eq:bound-solution-data-5}
\end{align}

\medskip
\noindent {\bf Case 2:} If $\|\bdiv(\bsi_{f,\epsilon})\|_{\bL^{4/3}(\Omega_f)} < 1$ then the negative term in \eqref{eq:bound-solution-data-2} is moved to the right-hand side and bounded by $\epsilon/5$.
Thus, analogously to \eqref{eq:bound-solution-data-5}, we use Young's inequality, \eqref{eq:ubu-bound} and \eqref{eq:bound-solution-data-2a}, and estimates \eqref{eq:tau-d-H0div-inequality}--\eqref{eq:tau-H0div-Xf-inequality}, to derive
\begin{align}
&\|\bsi_{f,\epsilon}\|^2_{\bbX_f} 
+ \|\bu_{p,\epsilon}\|^2_{\bX_p}
+ \|\bsi_{e,\epsilon}\|^2_{\bSigma_e}
+ \|\ubu_\epsilon\|^2_\bS 
+ s_0\,\|p_{p,\epsilon}\|^2_{\W_p} 
+ \epsilon\,\|\bdiv(\bsi_{f,\epsilon})\|^{4/3}_{\bL^{4/3}(\Omega_f)} 
+ \epsilon\,\|\div(\bu_{p,\epsilon})\|^2_{\L^2(\Omega_p)}
\nonumber \\ 
&\quad \,\leq\,  C\,\Big(\|\wh{\f}_f\|^2_{\bL^2(\Omega_f)}  + \|\wh{\f}_p\|^2_{\bL^2(\Omega_p)} + \,\, \|\wh{q}_p\|^2_{\L^2(\Omega_p)}+ \|\wh{\f}_e\|^2_{\bbL^2(\Omega_p)}+ \epsilon \Big)\,.
\label{eq:bound-solution-data-9}
\end{align}
From \eqref{eq:bound-solution-data-5} and \eqref{eq:bound-solution-data-9}, we deduce that for both cases, there exists $\wt{c}_\bT > 0$ independent of $\epsilon$ and $s_0$, such that
\begin{equation}\label{eq:bound-solution-independently-epsilon-1}
\|(\ubsi_\epsilon,\ubu_\epsilon)\|_{\bQ\times \bS}
\,\leq\,\wt{c}_\bT\,\Big\{ \|\wh{\f}_f\|_{\bL^2(\Omega_f)} +  \|\wh{\f}_e\|_{\bbL^2(\Omega_p)} + \|\wh{\f}_p\|_{\bL^2(\Omega_p)} + \| \wh{q}_p\|_{\L^2(\Omega_p)}  + \epsilon^{1/2} \Big\},
\end{equation}
which prove that the solution $(\ubsi_\epsilon, \ubu_\epsilon)\in \bQ\times \bS$ of \eqref{eq:regularization-T-auxiliary-problem-operator} is bounded independently of $\epsilon$.

Given $\epsilon>0$, $\{(\ubsi_\epsilon, \ubu_\epsilon)\}$ solution of \eqref{eq:regularization-T-auxiliary-problem-operator} is a bounded sequence independently of $\epsilon$.
Then, since $\bQ$ and $\bS$ are reflexive Banach spaces, as $\epsilon\to 0$ we can extract weakly convergent subsequences $\{\ubsi_{\epsilon,n}\}^\infty_{n=1}$, $\{\ubu_{\epsilon,n}\}^\infty_{n=1}$, such that $\ubsi_{\epsilon,n} \rightharpoonup \ubsi$ in $\bQ$ and $\ubu_{\epsilon,n}\rightharpoonup \ubu$ in $\bS$, which together with the fact that $\cE_1, \cE_2, \cA, \cB, \cK_{\bw_f}$, $\cL_\bzeta$, and $\cC$ are continuous, imply that $(\ubsi,\ubu)$ solve \eqref{eq:T-auxiliary-problem-operator}.
Moreover, proceeding analogously to \eqref{eq:bound-solution-independently-epsilon-1} we derive \eqref{eq:ubsi-ubu-bound-solution}, with $c_\bT$ independent of $\bw_f$ and $\bzeta$.

Finally, we prove that the solution of \eqref{eq:T-auxiliary-problem-operator} is unique. Since \eqref{eq:T-auxiliary-problem-operator} is linear, it is sufficient to prove that the problem with zero data has only the zero solution. 
Taking $(\wh{\bF},\wh{\bG}) = (\0,\0) $ in \eqref{eq:T-auxiliary-problem-operator}, testing it with solution $(\ubsi,\ubu)$ and employing non-negativity and coercivity estimates of $\cE_1, \cA,\cE_2, \cC$ (cf. Lemma \ref{lem:coercivity-properties-A-E2}) and continuity of $\cK_{\bw_f}$ and $\cL_{\bzeta}$ (cf. \eqref{eq:continuity-cK-wf}, \eqref{eq:continuity-cL-zeta}), yields 
\begin{align}
&\frac{1}{2\,\mu}\,\| \bsi^\rd_f\|^2_{\bbL^2(\Omega_f)} 
+ \mu\,k_{\min}\,\|\bu_p\|^2_{\bL^2(\Omega_p)} 
+ C_A\,\|\bsi_e\|^2_{\bSigma_e} 
+ \rho_f \|\bu_f\|^2_{\bL^2(\Omega_f)} 
+ s_0\|p_p\|^2_{\W_p}
+ \rho_p\|\bu_s\|^2_{\bL^2(\Omega_p)}
\nonumber \\
&\quad +\, c_I\,\sum^{n-1}_{j=1} \|( \bvarphi-\bu_s)\cdot\bt_{f,j}\|^2_{\L^2(\Gamma_{fp})}
- \frac{\rho_f\,n^{1/2}}{2\,\mu}\,\|\bw_f\|_{\bV_f}\|\bu_f\|_{\bV_f}\|\bsi^\rd_f\|_{\bbL^2(\Omega_f)}
- C_{\cL}\|\bzeta\|_{\bLambda_f}\|\bvarphi\|^2_{\bLambda_f} \,\leq\, 0 \,.
\label{eq:sol-uniqueness-4.8-1}
\end{align}
For the last two terms we proceed as in \eqref{eq:bount-to-coercivity1}. Testing 
the first row of \eqref{eq:T-auxiliary-problem-operator} with $\ubtau=(\btau_f, \0, \0)$ in combination with the inf-sup condition of $B_f$ (cf. \eqref{eq:inf-sup-vf-chif}), the continuity of $a_f$ (cf. \eqref{bilinear-form-1}), and the fact that $\|\bw_f\|_{\bV_f}\leq r_1^0$ (cf. \eqref{eq:r_1^0 defn}), we obtain
\begin{equation}\label{eq:ubu-bound-1}
\frac{5\,\beta}{6}\,\|(\bu_f, \bgamma_f, \bvarphi)\|_{\bV_f\times \bbQ_f\times \bLambda_f}
\,\leq \frac{1}{2\,\mu}\,\| \bsi^\rd_f \|_{\bbL^2(\Omega_f)}\,.
\end{equation}
Thus, omitting the seventh term in \eqref{eq:sol-uniqueness-4.8-1}, using \eqref{eq:ubu-bound-1} to bound 
$\|\bu_f\|_{\bV_f}$ and $\|\bvarphi\|_{\bLambda_f}$ by $\frac{3}{5\mu\beta}\| \bsi^\rd_f\|_{\bbL^2(\Omega_f)}$ in \eqref{eq:sol-uniqueness-4.8-1}, and recalling that $\|\bw_f\|_{\bV_f}\leq r_1^0$  (cf. \eqref{eq:r_1^0 defn}) and $\|\bzeta\|_{\bLambda_f} \leq r_2^0$ (cf. \eqref{eq:r20-def}), \eqref{eq:sol-uniqueness-4.8-1}, we deduce that
\begin{align}
& \dfrac{37}{100\,\mu}\,\|\bsi^\rd_f\|^2_{\bbL^2(\Omega_f)} 
+ C\,\Big(\|\bu_p\|^2_{\bL^2(\Omega_p)} 
+ \|\bsi_e\|^2_{\bSigma_e}
+ \|\bu_f\|^2_{\bL^2(\Omega_f)}
+ \|p_p\|^2_{\W_p}
+ \|\bu_s\|^2_{\bL^2(\Omega_p)}\Big) \,\leq\, 0\,, \label{eq:sol-uniqueness-sigfd-others}
\end{align}
with $C>0$ depending on $\mu, k_{\min}, C_A, \rho_f, s_0$, and $\rho_p$,
implying $\bsi^\rd_f = \0, \bu_p = \0, \bsi_e = \0, \bu_f = \0, p_p = 0$ and $\bu_s = \0$. 
On the other hand, we test the first row of \eqref{eq:T-auxiliary-problem-operator} with $\ubtau=(\btau_f,\bv_p,\0)$, employ the inf-sup conditions of $\cB$ (cf. Lemma \ref{lem:inf-sup-conditions}), and the continuity of $\kappa_{\bw_f}$ (cf. \eqref{eq:continuity-cK-wf}) with $\|\bw_f\|_{\bV_f}\leq r_1^0$ (cf. \eqref{eq:r_1^0 defn}), to obtain
\begin{equation}\label{eq:sol-uniqueness-4.8-3}
\frac{5\beta}{6}\,\|(\bu_f ,p_p, \bgamma_f, \bvarphi, \lambda)\|
\,\leq\, \sup_{\0\neq (\btau_f,\bv_p)\in \bbX_{f}\times \bX_p}  \frac{-a_f(\bsi_f,\btau_f) - a^d_p(\bu_p ,\bv_p)}{\|(\btau_f,\bv_p)\|_{\bbX_f\times \bX_p}} \,=\, 0\,.
\end{equation}
Therefore, $\bgamma_f = \0, \bvarphi = \0,$ and  $\lambda = 0 $. Finally, from the second row in \eqref{eq:T-auxiliary-problem-operator}, we have the identity
\begin{equation*}
b_f(\bsi_f ,\bv_f) 
\, = \,  \rho_f\,(\bu_f,\bv_f)_{\Omega_f} \,=\, 0 \quad \forall\,\bv_f\in \bV_f, 
\end{equation*}
which together with the property $\bdiv(\bbX_f) = (\bV_f)'$ allow us to deduce that $\bdiv(\bsi_f) = \0$. Employing the inequalities \eqref{eq:tau-d-H0div-inequality} and \eqref{eq:tau-H0div-Xf-inequality}, we conclude that $\bsi_f = \0$. Therefore
\eqref{eq:T-auxiliary-problem-operator} has a unique solution.
\end{proof}

As an immediate consequence we have the following corollary.
\begin{cor}\label{eq:T-well-delfined}
Assume that the conditions of Lemma ~\ref{thm:well-posedness-1} are satisfied. The operator $\bT$ defined in \eqref{eq:definition-operator-T} is well defined and it satisfies
\begin{equation}\label{eq:operator T-bound-solution}
\|\bT(\bw_f,\bzeta)\|_{\bV_f\times \bLambda_f} 
\,\leq\, {c_\bT\,\Big\{ \|\wh{\f}_f\|_{\bL^2(\Omega_f)} +  \|\wh{\f}_e\|_{\bbL^2(\Omega_p)}+ \|\wh{\f}_p\|_{\bL^2(\Omega_p)} +  \| \wh{q}_p\|_{\L^2(\Omega_p)} \Big\}}\,,
\end{equation}
where $c_\bT$ is the constant introduced in \eqref{eq:ubsi-ubu-bound-solution}.
\end{cor}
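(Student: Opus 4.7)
The corollary is essentially a direct consequence of Lemma~\ref{thm:well-posedness-1}, so the plan is to extract the required information rather than carry out fresh estimates. First, I would observe that under the hypotheses $\|\bw_f\|_{\bV_f}\leq r_1^0$ and $\|\bzeta\|_{\bLambda_f}\leq r_2^0$ (inherited from Lemma~\ref{thm:well-posedness-1}), the problem \eqref{eq:T-auxiliary-problem-operator} has a unique solution $(\ubsi,\ubu)\in \bQ\times \bS$. In particular the second component $\ubu = (\bu_f, p_p, \bgamma_f, \bu_s, \bvarphi, \lambda)\in \bS$ is uniquely determined, so the pair $(\bu_f, \bvarphi)$ extracted from $\ubu$ is well defined, and hence the map $\bT(\bw_f,\bzeta) := (\bu_f,\bvarphi)$ is single-valued on the closed ball of radius $(r_1^0, r_2^0)$ of $\bV_f\times \bLambda_f$.

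Next, for the norm estimate, I would use the definition of $\|\cdot\|_{\bS}$, which includes $\|\bu_f\|_{\bV_f}^2 + \|\bvarphi\|_{\bLambda_f}^2$ among its summands. This immediately gives the trivial componentwise bound
\begin{equation*}
\|\bT(\bw_f,\bzeta)\|_{\bV_f\times\bLambda_f}^2 \,=\, \|\bu_f\|_{\bV_f}^2 + \|\bvarphi\|_{\bLambda_f}^2 \,\leq\, \|\ubu\|_{\bS}^2 \,\leq\, \|(\ubsi,\ubu)\|_{\bQ\times\bS}^2,
\end{equation*}
and combining this with the a priori bound \eqref{eq:ubsi-ubu-bound-solution} from Lemma~\ref{thm:well-posedness-1} yields \eqref{eq:operator T-bound-solution} with the same constant $c_\bT$.

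There is no genuine obstacle in this step: all the analytical work (the regularization, the Browder--Minty argument, the uniform bound in $\epsilon$, and the passage to the limit) has already been carried out in the proof of Lemma~\ref{thm:well-posedness-1}. The only small point worth stating explicitly is that the bound on $\bT$ is inherited directly from the $\bS$-part of the solution bound, so no additional continuity or inf-sup arguments are needed here.
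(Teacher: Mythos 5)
Your proposal is correct and matches the paper's intent exactly: the paper states this corollary as "an immediate consequence" of Lemma~\ref{thm:well-posedness-1} with no written proof, and your argument (well-definedness from uniqueness of the solution to \eqref{eq:T-auxiliary-problem-operator}, plus the observation that $\|\bu_f\|_{\bV_f}^2+\|\bvarphi\|_{\bLambda_f}^2\leq\|\ubu\|_{\bS}^2\leq\|(\ubsi,\ubu)\|_{\bQ\times\bS}^2$ combined with \eqref{eq:ubsi-ubu-bound-solution}) is precisely the routine derivation being suppressed.
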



\subsubsection[]{Step 2: The resolvent system \eqref{eq:T-auxiliary-problem-operator-A} is well-posed}\label{sec:domain-D-nonempty}

In this section we proceed analogously to \cite{cgos2017} (see also \cite{cgo2021}) and establish the existence and uniqueness of a fixed-point of operator $\bT$ (cf. \eqref{eq:definition-operator-T}) by means of the well known Banach fixed-point theorem which implies that the resolvent system \eqref{eq:T-auxiliary-problem-operator-A} has a unique solution.

\begin{lem}\label{lem:T-contraction-mapping}
Let $r_1\in (0,r_1^0]$ and $r_2\in (0,r_2^0]$, with $r_1^0$ and $r_2^0$ defined in \eqref{eq:r_1^0 defn} and \eqref{eq:r20-def}, respectively. Let $\bW_{r_1,r_2}$ be the closed set defined by
\begin{equation}\label{eq:Wr-definition}
\bW_{r_1,r_2} := \Big\{ (\bw_f,\bzeta)\in \bV_f\times \bLambda_f \,:\quad \| \bw_f\|_{\bV_f} \leq r_1,\quad \|\bzeta\|_{\bLambda_f} \leq r_2 \Big\}\,,
\end{equation}
and set $r_0:=\min\big\{r_1^0, r_2^0\big\}\,.$ Then, for all $(\bw_f,\bzeta), (\wt{\bw}_f,\wt{\bzeta}) \in \bW_{r_1,r_2}$, there holds
\begin{align}
&\|\bT(\bw_f,\bzeta) - \bT(\wt{\bw}_f,\wt{\bzeta}) \|_{\bV_f\times \bLambda_f} \nonumber \\
& \quad \leq\, \frac{c_\bT}{r_0}\,\Big\{ \|\wh{\f}_f\|_{\bL^2(\Omega_f)} +  \|\wh{\f}_e\|_{\bbL^2(\Omega_p)}+ \|\wh{\f}_p\|_{\bL^2(\Omega_p)} + \| \wh{q}_p\|_{\L^2(\Omega_p)} \Big\} \|(\bw_f,\bzeta) - (\wt{\bw}_f,\wt{\bzeta})\|_{\bV_f\times \bLambda_f}.\label{eq:Lipschitz-continuity}
\end{align}
\end{lem}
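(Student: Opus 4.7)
The plan is to study the difference of the two resolvent problems defining $\bT(\bw_f,\bzeta) = (\bu_f,\bvarphi)$ and $\bT(\wt{\bw}_f,\wt{\bzeta}) = (\wt{\bu}_f,\wt{\bvarphi})$. Let $(\ubsi,\ubu)$ and $(\wt{\ubsi},\wt{\ubu})$ denote the respective solutions of \eqref{eq:T-auxiliary-problem-operator} in $\bQ\times\bS$, provided by Lemma~\ref{thm:well-posedness-1}. Exploiting the bilinearity of the maps $(\bw_f,\bu_f)\mapsto \kappa_{\bw_f}(\bu_f,\cdot)$ and $(\bzeta,\bvarphi)\mapsto l_{\bzeta}(\bvarphi,\cdot)$, I would split
\begin{equation*}
\cK_{\bw_f}(\ubu) - \cK_{\wt{\bw}_f}(\wt{\ubu}) \,=\, \cK_{\bw_f}(\ubu - \wt{\ubu}) + \cK_{\bw_f - \wt{\bw}_f}(\wt{\ubu}),\quad
\cL_{\bzeta}(\ubu) - \cL_{\wt{\bzeta}}(\wt{\ubu}) \,=\, \cL_{\bzeta}(\ubu - \wt{\ubu}) + \cL_{\bzeta - \wt{\bzeta}}(\wt{\ubu}),
\end{equation*}
so that subtracting the two versions of \eqref{eq:T-auxiliary-problem-operator} yields a linear system of exactly the same form as \eqref{eq:T-auxiliary-problem-operator} but with right-hand side $-\cK_{\bw_f-\wt{\bw}_f}(\wt{\ubu})\in\bQ'$ and $-\cL_{\bzeta-\wt{\bzeta}}(\wt{\ubu})\in\bS'$, and coefficient operators parameterized by $(\bw_f,\bzeta)\in\bW_{r_1,r_2}$.

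Next I would replay the stability portion of the proof of Lemma~\ref{thm:well-posedness-1} on this difference system. Testing the first and second equations with $\ubsi-\wt{\ubsi}$ and $\ubu-\wt{\ubu}$ respectively and adding, the mixed $\cB$-terms cancel, and the non-negativity bounds of Lemma~\ref{lem:coercivity-properties-A-E2} together with the continuity estimates \eqref{eq:continuity-cK-wf}--\eqref{eq:continuity-cL-zeta}, combined with $\|\bw_f\|_{\bV_f}\le r_1^0$ and $\|\bzeta\|_{\bLambda_f}\le r_2^0$, allow one to absorb $\cK_{\bw_f}(\ubu-\wt{\ubu})(\ubsi-\wt{\ubsi})$ and $\cL_{\bzeta}(\ubu-\wt{\ubu})(\ubu-\wt{\ubu})$ on the left, exactly as in the passages \eqref{eq:kappa-continuity bound}--\eqref{eq:operator L continuity bound-1}. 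Coupling this stress-side coercivity with the inf-sup conditions of Lemma~\ref{lem:inf-sup-conditions}, as done to derive \eqref{eq:ubu-bound-1}--\eqref{eq:uf-varphi-bound-1}, I get a bound of the form
\begin{equation*}
\|\ubsi - \wt{\ubsi}\|_{\bQ} + \|\ubu - \wt{\ubu}\|_{\bS} \,\leq\, C\,\Bigl(\|\cK_{\bw_f-\wt{\bw}_f}(\wt{\ubu})\|_{\bQ'} + \|\cL_{\bzeta-\wt{\bzeta}}(\wt{\ubu})\|_{\bS'}\Bigr).
\end{equation*}

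The final step is to estimate the two dual-space norms via the continuity bounds \eqref{eq:continuity-cK-wf}--\eqref{eq:continuity-cL-zeta}, which give the factor $\tfrac{\rho_f}{2\mu}\|\bw_f-\wt{\bw}_f\|_{\bV_f}\|\wt{\ubu}\|_{\bS}$ and $C_{\cL}\|\bzeta-\wt{\bzeta}\|_{\bLambda_f}\|\wt{\ubu}\|_{\bS}$, respectively, and then to apply the a priori bound \eqref{eq:ubsi-ubu-bound-solution} to $\wt{\ubu}$ to replace $\|\wt{\ubu}\|_{\bS}$ by $c_{\bT}\{\|\wh{\f}_f\|_{\bL^2(\Omega_f)}+\|\wh{\f}_e\|_{\bbL^2(\Omega_p)}+\|\wh{\f}_p\|_{\bL^2(\Omega_p)}+\|\wh{q}_p\|_{\L^2(\Omega_p)}\}$. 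Recognizing, via the definitions \eqref{eq:r_1^0 defn} and \eqref{eq:r20-def}, that $\tfrac{\rho_f}{2\mu} = \tfrac{\beta}{6\,r_1^0}$ and $C_{\cL} = \tfrac{\mu\beta^2}{6\,r_2^0}$, so that both pre-factors are dominated by an absolute constant times $1/r^0$, and observing that $(\bu_f-\wt{\bu}_f,\bvarphi-\wt{\bvarphi})$ is part of $\ubu-\wt{\ubu}$, the estimate \eqref{eq:Lipschitz-continuity} follows.

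The main technical point will be the coercivity+inf-sup step on the tested difference system, since one must recover control of $\|\bu_f-\wt{\bu}_f\|_{\bV_f}$ and $\|\bvarphi-\wt{\bvarphi}\|_{\bLambda_f}$ rather than only of the stress differences; this is precisely where the radius assumptions $r_1\leq r_1^0$ and $r_2\leq r_2^0$ are essential and where the factor $1/r^0$ in the Lipschitz constant arises. Once this is handled as in the proof of Lemma~\ref{thm:well-posedness-1}, the remaining algebra is a direct combination of continuity, inf-sup and the a priori bound for $\wt{\ubu}$.
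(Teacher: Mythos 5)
Your proposal is correct and follows essentially the same route as the paper: form the difference of the two resolvent systems, absorb the frozen-coefficient nonlinear terms using the radius bounds $r_1\le r_1^0$, $r_2\le r_2^0$, recover $\|\bu_f-\wt{\bu}_f\|_{\bV_f}$ and $\|\bvarphi-\wt{\bvarphi}\|_{\bLambda_f}$ via the inf-sup condition for $\cB$, and close with the a priori bound \eqref{eq:ubsi-ubu-bound-solution}. The only differences are immaterial: the paper freezes $\wt{\bw}_f,\wt{\bzeta}$ in the coefficient operators and places $\ubu$ (rather than $\wt{\ubu}$) in the forcing terms, and — as you correctly anticipate — the stability step must indeed be replayed by hand rather than quoted, since the right-hand side of the difference system is not of the restricted form $\wh{\bF}\in\bQ_2'$, $\wh{\bG}\in\bS_2'$ covered by Lemma~\ref{thm:well-posedness-1}; recovering the exact prefactor $c_\bT/r^0$ then requires the same careful Young-inequality weighting as in \eqref{eq:Lipschitz-3}--\eqref{eq:Lipschitz-3a}.
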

\begin{proof}
Given $(\bw_f,\bzeta), (\wt{\bw}_f,\wt{\bzeta}) \in \bW_{r_1,r_2}$, we let $(\bu_f,\bvarphi) := \bT(\bw_f,\bzeta)$ and $(\wt{\bu}_f,\wt{\bvarphi}) := \bT(\wt{\bw}_f,\wt{\bzeta})$. 
According to the definition of $\bT$ (cf. \eqref{eq:definition-operator-T}) and some algebraic computations, it follows that
\begin{align}
(\cE_1 + \cA)(\ubsi - \wt{\bsi})(\ubtau) + (\cB' + \cK_{\wt{\bw}_f})(\ubu - \wt{\ubu})(\ubtau) &= -\,\cK_{\bw_f - \wt{\bw}_f}(\ubu)(\ubtau) \quad \forall\,\ubtau\in \bQ\,, \nonumber \\[0.5ex]
\ds -\,\cB(\ubsi - \wt{\ubsi})(\ubv) + (\cE_2 + \cC +\cL_{\wt{\bzeta}})(\ubu - \wt{\ubu})(\ubv) &= -\cL_{\bzeta-\wt{\bzeta}}(\ubu )(\ubv) \quad \forall\,\ubv\in \bS\,. \label{eq:subtracting-auxiliary-problemas-T}
\end{align}
Testing with $\ubtau = \ubsi - \wt{\ubsi}$ and $\ubv = \ubu - \wt{\ubu}$ and employing the monotonicity of $a_f, a^d_p$ and $\cE_1$ (cf. \eqref{eq:coercivity-af}, \eqref{eq:coercivity-adp}, \eqref{eq: operator E_1-monotone}) and the continuity of  $\kappa_{\bw_f}$ and $l_{\bzeta}$ (cf. \eqref{eq:continuity-cK-wf}, \eqref{eq:continuity-cL-zeta}), we deduce that
\begin{align}
& \|\bsi^\rd_f - \wt{\bsi}^\rd_f\|^2_{\bbL^2(\Omega_f)}
\,\leq\, \rho_f\,n^{1/2}\,\Big( \|\bu_f\|_{\bV_f}\,\|\bw_f - \wt{\bw}_f\|_{\bV_f} + \|\wt{\bw}_f\|_{\bV_f}\,\|\bu_f - \wt{\bu}_f\|_{\bV_f} \Big)\|\bsi^\rd_f - \wt{\bsi}^\rd_f\|_{\bbL^2(\Omega_f)} \nonumber \\ 
& \qquad  +\, 2\mu C_{\cL}\Big(\|\wt{\bzeta}\|_{\bLambda_f}\|\bvarphi-\wt{\bvarphi}\|^2_{\bLambda_f}+\|\bvarphi\|_{\bLambda_f}\|\bzeta-\wt{\bzeta}\|_{\bLambda_f} \|\bvarphi-\wt{\bvarphi}\|_{\bLambda_f}\Big)\,. \label{eq:cont-bound-1a}
\end{align}
Using Young's inequality with $\epsilon = 8\,C_{\cL}/(\mu\beta^2)$ on the last term of right-hand side of \eqref{eq:cont-bound-1a}, we obtain
\begin{align}
&\|\bsi^\rd_f - \wt{\bsi}^\rd_f\|^2_{\bbL^2(\Omega_f)}
\,\leq\, \rho_f^2\,n\,\Big(2 \|\bu_f\|^2_{\bV_f}\,\|\bw_f - \wt{\bw}_f\|^2_{\bV_f} + 2\|\wt{\bw}_f\|^2_{\bV_f}\,\|\bu_f - \wt{\bu}_f\|^2_{\bV_f} \Big) \nonumber \\ 
& \qquad +\, 4\mu C_{\cL}\|\wt{\bzeta}\|_{\bLambda_f}\|\bvarphi-\wt{\bvarphi}\|^2_{\bLambda_f}+\frac{16 C_{\cL}^2}{\beta^2}\|\bvarphi\|^2_{\bLambda_f}\|\bzeta-\wt{\bzeta}\|^2_{\bLambda_f} + \frac{\mu^2\beta^2}{4}\|\bvarphi-\wt{\bvarphi}\|^2_{\bLambda_f}.\label{eq:sigfbound-1}
\end{align}
In turn, rearranging the first row of \eqref{eq:subtracting-auxiliary-problemas-T}, we obtain
\begin{equation*}
\ds (\cB' + \cK_{\wt{\bw}_f})(\ubu - \wt{\ubu})(\ubtau) 
\,=\, -\,\cK_{\bw_f - \wt{\bw}_f}\,(\ubu)(\ubtau) - (\cE_1 + \cA)\,(\ubsi - \wt{\ubsi})(\ubtau) \quad \forall\,\ubtau\in \bQ.
\end{equation*}
Testing with $\ubtau=(\btau_f, \0, \0)$ and combining with the inf-sup condition of $B_f$ (cf. \eqref{eq:inf-sup-vf-chif}), and the continuity of $a_f$ and $\cK_{\wt{\bw}_f}$ (cf. \eqref{bilinear-form-1}, \eqref{eq:continuity-cK-wf}) along with the fact that $\|\wt{\bw}_f\|_{\bV_f} \leq r_1$, yields
\begin{equation}\label{eq:Lipschitz-2}
\frac{25 \mu^2 \beta^2}{9}(\,\|\bu_f - \wt{\bu}_f\|^2_{\bV_f} +\|\bvarphi-\wt{\bvarphi}\|^2_{\bLambda_f})
\,\leq\, 2\,\rho_f^2\,n\,\|\bu_f\|^2_{\bV_f}\,\|\bw_f - \wt{\bw}_f\|^2_{\bV_f} + 2\,\|\bsi^\rd_f - \wt{\bsi}^\rd_f\|^2_{\bbL^2(\Omega_f)}.
\end{equation}
Combining \eqref{eq:sigfbound-1} and \eqref{eq:Lipschitz-2}, and using the bounds $\|\wt{\bw}_f\|_{\bV_f} \leq r_1$ and $\|\wt{\bzeta}\|_{\bLambda_f}\leq r_2$, we get
\begin{equation*}
\|\bu_f - \wt{\bu}_f\|^2_{\bV_f} +\|\bvarphi-\wt{\bvarphi}\|^2_{\bLambda_f}
\,\leq \frac{1}{(r_1^0)^2}\|\bu_f\|^2_{\bV_f}\,\|\bw_f - \wt{\bw}_f\|^2_{\bV_f} + \frac{1}{(r_2^0)^2}\|\bvarphi\|^2_{\bLambda_f}\|\bzeta-\wt{\bzeta}\|^2_{\bLambda_f}\,,
\end{equation*}
and applying simple algebraic computations, we obtain
\begin{align}
&\|\bT(\bw_f,\bzeta) - \bT(\wt{\bw}_f,\wt{\bzeta}) \|_{\bV_f\times \bLambda_f} \leq \frac{1}{r_0}\|(\bu_f,\bvarphi)\|_{\bV_f\times\bLambda_f}\|(\bw_f-\wt{\bw_f},\bzeta-\wt{\bzeta})\|_{\bV_f\times \bLambda_f}.\label{eq:Lipschitz-4}
\end{align}
Using \eqref{eq:ubsi-ubu-bound-solution} in \eqref{eq:Lipschitz-4}, we obtain \eqref{eq:Lipschitz-continuity} and complete the proof.
\end{proof}

\medskip

We are now in position of establishing the main result of this section.

\begin{lem}\label{thm:well-posed-domain-D}
Let $\bW_{r_1,r_2}$ be as in \eqref{eq:Wr-definition} and let $r:=\min\big\{r_1, r_2\big\}$. 
Assume that the data satisfy
\begin{equation}\label{eq:T-maps-Wr-into-Wr}
{c_\bT\,\Big\{ \|\wh{\f}_f\|_{\bL^2(\Omega_f)} +  \|\wh{\f}_e\|_{\bbL^2(\Omega_p)}+ \|\wh{\f}_p\|_{\bL^2(\Omega_p)} + \| \wh{q}_p\|_{\L^2(\Omega_p)} \Big\}} 
\,\leq\, r\,.
\end{equation}	
Then, the resolvent problem \eqref{eq:T-auxiliary-problem-operator-A} has a unique solution $(\ubsi,\ubu)\in \bQ\times \bS$ with $(\bu_f,\bvarphi)\in \bW_{r_1,r_2}$, and there holds
\begin{equation}\label{eq:bound-solution-steady-state}
\|(\ubsi,\ubu)\|_{\bQ\times \bS} 
\,\leq\, c_\bT\,\Big\{ \|\wh{\f}_f\|_{\bL^2(\Omega_f)} +  \|\wh{\f}_e\|_{\bbL^2(\Omega_p)}+ \|\wh{\f}_p\|_{\bL^2(\Omega_p)} + \|\wh{q}_p\|_{\L^2(\Omega_p)} \Big\} \,.
\end{equation} 
\end{lem}
\begin{proof}
Employing \eqref{eq:T-maps-Wr-into-Wr} in \eqref{eq:operator T-bound-solution} implies that $\bT:\bW_{r_1,r_2}\to \bW_{r_1,r_2}$. In particular, for $(\bu_f,\bvarphi) = \bT(\bw_f,\bzeta)$ we have that
$\|\bu_f\|_{\bV_f} \leq r \leq r_1$ and $\|\bvarphi\|_{\bLambda_f} \leq r \leq r_2$.
In turn, from \eqref{eq:Lipschitz-continuity} and assumption \eqref{eq:T-maps-Wr-into-Wr} we obtain that
\begin{equation*}
\|\bT(\bw_f,\bzeta) - \bT(\wt{\bw}_f,\wt{\bzeta}) \|_{\bV_f\times \bLambda_f}  \leq \frac{r}{r_0}\|(\bw_f,\bzeta) - (\wt{\bw}_f,\wt{\bzeta})\|_{\bV_f\times \bLambda_f}\,,
\end{equation*}
hence $\bT$ is a contraction mapping.
Therefore, from a direct application of the classical Banach fixed-point theorem we conclude that $\bT$ possesses a unique fixed-point $(\bu_f,\bvarphi)\in \bW_{r_1,r_2}$, or equivalently, problem \eqref{eq:T-auxiliary-problem-operator-A} has a unique solution. In addition, \eqref{eq:bound-solution-steady-state} follows directly from \eqref{eq:ubsi-ubu-bound-solution}.
\end{proof}


\subsubsection[]{Step 3: Existence of a solution of the alternative formulation \eqref{eq:alternative-formulation-operator-form}} \label{sec:wellposedness-alternative-formulation}

In this section we establish the existence of a solution to \eqref{eq:alternative-formulation-operator-form}.
We begin by showing that $\cM$, defined in \eqref{eq:defn-E-N-M}, is a monotone operator on the domain $\cD$ given by
\begin{equation}\label{eq:domain-D-and-Eb-tilde}
\cD := \big\{u \in E: \quad (\cN + \cM)(u)\in \wt{E}'_b \big\}\,,\quad\mbox{with }\, 
\wt{E}'_b:= \big\{ \wh{\mathcal{F}} \in E'_b:\quad \mbox{\eqref{eq:T-maps-Wr-into-Wr} holds} \big\}\,,
\end{equation}
where $u$, $\cN$, and $E'_b$ are as in \eqref{eq:defn-E-N-M} and \eqref{eq:defn-E'_b-D}, respectively, and $\wh{\mathcal{F}} = (\0,\0,\wh{\f}_e,\wh{\f}_f,\wh{q}_p,\0,\wh{\f}_p,\0,0)$.
Notice that \eqref{eq:T-maps-Wr-into-Wr} implies $(\bu_f,\bvarphi)\in \bW_{r_1,r_2}$ (cf. \eqref{eq:Wr-definition}).

\begin{lem}\label{lem:M-monotone-operator}
The operator $\cM$ defined by \eqref{eq:defn-E-N-M} is monotone in $\cD$.
\end{lem}
\begin{proof}
First, for each $u^i \in \cD$, $i\in \{1,2\}$, and using the definition of $\cM$ (cf. \eqref{eq:defn-E-N-M}), we have
\begin{align*}
&(\cM(u^1)-\cM(u^2))(u^1-u^2) =  \,  a_f(\bsi^1_f-\bsi^2_f,\bsi^1_f-\bsi^2_f) +  a^d_p(\bu_p^1-\bu_p^2,\bu_p^1-\bu_p^2) \nonumber\\ 
& \quad +\, \kappa_{\bu_f^1}(\bu_f^1, \bsi^1_f-\bsi^2_f)
- \kappa_{\bu_f^2}(\bu_f^2, \bsi^1_f-\bsi^2_f) + c_{\BJS}(\bu_s^1-\bu_s^2,\bvarphi^1-\bvarphi^2;\bu_s^1-\bu_s^2,\bvarphi^1-\bvarphi^2) \nonumber\\ 
& \quad +\, l_{\bvarphi^1}(\bvarphi^1,\bvarphi^1-\bvarphi^2)
- l_{\bvarphi^2}(\bvarphi^2,\bvarphi^1-\bvarphi^2)\,,
\end{align*}
which, together with the non-negativity bounds of $a_f, a^d_p$ and $c_\BJS$ (cf. \eqref{eq:coercivity-af}, \eqref{eq:coercivity-adp}, \eqref{eq:positivity-aBJS}), 
the continuity of $\kappa_{\bw_f}$ and $l_{\bzeta}$ (cf. \eqref{eq:continuity-cK-wf}, \eqref{eq:continuity-cL-zeta}), 
the estimates $\|\bu_f^1\|_{\bV_f}, \|\bu_f^2\|_{\bV_f} \leq r_1$, and $\|\bvarphi^1\|_{\bLambda_f}, \|\bvarphi^2\|_{\bLambda_f} \leq r_2$, where $r_1\in (0,r_1^0]$ (cf. \eqref{eq:r_1^0 defn}) and $r_2\in (0,r_2^0]$ (cf. \eqref{eq:r20-def}), and some algebraic computations, yields
\begin{align}
&(\cM(u^1)-\cM(u^2))(u^1-u^2)\geq \, \frac{1}{2 \mu} \|(\bsi^1_f -\bsi^2_f)^\rd\|^2_{\bbL^2(\Omega_f)}-C_{\cL}\big(\|\bvarphi^1\|_{\bLambda_f}+\|\bvarphi^2\|_{\bLambda_f}\big)\|\bvarphi^1-\bvarphi^2\|^2_{\bLambda_f} \nonumber\\ 
& \quad\,-\,\frac{\rho_f\,n^{1/2}}{2\,\mu}\,\big(\|\bu_f^1\|_{\bV_f} + \|\bu_f^2\|_{\bV_f}\big)\,\|\bu_f^1 - \bu_f^2\|_{\bV_f} \|(\bsi^1_f - \bsi^2_f)^\rd\|_{\bbL^2(\Omega_f)} \nonumber\\ 
& \quad \geq\,\frac{1}{2 \mu} \|(\bsi^1_f -\bsi^2_f)^\rd\|^2_{\bbL^2(\Omega_f)}
- \dfrac{\beta}{3}\,\|\bu_f^1 - \bu_f^2\|_{\bV_f}\|(\bsi^1_f - \bsi^2_f)^\rd\|_{\bbL^2(\Omega_f)}
- \dfrac{\mu\,\beta^2}{6}\,\|\bvarphi^1-\bvarphi^2\|^2_{\bLambda_f} \,.
\label{eq:mono f bound1}
\end{align}
In turn, since $u^1$ and $u^2$ belong to $\cD$ (cf. \eqref{eq:domain-D-and-Eb-tilde}), we proceed as in \eqref{eq:bount-to-coercivity1} and use the row corresponding to $\btau_f$ in the resolvent system \eqref{eq:T-auxiliary-problem-operator-A} (cf. \eqref{eq:continuous-alternative-weak-formulation-1a}), together with the inf-sup condition of $B_f$ (cf. \eqref{eq:inf-sup-vf-chif}), the continuity of $a_f$ and $\kappa_{\bw_f}$ (cf. \eqref{bilinear-form-1}, \eqref{eq:continuity-cK-wf}), and the bound $\|\bu_f^1\|_{\bV_f}, \|\bu_f^2\|_{\bV_f} \leq r_1$, to deduce that
\begin{equation}\label{eq:inf-sup-B1}
\begin{array}{l}
\ds \dfrac{2\,\beta}{3}\,\|(\bu_f^1, \bgamma_f^1, \bvarphi^1)-(\bu_f^2, \bgamma_f^2, \bvarphi^2)\| \\[2ex] 
\ds\quad \leq	
\big(\beta - \frac{\rho_f n^{1/2}}{2\,\mu}(\|\bu^1_f\|_{\bV_f}+\|\bu^2_f\|_{\bV_f})\big)\,\|(\bu_f^1, \bgamma_f^1, \bvarphi^1)-(\bu_f^2, \bgamma_f^2, \bvarphi^2)\| 
\leq \frac{1}{2\,\mu} \|(\bsi^1_f - \bsi^2_f)^\rd\|_{\bbL^2(\Omega_f)}\,.
\end{array}
\end{equation}
Thus, from \eqref{eq:inf-sup-B1}, both $\|\bu_f^1 - \bu_f^2\|_{\bV_f}$ and $\|\bvarphi^1-\bvarphi^2\|_{\bLambda_f}$ are bounded by $\frac{3}{4\beta\,\mu}\|(\bsi^1_f - \bsi^2_f)^\rd\|_{\bbL^2(\Omega_f)}$, which in turn allows us to deduce from \eqref{eq:mono f bound1} that
\begin{equation}\label{eq:mono u bound2}
(\cM(u^1)-\cM(u^2))(u^1-u^2)
\,\geq \frac{5}{32\mu}\|(\bsi^1_f - \bsi^2_f)^\rd\|^2_{\bbL^2(\Omega_f)}\geq 0\,,
\end{equation}
which implies the monotonicity of $\cM$ in $\cD$.
\end{proof}

Now, we establish a suitable initial condition result, which is needed to apply Theorem \ref{thm:auxiliary-theorem} to the
context of \eqref{eq:alternative-formulation-operator-form}.
\begin{lem}\label{lem:sol0-in-M-operator}
Assume the initial data $(\bu_{f,0},p_{p,0},\bbeta_{p,0},\bu_{s,0}) \in \bH$, where
\begin{subequations}\label{eq:H-space-initial-condition}
\begin{align}
& \ds \bH := \Big\{ (\bv_f,w_p,\bxi_p,\bv_s)\in \bH^1(\Omega_f) \times \H^1(\Omega_p) \times \bV_p\times \bV_p  \,:
  \nonumber \\[1ex] 
& \ds \qquad \bdiv(\be(\bv_f)) \in \bL^2(\Omega_f),\quad 
\bdiv(\bv_f\otimes \bv_f)\in \bL^2(\Omega_f), \nonumber \\[1ex]
& \ds \qquad \div(\bv_f) = 0\,\mbox{ in }\, \Omega_f, \quad \big( 2\mu\be(\bv_f) - \rho_f(\bv_f\otimes\bv_f) \big)\bn_f = \0 \,\mbox{ on }\, \Gamma^{\rN}_f,\quad 
\bv_f = \0 \qon \Gamma^{\rD}_f, \label{bdycond-1}\\[1ex]
& \ds\qquad \bK\,\nabla\,w_p\in \bH^1(\Omega_p),\quad \bK\,\nabla\,w_p\cdot\bn_p = 0 \,\mbox{ on }\, \Gamma^{\rN}_p,\quad w_p = 0 \,\mbox{ on }\, \Gamma^{\rD}_p, \label{bdycond-2}\\[1ex]
& \ds \qquad  \ds \bv_f\cdot\bn_f + \left(\bv_s -\frac{1}{\mu}  \bK\,\nabla\,w_p\right)\cdot\bn_p \,=\, 0 \mbox{ on } \Gamma_{fp}, \label{bdycond-3} \\[1ex]
& \ds\qquad  2\mu\,\be(\bv_f)\bn_f + (A^{-1}(\, \be(\bxi_{p})) - \alpha_p\,w_{p}\,\bI)\bn_p \,=\, 0 \mbox{ on } \Gamma_{fp},\label{bdycond-4}\\
& \ds \qquad  2\mu\,\be(\bv_f)\bn_f+ \mu\,\alpha_{\BJS}\sum^{n-1}_{j=1}\,\sqrt{\bK^{-1}_j}\left\{\left(\bv_f - \bv_s\right)\cdot\bt_{f,j}\right\}\,\bt_{f,j} \,=\, -\,w_p\bn_f \mbox{ on } \Gamma_{fp} \label{bdycond-5}\Big\}.
\end{align}
\end{subequations}
Furthermore, assume that there holds
\begin{align}
&\|\bu_{f,0}\|_{\bL^{2}(\Omega_f)}
+ \|\bdiv(\be(\bu_{f,0}))\|_{\bL^{2}(\Omega_f)} 
+ \|\bdiv(\bu_{f,0}\otimes \bu_{f,0})\|_{\bL^2(\Omega_f)}
+ \|\bu_{s,0}\|_{\bV_p} \nonumber\\ 
&\quad +\, \|\bbeta_{p,0}\|_{\bV_p}
+ \|\bdiv(A^{-1}(\be(\bbeta_{p,0})))\|_{\bbL^2(\Omega_p)} 
+ \|p_{p,0}\|_{\H^1(\Omega_p)}
+ \|\div(\bK\nabla p_{p,0})\|_{\L^2(\Omega_p)} 
\,<\, \frac{1}{C_0}\frac{r}{c_\bT} \,, \label{eq:extra-assumption}
\end{align}
with $C_0$ satisfying \eqref{eq: initial data bound} below and
$r\in (0,r_0)$, with $r_0$ and $c_\bT$ defined in Lemmas \ref{lem:T-contraction-mapping} and \ref{thm:well-posedness-1}, respectively. Then, there exist $\ubsi_0 := (\bsi_{f,0}, \bu_{p,0},\bsi_{e,0})\in \bQ$ and   $\ubu_0 := (\bu_{f,0},p_{p,0},\bgamma_{f,0},\bu_{s,0},\bvarphi_0, \lambda_0)\in \bS$  with $(\bu_{f,0},\bvarphi_0)\in \bW_{r_1,r_2}$ (cf. \eqref{eq:Wr-definition}) such that 
\begin{align}
(\cE_1 + \cA)(\ubsi_0) + (\cB' + \cK_{\bu_{f,0}})(\ubu_0) &=  \wh{\bF}_0 \quad \mbox{ in }\quad  \bQ', \nonumber\\ 
-\,\cB(\ubsi_0) + (\cE_2 + \cC +\cL_{\bvarphi_0})(\ubu_0) &=  \wh{\bG}_0  \quad \mbox{ in } \quad \bS', \label{eq:initial-data-system}
\end{align}
where $\wh{\bF}_0(\ubtau) \,:=\, (\wh{\f}_{e,0},\btau_e)_{\Omega_p}$ and 
$\wh{\bG}_0(\ubv) \,:=\, (\wh{\f}_{f,0},\bv_f)_{\Omega_f} + (\wh{q}_{p,0},w_p)_{\Omega_p} + (\wh{\f}_{p,0},\bv_s)_{\Omega_p}\,\,  \forall  (\ubtau,\ubv)\in \bQ\times \bS$,
with some $(\wh{\f}_{e,0}, \wh{\f}_{f,0}, \wh{q}_{p,0},\wh{\f}_{p,0})\in \bbL^2(\Omega_p)\times \bL^2(\Omega_f)\times \L^2(\Omega_p)\times \bL^2(\Omega_p)$ satisfying
\begin{equation}\label{eq:initial-data-bound-1}
c_\bT\,\Big\{ \|\wh{\f}_{f,0}\|_{\bL^2(\Omega_f)} +  \|\wh{\f}_{e,0}\|_{\bbL^2(\Omega_p)}+ \|\wh{\f}_{p,0}\|_{\bL^2(\Omega_p)} + \|\wh{q}_{p,0}\|_{\L^2(\Omega_p)} \Big\}
\,\leq\, r \,.
\end{equation}
\end{lem}
\begin{proof}
Starting with the given data $(\bu_{f,0},p_{p,0},\bu_{s,0},\bbeta_{p,0})\in \bH$, we take a sequence of steps associated with individual subproblems in order to obtain complete initial data that satisfy the coupled problem \eqref{eq:initial-data-system}. 

\noindent 1. Define $\bu_{p,0} := -\dfrac{1}{\mu}\,\bK\nabla p_{p,0}$. It follows that
\begin{equation}\label{eq:sol0-up0-pp0}
\mu\,\bK^{-1}\bu_{p,0} = -\nabla p_{p,0},\quad 
\div(\bu_{p,0}) = -\frac{1}{\mu}\,\div(\bK\nabla p_{p,0}) \qin \Omega_p,\quad
\bu_{p,0}\cdot\bn_p = 0 \qon \Gamma^{\rN}_p.
\end{equation}
Defining $\lambda_0 := p_{p,0}|_{\Gamma_{fp}}\in \Lambda_p$, integrating by parts the first equation in \eqref{eq:sol0-up0-pp0}, employing \eqref{bdycond-2}, and imposing in a weak sense the second equation of \eqref{eq:sol0-up0-pp0}, we obtain
\begin{equation}\label{eq:system-sol0-1}
\begin{array}{ll}
a^d_p(\bu_{p,0},\bv_p) + b_p(\bv_p,p_{p,0}) + b_{\bn_p}(\bv_p,\lambda_0) = 0 & \forall\,\bv_p\in \bV_p\,, \\[2ex]
s_0\,(p_{p,0},w_p)_{\Omega_p} - b_p(\bu_{p,0},w_p) = \ds ( s_0\,p_{p,0} - \frac{1}{\mu}\,\div(\bK\nabla p_{p,0}),w_p)_{\Omega_p} & \forall\,w_p\in \W_p\,.
\end{array} 
\end{equation}

\noindent 2. Defining $\bsi_{f,0} := 2\mu\be(\bu_{f,0}) - \rho_f(\bu_{f,0}\otimes\bu_{f,0})$, $\bgamma_{f,0} := \dfrac{1}{2}\left( \nabla\bu_{f,0} - (\nabla\bu_{f,0})^\rt \right)$, and $\bvarphi_0 := \bu_{f,0}|_{\Gamma_{fp}}$, it follows that $\bsi_{f,0}\in \bbX_f$, and
\begin{align}
& \frac{1}{2\mu}\bsi^\rd_{f,0} = \nabla\bu_{f,0} - \bgamma_{f,0} - \frac{\rho_f}{2\mu}(\bu_{f,0}\otimes\bu_{f,0})^\rd\,,\quad
-\bdiv(\bsi_{f,0}) = -\bdiv(2\mu\be(\bu_{f,0}) - \rho_f(\bu_{f,0}\otimes \bu_{f,0}))\,. \label{eq:sol0-sigmaf0-uf0-gammaf0}
\end{align}
Then, integrating by parts the first equation in \eqref{eq:sol0-sigmaf0-uf0-gammaf0}, employing  \eqref{bdycond-1}, and imposing in a weak sense the second equation of \eqref{eq:sol0-sigmaf0-uf0-gammaf0}, we get
\begin{equation}\label{eq:system-sol0-2}
\begin{array}{ll}
a_f(\bsi_{f,0},\btau_f) + b_f(\btau_f,\bu_{f,0}) + b_{\sk}(\btau_f,\bgamma_{f,0}) + b_{\bn_f}(\btau_f,\bvarphi_0) + \kappa_{\bu_{f,0}}(\bu_{f,0},\btau_f) = 0 & \forall\,\btau_f\in \bbX_f\,, \\[2ex]
\rho_f(\bu_{f,0},\bv_f)_{\Omega_f} - b_f(\bsi_{f,0},\bv_f) 
= ( \rho_f\bu_{f,0} - \bdiv(2\mu\be(\bu_{f,0}) - \rho_f(\bu_{f,0}\otimes\bu_{f,0})),\bv_f)_{\Omega_f} & \forall\,\bv_f\in \bV_f\,, \\[2ex]
-\,b_{\sk}(\bsi_{f,0},\bchi_f) = 0 & \forall\,\bchi_f\in \bbQ_f\,.
\end{array} 
\end{equation}

\noindent 3. From \eqref{bdycond-3} and \eqref{bdycond-5}, and using the data $\bu_{p,0}$, $\lambda_{0}$, $\bsi_{f,0}$, and $\bvarphi_0$ defined in the previous steps, we have
\begin{align*}
& \bu_{s,0}\cdot\bn_p \,=\, -\bvarphi_0\cdot\bn_f - \bu_{p,0}\cdot\bn_p \qon \Gamma_{fp}\,, \nonumber\\ 
& \mu\,\alpha_{\BJS}\sum^{n-1}_{j=1} \sqrt{\bK^{-1}_j}(\bu_{s,0}\cdot\bt_{f,j})\bt_{f,j} \nonumber\\ 
& \qquad
= \bsi_{f,0}\bn_f + \mu\alpha_{\BJS}\sum^{n-1}_{j=1} \sqrt{\bK^{-1}_j}(\bvarphi_0\cdot\bt_{f,j})\bt_{f,j}+ \rho_f(\bvarphi_0\otimes \bvarphi_0)\bn_f
+ p_{p,0}\bn_f \qon \Gamma_{fp}\,,
\end{align*}
which imply
\begin{equation}\label{eq:system-sol0-3}
\begin{array}{ll}
c_{\Gamma}(\bu_{s,0},\bvarphi_0;\xi) - b_{\bn_p}(\bu_{p,0},\xi) = 0 & \forall\,\xi\in \Lambda_p\,, \\[2ex]
c_{\BJS}(\bu_{s,0},\bvarphi_0;\0,\bpsi) - c_{\Gamma}(\0,\bpsi;\lambda_0) - b_{\bn_f}(\bsi_{f,0},\bpsi) +l_{\bvarphi_0}(\bvarphi_0,\bpsi)= 0 & \forall\,\bpsi\in \bLambda_{f} \,.
\end{array}
\end{equation}

\noindent 4. Define $\bsi_{e,0} \in \bSigma_e$ such that
\begin{equation}\label{eq:system-sol0-4}
\bsi_{e,0} := A^{-1}(\be(\bbeta_{p,0}))\,.
\end{equation}
It follows that $- \bdiv(\bsi_{e,0} - \alpha_p\,p_{p,0}\,\bI)
= - \bdiv(A^{-1}(\be(\bbeta_{p,0})) - \alpha_p\,p_{p,0}\,\bI)$.
Thus, multiplying this expression by $\bv_s \in \bV_p$, integrating by parts, and employing \eqref{bdycond-4} and \eqref{bdycond-5}, we obtain

\begin{align}
& \rho_p(\bu_{s,0},\bv_s)_{\Omega_p} - b_s(\bsi_{e,0},\bv_s) + \alpha_p\,b_p(\bv_s,p_{p,0}) - c_{\Gamma}(\bv_s,\0;\lambda_0) + c_{\BJS}(\bu_{s,0},\bvarphi_0;\bv_s,\0) \nonumber \\
& \qquad  =  
(\rho_p\bu_{s,0} - \bdiv(A^{-1}(\be(\bbeta_{p,0})) - \alpha_p\,p_{p,0}\,\bI),\bv_s)_{\Omega_p}\,. \label{eq:system-sol0-4a}
\end{align}
In addition, using \eqref{eq:system-sol0-4} and simple computations we deduce that
\begin{equation}\label{eq:system-sol0-5}
a^s_p(\bsi_{e,0},\btau_e) + b_s(\btau_e,\bu_{s,0})
= (\be(\bbeta_{p,0}) - \be(\bu_{s,0}),\btau_e)_{\Omega_p} \,.
\end{equation}	

Now, using \eqref{bdycond-1}, \eqref{eq:system-sol0-1}, \eqref{eq:system-sol0-2}, \eqref{eq:system-sol0-3}, \eqref{eq:system-sol0-4a}, and \eqref{eq:system-sol0-5}, we find
$(\bsi_{f,0}, \bu_{p,0}, \bsi_{e,0})\in \bQ$ and  $(\bu_{f,0},p_{p,0},\bgamma_{f,0},\bu_{s,0},\bvarphi_0,\lambda_0)\in \bS$ satisfying \eqref{eq:initial-data-system} with data $(\wh{\f}_{f,0},\wh{\f}_{e,0},\wh{q}_{p,0},\wh{\f}_{p,0})\in 
\bL^2(\Omega_f)\times\bbL^2(\Omega_p)\times \L^2(\Omega_p) \times \bL^2(\Omega_p)$ defined as
\begin{align*}
& \wh{\f}_{f,0} := \rho_f\,\bu_{f,0} - \bdiv(2\mu\be(\bu_{f,0}) - \rho_f(\bu_{f,0}\otimes\bu_{f,0})) \,,\quad 
{\f}_{e,0} := \be(\bbeta_{p,0}) - \be(\bu_{s,0}) \,,\\[0.5ex]
& \wh{\f}_{p,0} := \rho_p\,\bu_{s,0} - \bdiv(A^{-1}(\be(\bbeta_{p,0})) - \alpha_p\,p_{p,0}\,\bI) \,,\quad 
\wh{q}_{p,0} := s_0\,p_{p,0} + \alpha_p\,\div(\bu_{s,0}) - \frac{1}{\mu}\,\div(\bK\nabla p_{p,0}) \,.
\end{align*}
It follows from the above definition that there exists $C_0 > 0$ such that
\begin{align}
&\|\wh{\f}_{f,0}\|_{\bL^2(\Omega_f)} + \|\wh{\f}_{e,0}\|_{\bbL^2(\Omega_p)} +  \|\wh{\f}_{p,0}\|_{\bL^2(\Omega_p)} + \|\wh{q}_{p,0}\|_{\L^2(\Omega_p)} \nonumber\\ 
&\quad \leq C_0 \Big( \|\bu_{f,0}\|_{\bL^{2}(\Omega_f)}
+ \|\bdiv(\be(\bu_{f,0}))\|_{\bL^{2}(\Omega_f)} 
+ \|\bdiv(\bu_{f,0}\otimes \bu_{f,0})\|_{\bL^2(\Omega_f)}
+ \|\bu_{s,0}\|_{\bH^1(\Omega_p)} \nonumber\\ 
&\qquad +\, \|\bbeta_{p,0}\|_{\bH^1(\Omega_p)}
+ \|\bdiv(A^{-1}(\be(\bbeta_{p,0})))\|_{\bbL^2(\Omega_p)} 
+ \|p_{p,0}\|_{\H^1(\Omega_p)}+\|\div(\bK\nabla p_{p,0})\|_{\L^2(\Omega_p)}
\Big)\,. \label{eq: initial data bound}
\end{align}
Thus, \eqref{eq:extra-assumption} guarantee \eqref{eq:initial-data-bound-1} and proceeding as in Lemma~\ref{thm:well-posed-domain-D} we are able to deduce that
\begin{equation}\label{eqn:soln-1-bound}
\|(\ubsi_{0}, \ubu_{0})\|_{\bQ\times \bS} 
\,\leq\, c_\bT\,\Big\{ \|\wh{\f}_{f,0}\|_{\bL^2(\Omega_f)} +  \|\wh{\f}_{e,0}\|_{\bbL^2(\Omega_p)}+ \|\wh{\f}_{p,0}\|_{\bL^2(\Omega_p)} + \|\wh{q}_{p,0}\|_{\L^2(\Omega_p)} \Big\} \,,
\end{equation}
which together with \eqref{eq: initial data bound} and \eqref{eq:initial-data-bound-1}, implies that  $(\bu_{f,0},\bvarphi_{0})\in \bW_{r_{1},r_{2}}$, completing the proof.
\end{proof}

We are now in position to establish existence of a solution of the alternative formulation \eqref{eq:alternative-formulation-operator-form}.

\begin{lem}\label{lem:parabolic-solution}
For each 
$\f_f\in \W^{1,1}(0,T;\bL^2(\Omega_f))$, $q_p\in \W^{1,1}(0,T;\L^2(\Omega_p))$, and $\f_p\in \W^{1,1}(0,T;\bL^2(\Omega_p))$ satisfying for all $t \in [0,T]$
\begin{equation}\label{small-data}
\|\f_f(t)\|_{\bL^2(\Omega_f)} + \|\f_p(t)\|_{\bL^2(\Omega_p)} + \|q_p(t)\|_{\L^2(\Omega_p)} 
< \frac{r}{c_\bT}\,,
\end{equation}
where $r \in (0,r_0)$, with $r_0$ and $c_\bT$ defined in Lemmas \ref{lem:T-contraction-mapping} and \ref{thm:well-posedness-1}, 
and initial data $(\bu_{f,0},p_{p,0},\bbeta_{p,0},\bu_{s,0})$
satisfying the assumptions of Lemma \ref{lem:sol0-in-M-operator}, there exists a solution of \eqref{eq:alternative-formulation-operator-form}, $(\ubsi,\ubu): [0,T] \to \bQ\times\bS$ with $(\bu_f(t),\bvarphi(t))\in \bW_{r_1,r_2}$ (cf. \eqref{eq:Wr-definition}),
\begin{equation*}
(\bsi_e, \bu_f, p_p,\bu_s)\in \W^{1,\infty}(0,T;\bSigma_e)\times \W^{1,\infty}(0,T;\bL^2(\Omega_f))\times \W^{1,\infty}(0,T;\W_p)\times\W^{1,\infty}(0,T; \bL^2(\Omega_p))\,,
\end{equation*}
and $(\bsi_e(0),\bu_f(0),p_p(0),\bu_s(0)) = (\bsi_{e,0},\bu_{f,0},p_{p,0},\bu_{s,0})$, where $\bsi_{e,0}$ is constructed in Lemma \ref{lem:sol0-in-M-operator}.
\end{lem}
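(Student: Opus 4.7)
The plan is to apply Theorem~\ref{thm:auxiliary-theorem} to the abstract evolution system \eqref{eq:alternative-formulation-operator-form} with the identifications in \eqref{eq:defn-E-N-M}, namely $E := \bQ\times \bS$, $u := (\ubsi,\ubu)$, the linear operator $\cN$, and the nonlinear operator $\cM$ carrying the spatial and interface terms (including the nonlinear blocks $\cK_{\bu_f}$ and $\cL_{\bvarphi}$). The forcing is $\cF(t) := (\bF(t),\bG(t))$, and by \eqref{operators-3} together with the data assumptions $\f_f,\f_p\in \W^{1,1}(0,T;\bL^2)$ and $q_p\in \W^{1,1}(0,T;\L^2)$, we have $\cF\in \W^{1,1}(0,T;E'_b)$ with $E'_b$ as in \eqref{eq:defn-E'_b-D}.

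The structural hypotheses of Theorem~\ref{thm:auxiliary-theorem} are verified as follows. The operator $\cN$ is linear, symmetric, and monotone by inspection of \eqref{defn-N}, using \eqref{eq:A-bounds} and the positivity of $\rho_f,s_0,\rho_p$; its associated seminorm $\|\cdot\|_\cN$ is equivalent to the $L^2$-norm of $(\bsi_e,\bu_f,p_p,\bu_s)$, so the dual $E'_b$ is the product space \eqref{eq:defn-E'_b-D}. Monotonicity of $\cM$ is Lemma~\ref{lem:M-monotone-operator}, valid on the domain where $(\bu_f,\bvarphi)\in \bW_{r_1,r_2}$. The range condition $Rg(\cN+\cM) = E'_b$ is supplied by Theorem~\ref{thm:well-posed-domain-D}: for every right-hand side $(\wh{\f}_f,\wh{\f}_e,\wh{q}_p,\wh{\f}_p)\in E'_b$ satisfying the smallness condition \eqref{eq:T-maps-Wr-into-Wr}, the resolvent system \eqref{eq:T-auxiliary-problem-operator-A} admits a unique solution with $(\bu_f,\bvarphi)\in \bW_{r_1,r_2}$. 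Accordingly, $\cD$ is taken to consist of those $u\in E$ whose second component $(\bu_f,\bvarphi)$ lies in $\bW_{r_1,r_2}$ and for which $(\cN+\cM)(u)$ can be identified with an element of $E'_b$. Membership $u_0\in \cD$ of the prescribed initial datum is exactly Lemma~\ref{lem:sol0-in-M-operator}, where the remaining initial unknowns $\bsi_{e,0},\bu_{p,0},\bgamma_{f,0},\bvarphi_0,\lambda_0$ are constructed so that \eqref{eq:initial-data-system} holds with data in $E'_b$ bounded by $r$ thanks to \eqref{eq:extra-assumption}.

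Once the hypotheses are verified, Theorem~\ref{thm:auxiliary-theorem} yields a solution $u:[0,T]\to E$ with $u(t)\in \cD$ for every $t\in[0,T]$, satisfying \eqref{eq:alternative-formulation-operator-form} a.e.\ on $(0,T)$, $\cN(u(0))=\cN(u_0)$, and $\cN(u)\in \W^{1,\infty}(0,T;E'_b)$. In particular, the inclusion $u(t)\in \cD$ propagates the ball constraint $(\bu_f(t),\bvarphi(t))\in \bW_{r_1,r_2}$ along the trajectory, which is exactly where the smallness assumption \eqref{small-data} on the data enters, via the a~priori bound \eqref{eq:bound-solution-steady-state}. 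Translating $\cN(u)\in \W^{1,\infty}(0,T;E'_b)$ through \eqref{defn-N} and the equivalence of $\|\cdot\|_\cN$ with the $L^2$-norm on its non-degenerate components yields $\bsi_e\in \W^{1,\infty}(0,T;\bSigma_e)$, $\bu_f\in \W^{1,\infty}(0,T;\bL^2(\Omega_f))$, $p_p\in \W^{1,\infty}(0,T;\W_p)$, and $\bu_s\in \W^{1,\infty}(0,T;\bL^2(\Omega_p))$. The matching condition $\cN(u(0))=\cN(u_0)$ then delivers the pointwise identifications $(\bsi_e(0),\bu_f(0),p_p(0),\bu_s(0))=(\bsi_{e,0},\bu_{f,0},p_{p,0},\bu_{s,0})$.

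The main obstacle is establishing and then maintaining the smallness constraint $(\bu_f(t),\bvarphi(t))\in \bW_{r_1,r_2}$ along the entire trajectory: the nonlinear blocks $\cK_{\bu_f}$ and $\cL_{\bvarphi}$ fail to be monotone for arbitrarily large values, so both the range condition for $\cN+\cM$ (Theorem~\ref{thm:well-posed-domain-D}) and the monotonicity of $\cM$ (Lemma~\ref{lem:M-monotone-operator}) are available only on the restricted domain $\cD$. The choices of $r_1^0$, $r_2^0$, $r$, and the data condition \eqref{small-data} are precisely calibrated so that the a~priori bound \eqref{eq:bound-solution-steady-state} keeps $(\bu_f,\bvarphi)$ inside $\bW_{r_1,r_2}$ uniformly in $t$, which is what permits a clean application of Theorem~\ref{thm:auxiliary-theorem} in the present nonlinear Banach space setting.
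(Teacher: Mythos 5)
Your proposal is correct and follows essentially the same route as the paper: both apply Theorem~\ref{thm:auxiliary-theorem} with the identifications \eqref{eq:defn-E-N-M}, take $\cD$ to be the set where $(\bu_f,\bvarphi)\in\bW_{r_1,r_2}$, invoke Lemma~\ref{lem:M-monotone-operator} for monotonicity of $\cM$ on $\cD$, Theorem~\ref{thm:well-posed-domain-D} for the range condition (the paper makes the restriction to small right-hand sides explicit via a restricted range space $\wt{E}'_b$, which you capture implicitly), and Lemma~\ref{lem:sol0-in-M-operator} for the compatible initial data. Your final paragraph translating $\cN(u)\in\W^{1,\infty}(0,T;E'_b)$ into the stated componentwise regularity and initial-value identification is exactly the intended reading of the abstract conclusion.
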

\begin{proof}
Problem \eqref{eq:alternative-formulation-operator-form} fits within the framework of Theorem \ref{thm:auxiliary-theorem}, with $E$, $u$, $\cN$, and $\cM$ defined in \eqref{eq:defn-E-N-M} and the restricted range space $\wt{E}'_b$ and domain $\cD$ given in \eqref{eq:domain-D-and-Eb-tilde}.
Indeed, under assumption \eqref{small-data}, in problem \eqref{eq:alternative-formulation-operator-form} we have $\cF = (\0,\0,\0,\f_f,q_p,\0,\f_p,\0,0) \in \wt{E}'_b$.
From the definition of the operators $\cE_1$ and $\cE_2$ (cf. \eqref{defn-E1-E2}), it follows that $\cN$ is linear, symmetric, and monotone.
Moreover, Lemma \ref{lem:M-monotone-operator} shows that $\cM$ is monotone on the domain $\cD$.
The range condition $Rg(\cN+\cM) = \wt{E}'_b$ is established in Lemma \ref{thm:well-posed-domain-D}, which proves that for each $\wh{\cF} \in \wt{E}'_b$ there exists $\wh{u}=(\wh{\ubsi},\wh{\ubu}) \in \cD$ solving \eqref{eq:T-auxiliary-problem-operator-A}.
Finally, initial data $u_0 \in \cD$ with $(\cN + \cM)(u_0) \in \wt{E}'_b$ is constructed in Lemma \ref{lem:sol0-in-M-operator}.
The statement of the lemma then follows by applying Theorem \ref{thm:auxiliary-theorem}.
\end{proof}


\subsection{Existence and uniqueness of solution of the original formulation}

In this section we discuss how the well-posedness of the original formulation \eqref{eq:continuous-weak-formulation-1} follows from the existence of a solution of the alternative formulation \eqref{eq:NS-Biot-formulation-2} (cf. \eqref{eq:alternative-formulation-operator-form}).
Recall that $\bu_s$ is the structure velocity, so the displacement solution can be recovered from
\begin{equation}\label{eq:etap-us-relation}
\bbeta_p(t) = \bbeta_{p,0} + \int^t_0 \bu_s(s)\,ds \quad \forall\, t\in (0,T],
\end{equation}
and then, by construction, $\bu_s = \partial_t\,\bbeta_p$ and $\bbeta_p(0) = \bbeta_{p,0}$.

We note that $a^e_p(\cdot,\cdot)$ satisfies the bounds, for some $c_e, C_e > 0$, for all $\bbeta_p, \bxi_p\in \bV_p$,
\begin{equation}\label{eq:aep-coercivity-bound}
c_e\,\|\bxi_p\|^2_{\bV_p} \,\leq\, a^e_p(\bxi_p, \bxi_p),\quad
a^e_p(\bbeta_p, \bxi_p) \,\leq\, C_e\,\|\bbeta_p\|_{\bV_p}\,\|\bxi_p\|_{\bV_p},
\end{equation}
where the coercivity bound follows from Korn's inequality.
We now state the aforementioned result.
\begin{thm}\label{thm:unique soln}
For each $(\bu_f(0), p_p(0), \bbeta_p(0), \partial_t\bbeta_p(0)) = (\bu_{f,0}, p_{p,0}, \bbeta_{p,0}, \bu_{s,0})\in \bV_f\times \W_p\times \bV_p\times \bV_p$, where $\bu_{f,0}, p_{p,0}, \bbeta_{p,0}$ and $\bu_{s,0}$ are compatible initial data satisfying Lemma \ref{lem:sol0-in-M-operator} (cf. \eqref{eq:extra-assumption}), and for each
\begin{equation*}
\f_f\in \W^{1,1}(0,T;\bL^2(\Omega_f)),\quad \f_p\in \W^{1,1}(0,T;\bL^2(\Omega_p)),\quad q_p\in \W^{1,1}(0,T;\L^2(\Omega_p))\,,
\end{equation*}
under the assumptions of Lemma \ref{lem:parabolic-solution} (cf. \eqref{small-data}), there exists a unique solution of \eqref{eq:continuous-weak-formulation-1}$, (\bsi_f,\bu_p, \bbeta_p,$
$\bu_f, p_p, \bgamma_f, \bvarphi, \lambda): [0,T]\to \bbX_f\times \bX_p\times\bV_p\times\bV_f\times \W_p\times \bbQ_f\times \bLambda_f\times \Lambda_p$ with $(\bu_f(t),\bvarphi(t)):[0,T]\to \bW_{r_1,r_2}$. In addition, $\bsi_f(0) = \bsi_{f,0}, \bu_p(0) = \bu_{p,0},  \bgamma_f(0) = \bgamma_{f,0}, \bvarphi(0) = \bvarphi_{0}$ and $\lambda(0) = \lambda_{0}.$
\end{thm}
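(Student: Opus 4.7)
The plan is to derive the well-posedness of \eqref{eq:continuous-weak-formulation-1} from the well-posedness of the alternative formulation \eqref{eq:NS-Biot-formulation-2} established in Lemma~\ref{lem:parabolic-solution}. Starting from the solution $(\bsi_f, \bu_p, \bsi_e, \bu_f, p_p, \bgamma_f, \bu_s, \bvarphi, \lambda)$ of \eqref{eq:NS-Biot-formulation-2}, I would first define the displacement by the time integral $\bbeta_p(t) := \bbeta_{p,0} + \int_0^t \bu_s(s)\,ds$ as in \eqref{eq:etap-us-relation}; this automatically delivers $\partial_t \bbeta_p = \bu_s$, $\bbeta_p(0) = \bbeta_{p,0}$, and $\bbeta_p \in \W^{1,\infty}(0,T;\bV_p)$. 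The key consistency step is the constitutive identity $A(\bsi_e(t)) = \be(\bbeta_p(t))$ for all $t \in [0,T]$, which I would obtain by integrating \eqref{eq:continuous-alternative-weak-formulation-1d2} in time, using the definition of $b_s$, and invoking the initial compatibility $A(\bsi_{e,0}) = \be(\bbeta_{p,0})$ built into the construction of Lemma~\ref{lem:sol0-in-M-operator}. Substituting $\bsi_e = A^{-1}(\be(\bbeta_p)) = 2\mu_p\,\be(\bbeta_p) + \lambda_p\,\div(\bbeta_p)\,\bI$ into $-b_s(\bsi_e,\bv_s)$ in \eqref{eq:continuous-alternative-weak-formulation-1d1} reproduces the elasticity bilinear form appearing in \eqref{eq:continuous-weak-formulation-1d}, while $\rho_p(\partial_t \bu_s,\bv_s)_{\Omega_p}$ becomes $\rho_p(\partial_{tt}\bbeta_p,\bv_s)_{\Omega_p}$; every other equation of \eqref{eq:continuous-alternative-weak-formulation-1} coincides verbatim with the corresponding one in \eqref{eq:continuous-weak-formulation-1}. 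The remaining initial conditions $\bsi_f(0) = \bsi_{f,0}$, $\bu_p(0) = \bu_{p,0}$, $\bgamma_f(0) = \bgamma_{f,0}$, $\bvarphi(0) = \bvarphi_0$, $\lambda(0) = \lambda_0$ then follow from the compatible data furnished by Lemma~\ref{lem:sol0-in-M-operator} together with the continuity of the solution at $t = 0$.

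For uniqueness, I would consider two solutions $(\bsi_f^i, \bu_p^i, \bbeta_p^i, \bu_f^i, p_p^i, \bgamma_f^i, \bvarphi^i, \lambda^i)$, $i=1,2$, of \eqref{eq:continuous-weak-formulation-1} sharing the same data and initial conditions, both confined to $\bW_{r_1,r_2}$. Setting $\bu_s^i := \partial_t \bbeta_p^i$ and $\bsi_e^i := A^{-1}(\be(\bbeta_p^i))$ converts each one into a solution of the alternative formulation, so it suffices to prove uniqueness at the level of \eqref{eq:NS-Biot-formulation-2}. Subtracting the two systems, testing with the componentwise differences, and combining the monotonicity of $\cE_1, \cE_2, \cA, \cC$ (Lemma~\ref{lem:coercivity-properties-A-E2}) with the continuity bounds \eqref{eq:continuity-cK-wf} and \eqref{eq:continuity-cL-zeta} for $\cK_{\bu_f}$ and $\cL_\bvarphi$, absorbed via the smallness $\|\bu_f^i\|_{\bV_f} \le r_1^0$ and $\|\bvarphi^i\|_{\bLambda_f} \le r_2^0$, would produce a differential inequality controlling the squared $\bL^2$-norms of $\bu_f^1 - \bu_f^2$, $p_p^1 - p_p^2$, $\bu_s^1 - \bu_s^2$ together with the $A$-weighted norm of $\bsi_e^1 - \bsi_e^2$. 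Applying Gronwall's lemma with vanishing initial difference would force these components to agree, after which the inf-sup conditions of Lemma~\ref{lem:inf-sup-conditions} together with the remaining equations pin down $\bsi_f^1 = \bsi_f^2$, $\bu_p^1 = \bu_p^2$, $\bgamma_f^1 = \bgamma_f^2$, $\bvarphi^1 = \bvarphi^2$, $\lambda^1 = \lambda^2$, and $\bbeta_p^1 = \bbeta_p^2$ follows from \eqref{eq:etap-us-relation}.

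The hardest step will be the uniqueness argument, where the nonlinear contributions from $\kappa_{\bu_f}$ and $l_\bvarphi$ produce trilinear differences of the type $\kappa_{\bu_f^1 - \bu_f^2}(\bu_f^1,\cdot) + \kappa_{\bu_f^2}(\bu_f^1 - \bu_f^2,\cdot)$ and an analogous decomposition for $l_\bvarphi$. Controlling these without destroying the monotone coercivity relies crucially on the smallness hypotheses \eqref{eq:r_1^0 defn} and \eqref{eq:r20-def}, which allow the nonlinear contributions to be absorbed by the Stokes-dissipation, Darcy, and BJS terms through Young's inequality with carefully chosen weights, mirroring the argument used to establish the monotonicity of $\cM$ in Lemma~\ref{lem:M-monotone-operator} and the contraction estimate in Lemma~\ref{lem:T-contraction-mapping}. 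A subsidiary technical point is the rigorous pointwise-in-time derivation of $A(\bsi_e) = \be(\bbeta_p)$ from the variational identity \eqref{eq:continuous-alternative-weak-formulation-1d2}, which is legitimate thanks to the $\W^{1,\infty}$-in-time regularity of $\bsi_e$ and $\bu_s$ provided by Lemma~\ref{lem:parabolic-solution}.
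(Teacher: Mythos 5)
Your existence and uniqueness arguments follow essentially the same route as the paper: existence by integrating \eqref{eq:continuous-alternative-weak-formulation-1d2} in time to recover $A(\bsi_e)=\be(\bbeta_p)$ and hence $-b_s(\bsi_e,\bv_s)=a^e_p(\bbeta_p,\bv_s)$, and uniqueness by an energy estimate in which the trilinear differences from $\kappa$ and $l$ are absorbed using the smallness $r_1^0,r_2^0$ exactly as in Lemma~\ref{lem:M-monotone-operator}, followed by the inf-sup conditions to recover the remaining components. (Two cosmetic remarks: the paper runs the uniqueness estimate directly on \eqref{eq:continuous-weak-formulation-1} rather than converting back to the alternative formulation, and no Gr\"onwall step is actually needed since the resulting inequality \eqref{eq:thm4.11-coercivitybounds} already has zero right-hand side after integration.)

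The genuine gap is your treatment of the final claim $\bsi_f(0)=\bsi_{f,0}$, $\bu_p(0)=\bu_{p,0}$, $\bgamma_f(0)=\bgamma_{f,0}$, $\bvarphi(0)=\bvarphi_0$, $\lambda(0)=\lambda_0$, which you dispose of in one sentence via ``the compatible data together with the continuity of the solution at $t=0$.'' This does not follow from continuity: these five variables carry no time derivative in \eqref{eq:continuous-weak-formulation-1}, so their values at $t=0$ are only implicitly determined by the algebraic (non-evolutionary) equations once $\bu_f(0),p_p(0),\bbeta_p(0),\partial_t\bbeta_p(0)$ are fixed, and one must prove that this implicitly determined value coincides with the data constructed in Lemma~\ref{lem:sol0-in-M-operator}. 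The paper does this by letting $t\to 0$ in \eqref{eq:continuous-weak-formulation-1}, subtracting the resolvent-type system \eqref{eq:T-auxiliary-problem-operator-B} satisfied by $(\ubsi_0,\ubu_0)$ at $t=0$, and showing the resulting homogeneous system \eqref{eq:init-0} for the differences $\overline{\bsi}_f,\overline{\bu}_p,\overline{\bgamma}_f,\overline{\bvarphi},\overline{\lambda}$ has only the zero solution — first via a coercivity/BJS estimate giving $\overline{\bsi}^\rd_f=\0$, $\overline{\bu}_p=\0$, $\overline{\bvarphi}\cdot\bt_{f,j}=0$, then a density argument for $\overline{\bvarphi}\cdot\bn_f$, then the inf-sup condition for $\overline{\bgamma}_f$ and $\overline{\lambda}$, and finally $\bdiv(\bbX_f)=(\bV_f)'$ together with \eqref{eq:tau-d-H0div-inequality}--\eqref{eq:tau-H0div-Xf-inequality} for $\overline{\bsi}_f=\0$. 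Your proposal needs this (or an equivalent) argument to establish the last assertion of the theorem.
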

\begin{proof}
We begin by using the existence of a solution of the alternative formulation \eqref{eq:NS-Biot-formulation-2} to establish solvability of the original formulation \eqref{eq:continuous-weak-formulation-1}.
Let $(\bsi_f, \bu_p, \bsi_e, \bu_f, p_p, \bgamma_f, \bu_s, \bvarphi, \lambda)$ be a solution to \eqref{eq:NS-Biot-formulation-2}.
Let $\bbeta_p$ be defined in \eqref{eq:etap-us-relation}, so $\bu_s = \partial_t\,\bbeta_p$.
Then \eqref{eq:NS-Biot-formulation-2} with $\btau_e = \0$ and $\bv_s = \0$ implies \eqref{eq:continuous-weak-formulation-1} with $\bxi_p = \0$.
In turn, testing the first equation in \eqref{eq:NS-Biot-formulation-2} with $\btau_e\in \bSigma_e$ gives $\big(\partial_t\,(A(\bsi_e) - \be(\bbeta_p)), \btau_e\big)_{\Omega_p} = 0$, which, using that $\be(\bX_p) \subset \bSigma_e$, implies that $\partial_t\,(A(\bsi_e) - \be(\bbeta_p)) = \0$.
Integrating from $0$ to $t\in (0,T]$ and using that $\bsi_e(0) = A^{-1}(\be(\bbeta_p(0)))$ implies that $\bsi_e(t) = A^{-1}(\be(\bbeta_p(t)))$.
Therefore, with \eqref{eq:elasticity-stress-isotropic}, we deduce
\begin{equation*}
b_s(\bsi_e,\bv_s) \,=\, -\,(\bsi_e, \be(\bv_s))_{\Omega_p} 
\,=\, -\,(A^{-1}(\be(\bbeta_p)),\be(\bv_s))_{\Omega_p} 
\,=\, -\,a^e_p(\bbeta_p, \bv_s),
\end{equation*} 
and then replacing back into the second equation in \eqref{eq:NS-Biot-formulation-2} with $\bv_s\in \bV_p$, yields
\begin{equation*}
 \rho_p\,(\partial_{tt}\bbeta_p,\bv_s)_{\Omega_p}+\,\, c_{\BJS}(\partial_{t}\bbeta_p,\bvarphi;\bv_s,\0)- c_{\Gamma}(\bv_s,\0;\lambda) +a^e_p(\bbeta_p, \bv_s) + \alpha_p\,b_p(p_p, \bv_s) \,=\, (\f_p, \bv_s)_{\Omega_p}.
\end{equation*}
Therefore \eqref{eq:NS-Biot-formulation-2} implies \eqref{eq:continuous-weak-formulation-1}, which establishes that $(\bsi_f, \bu_p, \bbeta_{p,0} + \int^t_0 \bu_s(s)\,ds, \bu_f, p_p, \bgamma_f, \bvarphi, \lambda)$ is a solution to \eqref{eq:continuous-weak-formulation-1}.

Now, assume that the solution of \eqref{eq:continuous-weak-formulation-1} is not unique.
Let $(\bsi^i_f, \bu^i_p, \bbeta^i_p, \bu^i_f, p^i_p, \bgamma^i_f, \bvarphi^i, \lambda^i)$, with $i\in \{1,2\}$, be two solutions corresponding to the same data. Taking \eqref{eq:continuous-weak-formulation-1} with $\btau_f = \bsi^1_f - \bsi^2_f, \bv_p = \bu^1_p - \bu^2_p, \bxi_p = \partial_t\,\bbeta^1_p - \partial_t\,\bbeta^2_p, \bv_f = \bu^1_f - \bu^2_f, w_p = p^1_p - p^2_p, \bchi_f = \bgamma^1_f - \bgamma^2_f, \bpsi = \bvarphi^1 - \bvarphi^2$ and $\xi = \lambda^1 - \lambda^2$, using arguments similar to those in \eqref{eq:mono f bound1}--\eqref{eq:mono u bound2}, that is, by combining \eqref{eq:continuous-weak-formulation-1a} with the inf-sup condition of $B_f$ (cf. \eqref{eq:inf-sup-vf-chif}), the continuity of $\kappa_{\bw_f}$ and $l_{\bzeta}$ (cf. \eqref{eq:continuity-cK-wf}, \eqref{eq:continuity-cL-zeta}), the fact that $(\bu^i_f(t),\bvarphi^i(t)) \in \bW_{r_1,r_2}$ (cf. \eqref{eq:Wr-definition}), and \eqref{eq:aep-coercivity-bound}, we obtain
\begin{align}
&\frac{1}{2}\,\partial_t\,\Big( c_e\|\bbeta^1_p - \bbeta^2_p\|^2_{\bV_p}  + \rho_f\,\|\bu^1_f - \bu^2_f\|^2_{\bL^2(\Omega_f)} + s_0\,\|p^1_p - p^2_p\|^2_{\W_p} +\rho_p\,\|\partial_t\,\bbeta^1_p - \partial_t\,\bbeta^2_p\|^2_{\bL^2(\Omega_p)}\Big)  \nonumber\\ 
&\quad + \frac{5}{32\,\mu}\,\|(\bsi^1_f - \bsi^2_f)^\rd\|^2_{\bbL^2(\Omega_f)} + \mu\,k_{\min}\,\|\bu^1_p - \bu^2_p\|^2_{\bL^2(\Omega_p)}\,\leq\, 0 \,,\label{eq:thm4.11-coercivitybounds}
\end{align}
and integrating in time from $0$ to $t\in (0,T]$,  and using $\bu^1_f(0) = \bu^2_f(0), p^1_p(0) = p^2_p(0), \bbeta^1_p(0) = \bbeta^2_p(0)$,
$\partial_t\,\bbeta^1_p(0) = \partial_t\,\bbeta^2_p(0)$, yields
\begin{align*}
&\frac{1}{2}\,\Big( c_e\|\bbeta^1_p - \bbeta^2_p\|^2_{\bV_p}  + \rho_f\,\|\bu^1_f - \bu^2_f\|^2_{\bL^2(\Omega_f)} + s_0\,\|p^1_p - p^2_p\|^2_{\W_p} +\rho_p\,\|\partial_t\,\bbeta^1_p - \partial_t\,\bbeta^2_p\|^2_{\bL^2(\Omega_p)}\Big)  \nonumber\\ 
& \quad +\,\, C\,\int^t_0 \Big(\|(\bsi^1_f - \bsi^2_f)^\rd\|^2_{\bbL^2(\Omega_f)} + \|\bu^1_p - \bu^2_p\|^2_{\bL^2(\Omega_p)} \Big)\, ds \,\leq\, 0 \,.
\end{align*}
Thus, $(\bsi^1_f(t))^\rd = (\bsi^2_f(t))^\rd, \bu^1_p(t) = \bu^2_p(t), \bbeta^1_p(t) = \bbeta^2_p(t), \bu_f^1(t) = \bu_f^2(t), p^1_p(t) = p^2_p(t)$, and $\partial_t\,\bbeta^1_p(t) = \partial_t\,\bbeta^2_p(t)$ for all $t\in (0,T]$.

On the other hand, for $(\bu^i_f, p^i_p, \bgamma^i_f, \bvarphi^i, \lambda^i) \in \bV_f\times \W_p\times \bbQ_f\times \bLambda_f\times \Lambda_p$ and $(\bsi^i_f, \bu^i_p) \in \bbX_f\times \bX_p$  with $i\in \{1,2\}$ satisfying \eqref{eq:continuous-weak-formulation-1a} and \eqref{eq:continuous-weak-formulation-1e}, using similar arguments to \eqref{eq:inf-sup-B1}, the inf-sup conditions of $\cB$ (cf. Lemma \ref{lem:inf-sup-conditions}) and continuity of $\kappa_{\bw_f}$ (cf. \eqref{eq:continuity-cK-wf}) along with $(\bu^1_f,\bvarphi^1), (\bu^2_f,\bvarphi^2)\in \bW_{r_1,r_2}$ (cf. \eqref{eq:Wr-definition}), we obtain
\begin{align*}
& \frac{2\beta}{3}\,\|(\bu^1_f - \bu^2_f, p^1_p - p^2_p, \bgamma^1_f - \bgamma^2_f, \bvarphi^1 - \bvarphi^2, \lambda^1 - \lambda^2)\| \nonumber\\
& \quad \,\leq \sup_{\0\neq (\btau_f,\bv_p)\in \bbX_{f}\times \bX_p}   \frac{-a_f(\bsi^2_f - \bsi^1_f,\btau_f) - a^d_p(\bu^2_p - \bu^1_p,\bv_p)}{\|(\btau_f,\bv_p)\|_{\bbX_f\times \bX_p}} \,=\, 0 \,.
\end{align*}
Therefore, $\bgamma^1_f(t) = \bgamma^2_f(t), \bvarphi^1(t) = \bvarphi^2(t)$, and $\lambda^1(t) = \lambda^2(t)$ for all $t\in (0,T]$.
In turn, from \eqref{eq:continuous-weak-formulation-1b}, we have the identity
\begin{equation*}
b_f(\bsi^1_f - \bsi^2_f,\bv_f)  
\,=\, \rho_f\,(\partial_t\,(\bu^1_f - \bu^2_f),\bv_f)_{\Omega_f} 
\,=\, 0 \quad \forall\,\bv_f\in \bV_f, 
\end{equation*}
which together with the property $\bdiv(\bbX_f) = (\bV_f)'$ allow us to deduce: $\bdiv(\bsi^1_f(t)) = \bdiv(\bsi^2_f(t))$ for all $t\in (0,T]$, and employing again the inequalities \eqref{eq:tau-d-H0div-inequality} and \eqref{eq:tau-H0div-Xf-inequality}, we conclude that $\bsi^1_f(t) = \bsi^2_f(t)$ for all $t\in (0,T]$, so then we can conclude that \eqref{eq:continuous-weak-formulation-1} has a unique solution.

It remains to establish the initial values for $\bsi_f$, $\bu_p$, $\bgamma_f$, $\bvarphi$, and $\lambda$. Let $\overline{\bsi}_f =\bsi_f(0) - \bsi_{f,0}$, with a similar definition and notation for the rest of the variables, except for $\overline{\partial_t\,\bbeta}_{p} = \partial_t\,\bbeta_{p}(0) - \bu_{s,0}$. 
We can take $t \to 0$ in \eqref{eq:continuous-weak-formulation-1}. 
Using that the initial data $(\ubsi_0,\ubu_0)$ constructed in Lemma \ref{lem:sol0-in-M-operator} satisfies \eqref{eq:initial-data-system}, which corresponds to \eqref{eq:T-auxiliary-problem-operator-B} at $t=0$ without the operator $\cE_1$, and that $\overline{\bu}_{f} = \0, \overline{p}_{p} = 0, \overline{\bbeta}_{p} = \0 $ and $\overline{\partial_t\,\bbeta}_{p} = \0,$ we obtain
\begin{subequations}\label{eq:init-0}
\begin{align}
&\ds  \frac{1}{2\,\mu}\,(\overline{\bsi}^\rd_f,\btau^\rd_f)_{\Omega_f} - \pil\btau_f\bn_f,\overline{\bvarphi} \pir_{\Gamma_{fp}} + (\overline{\bgamma}_f,\btau_f)_{\Omega_f} =0, \label{init-1} \\[1ex]
& \ds -\,(\overline{\bsi}_f,\bchi_f)_{\Omega_f} = 0, \label{init-2} \\[1ex]
& \ds \mu\,(\bK^{-1}\overline{\bu}_p,\bv_p)_{\Omega_p} + \pil\bv_p\cdot\bn_p,\overline{\lambda}\pir_{\Gamma_{fp}} = 0, \label{init-3} \\[1ex]
& \ds - \pil\overline{\bvarphi}\cdot\bn_f + \overline{\bu}_p\cdot\bn_p,\xi\pir_{\Gamma_{fp}} = 0, \label{init-4} \\[1ex]
& \ds \pil\overline{\bsi}_f\bn_f,\bpsi\pir_{\Gamma_{fp}} + \mu\,\alpha_{\BJS}\,\sum_{j=1}^{n-1} \pil\sqrt{\bK^{-1}_j} \overline{\bvarphi} \cdot\bt_{f,j},\bpsi\cdot\bt_{f,j} \pir_{\Gamma_{fp}}   +\,\, \rho_f\pil \overline{\bvarphi}\cdot\bn_f, \bvarphi(0)\cdot\bpsi \pir_{\Gamma_{fp}} \nonumber \\[2ex]
& \ds\quad +\, \rho_f\pil \bvarphi_0\cdot\bn_f, \overline{\bvarphi}\cdot\bpsi \pir_{\Gamma_{fp}}+ \pil\bpsi\cdot\bn_f,\overline{\lambda}\pir_{\Gamma_{fp}} = 0.\label{init-5} 
\end{align}
\end{subequations}
Taking $(\btau_f,\bv_p,\bchi_f,\bpsi, \xi) = (\overline{\bsi}_f,\overline{\bu}_p,\overline{\bgamma}_f,\overline{\bvarphi},\overline{\lambda})$ in \eqref{eq:init-0}, using that $(\bu_{f,0},\bvarphi_0), (\bu_{f}(0),\bvarphi(0))\in \bW_{r_1,r_2}$ (cf. \eqref{eq:Wr-definition}), combining the equations and proceeding as in  \eqref{eq:thm4.11-coercivitybounds}, we get
\begin{equation*}
\,\|\overline{\bsi}^\rd_f \|^2_{\bbL^2(\Omega_f)} + \,\|\overline{\bu}_p\|^2_{\bL^2(\Omega_p)} + \,\sum^{n-1}_{j=1} \|\overline{\bvarphi}\cdot \bt_{f,j} \|^2_{\L^2(\Gamma_{fp})}\,\leq\, 0,  
\end{equation*}
which implies that $\overline{\bsi}^\rd_f = \0, \overline{\bu}_p=\0$ and $\overline{\bvarphi}\cdot \bt_{f,j} = 0.$ In addition, \eqref{init-4} implies that $\pil\overline{\bvarphi}\cdot\bn_f,\xi\pir_{\Gamma_{fp}} = 0$ for all $\xi \in \H^{1/2}(\Gamma_{fp}).$  Since $\H^{1/2}(\Gamma_{fp})$ 
is dense in $\L^2(\Gamma_{fp})$, it follows that $\overline{\bvarphi}\cdot\bn_f= 0;$ hence $\overline{\bvarphi}=\0$. Again, combining \eqref{init-1} and \eqref{init-3}, employing the inf-sup conditions of $\cB$ (cf. Lemma~\ref{lem:inf-sup-conditions}), together with  $\overline{\bsi}^\rd_f = \0$, $\overline{\bvarphi} = \0$ and $\overline{\bu}_p=\0$, implies that $\overline{\bgamma}_f = \0$ and $\overline{\lambda} =0$. Finally, from \eqref{eq:continuous-weak-formulation-1b} at t = 0, we obtain
\begin{equation}\label{div-sigf-init}
  b_f(\overline\bsi_{f},\bv_f) =  \rho_f\, (\partial_t\,\bu_f(0),\bv_f)_{\Omega_f} - (\f_f(0),\bv_f)_{\Omega_f} - b_f(\bsi_{f,0},\bv_f) = 0,
\end{equation}
where we have chosen $\f_f(0)= \rho_f\,\partial_t\,\bu_f(0) - \bdiv(\bsi_{f,0})$. Applying the property $\bdiv(\bbX_f) = (\bV_f)'$ allow us to deduce that $\bdiv(\overline{\bsi}_f) = \0$, which, combined with the inequalities \eqref{eq:tau-d-H0div-inequality} and \eqref{eq:tau-H0div-Xf-inequality}, yields $\overline{\bsi}_f = \0$, completing the proof.
\end{proof}

We next recall from \cite[Lemma 3.3]{cwy2025} a result that will be employed to derive the stability bound for the solution of \eqref{eq:continuous-weak-formulation-1} without relying on Gr\"onwall's inequality.
\begin{lem}\label{xing-lemma}
Suppose that for all $t \in (0,T]$,
\begin{equation*}
H^2(t) + R(t) \leq A(t) + 2 \int_0^t B(s) H(s) \, ds,
\end{equation*}
where $H, R, A$ and $B$ are non-negative functions. Then
\begin{equation*}
\sqrt{H^2(T) + R(T)} \leq \sup_{0\leq t\leq T} \sqrt{A(t)} + \int_0^T B(t) dt.
\end{equation*}
\end{lem}
%
%

We conclude with the aforementioned stability bound.
\begin{thm}\label{thm: continuous stability}
Under the assumptions of Theorem \ref{thm:unique soln}, and assuming that $\f_f \in \H^1(0,T;\bL^2(\Omega_f))$, $\f_p \in \H^1(0,T;\bL^2(\Omega_p))$, and $q_p \in \H^1(0,T;\L^2(\Omega_p))$, 
there exists a positive constant $C$, independent of $s_0$, such that
\begin{align}
&\|\bsi_f\|_{\L^2(0,T;\bbX_f)} 
+ \|\partial_t\bsi^\rd_f\|_{\L^2(0,T;\bbL^2(\Omega_f))} 
+ \|\bu_p\|_{\L^2(0,T;\bX_p)} 
+ \| \partial_t\bu_p\|_{\L^2(0,T;\bL^2(\Omega_p))}
+ \|\bbeta_p\|_{\L^\infty(0,T;\bV_p)} 
\nonumber \\
&\quad +\, \| \partial_t\bbeta_p\|_{\L^\infty(0,T;\bL^2(\Omega_p))}
+ \sum^{n-1}_{j=1} \|(\bvarphi-\partial_t\,\bbeta_p)\cdot\bt_{f,j}\|_{\H^1(0,T;\L^2(\Gamma_{fp}))}
+\, \| \partial_{tt}\bbeta_p\|_{\L^\infty(0,T;\bL^2(\Omega_p))}
\nonumber \\
&\quad +\, \|\bu_f\|_{\H^1(0,T;\bV_f)}
+ \|\bu_f\|_{\W^{1,\infty}(0,T;\bL^2(\Omega_f))}
+ \sqrt{s_0}\,\|p_p\|_{\W^{1,\infty}(0,T;\W_p)}
+ \|p_p\|_{\H^1(0,T;\W_p)}
\nonumber \\[1ex]
&\quad 
+\, \|\bgamma_f\|_{\H^1(0,T;\bbQ_f)} 
+ \|\bvarphi\|_{\H^1(0,T;\bLambda_f)} 
+ \|\lambda\|_{\H^1(0,T;\Lambda_p)} 
\nonumber \\
&\leq\, C \sqrt{T}\,\Bigg( \|\f_f\|_{\H^1(0,T;\bL^2(\Omega_f))}  
+ \|\f_p\|_{\H^1(0,T;\bL^2(\Omega_p))}
+ \|q_p\|_{\H^1(0,T;\L^2(\Omega_p))}
+ \frac{1}{\sqrt{s_0}}\|q_p(0)\|_{\L^2(\Omega_p)}
\nonumber \\
& \quad +\, \|\bu_{f,0}\|_{\bL^2(\Omega_f)}
+ \|\bdiv( \be(\bu_{f,0}))\|_{\bL^{2}(\Omega_f)}
+ \|\bdiv(\bu_{f,0}\otimes \bu_{f,0})\|_{\bL^2(\Omega_f)} 
+ \sqrt{s_0}\,\|p_{p,0}\|_{\W_p}
\nonumber \\
& \quad +\, \|p_{p,0}\|_{\H^1(\Omega_p)}
+ \frac{1}{\sqrt{s_0}}\|\div(\bK\nabla p_{p,0})\|_{\bL^2(\Omega_p)}
+ \|\bbeta_{p,0}\|_{\bV_p} 
+ \|\bdiv(A^{-1}(\, \be(\bbeta_{p,0})))\|_{\bL^2(\Omega_p)} 
\nonumber \\
&\quad 
+\, \left(1+\frac{1}{\sqrt{s_0}}\right)\|\bu_{s,0}\|_{\bV_p}  
\Bigg) \,.
\label{eq:continuous-stability}
\end{align}
\end{thm}
\begin{proof}
We begin by choosing $(\btau_f, \bv_p, \bxi_p, \bv_f, w_p, \bchi_f, \bpsi, \xi) = (\bsi_f, \bu_p, \partial_t\,\bbeta_p, \bu_f, p_p, \bgamma_f, \bvarphi, \lambda)$ in \eqref{eq:continuous-weak-formulation-1} to get
\begin{align}
&\frac{1}{2}\,\partial_t\,\Big( \rho_f\,\|\bu_f\|^2_{\bL^2(\Omega_f)} 
+ s_0\,\|p_p\|^2_{\W_p} 
+ a^e_p(\bbeta_p, \bbeta_p)  
+ \rho_p\,\|\partial_t\bbeta_p\|^2_{\bL^2(\Omega_p)}\Big) 
+ a_f(\bsi_f, \bsi_f) 
+ a^d_p(\bu_p,\bu_p) \nonumber \\
&\quad +\, c_{\BJS}(\partial_t\,\bbeta_p, \bvarphi;\partial_t\,\bbeta_p, \bvarphi) 
+ \kappa_{\bu_f}(\bu_f,\bsi_f) 
+ l_{\bvarphi}(\bvarphi,\bvarphi)
\,=\, (\f_p,\partial_t\,\bbeta_p)_{\Omega_p} 
+ (\f_f,\bu_f)_{\Omega_f} 
+ (q_p,p_p)_{\Omega_p} \,.\label{eq:stability-bound-1}
\end{align}
Next, we integrate \eqref{eq:stability-bound-1} from $0$ to $t\in (0,T]$, use the coercivity bound of $a^e_p$ (cf. \eqref{eq:aep-coercivity-bound}), the non-negativity bounds of $a_f, a^d_p, \cE_2$ and $c_\BJS$ (cf. \eqref{eq:coercivity-af}, \eqref{eq:coercivity-adp}, \eqref{eq: operator E_2-monotone}, \eqref{eq: operator C-monotone}), the identity
\begin{equation*}
\int_0^t (\f_p,\partial_t\,\bbeta_p)_{\Omega_p}\, ds = (\f_p,\,\bbeta_p)_{\Omega_p}\Big|_0^t - \int_0^t (\partial_t \f_p,\\,\bbeta_p)_{\Omega_p}\, ds,
\end{equation*}
and arguments similar to those in \eqref{eq:sol-uniqueness-4.8-1}--\eqref{eq:sol-uniqueness-sigfd-others} to deal with the terms $\kappa_{\bu_f}$ and $l_\bvarphi$, namely, using \eqref{eq:continuous-weak-formulation-1a} together with the inf-sup condition of $B_f$ (cf. \eqref{eq:inf-sup-vf-chif}), the continuity of $\kappa_{\bw_f}$ and $l_{\bzeta}$ (cf. \eqref{eq:continuity-cK-wf}, \eqref{eq:continuity-cL-zeta}), and the fact that $(\bu_f(t),\bvarphi(t)):[0,T]\to \bW_{r_1,r_2}$ (cf. \eqref{eq:Wr-definition}), to obtain
\begin{align}\label{eq:bound-unsteady-state-solution-1}
& \ds \frac{\rho_f}{2}\,\|\bu_f(t)\|^2_{\bL^2(\Omega_f)} + \frac{s_0}{2}\,\|p_p(t)\|^2_{\W_p} + \frac{c_e}{2}\|\bbeta_p(t)\|^2_{\bV_p} + \frac{\rho_p}{2}\,\| \partial_t\bbeta_p(t)\|^2_{\bL^2(\Omega_p)}\nonumber \\[1ex]
&\ds\quad +\,
\int^t_0 \Big( \frac{37}{100\mu}\| \bsi^\rd_f\|^2_{\bbL^2(\Omega_f)}
+ \mu\,k_{\min} \|\bu_p\|^2_{\bL^2(\Omega_p)}
+ c_I\sum^{n-1}_{j=1} \|( \bvarphi - \partial_t\,\bbeta_p)\cdot\bt_{f,j}\|^2_{\L^2(\Gamma_{fp})} \Big)\, ds \nonumber \\[1ex]
& \ds \leq\, \frac{\rho_f}{2}\,\|\bu_f(0)\|^2_{\bL^2(\Omega_f)} +  \frac{s_0}{2}\,\|p_p(0)\|^2_{\W_p} + \frac{c_e}{2}\|\bbeta_p(0)\|^2_{\bV_p} + \frac{\rho_p}{2}\,\| \partial_t\bbeta_p(0)\|^2_{\bL^2(\Omega_p)} + (\f_p(t),\bbeta_p(t))_{\Omega_p}  \nonumber \\[1ex]
&\ds\quad -\, (\f_p(0),\bbeta_p(0))_{\Omega_p}\,-\,\, \int^t_0 \Big( (\partial_t\,\f_p,\bbeta_p)_{\Omega_p} -  (\f_f,\bu_f)_{\Omega_f}  - (q_p,p_p)_{\Omega_p} \Big)\, ds \,.
\end{align}

On the other hand, similarly to \eqref{eq:sol-uniqueness-4.8-3}, adding \eqref{eq:continuous-weak-formulation-1a} and \eqref{eq:continuous-weak-formulation-1e}, applying the inf-sup condition of $\cB$ in Lemma \ref{lem:inf-sup-conditions} for $(\bu_f, p_p, \bgamma_f, \bvarphi, \lambda)$, the continuity of $\kappa_{\bw_f}$ (cf. \eqref{eq:continuity-cK-wf}), with $(\bu_f(t),\bvarphi(t)):[0,T]\to \bW_{r_1,r_2}$ (cf. \eqref{eq:Wr-definition}), and the continuity bounds of $a_f, a^d_p$ (cf. \eqref{defn-A}, \eqref{eq:continuity-cA-cB-cC}), we deduce that
\begin{equation}\label{eq:bound-unsteady-state-solution-3}
\int^t_0 \|(\bu_f, p_p, \bgamma_f, \bvarphi, \lambda)\|^2 \, ds 
\,\leq\, C\,\int^t_0 \Big( \| \bsi^\rd_f\|^2_{\bbL^2(\Omega_f)} + \|\bu_p\|^2_{\bL^2(\Omega_p)} \Big)\, ds.
\end{equation}
%

Thus, applying Cauchy--Schwarz and Young's inequalities on the right-hand side of \eqref{eq:bound-unsteady-state-solution-1} in combination with \eqref{eq:bound-unsteady-state-solution-3}, we obtain
\begin{align}
&\|\bu_f(t)\|^2_{\bL^2(\Omega_f)} + s_0\,\|p_p(t)\|^2_{\W_p} + \|\bbeta_p(t)\|^2_{\bV_p}+ \,\| \partial_t\bbeta_p(t)\|^2_{\bL^2(\Omega_p)} \nonumber \\
&\quad +\, \int^t_0 \Bigg(  \| \bsi^\rd_f\|^2_{\bbL^2(\Omega_f)} + \|\bu_p\|^2_{\bL^2(\Omega_p)} + \sum^{n-1}_{j=1} \|( \bvarphi - \partial_t\,\bbeta_p )\cdot\bt_{f,j}\|^2_{\L^2(\Gamma_{fp})} + \|(\bu_f, p_p, \bgamma_f, \bvarphi, \lambda)\|^2 \Bigg)\, ds \nonumber \\
&\leq\, C\,\Bigg( \int^t_0 \Big( \|\f_f\|^2_{\bL^2(\Omega_f)} + \|q_p\|^2_{\L^2(\Omega_p)} \Big)\,ds 
+ \|\f_p(t)\|^2_{\bL^2(\Omega_p)} + \|\f_p(0)\|^2_{\bL^2(\Omega_p)} +\, \|\bu_f(0)\|^2_{\bL^2(\Omega_f)} 
\nonumber \\
&\quad 
+ s_0\,\|p_p(0)\|^2_{\W_p} + \|\bbeta_p(0)\|^2_{\bV_p} 
+ \| \partial_t\bbeta_p(0)\|^2_{\bL^2(\Omega_p)}  +\, \int^t_0 \|\partial_t\,\f_p\|_{\bL^2(\Omega_p)}\|\bbeta_p\|_{\bV_p} \, ds\Bigg)\,.
\label{eq:bound-unsteady-state-solution-6}
\end{align}

Next, we obtain a stability bound for $\|\bdiv(\bsi_f)\|_{\bL^{4/3}(\Omega_f)}$ and $\|\div(\bu_{p})\|_{\L^2(\Omega_p)}$. From \eqref{eq:continuous-weak-formulation-1b}, \eqref{eq:continuous-weak-formulation-1f}, and recalling that $\bdiv(\bbX_f) = (\bV_f)'$ and $\div(\bX_p) = (\W_p)'$, it follows that
\begin{align}
&\|\bdiv(\bsi_f)\|_{\bL^{4/3}(\Omega_f)} 
\,\leq\, |\Omega_f|^{1/4}\,\big(\|\f_f \|_{\bL^2(\Omega_f)} + \rho_f\,\|\partial_t\,\bu_f \|_{\bL^2(\Omega_f)}\big), \nonumber \\[1ex]
&\mbox{and}\quad \|\div(\bu_{p})\|_{\L^2(\Omega_p)} 
\,\leq\, \|q_p\|_{\L^2(\Omega_p)} + s_0\,\|\partial_t p_{p}\|_{\W_p} + \alpha_p\,n^{1/2}\,\|\partial_t\bbeta_{p}\|_{\bV_p} .\label{eq:bound-unsteady-state-solution-5}
\end{align}
\noindent{\bf Bounds on time derivatives on the right-hand side of \eqref{eq:bound-unsteady-state-solution-5}.}

\noindent In order to bound the time derivative terms in \eqref{eq:bound-unsteady-state-solution-5}, we take a finite difference in time of the whole system \eqref{eq:continuous-weak-formulation-1}. 
In particular, given $t \in [0,T)$ and $s > 0$ with $t+s \le T$, let $\partial_t^s\phi := \frac{\phi(t+s) - \phi(t)}{s}$. 
Thus, applying the operator to \eqref{eq:continuous-weak-formulation-1}, and testing with
$(\btau_f, \bv_p, \bxi_p, \bv_f, w_p, \bchi_f, \bpsi, \xi) = (\partial_t^s\bsi_f, \partial_t^s\bu_p, \partial_t^s\partial_t^s\,\bbeta_p, \partial_t^s\bu_f, \partial_t^s p_p, \partial_t^s\bgamma_f$, $\partial_t^s\bvarphi, \partial_t^s\lambda)$, similarly to \eqref{eq:stability-bound-1}, we get 
\begin{align}
& \frac{1}{2}\,\partial_t\,\Big( \rho_f\,\|\partial_t^s\,\bu_f\|^2_{\bL^2(\Omega_f)} 
+ s_0\,\|\partial_t^s\,p_p\|^2_{\W_p} 
+ a^e_p(\partial_t^s\bbeta_p, \partial_t^s\bbeta_p) 
+ \rho_p\,\|\partial_t^s\partial_t^s\bbeta_p\|^2_{\bL^2(\Omega_p)}\Big) 
\nonumber \\
&\quad +\, a_f(\partial_t^s\bsi_f, \partial_t^s\bsi_f)
+ a^d_p(\partial_t^s\bu_p,\partial_t^s\bu_p) 
+ c_{\BJS}(\partial_t^s\partial_t^s\,\bbeta_p,\partial_t^s \bvarphi;\partial_t^s\partial_t^s\,\bbeta_p, \partial_t^s\bvarphi) 
+ \kappa_{\partial_t^s\bu_f}(\bu_f(t),\partial_t^s\bsi_f)  
\nonumber \\
&\quad +\, \kappa_{\bu_f(t+s)}(\partial_t^s\bu_f,\partial_t^s\bsi_f)
+ l_{\partial_t^s\bvarphi}(\bvarphi(t),\partial_t^s\bvarphi) 
+ l_{\bvarphi(t+s)}(\partial_t^s\bvarphi,\partial_t^s\bvarphi)
\nonumber \\
& =\, (\partial_t^s\f_p,\partial_t^s\partial_t^s\,\bbeta_p)_{\Omega_p} 
+ (\partial_t^s\f_f,\partial_t^s\bu_f)_{\Omega_f} 
+ (\partial_t^s q_p,\partial_t^s p_p)_{\Omega_p}\,.
\label{eq:stability-bound-7}
\end{align}
Next, we integrate from 0 to t $\in (0,T)$, use the coercivity bound in \eqref{eq:aep-coercivity-bound}, the non-negativity bounds of $a_f, a^d_p$ and $c_{\BJS}$ in Lemma \ref{lem:coercivity-properties-A-E2}, arguments similar to those in \eqref{eq:mono f bound1}--\eqref{eq:mono u bound2}, that is, by combining \eqref{eq:continuous-weak-formulation-1a} with the inf-sup condition of $B_f$ (cf. \eqref{eq:inf-sup-vf-chif}), the continuity of  $\kappa_{\bw_f}$ and $l_{\bzeta}$ (cf. \eqref{eq:continuity-cK-wf}, \eqref{eq:continuity-cL-zeta}), the fact that $(\bu_f(t),\bvarphi(t)):[0,T]\to \bW_{r_1,r_2}$ (cf. \eqref{eq:Wr-definition}), and take $s\to 0$ to obtain
\begin{align}\label{eq:bound-unsteady-state-solution-8}
&\ds \frac{\rho_f}{2}\,\|\partial_t \bu_f(t)\|^2_{\bL^2(\Omega_f)} 
+ \frac{s_0}{2}\,\| \partial_t p_p(t)\|^2_{\W_p} 
+ \frac{c_e}{2}\| \partial_t\bbeta_p(t)\|^2_{\bV_p} 
+ \frac{\rho_p}{2}\,\| \partial_{tt}\bbeta_p(t)\|^2_{\bL^2(\Omega_p)}
\nonumber \\[1ex]
&\ds\quad +\, \int^t_0 \Big( \frac{5}{32\mu}\,\| \partial_t\bsi^\rd_f\|^2_{\bbL^2(\Omega_f)}  
+ \mu\,k_{\min}\|\partial_t\bu_p\|^2_{\bL^2(\Omega_p)} 
+ c_I\,\sum^{n-1}_{j=1} \|( \partial_t\bvarphi - \partial_{tt}\,\bbeta_p)\cdot\bt_{f,j}\|^2_{\L^2(\Gamma_{fp})} \Big)\, ds 
\nonumber \\[1ex]
&\ds\leq\, \frac{\rho_f}{2}\,\|\partial_t\bu_f(0)\|^2_{\bL^2(\Omega_f)} 
+ \frac{s_0}{2}\,\|\partial_t p_p(0)\|^2_{\W_p} 
+ \frac{c_e}{2}\|\partial_t\bbeta_p(0)\|^2_{\bV_p} 
+ \frac{\rho_p}{2}\,\|\partial_{tt}\bbeta_p(0)\|^2_{\bL^2(\Omega_p)}  
\nonumber \\[1ex]
& \ds\quad +\, \int^t_0 \Big( (\partial_t\f_p,\partial_{tt}\,\bbeta_p)_{\Omega_p} 
+ (\partial_t\f_f,\partial_t\bu_f)_{\Omega_f} 
+ (\partial_t q_p,\partial_t p_p)_{\Omega_p} \Big)\, ds \,. 
\end{align}
In turn, differentiating in time \eqref{eq:continuous-weak-formulation-1a} and \eqref{eq:continuous-weak-formulation-1e} and applying the inf-sup condition of $\cB$ in Lemma \ref{lem:inf-sup-conditions} (cf. \eqref{eq:inf-sup-qp-xi}, \eqref{eq:inf-sup-vf-chif}) for $(\partial_t\bu_f, \partial_t p_p, \partial_t\bgamma_f, \partial_t\bvarphi, \partial_t\lambda)$, and the continuity of $\kappa_{\bw_f}$ (cf. \eqref{eq:continuity-cK-wf}), with $(\bu_f(t),\bvarphi(t)):[0,T]\to \bW_{r_1,r_2}$  (cf. \eqref{eq:Wr-definition}) as in \eqref{eq:bound-unsteady-state-solution-3}, to derive
\begin{equation}\label{eq:bound-unsteady-state-solution-14}
\int^t_0 \|(\partial_t\bu_f, \partial_t p_p, \partial_t\bgamma_f, \partial_t\bvarphi, \partial_t\lambda)\|^2 \, ds 
\,\leq\, C\,\int^t_0 \Big( \| \partial_t\bsi^\rd_f\|^2_{\bbL^2(\Omega_f)} + \|\partial_t\bu_p\|^2_{\bL^2(\Omega_p)} \Big)\,ds \,.
\end{equation}
Thus, using Cauchy--Schwarz and Young's inequalities on the right-hand side of \eqref{eq:bound-unsteady-state-solution-8} in combination with \eqref{eq:bound-unsteady-state-solution-14}, we get
\begin{align}\label{eq:bound-unsteady-state-solution-10}
&\ds \|\partial_t\bu_f(t)\|^2_{\bL^2(\Omega_f)} 
+ s_0\,\| \partial_t p_p(t)\|^2_{\W_p} 
+ \| \partial_t \bbeta_p(t)\|^2_{\bV_p} 
+ \| \partial_{tt}\bbeta_p(t)\|^2_{\bL^2(\Omega_p)}
+ \int^t_0 \Big( \| \partial_t\bsi^\rd_f\|^2_{\bbL^2(\Omega_f)} 
\nonumber \\[1ex]
&\ds\quad +\, \| \partial_t\bu_p\|^2_{\bL^2(\Omega_p)} + \sum^{n-1}_{j=1} \|( \partial_t\bvarphi - \partial_{tt}\,\bbeta_p)\cdot\bt_{f,j}\|^2_{\L^2(\Gamma_{fp})} + \|(\partial_t\bu_f, \partial_t p_p, \partial_t\bgamma_f, \partial_t\bvarphi, \partial_t\lambda)\|^2 \Big)\, ds \nonumber \\[1ex]
&\ds\leq\, C\,\Bigg( \,\int^t_0 \big(   \|\partial_t\f_f\|^2_{\bL^2(\Omega_f)} 
+ \|\partial_t q_p\|^2_{\L^2(\Omega_p)}\big)\,ds
+ \| \partial_t\bu_f(0)\|^2_{\bL^2(\Omega_f)} 
+ s_0\,\|\partial_t p_p(0)\|^2_{\W_p} 
\nonumber \\[1ex]
&\ds\quad +\, \|\partial_t\bbeta_p(0)\|^2_{\bV_p}
+ \|\partial_{tt}\bbeta_p(0)\|^2_{\bL^2(\Omega_p)} 
+ \int^t_0 \|\partial_{t}\,\f_p\|_{\bL^2(\Omega_p)}\|\partial_{tt}\bbeta_p\|_{\bL^2(\Omega_p)}\, ds  \Bigg)\,.
\end{align}

\noindent{\bf Bound on initial data.}

\noindent Recall that $(\bu_f(0), p_p(0), \bbeta_p(0), \partial_t\bbeta_p(0)) = (\bu_{f,0}, p_{p,0}, \bbeta_{p,0}, \bu_{s,0})$ is the initial data  given to us.
From \eqref{eq:continuous-weak-formulation-1b}, \eqref{eq:continuous-weak-formulation-1d} and \eqref{eq:continuous-weak-formulation-1f} at time $t=0$, integrating by parts backwardly \eqref{eq:continuous-weak-formulation-1d} in combination with the fact that the initial data satisfy \eqref{bdycond-4}--\eqref{bdycond-5} to cancel the terms on the interface, and choosing $(\bv_f,\bxi_p,w_p) = (\partial_t \bu_f(0), \partial_{tt}\bbeta_p(0), \partial_t p_p(0))$, we get
\begin{align}\label{eq:bound-unsteady-state-solution-11}
&\ds \rho_f\,\| \partial_t\bu_f(0)\|^2_{\bL^2(\Omega_f)} 
+ s_0\,\| \partial_t p_p(0)\|^2_{\W_p} 
+ \rho_p\,\| \partial_{tt}\bbeta_p(0)\|^2_{\bL^2(\Omega_p)} 
\,=\, (\partial_t\bu_f(0),\bdiv(\bsi_f)(0))_{\Omega_f} 
\nonumber \\[1ex]
&\ds\quad +\, ( \bdiv(A^{-1}(\be(\bbeta_p)) - \alpha_p\,p_p\,\bI)(0),\partial_{tt}\bbeta_p(0))_{\Omega_p}
-\alpha_p\,(\div(\partial_t\,\bbeta_p)(0), \partial_t p_p(0))_{\Omega_p}   
\nonumber \\[1ex]
&\ds\quad -\, (\partial_t p_p(0),\div(\bu_p)(0))_{\Omega_p}
+ (\f_f(0),\partial_t\bu_f(0))_{\Omega_f}
+ (\f_p(0), \partial_{tt}\bbeta_p(0))_{\Omega_p} 
+ (q_p(0),\partial_t p_p(0))_{\Omega_p} \,.
\end{align}
Then, applying Cauchy--Schwarz and Young's inequalities with appropriate weights on the right-hand side of \eqref{eq:bound-unsteady-state-solution-11}, we obtain
\begin{align*}
& \ds \|\partial_t\bu_f(0)\|^2_{\bL^2(\Omega_f)} 
+ s_0\,\| \partial_t p_p(0)\|^2_{\W_p} 
+ \| \partial_{tt}\bbeta_p(0)\|^2_{\bL^2(\Omega_p)}
\,\leq\, C\bigg( \|\bdiv(A^{-1}(\, \be(\bbeta_{p})))(0)\|^2_{\bL^2(\Omega_p)} 
\nonumber \\
&\ds\quad +\, \|p_{p}(0)\|^2_{\H^1(\Omega_p)}
+ \|\bdiv(\bsi_{f}(0))\|^2_{\bL^2(\Omega_f)}
+ \frac{1}{s_0}\,\|\partial_t\,\bbeta_p(0)\|^2_{\bV_p} 
+ \frac{1}{s_0}\,\|\div(\bu_p)(0)\|^2_{\bL^2(\Omega_p)}   
+ \|\f_f(0) \|^2_{\bL^2(\Omega_f)}
\nonumber \\
&\ds\quad +\, \|\f_p(0)\|^2_{\bL^2(\Omega_p)} 
+ \frac{1}{s_0}\|q_p(0)\|^2_{\L^2(\Omega_p)}\Bigg)
+ \delta\,\Bigg( \|\partial_t\bu_f(0)\|^2_{\bL^2(\Omega_f)} 
+ s_0\,\| \partial_t p_p(0)\|^2_{\W_p} 
+ \|\partial_{tt}\bbeta_p(0)\|^2_{\bL^2(\Omega_p)}\Bigg)\,,
\end{align*}
and taking $\delta$ small enough, using \eqref{eq:sol0-up0-pp0} and \eqref{eq:sol0-sigmaf0-uf0-gammaf0} to bound $\div(\bu_p)(0)$ and $\bdiv(\bsi_{f})(0)$, respectively, implies 
\begin{align}\label{eq:bound-unsteady-state-solution-13}
&\ds \|\partial_t\bu_f(0)\|^2_{\bL^2(\Omega_f)} 
+ s_0\,\|\partial_t p_p(0)\|^2_{\W_p} 
+ \| \partial_{tt}\bbeta_p(0)\|^2_{\bL^2(\Omega_p)} 
\nonumber \\
&\ds\quad \leq\, C\Bigg( \|\bdiv(\be(\bu_{f,0}))\|^2_{\bL^{2}(\Omega_f)}
+ \|\bdiv(\bu_{f,0}\otimes \bu_{f,0})\|^2_{\bL^2(\Omega_f)}
+ \|\bdiv(A^{-1}(\, \be(\bbeta_{p,0})))\|^2_{\bL^2(\Omega_p)} 
+ \|p_{p,0}\|^2_{\H^1(\Omega_p)}
\nonumber \\ 
&\ds\quad \quad+\, \frac{1}{s_0}\|\bu_{s,0}\|^2_{\bV_p} 
+ \frac{1}{s_0}\|\div(\bK\nabla p_{p,0})\|^2_{\bL^2(\Omega_p)} 
+ \|\f_f(0) \|^2_{\bL^2(\Omega_f)} 
+ \|\f_p(0)\|^2_{\bL^2(\Omega_p)} 
+ \frac{1}{s_0}\|q_p(0)\|^2_{\L^2(\Omega_p)}\Bigg)\,.
\end{align}

Finally, combining \eqref{eq:bound-unsteady-state-solution-6}, \eqref{eq:bound-unsteady-state-solution-10}, and \eqref{eq:bound-unsteady-state-solution-13}, using the Sobolev embedding of $\H^1(0,T)$ into $\L^\infty(0,T)$, and applying Lemma \ref{xing-lemma} in the context of the non-negative functions $H=\big( \|\bbeta_p\|^2_{\bV_p} + \|\partial_{tt}\bbeta_p\|^2_{\bL^2(\Omega_p)}\big)^{1/2}$ and $B=\|\partial_{t}\,\f_p\|_{\bL^2(\Omega_p)}$, with $R$ and $A$ representing the remaining terms, to control $\int^t_0 B(s)\,H(s)\,ds$, along with \eqref{eq:bound-unsteady-state-solution-5} and the Sobolev embedding of $\L^\infty(0,T)$ into $\L^2(0,T)$,
\begin{equation}\label{sobolev-L-infty-L2}
\|\Phi\|_{\L^2(0,T)} \,\leq\, \sqrt{T}\,\|\Phi\|_{\L^\infty(0,T)} \,,
\end{equation}
for any scalar or vector-valued function $\Phi$ to bound the terms $\|\bdiv(\bsi_f)\|_{\bL^{4/3}(\Omega_f)}$ and $\|\div(\bu_{p})\|_{\L^2(\Omega_p)}$ in $\L^2(0,T)$, applying \eqref{eq:tau-d-H0div-inequality}--\eqref{eq:tau-H0div-Xf-inequality}, and performing some algebraic manipulations, we obtain \eqref{eq:continuous-stability}.
\end{proof}


\section{Semidiscrete continuous-in-time approximation}\label{sec:Semidiscrete continuous-in-time approximation}

In this section we introduce and analyze the semidiscrete continuous-in-time approximation of \eqref{eq:continuous-weak-formulation-1}. 
We analyze its solvability by employing the strategy developed in Section \ref{sec:well-posedness-model}. 
In addition, we derive error estimates with rates of convergence. 

Let $\cT_h^f$ and $\cT_h^p$ be shape-regular and quasi-uniform affine finite element partitions of $\Omega_f$ and $\Omega_p$, respectively, where $h$ is the maximum element diameter. The two partitions may be non-matching along the interface $\Gamma_{fp}$. For the discretization, we consider the following conforming finite element spaces:
\begin{equation*}
\bbX_{fh}\times \bV_{fh}\times \bbQ_{fh}\subset \bbX_f \times \bV_f\times \bbQ_{f}, \quad
\bV_{ph}\subset\bV_p, \quad \bX_{ph}\times\W_{ph} \subset \bX_p\times\W_p \,.
\end{equation*}
We choose $(\bbX_{fh}, \bV_{fh}, \bbQ_{fh})$ to be any stable finite element spaces for mixed elasticity with weakly imposed stress symmetry, such as the
Amara--Thomas \cite{at1979}, PEERS \cite{abd1984}, Stenberg
\cite{stenberg1988}, Arnold--Falk--Winther \cite{afw2007,awanou2013},
or Cockburn--Gopalakrishnan--Guzman \cite{cgg2010} families of spaces. 
We take $(\bX_{ph},\W_{ph})$ to be any stable mixed finite element Darcy spaces, such as the Raviart--Thomas (RT) or Brezzi--Douglas--Marini (BDM) spaces \cite{Brezzi-Fortin}. 
We note that these spaces satisfy
\begin{equation}\label{eq: div-prop} 
\div(\bX_{ph})=\W_{ph}, \quad
\bdiv(\bbX_{fh}) = \bV_{fh}.
\end{equation}
For the Lagrange multipliers, we choose the conforming approximations
\begin{equation}\label{defn-Lambda-h}
\Lambda_{ph} \subset \Lambda_{p}\,,\quad 
\bLambda_{fh} \subset \bLambda_{f}\,,
\end{equation}
equipped with $\H^{1/2}$-norms as in \eqref{eq:H1/2-norms}. If the normal traces of the spaces $\bX_{ph},$ or $\bbX_{fh}$ contain piecewise polynomials in $\cP_k$ on simplices or $\cQ_k$ on cubes with $k \geq 1$, where $\cP_k$ denotes polynomials of total degree $k$ and $\cQ_k$ stands for polynomials of degree $k$ in each variable, we take
$\Lambda_{ph}$ and $\bLambda_{fh}$ to be continuous piecewise polynomials in $\cP_k$ or $\cQ_k$ on the traces of $\cT_h^p$ and $\cT_h^f$, respectively. In the case $k = 0$, we take $\Lambda_{ph}$ and $\bLambda_{fh}$ to be continuous piecewise polynomials in $\cP_1$ or $\cQ_1$ on grids obtained by coarsening by two the traces of $\cT_h^p$ and $\cT_h^f$, respectively. These choices guarantee the inf-sup conditions given below in Lemma ~\ref{lem: discrete inf-sup}.

The semidiscrete continuous-in-time approximation to \eqref{eq:continuous-weak-formulation-1} is: Find $(\bsi_{fh},\bu_{ph}, \bbeta_{ph}, \bu_{fh}, p_{ph}, \bgamma_{fh}, \bvarphi_{h},$ $\lambda_{h}) : [0,T]\to \bbX_{fh}\times \bX_{ph}\times \bV_{ph}\times \bV_{fh}\times \W_{ph}\times \bbQ_{fh}\times \bLambda_{fh}\times \Lambda_{ph}$, such that for a.e. $t\in (0,T)$:
\begin{align}
& \rho_f (\partial_t\,\bu_{fh},\bv_{fh})_{\Omega_f}+ a_f(\bsi_{fh},\btau_{fh})+b_{\bn_f}(\btau_{fh},\bvarphi_{h}) + b_f(\btau_{fh},\bu_{fh}) + b_\sk(\bgamma_{fh},\btau_{fh})+\kappa_{\bu_{fh}}(\bu_{fh}, \btau_{fh}) \nonumber \\ 
&\quad -\, b_f(\bsi_{fh},\bv_{fh}) - b_\sk(\bsi_{fh},\bchi_{fh})  \, = \, (\f_{f},\bv_{fh})_{\Omega_f}\,, \nonumber \\ 
& \rho_p(\partial_{tt}\bbeta_{ph},\bxi_{ph})_{\Omega_p} + a^e_p(\bbeta_{ph},\bxi_{ph})+ \alpha_p\,b_p(\bxi_{ph},p_{ph}) +c_{\BJS}(\partial_t\,\bbeta_{ph}, \bvarphi_{h};\bxi_{ph}, \bpsi_{h})- c_{\Gamma}(\bxi_{ph},\bpsi_{h};\lambda_{h}) \nonumber \\ 
&\quad -\,b_{\bn_f}(\bsi_{fh},\bpsi_{h})+l_{\bvarphi_{h}}(\bvarphi_{h},\bpsi_{h}) \, = \,(\f_{p},\bxi_{ph})_{\Omega_p}\,, \nonumber \\ 
& s_0 (\partial_t\,p_{ph},w_{ph})_{\Omega_p} +a^d_p(\bu_{ph},\bv_{ph}) +b_p(\bv_{ph},p_{ph})+b_{\bn_p}(\bv_{ph},\lambda_{h})  - \alpha_p\,b_p(\partial_t\,\bbeta_{ph},w_{ph})- b_p(\bu_{ph},w_{ph}) \nonumber \\ 
&\quad = \,(q_{p},w_{ph})_{\Omega_p}\,, \nonumber \\ 
& c_{\Gamma}(\partial_t\,\bbeta_{ph},\bvarphi_{h};\xi_{h})-b_{\bn_p}(\bu_{ph},\xi_{h})\, = \,0\,,  
\label{eq:NS-Biot-semiformulation-1}
\end{align}
for all $(\btau_{fh}, \bv_{ph}, \bxi_{ph}, \bv_{fh}, w_{ph}, \bchi_{fh}, \bpsi_{h}, \xi_{h})\in \bbX_{fh}\times \bX_{ph}\times \bV_{ph}\times \bV_{fh}\times \W_{ph}\times \bbQ_{fh}\times \bLambda_{fh}\times \Lambda_{ph}$. The system is complemented with 
initial conditions for $\bu_{fh}(0),p_{ph}(0),\bbeta_{ph}(0)$ and $\partial_t\,\bbeta_{ph}(0)$, which will be chosen as suitable approximations of $\bu_{f,0},p_{p,0},\bbeta_{p,0}$ and $\bu_{s,0}$ that give 
compatible initial data for all variables, constructed in Lemma~\ref{lem: discrete initial condition} below.

\subsection{Alternative semidiscrete formulation}

To establish the well-posedness of the semidiscrete formulation \eqref{eq:NS-Biot-semiformulation-1}, we follow the same strategy employed for its continuous counterpart.
For analysis purposes only, we consider an alternative semidiscrete formulation \eqref{eq:NS-Biot-formulation-2}.
Let $\bV_{ph}$ consist of polynomials of degree at most $s_{\bbeta_p}\geq 1$. We introduce the stress finite element space $\bSigma_{eh} \subset \bSigma_e$, as symmetric tensors with elements that are discontinuous polynomials of degree at most $s_{\bbeta_p}-1$:
\begin{equation*}
\bSigma_{eh} \,:=\, \Big\{ \bsi_e\in \bSigma_{e} :\quad\bsi_e|_{T\in \cT_h^p}\in \cP^{sym}_{s_{\bbeta_p}-1}(T)^{n\times n} \Big\}.
\end{equation*}
The alternative semidiscrete formulation is: Find $(\bsi_{fh}, \bu_{ph}, \bsi_{eh}, \bu_{fh}, p_{ph}, \bgamma_{fh}, \bu_{sh}, \bvarphi_{h}, \lambda_{h}) : [0,T]\to \bbX_{fh}\times \bX_{ph}\times \bSigma_{eh}\times \bV_{fh}\times \W_{ph}\times \bbQ_{fh}\times \bV_{ph}\times \bLambda_{fh}\times \Lambda_{ph}$, such that for a.e. $t\in (0,T)$:
\begin{align}
& a_f(\bsi_{fh},\btau_{fh}) + \kappa_{\bu_{fh}}(\bu_{fh}, \btau_{fh}) + a^d_p(\bu_{ph},\bv_{ph}) + a^s_p(\partial_t\,\bsi_{eh},\btau_{eh}) \nonumber\\ 
&\quad +\, b_f(\btau_{fh},\bu_{fh}) + b_p(\bv_{ph},p_{ph}) + b_\sk(\btau_{fh},\bgamma_{fh})
+ b_s(\btau_{eh},\bu_{sh}) + b_{\bn_f}(\btau_{fh},\bvarphi_{h}) + b_{\bn_p}(\bv_{ph},\lambda_{h})  
\,=\, 0\,, \nonumber\\ 
& \rho_f\,(\partial_t\,\bu_{fh},\bv_{fh})_{\Omega_f} + s_0\,(\partial_t\,p_{ph},w_{ph})_{\Omega_p}
+ \rho_p\,(\partial_t\bu_{sh},\bv_{sh})_{\Omega_p} \nonumber\\ 
&\quad +\, c_{\BJS}(\bu_{sh},\bvarphi_{h};\bv_{sh},\bpsi_{h}) + c_{\Gamma}(\bu_{sh},\bvarphi_{h};\xi_{h}) - c_{\Gamma}(\bv_{sh},\bpsi_{h};\lambda_{h}) 
+ \alpha_p\,b_p(\bv_{sh},p_{ph}) - \alpha_p\,b_p(\bu_{sh},w_{ph}) \nonumber\\  
&\quad -\, b_f(\bsi_{fh},\bv_{fh}) - b_p(\bu_{ph},w_{ph}) - b_\sk(\bsi_{fh},\bchi_{fh})  
- b_s(\bsi_{eh},\bv_{sh}) - b_{\bn_f}(\bsi_{fh},\bpsi_{h}) - b_{\bn_p}(\bu_{ph},\xi_{h}) \nonumber\\  
&\quad +\, l_{\bvarphi_{h}}(\bvarphi_{h},\bpsi_{h}) =\, (\f_f,\bv_{fh})_{\Omega_f} + (q_p,w_{ph})_{\Omega_p} + (\f_p,\bv_{sh})_{\Omega_p}\,, 
\label{eq:NS-Biot-semi-formulation-2}
\end{align}
for all
$(\btau_{fh}, \bv_{ph}, \btau_{eh}, \bv_{fh}, w_{ph}, \bchi_{fh}, \bv_{sh}, \bpsi_{h}, \xi_{h})\in \bbX_{fh}\times \bX_{ph}\times \bSigma_{eh}\times \bV_{fh}\times \W_{ph}\times \bbQ_{fh}\times \bV_{ph}\times \bLambda_{fh}\times \Lambda_{ph}$. 
The initial conditions for $\bu_{fh}(0)$, $p_{ph}(0)$, $\bu_{sh}(0)$, and $\bsi_{eh}(0)$ are approximations of $\bu_{f,0}$, $p_{p,0}$, $\bu_{s,0}$, and $\bsi_{e,0}$, respectively, chosen to ensure compatible initial data for all variables.

Now, we group the spaces, unknowns and test functions similarly to the continuous case:
\begin{equation*}
\begin{array}{c}
\ds \bQ_h := \bbX_{fh}\times \bX_{ph}\times \bSigma_{eh},\quad
\bS_h := \bV_{fh}\times \W_{ph}\times \bbQ_{fh}\times \bV_{ph}\times \bLambda_{fh}\times \Lambda_{ph}, \\ [1.5ex]
\ds \ubsi_h := (\bsi_{fh}, \bu_{ph}, \bsi_{eh})\in \bQ_h,\quad 
\ubu_h := (\bu_{fh}, p_{ph}, \bgamma_{fh}, \bu_{sh}, \bvarphi_{h}, \lambda_{h})\in \bS_h, \\[1ex]
\ds \ubtau_h := (\btau_{fh}, \bv_{ph}, \btau_{eh})\in \bQ_h,\quad 
\ubv_h := (\bv_{fh}, w_{ph}, \bchi_{fh}, \bv_{sh}, \bpsi_{h}, \xi_{h})\in \bS_h,
\end{array}
\end{equation*}
where the spaces $\bQ_h$ and $\bS_h$ are endowed with the norms defined in \eqref{norms}.
Thus, \eqref{eq:NS-Biot-semi-formulation-2} reads: Find $(\ubsi_{h},\ubu_{h}): [0,T]\to \bQ_{h}\times \bS_{h}$ such that 
for all $(\ubtau_h,\ubv_{h}) \in \bQ_{h}\times \bS_{h}$, and for a.e. $t\in (0,T),$
\begin{align}
\frac{\partial}{\partial\,t}\,\cE_1(\ubsi_{h})(\ubtau_{h}) + \cA(\ubsi_{h})(\ubtau_{h}) + \cB'(\ubu_{h})(\ubtau_{h}) + \cK_{\bu_{fh}}(\ubu_{h})(\ubtau_{h}) &= \bF(\ubtau_{h})\,, \nonumber\\ 
\frac{\partial}{\partial\,t}\,\cE_2(\ubu_{h})(\ubv_{h}) - \cB(\ubsi_{h})(\ubv_{h}) + \cC(\ubu_{h})(\ubv_{h})+\cL_{\bvarphi_{h}}(\ubu_{h})(\ubv_{h}) &= \bG(\ubv_{h})\,,
\label{eq:alternative-discrete-formulation-operator-form}
\end{align}
where the operators and functionals are defined in \eqref{operators-1} and \eqref{operators-3}.

Existence of a solution of \eqref{eq:alternative-discrete-formulation-operator-form} (cf. \eqref{eq:NS-Biot-semi-formulation-2}) follows by using Theorem \ref{thm:auxiliary-theorem} and similar arguments to the ones employed in Section \ref{sec:well-posedness-model}. In particular, we need to establish the range condition and construct compatible initial data. For the range condition, we consider the resolvent system: Find $(\ubsi_{h},\ubu_{h})\in \bQ_{h}\times \bS_{h}$, such that
\begin{align}
\cE_1(\ubsi_{h})(\ubtau_{h}) + \cA(\ubsi_{h})(\ubtau_{h}) + \cB'(\ubu_{h})(\ubtau_{h}) + \cK_{\bu_{fh}}(\ubu_{h})(\ubtau_{h}) &= \wh{\bF}(\ubtau_{h}), \nonumber\\ 
\cE_2(\ubu_{h})(\ubv_{h}) - \cB(\ubsi_{h})(\ubv_{h}) + \cC(\ubu_{h})(\ubv_{h})+\cL_{\bvarphi_{h}}(\ubu_{h})(\ubv_{h}) &=  \wh{\bG}(\ubv_{h}),\label{eq:T-auxiliary-discrete-problem-operator-A}
\end{align}
for all $(\ubtau_{h},\ubv_{h})\in \bQ_{h}\times \bS_{h}$, where $\wh{\bF} \in \bQ_{h}'$ and $\wh{\bG} \in \bS_{h}'$ are defined as
\begin{equation}\label{source-terms}
\begin{split}  
& \wh{\bF}(\ubtau_{h}) \,:=\, (\wh{\f}_{e},\btau_{eh})_{\Omega_p} \quad \forall\,\ubtau_{h} \in \bQ_{h}\,, \\ 
& \wh{\bG}(\ubv_{h}) \,:=\, (\wh{\f}_{f},\bv_{fh})_{\Omega_f} + (\wh{q}_{p},w_{ph})_{\Omega_p} + (\wh{\f}_{p},\bv_{sh})_{\Omega_p} \quad \forall\,\ubv_{h} \in \bS_{h}\,,
\end{split}
\end{equation}
for some $\wh{\f}_{e} \in \bbL^2(\Omega_p)$, $\wh{\f}_{f} \in \bL^2(\Omega_f)$, $\wh{q}_{p} \in \L^2(\Omega_p)$, and $\wh{\f}_{p} \in \bL^2(\Omega_p)$.

As in the continuous case, the well posedness of \eqref{eq:T-auxiliary-discrete-problem-operator-A} is established by formulating it as a fixed point problem and using the Banach fixed-point theorem. To that end, 
let $\bT_{\tt d}$ be the discrete version of the operator $\bT$ defined in \eqref{eq:definition-operator-T}, i.e., $\bT_{\tt d} : \bV_{fh} \times \bLambda_{fh} \to \bV_{fh}\times \bLambda_{fh}$ is such that
\begin{equation}\label{eq:definition-operator-T-h}
\bT_{\tt d}(\bw_{fh},\bzeta_h) \,:=\, (\bu_{fh},\bvarphi_h) \quad \forall\,(\bw_{fh},\bzeta_h)\in \bV_{fh}\times \bLambda_{fh},
\end{equation}
where $\ubu_h := (\bu_{fh}, p_{ph}, \bgamma_{fh}, \bu_{sh}, \bvarphi_h, \lambda_h)\in \bQ_h$ is the second component of the unique solution (to be confirmed below) of the problem: Find $(\ubsi_h,\ubu_h)\in \bQ_h\times \bS_h$, such that
\begin{align}
\cE_1(\ubsi_{h})(\ubtau_{h}) + \cA(\ubsi_{h})(\ubtau_{h}) + \cB'(\ubu_{h})(\ubtau_{h}) + \cK_{\bw_{fh}}(\ubu_{h})(\ubtau_{h}) &= \wh{\bF}(\ubtau_{h}), \nonumber\\ 
\cE_2(\ubu_{h})(\ubv_{h}) - \cB(\ubsi_{h})(\ubv_{h}) + \cC(\ubu_{h})(\ubv_{h})+\cL_{\bzeta_{h}}(\ubu_{h})(\ubv_{h}) &=  \wh{\bG}(\ubv_{h}),\label{eq:Td-lin}
\end{align}
for all $(\ubtau_{h},\ubv_{h})\in \bQ_{h}\times \bS_{h}$.
We proceed as in Sections \ref{sec:fixed-point-approach}--\ref{sec:domain-D-nonempty}.
We observe that the continuity of all bilinear forms in the discrete case follows directly from their continuous counterparts (cf. Lemma \ref{lem:cont}).
In addition, the discrete inf-sup conditions satisfied by the finite element spaces are established in the following lemma.
\begin{lem}\label{lem: discrete inf-sup}
There exist constants $\wt{\beta}_1, \wt{\beta}_2, \wt{\beta}_3 > 0$ such that
\begin{equation}\label{eq:discrete inf-sup-vs}
\sup_{\0\neq \btau_{eh}\in \bSigma_{eh}} \frac{b_s(\btau_{eh},\bv_{sh})}{\|\btau_{eh}\|_{\bSigma_{e}}} 
\geq \wt{\beta}_1\,\|\bv_{sh}\|_{\bV_{p}} \quad \forall\,\bv_{sh}\in \bV_{ph} \,,
\end{equation}
\begin{equation}\label{eq:discrete inf-sup-qp-xi}
\sup_{\0\neq \bv_{ph}\in \bX_{ph}} \frac{b_p(\bv_{ph},w_{ph}) + b_{\bn_p}(\bv_{ph},\xi_{h})}{\|\bv_{ph}\|_{\bX_{p}}} 
\geq \wt{\beta}_2\,\|(w_{ph},\xi_{h})\|_{\W_{p}\times \Lambda_{p}} \quad \forall\,(w_{ph},\xi_{h})\in \W_{ph}\times \Lambda_{ph} \,,
\end{equation}
and 
\begin{equation}\label{eq:discrete inf-sup-vf-chif}
\sup_{\0\neq \btau_{fh}\in \bbX_{fh}} \frac{B_f(\btau_{fh},(\bv_{fh},\bchi_{fh},\bpsi_{h}))}{\|\btau_{fh}\|_{\bbX_{f}}} 
\geq \wt{\beta}_3\,\|(\bv_{fh}, \bchi_{fh}, \bpsi_{h})\|_{\bV_{f}\times \bbQ_{f}\times \bLambda_{f}}\,,
\end{equation}
for all $(\bv_{fh},\bchi_{fh},\bpsi_{h})\in \bV_{fh}\times \bbQ_{fh}\times \bLambda_{fh}$ and $B_f$ defined in \eqref{eq:inf-sup-vf-chif}.
\end{lem}
\begin{proof}
For the proof of \eqref{eq:discrete inf-sup-vs} we refer the reader to \cite[eq. (5.18) in Theorem 5.1]{aeny2019}.
By combining a slight adaptation of \cite[eqs. (4.13) and (4.22)]{gos2011}, we deduce \eqref{eq:discrete inf-sup-qp-xi}. 
For the proof of \eqref{eq:discrete inf-sup-vf-chif}, since $(\bbX_{fh}, \bV_{fh}, \bbQ_{fh})$ is a stable elasticity triple, it follows that
\begin{equation}\label{bf-bsk-h1}
  \sup_{\0\neq \btau_{fh}\in \bbX_{fh},\btau_{fh}\bn_f = 0 \text{ on } \Gamma_{fp} }
  \frac{b_f(\btau_{fh},\bv_{fh}) + b_{\sk}(\btau_{fh},\bchi_{fh})}
{\|\btau_{fh}\|_{\bbH(\bdiv,\Omega_f)}} \geq C(\|\bv_{fh}\|_{\bL^2(\Omega_f)} + \|\bchi_{fh}\|_{\bbL^2(\Omega_f)}),
\end{equation}
where the constraint $\btau_{fh}\bn_f = 0$ on $\Gamma_f^N \cup \Gamma_{fp}$ can be handled as in \cite[Lemma~4.3]{msmfe-simpl}. Then, following the argument from 
\cite[Section 4.4.2]{gobs2021} and combining \cite[eqs. (4.28) and (4.29)]{gobs2021}, we conclude that
\begin{equation}\label{bf-bsk-h2}
\sup_{\0\neq \btau_{fh}\in \bbX_{fh},\btau_{fh}\bn_f = 0 \text{ on } \Gamma_{fp} }   
\frac{b_f(\btau_{fh},\bv_{fh}) + b_{\sk}(\btau_{fh},\bchi_{fh})}
{\|\btau_{fh}\|_{\bbX_{f}}} 
\geq C(\|\bv_{fh}\|_{\bV_f} + \|\bchi_{fh}\|_{\bbQ_f}).
\end{equation}
On the other hand, let $\bbX_{fh}^0:= \big\{\btau_{fh}\in \bbX_{fh}: b_f(\btau_{fh},\bv_{fh}) + b_{\sk}(\btau_{fh},\bchi_{fh}) = 0 \ \ \forall (\bv_{fh},\bchi_{fh}) \in \bV_{fh}\times \bbQ_{fh}\big\}$. Observing that $\|\btau_{fh}\|_{\bbX_{f}} \leq C\,\|\btau_{fh}\|_{\bbH(\bdiv;\Omega_{f})}$, we have
\begin{equation}\label{bnf-h}
\sup_{\0\neq \btau_{fh}\in \bbX_{fh}^0} \frac{b_{\bn_f}(\btau_{fh},\bpsi_h)}{\|\btau_{fh}\|_{\bbX_{f}}}
\geq \sup_{\0\neq \btau_{fh}\in \bbX_{fh}^0} \frac{b_{\bn_f}(\btau_{fh},\bpsi_h)}{\|\btau_{fh}\|_{\bbH(\bdiv;\Omega_{f})}}
\geq C \|\bpsi_h\|_{\bLambda_f}\,,
\end{equation}
where the last inequality follows from \cite[eqs. (5.26)--(5.29)]{gmor2014}. Then \eqref{eq:discrete inf-sup-vf-chif} follows from combining \eqref{bf-bsk-h2} and \eqref{bnf-h}.
\end{proof}

Define $\wt\beta := \min\left\{ \wt{\beta}_1, \wt{\beta}_2, \wt{\beta}_3\right\}$ (cf. \eqref{eq:discrete inf-sup-vs}, \eqref{eq:discrete inf-sup-qp-xi}, \eqref{eq:discrete inf-sup-vf-chif}) and the radii
\begin{equation}\label{eq:discrete r_1^0 r_2^0 defn}
r_{1\ttd}^0:= \frac{\mu\,\wt\beta}{3\,\rho_f\,n^{1/2}} \qan 
r_{2\ttd}^0 := \frac{\mu \wt\beta^2}{12\,C_{\cL}}\,.
\end{equation}
Proceeding as in the proof of Lemma \ref{thm:well-posedness-1},
we obtain the existence and uniqueness of a solution $(\ubsi_{h},\ubu_{h})\in \bQ_{h}\times \bS_{h}$ of the linearized problem \eqref{eq:Td-lin}.
The aforementioned result is stated next.
\begin{lem}\label{thm:discrete-well-posedness-1}
  For each $\wh{\f}_{f}\in \bL^2(\Omega_f), \wh{\f}_{p}\in \bL^2(\Omega_p), \wh{\f}_{e} \in \bbL^2(\Omega_p)$, and $\wh{q}_{p}\in \L^2(\Omega_p)$, the problem \eqref{eq:Td-lin}
  has a unique solution $(\ubsi_{h},\ubu_{h})\in \bQ_{h}\times \bS_{h}$ for each $(\bw_{fh},\bzeta_{h})\in \bV_{fh}\times \bLambda_{fh}$ such that $\|\bw_{fh}\|_{\bV_f} \leq r_{1\ttd}^0$ and $\|\bzeta_{h}\|_{\bLambda_f} \leq r_{2\ttd}^0$ (cf. \eqref{eq:discrete r_1^0 r_2^0 defn}).
  Moreover, there exists a constant $c_{\bT_{\ttd}} > 0$, independent of $\bw_{fh},\bzeta_{h}$ and the data $\wh{\f}_{f}, \wh{\f}_{p}, \wh{\f}_{e} $, and $\wh{q}_{p}$, such that
\begin{equation}\label{eq:discrete-ubsi-ubu-bound-solution}
\, \|(\ubsi_{h}, \ubu_{h})\|_{\bQ\times \bS} 
\,\leq\, {c_{\bT_{\ttd}}\,\Big\{ \|\wh{\f}_{f}\|_{\bL^2(\Omega_f)} +  \|\wh{\f}_{e}\|_{\bbL^2(\Omega_p)}+ \|\wh{\f}_{p}\|_{\bL^2(\Omega_p)} + \| \wh{q}_{p}\|_{\L^2(\Omega_p)}\Big\}}.
\end{equation}
\end{lem}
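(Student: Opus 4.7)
The plan is to mirror the proof of Lemma~\ref{thm:well-posedness-1} in the discrete setting. Since the finite element spaces are conforming, $\bQ_h \subset \bQ$ and $\bS_h \subset \bS$, all structural properties of the operators $\cE_1,\cE_2,\cA,\cB,\cC,\cK_{\bw_f},\cL_{\bzeta}$ established in Lemmas~\ref{lem:cont} and~\ref{lem:coercivity-properties-A-E2}, together with the coefficient inequalities \eqref{eq:tau-d-H0div-inequality}--\eqref{eq:tau-H0div-Xf-inequality}, transfer verbatim. The ingredients that need a discrete replacement are (i) the inf-sup conditions for $\cB$, supplied by Lemma~\ref{lem: discrete inf-sup}, (ii) the surjectivity of the divergence operators, provided by \eqref{eq: div-prop}, and (iii) the smallness thresholds, already defined in \eqref{eq:discrete r_1^0 r_2^0 defn} using the discrete inf-sup constant $\beta_d$ in place of its continuous counterpart $\beta$.

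For fixed $(\bw_{fh},\bzeta_h)$ with $\|\bw_{fh}\|_{\bV_f}\le r_{1d}^0$ and $\|\bzeta_h\|_{\bLambda_f}\le r_{2d}^0$, I will first introduce the discrete regularization of \eqref{eq:T-auxiliary-discrete-problem-operator-A} by adding $\epsilon\cR$ as in \eqref{eq:regularization-T-auxiliary-problem-operator}, and reproduce the discrete analogs of Lemmas~\ref{lem:auxiliary-problem-first-fow} and~\ref{lem:J-bijective}. Given $\ubv_h\in \bS_h$, the Browder--Minty theorem yields a unique $\ubsi_{h,\epsilon}(\ubv_h)\in \bQ_h$ satisfying the first row of the regularized problem, by the very same bounded/continuous/coercive/monotone argument used in the continuous case. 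Next, defining
\begin{equation*}
\cJ^h_{\bw_{fh},\bzeta_h}(\ubu_h)(\ubv_h) := -\cB(\ubsi_{h,\epsilon}(\ubu_h))(\ubv_h) + (\cE_2+\cC+\cL_{\bzeta_h})(\ubu_h)(\ubv_h),
\end{equation*}
I will verify boundedness, continuity, coercivity and monotonicity on $\bS_h$ as in Lemma~\ref{lem:J-bijective}: the discrete inf-sup condition obtained by combining \eqref{eq:discrete inf-sup-vs}--\eqref{eq:discrete inf-sup-vf-chif} and the choice of $r_{1d}^0,r_{2d}^0$ make the absorption estimates \eqref{eq:kappa-continuity bound}--\eqref{eq:operator L continuity bound} go through unchanged with $\beta$ replaced by $\beta_d$. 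A second Browder--Minty application then yields a unique $\ubu_{h,\epsilon}\in \bS_h$, and hence a unique regularized solution $(\ubsi_{h,\epsilon},\ubu_{h,\epsilon})\in \bQ_h\times \bS_h$.

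Next, I will derive $\epsilon$-independent bounds for $(\ubsi_{h,\epsilon},\ubu_{h,\epsilon})$ by testing the regularized system against itself and repeating \eqref{eq:bound-solution-data-1}--\eqref{eq:bound-solution-independently-epsilon-1}; here the discrete divergence property \eqref{eq: div-prop}, namely $\bdiv(\bbX_{fh})=\bV_{fh}$ and $\div(\bX_{ph})=\W_{ph}$, plays the role of the continuous surjectivity used in \eqref{eq:bound-solution-data-2a}. Finite dimensionality of $\bQ_h\times \bS_h$ then allows passage to the limit $\epsilon\to 0$ through a convergent subsequence, producing a solution $(\ubsi_h,\ubu_h)$ of \eqref{eq:T-auxiliary-discrete-problem-operator-A} satisfying \eqref{eq:discrete-ubsi-ubu-bound-solution}. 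Uniqueness follows by mimicking \eqref{eq:sol-uniqueness-4.8-1}--\eqref{eq:sol-uniqueness-4.8-3}: in the homogeneous problem, monotonicity combined with absorption of the nonlinear terms under the smallness hypotheses forces $\bsi_{eh},\bsi_{fh}^\rd,\bu_{ph},\bu_{fh},p_{ph},\bu_{sh}$ to vanish; the discrete inf-sup conditions \eqref{eq:discrete inf-sup-vf-chif} and \eqref{eq:discrete inf-sup-qp-xi} then force $\bgamma_{fh},\bvarphi_h,\lambda_h$ to vanish; and finally \eqref{eq: div-prop} together with \eqref{eq:tau-H0div-Xf-inequality} yields $\bsi_{fh}=\0$.

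The main obstacle is essentially bookkeeping rather than genuinely technical: I must ensure that the discrete threshold constants $r_{1d}^0,r_{2d}^0$ and stability constant $c_{\bT_d}$ depend only on $\beta_d$ and on the continuity/coercivity constants inherited from the continuous setting, so that no hidden $h$-dependence creeps in. This is precisely guaranteed by the assumption that the chosen finite element pairs satisfy Lemma~\ref{lem: discrete inf-sup} with $h$-independent constants, which is standard for the Amara--Thomas, PEERS, Stenberg, Arnold--Falk--Winther and Cockburn--Gopalakrishnan--Guzman families together with the Raviart--Thomas or Brezzi--Douglas--Marini Darcy spaces used here.
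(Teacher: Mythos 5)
Your proposal is correct and follows exactly the route the paper intends: the paper's own ``proof'' of this lemma is simply the remark that it is identical to that of Lemma~\ref{thm:well-posedness-1}, with the discrete inf-sup conditions of Lemma~\ref{lem: discrete inf-sup}, the divergence properties \eqref{eq: div-prop}, and the thresholds \eqref{eq:discrete r_1^0 r_2^0 defn} replacing their continuous counterparts. You in fact supply more detail than the paper does (e.g., noting that finite dimensionality simplifies the passage $\epsilon\to 0$ and that the constants must be $h$-independent), and all of these points are sound.
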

As an immediate consequence we have the following corollary.
\begin{cor}\label{eq:T-discrete-well-delfined}
Assume that the conditions of Lemma \ref{thm:discrete-well-posedness-1} are satisfied. The operator $\bT_{\ttd}$ is well defined and it satisfies
\begin{equation*}
\|\bT_{\tt d}(\bw_{fh},\bzeta_{h})\|_{\bV_f\times \bLambda_f} 
\,\leq\, {c_{\bT_{\tt d}}\,\Big\{ \|\wh{\f}_{f}\|_{\bL^2(\Omega_f)} +  \|\wh{\f}_{e}\|_{\bbL^2(\Omega_p)}+ \|\wh{\f}_{p}\|_{\bL^2(\Omega_p)} + \| \wh{q}_{p}\|_{\L^2(\Omega_p)} \Big\}}.
\end{equation*}
\end{cor}

We next state the following lemma establishing that $\bT_{\tt d}$ is a continuous operator. The proof employs similar arguments to the ones used in the proof of Lemma \ref{lem:T-contraction-mapping}.
\begin{lem}\label{lem:T-discrete-contraction-mapping}
Let $r_{1\ttd}\in (0,r_{1\ttd}^0]$ and $r_{2\ttd}\in (0,r_{2\ttd}^0]$, where $r_{1\ttd}^0$ and $r_{2\ttd}^0$ are defined in \eqref{eq:discrete r_1^0 r_2^0 defn}. Let $\bW_{r_{1\ttd},r_{2\ttd}}$ be the closed set defined by
\begin{equation}\label{eq:Wr-discrete-definition}
\bW_{r_{1\ttd},r_{2\ttd}} := \Big\{ (\bw_{fh},\bzeta_{h})\in \bV_{fh}\times \bLambda_{fh} \,:\quad \| \bw_{fh}\|_{\bV_f} \leq r_{1\ttd},\quad \|\bzeta_{h}\|_{\bLambda_f} \leq r_{2\ttd} \Big\} \,,
\end{equation}
and define $r_\ttd^0:= \min\big\{r_{1\ttd}^0, r_{2\ttd}^0\big\}$. 
Then, for all $(\bw_{fh},\bzeta_{h}), (\wt{\bw}_{fh},\wt{\bzeta}_{h}) \in \bW_{r_{1\ttd},r_{2\ttd}}$, there holds 
\begin{align*}
&\|\bT_{\tt d}(\bw_{fh},\bzeta_{h}) - \bT_{\tt d}(\wt{\bw}_{fh},\wt{\bzeta}_{h}) \|_{\bV_f\times \bLambda_f}  \nonumber \\[1ex]
&\quad \leq\, \frac{c_{\bT_{\tt d}}}{r_\ttd^0}\,\Big\{ \|\wh{\f}_{f}\|_{\bL^2(\Omega_f)} +  \|\wh{\f}_{e}\|_{\bbL^2(\Omega_p)}+ \|\wh{\f}_{p}\|_{\bL^2(\Omega_p)} + \| \wh{q}_{p}\|_{\L^2(\Omega_p)} \Big\}\|(\bw_{fh},\bzeta_{h}) - (\wt{\bw}_{fh},\wt{\bzeta}_{h})\|_{\bV_f\times \bLambda_f}\,. 
\end{align*}
\end{lem}

Next, using the arguments employed in Lemma~\ref{thm:well-posed-domain-D} we obtain the following well-posedness result for the discrete resolvent problem \eqref{eq:T-auxiliary-discrete-problem-operator-A}. 
\begin{lem}\label{thm:well-posed-discrete}
Let $\bW_{r_{1\ttd},r_{2\ttd}}$ be as in \eqref{eq:Wr-discrete-definition} and let $r_\ttd:=\min\big\{r_{1\ttd}, r_{2\ttd}\big\}$. 
Assume that the data satisfy
\begin{equation}\label{eq:discrete-T-maps-Wr-into-Wr}
c_{\bT_{\tt d}}\,\Big\{ \|\wh{\f}_{f}\|_{\bL^2(\Omega_f)} 
+ \|\wh{\f}_{e}\|_{\bbL^2(\Omega_p)}
+ \|\wh{\f}_{p}\|_{\bL^2(\Omega_p)} 
+ \|\wh{q}_{p}\|_{\L^2(\Omega_p)} \Big\} \,\leq\, r_\ttd\,.
\end{equation}	
Then, the discrete resolvent problem \eqref{eq:T-auxiliary-discrete-problem-operator-A} has a unique solution $(\ubsi_{h},\ubu_{h})\in \bQ_{h}\times \bS_{h}$ with $(\bu_{fh},\bvarphi_{h})\in \bW_{r_{1\ttd},r_{2\ttd}}$, and there holds
\begin{equation*}
\|(\ubsi_{h},\ubu_{h})\|_{\bQ\times \bS} 
\,\leq\, c_{\bT_{\tt d}}\,\Big\{ \|\wh{\f}_{f}\|_{\bL^2(\Omega_f)} 
+  \|\wh{\f}_{e}\|_{\bbL^2(\Omega_p)}
+ \|\wh{\f}_{p}\|_{\bL^2(\Omega_p)} 
+ \|\wh{q}_{p}\|_{\L^2(\Omega_p)}\Big\}\,.
\end{equation*} 
\end{lem}

The next step is to construct compatible discrete initial data, which is carried out in the following lemma.
\begin{lem}\label{lem: discrete initial condition}
Assume that the conditions of Lemma \ref{lem:sol0-in-M-operator} are satisfied.
Assume in addition that the data satisfy
\begin{align}
&\|\bu_{f,0}\|_{\bL^2(\Omega_f)}
+ \|\bdiv(\be(\bu_{f,0}))\|_{\bL^{2}(\Omega_f)} 
+ \|\bdiv(\bu_{f,0}\otimes\bu_{f,0})\|_{\bL^{2}(\Omega_f)} 
+ \|\bu_{s,0}\|_{\bH^1(\Omega_p)} \nonumber \\
&\quad +\, \|\bbeta_{p,0}\|_{\bH^1(\Omega_p)}
+ \|\bdiv(A^{-1}(\be(\bbeta_{p,0})))\|_{\bL^2(\Omega_p)}
+ \|p_{p,0}\|_{\H^1(\Omega_p)} 
+ \|\div(\bK\nabla p_{p,0})\|_{\bL^2(\Omega_p)}
\leq \frac{1}{\wh{C}_0}\frac{r_\ttd}{c_{\bT_{\tt d}}}\,,
\label{eq:extra-assumption-discrete}
\end{align}
where the constant $\wh{C}_0 > 0$ is introduced below in \eqref{eq: discrete initial data bound-1}.
Then, there exist $\ubsi_{h,0} := (\bsi_{fh,0}, \bu_{ph,0}, \bsi_{eh,0})\in \bQ_h$ and   $\ubu_{h,0} := (\bu_{fh,0},p_{ph,0},\bgamma_{fh,0},\bu_{sh,0},\bvarphi_{h,0}, \lambda_{h,0})\in \bS_h$  with $(\bu_{fh,0},\bvarphi_{h,0})\in \bW_{r_{1\ttd},r_{2\ttd}}$ (cf. \eqref{eq:Wr-discrete-definition}) such that 
\begin{align}
(\cE_1 + \cA)(\ubsi_{h,0}) + (\cB' + \cK_{\bu_{fh,0}})(\ubu_{h,0}) &= \wh\bF_{0} \qin \bQ_{h}'\,, \nonumber\\ 
-\,\cB(\ubsi_{h,0}) + ( \cE_2 + \cC +\cL_{\bvarphi_{h,0}})(\ubu_{h,0}) &= \wh\bG_{0} \qin \bS_{h}'\,, \label{eq:initial-discrete-data-system}
\end{align}
where $\wh\bF_{0}(\ubtau_h) := (\wh{\f}_{e,0},\btau_{eh})_{\Omega_p}$ and  $\wh\bG_{0}(\ubv_h) := (\wh{\f}_{f,0},\bv_{fh})_{\Omega_f} + (\wh{q}_{p,0},w_{ph})_{\Omega_p} + (\wh{\f}_{p,0},\bv_{sh})_{\Omega_p} \,\,  \forall\,(\ubtau_{h},\ubv_{h})\in \bQ_h\times \bS_h $
with some $\wh{\f}_{e,0} \in \bbL^2(\Omega_p)$, $\wh{\f}_{f,0} \in \bL^2(\Omega_f)$, $\wh{q}_{p,0} \in \L^2(\Omega_p)$, and $\wh{\f}_{p,0} \in \bL^2(\Omega_p)$ satisfying
\begin{equation}\label{eq:initial-discrete-data-bound-1}
c_{\bT_{\tt d}}\,\Big\{ \|\wh{\f}_{f,0}\|_{\bL^2(\Omega_f)} 
+ \|\wh{\f}_{e,0}\|_{\bbL^2(\Omega_p)}
+ \|\wh{\f}_{p,0}\|_{\bL^2(\Omega_p)} 
+ \|\wh{q}_{p,0}\|_{\L^2(\Omega_p)}\Big\} \,\leq\, r_{\ttd}\,.
\end{equation}
\end{lem}
\begin{proof}
The discrete initial data is defined as a suitable elliptic projection of the continuous initial data $(\ubsi_0,\ubu_0)$ constructed in Lemma \ref{lem:sol0-in-M-operator}. 
Indeed, let $(\bsi_{fh,0}, \bu_{ph,0}, \bsi_{eh,0}, \bu_{fh,0}, p_{ph,0}, \bgamma_{fh,0}, \bu_{sh,0}, \bvarphi_{h,0}, \lambda_{h,0}) \in \bbX_{fh}\times \bX_{ph}\times \bSigma_{eh}\times \bV_{fh}\times \W_{ph}\times \bbQ_{fh}\times \bV_{ph}\times \bLambda_{fh}\times \Lambda_{ph}$ be such that, for all $(\btau_{fh}, \bv_{ph}, \btau_{eh}, \bv_{fh}, w_{ph}, \bchi_{fh}, \linebreak \bv_{sh}, \bpsi_{h}, \xi_{h})\in \bbX_{fh}\times \bX_{ph}\times \bSigma_{eh}\times \bV_{fh}\times \W_{ph}\times \bbQ_{fh}\times \bV_{ph}\times \bLambda_{fh}\times \Lambda_{ph}$,
\begin{subequations}\label{eq:system-discrete-sol0-1}
\begin{align}
& \ds a_f(\bsi_{fh,0},\btau_{fh}) + b_f(\btau_{fh},\bu_{fh,0}) + b_{\sk}(\btau_{fh},\bgamma_{fh,0}) + b_{\bn_f}(\btau_{fh},\bvarphi_{h,0}) + \kappa_{\bu_{fh,0}}(\bu_{fh,0},\btau_{fh})\nonumber \\[1ex] 
& \ds \quad = a_f(\bsi_{f,0},\btau_{fh}) + b_f(\btau_{fh},\bu_{f,0}) + b_{\sk}(\btau_{fh},\bgamma_{f,0}) + b_{\bn_f}(\btau_{fh},\bvarphi_{0}) + \kappa_{\bu_{f,0}}(\bu_{f,0},\btau_{fh}) = 0\,, \label{eq:semi-discrete-weak-formulation-1a}  \\[1ex] 
& \ds \rho_f(\bu_{fh,0},\bv_{fh})_{\Omega_f} - b_f(\bsi_{fh,0},\bv_{fh})
= \rho_f(\bu_{f,0},\bv_{fh})_{\Omega_f} - b_f(\bsi_{f,0},\bv_{fh}) \nonumber \\[1ex]
& \ds\quad = (\rho_f\,\bu_{f,0} - \bdiv(2\mu\be(\bu_{f,0}) - \rho_f(\bu_{f,0}\otimes \bu_{f,0})),\bv_{fh})_{\Omega_f}\,, \label{eq:semi-discrete-weak-formulation-1b}  \\[1ex] 
& \ds -\,b_{\sk}(\bsi_{fh,0},\bchi_{fh}) =  -\,b_{\sk}(\bsi_{f,0},\bchi_{fh}) = 0 \,, \label{eq:semi-discrete-weak-formulation-1c} \\[1ex]
& \ds a^s_p(\bsi_{eh,0},\btau_{eh}) + b_s(\btau_{eh},\bu_{sh,0})= a^s_p(\bsi_{e,0},\btau_{eh}) + b_s(\btau_{eh},\bu_{s,0})
= ( \be(\bbeta_{p,0}) - \be(\bu_{s,0}),\btau_{eh})_{\Omega_p}\,,
\label{eq:semi-discrete-weak-formulation-1d} \\[1ex] 
& \ds \rho_p(\bu_{sh,0},\bv_{sh})_{\Omega_p} - b_s(\bsi_{eh,0},\bv_{sh})  + c_{\BJS}(\bu_{sh,0},\bvarphi_{h,0};\bv_{sh},\0) - c_{\Gamma}(\bv_{sh},\0;\lambda_{h,0}) + \alpha_p\,b_p(\bv_{sh},p_{ph,0}) \nonumber \\[1ex] 
& \ds \quad =
\rho_p(\bu_{s,0},\bv_{sh})_{\Omega_p} - b_s(\bsi_{e,0},\bv_{sh})  + c_{\BJS}(\bu_{s,0},\bvarphi_{0};\bv_{sh},\0) - c_{\Gamma}(\bv_{sh},\0;\lambda_{0}) + \alpha_p\,b_p(\bv_{sh},p_{p,0}) \nonumber \\[1ex]
&\ds \quad = (\rho_p\,\bu_{s,0} - \bdiv(A^{-1}(\be(\bbeta_{p,0})) - \alpha_p\,p_{p,0}\,\bI),\bv_{sh})_{\Omega_p}\,, \label{eq:semi-discrete-weak-formulation-1e} \\[1ex] 
& \ds a^d_p(\bu_{ph,0},\bv_{ph}) + b_p(\bv_{ph},p_{ph,0}) + b_{\bn_p}(\bv_{ph},\lambda_{h,0}) \nonumber\\[1ex] 
& \ds \quad = a^d_p(\bu_{p,0},\bv_{ph}) + b_p(\bv_{ph},p_{p,0}) + b_{\bn_p}(\bv_{ph},\lambda_{0}) = 0\,, \label{eq:semi-discrete-weak-formulation-1f} \\[1ex] 
& \ds s_0(p_{ph,0},w_{ph})_{\Omega_p} - \alpha_p b_p(\bu_{sh,0},w_{ph})  - b_p(\bu_{ph,0},w_{ph}) 
= s_0(p_{p,0},w_{ph})_{\Omega_p} - \alpha_p b_p(\bu_{s,0},w_{ph})
- b_p(\bu_{p,0},w_{ph}) \nonumber \\[1ex] 
& \ds \quad = ( s_0\,p_{p,0} + \alpha_p \div (\,\bu_{s,0})
- \frac{1}{\mu}\,\div(\bK\nabla p_{p,0}),w_{ph})_{\Omega_p}\,, \label{eq:semi-discrete-weak-formulation-1g} \\[1ex] 
& \ds c_{\Gamma}(\bu_{sh,0},\bvarphi_{h,0};\xi_h) - b_{\bn_p}(\bu_{ph,0},\xi_h) = c_{\Gamma}(\bu_{s,0},\bvarphi_{0};\xi_h) - b_{\bn_p}(\bu_{p,0},\xi_h) = 0\,, \label{eq:semi-discrete-weak-formulation-1h} \\[1ex] 
& \ds c_{\BJS}(\bu_{sh,0},\bvarphi_{h,0};\0,\bpsi_h) - c_{\Gamma}(\0,\bpsi_h;\lambda_{h,0}) - b_{\bn_f}(\bsi_{fh,0},\bpsi_h) +l_{\bvarphi_{h,0}}(\bvarphi_{h,0},\bpsi_h) \nonumber \\[1ex]
& \ds \quad =  c_{\BJS}(\bu_{s,0},\bvarphi_{0};\0,\bpsi_h) - c_{\Gamma}(\0,\bpsi_h;\lambda_{0}) - b_{\bn_f}(\bsi_{f,0},\bpsi_h) +l_{\bvarphi_{0}}(\bvarphi_{0},\bpsi_h) = 0\,. \label{eq:semi-discrete-weak-formulation-1i} 
\end{align}
\end{subequations}
We observe that \eqref{eq:system-discrete-sol0-1} fit in the form of the discrete resolvent system \eqref{eq:T-auxiliary-discrete-problem-operator-A} with source terms
\begin{align*}
& \wh{\f}_{f,0} = \rho_f\,\bu_{f,0} - \bdiv(2\mu\be(\bu_{f,0}) - \rho_f(\bu_{f,0}\otimes \bu_{f,0})) \,,\quad 
\wh{\f}_{e,0} =\be(\bbeta_{p,0}) - \be(\bu_{s,0}) \,, \\ \nonumber
& \wh{\f}_{p,0} = \rho_p\,\bu_{s,0} - \bdiv(A^{-1}(\be(\bbeta_{p,0})) - \alpha_p\,p_{p,0}\,\bI) \,,\quad 
\wh{q}_{p,0} = s_0\,p_{p,0} + \alpha_p \div (\bu_{s,0}) - \frac{1}{\mu}\,\div(\bK\nabla p_{p,0}) \,,
\end{align*}
which satisfy, for some $\wh{C}_0 > 0$,
\begin{align}
&\|\wh{\f}_{f,0}\|_{\bL^2(\Omega_f)} 
+ \|\wh{\f}_{e,0}\|_{\bbL^2(\Omega_p)} 
+ \|\wh{\f}_{p,0}\|_{\bL^2(\Omega_p)} 
+ \|\wh{q}_{p,0}\|_{\L^2(\Omega_p)} \nonumber \\
&\ds\quad \leq\,
\wh{C}_0\,\Big( \|\bu_{f,0}\|_{\bL^2(\Omega_f)}
+ \|\bdiv(\be(\bu_{f,0}))\|_{\bL^{2}(\Omega_f)} 
+ \|\bdiv(\bu_{f,0}\otimes\bu_{f,0})\|_{\bL^{2}(\Omega_f)} 
+ \|\bu_{s,0}\|_{\bV_p} \nonumber \\
&\qquad +\, \|\bbeta_{p,0}\|_{\bV_p}
+ \|\bdiv(A^{-1}(\be(\bbeta_{p,0})))\|_{\bL^2(\Omega_p)}
+ \|p_{p,0}\|_{\H^1(\Omega_p)} 
+ \|\div(\bK\nabla p_{p,0})\|_{\bL^2(\Omega_p)}\Big) \,, 
\label{eq: discrete initial data bound-1}
\end{align}
Thus, \eqref{eq:extra-assumption-discrete} guarantee \eqref{eq:initial-discrete-data-bound-1} and hence Lemma~\ref{thm:well-posed-discrete} yields the well-posedness of 
\eqref{eq:system-discrete-sol0-1} and 
\begin{equation}\label{eqn:discrete-soln-1-bound}
\|(\ubsi_{h,0}, \ubu_{h,0})\|_{\bQ\times \bS} 
\,\leq\, c_{\bT_{\ttd}}\,\Big\{ \|\wh{\f}_{f,0}\|_{\bL^2(\Omega_f)} 
+ \|\wh{\f}_{e,0}\|_{\bbL^2(\Omega_p)}
+ \|\wh{\f}_{p,0}\|_{\bL^2(\Omega_p)} + \| \wh{q}_{p,0}\|_{\L^2(\Omega_p)}\Big\} \,,
\end{equation}
which together with \eqref{eq: discrete initial data bound-1} and \eqref{eq:extra-assumption-discrete}, implies that  $(\bu_{fh,0},\bvarphi_{h,0})\in \bW_{r_{1\ttd},r_{2\ttd}}$, completing the proof.
\end{proof}

We are now in position to establish existence of a solution of the alternative discrete formulation \eqref{eq:alternative-discrete-formulation-operator-form}

\begin{lem}\label{lem:parabolic-solution-h}
For each 
$\f_f\in \W^{1,1}(0,T;\bL^2(\Omega_f))$, $q_p\in \W^{1,1}(0,T;\L^2(\Omega_p))$, and $\f_p\in \W^{1,1}(0,T;\bL^2(\Omega_p))$ satisfying for all $t \in [0,T]$
\begin{equation}\label{small-data-h}
\|\f_f(t)\|_{\bL^2(\Omega_f)} + \|\f_p(t)\|_{\bL^2(\Omega_p)} + \|q_p(t)\|_{\L^2(\Omega_p)} 
< \frac{r_\ttd}{c_{\bT_{\tt d}}\wh{C}_0}\,,
\end{equation}
where $r_\ttd \in (0,r_\ttd^0)$, with $r_\ttd^0$ and $c_{\bT_{\tt d}}$
from Lemmas \ref{lem:T-discrete-contraction-mapping} and \ref{thm:discrete-well-posedness-1}, respectively,  
and initial data $(\bu_{f,0},p_{p,0},\bbeta_{p,0}$, $\bu_{s,0})$
satisfying the assumption \eqref{eq:extra-assumption-discrete} of Lemma \ref{lem: discrete initial condition}, there exists a solution of \eqref{eq:alternative-discrete-formulation-operator-form}, $(\ubsi_h,\ubu_h): [0,T] \to \bQ_h\times\bS_h$ with $(\bu_{fh}(t),\bvarphi_h(t))\in \bW_{r_{1\ttd},r_{2\ttd}}$ (cf. \eqref{eq:Wr-discrete-definition}),
\begin{equation*}
(\bsi_{eh}, \bu_{fh}, p_{ph},\bu_{sh})\in \W^{1,\infty}(0,T;\bSigma_{eh})\times \W^{1,\infty}(0,T;\bV_{fh})\times \W^{1,\infty}(0,T;\W_{ph})\times\W^{1,\infty}(0,T; \bV_{ph})\,,
\end{equation*}
and $(\bsi_{eh}(0),\bu_{fh}(0),p_{ph}(0),\bu_{sh}(0)) = (\bsi_{eh,0},\bu_{fh,0},p_{ph,0},\bu_{sh,0})$, where the initial data are constructed in Lemma \ref{lem: discrete initial condition}.
\end{lem}

\begin{proof}
The proof follows the proof of Lemma~\ref{lem:parabolic-solution}.
We apply Theorem~\ref{thm:auxiliary-theorem} with $E$, $u$, $\cN$, $\cM$, and $E'_b$ being the discrete counterparts of the spaces and operators defined in \eqref{eq:defn-E-N-M} and \eqref{eq:defn-E'_b-D}, with domain 
$\cD := \{u \in E: (\cN + \cM)(u)\in \wt{E}'_b \}$,
and restricted range space
$\wt{E}'_b := \{ (\0,\0,\wh{\f}_{e},\wh{\f}_{f},\wh{q}_{p},\0,\wh{\f}_p,\0,0) \in E'_b: \mbox{\eqref{eq:discrete-T-maps-Wr-into-Wr} holds}\}$ (cf. \eqref{eq:domain-D-and-Eb-tilde}).
By assumption \eqref{small-data-h}, in problem \eqref{eq:alternative-discrete-formulation-operator-form} we have $\cF = (\0,\0,\0,\f_f,q_p,\0, \f_p,\0,0) \in \wt{E}'_b$. As in the proof of Lemma~\ref{lem:parabolic-solution}, $\cN$ is linear, symmetric, and monotone, while $\cM$ is monotone on the domain $\cD$.
The range condition $Rg(\cN+\cM) = \wt{E}'_b$ is established in Lemma~\ref{thm:well-posed-discrete}.
Finally, initial data $u_{h,0} \in \cD$ with $(\cN + \cM)(u_{h,0}) \in \wt{E}'_b$ is constructed in Lemma~\ref{lem: discrete initial condition}.
Hence, the statement follows directly by applying Theorem~\ref{thm:auxiliary-theorem} in this discrete setting.
\end{proof}


\subsection{Existence and uniqueness of a solution of the semidiscrete method}

We define $\bbeta_{ph,0} \in \bV_{ph}$ as the unique solution to the problem
\begin{equation}\label{discrete etap0-bound}
a^e_p(\bbeta_{ph,0},\bxi_{ph}) = - b_s(\bsi_{eh,0},\bxi_{ph})\quad \forall\, \bxi_{ph} \in \bV_{ph} \,,
\end{equation}
where $\bsi_{eh,0}$ satisfies problem \eqref{eq:system-discrete-sol0-1}. Notice that \eqref{discrete etap0-bound} is well-posed by a direct application of Lax-Milgram theorem and satisfies
\begin{equation}\label{discrete etap0-bound-1}
\|\bbeta_{ph,0}\|_{\bV_{p}} \,\leq\, C\,\|\bsi_{eh,0}\|_{\bSigma_{e}} \,.
\end{equation}
Thus, as in the continuous case \eqref{eq:etap-us-relation}, we can recover the displacement solution from 
\begin{equation*}
\bbeta_{ph}(t) = \bbeta_{ph,0} + \int_0^t \bu_{sh}(s) \, ds, \quad \forall \, t \in [0,T] \,.
\end{equation*}

Now, we establish the well-posedness of the semidiscrete method \eqref{eq:NS-Biot-semiformulation-1} 
and the corresponding stability bound.
\begin{thm}\label{thm: well-posedness main result semi}
For each compatible discrete initial data satisfying Lemma \ref{lem: discrete initial condition} and
\begin{equation*}
\f_f\in \W^{1,1}(0,T;\bL^2(\Omega_f)),\quad \f_p\in \W^{1,1}(0,T;\bL^2(\Omega_p)),\quad q_p\in \W^{1,1}(0,T;\L^2(\Omega_p)),
\end{equation*}
under the assumptions of Lemma~\ref{lem:parabolic-solution-h}, there exists a unique solution of \eqref{eq:NS-Biot-semiformulation-1}, $(\bsi_{fh},\bu_{ph},\bbeta_{ph},$ 
$\bu_{fh}, p_{ph}, \bgamma_{fh}, \\ \bvarphi_h, \lambda_h) : [0,T]\to \bbX_{fh}\times \bX_{ph}\times\bV_{ph}\times\bV_{fh}\times \W_{ph}\times \bbQ_{fh}\times \bLambda_{fh}\times \Lambda_{ph}$  with $(\bu_{fh}(0), p_{ph}(0), \bbeta_{ph}(0), \partial_t\bbeta_{ph}(0)) = (\bu_{fh,0}, p_{ph,0}, \bbeta_{ph,0}, \bu_{sh,0})$ and 
$(\bu_{fh}(t),\bvarphi_{h}(t)):[0,T]\to \bW_{r_{1\ttd},r_{2\ttd}}$. 
In addition, $\bsi_{fh}(0) = \bsi_{fh,0}, \bu_{ph}(0) = \bu_{ph,0},  \bgamma_{fh}(0) = \bgamma_{fh,0}, \bvarphi_{h}(0) = \bvarphi_{h,0}$, and $\lambda_{h}(0) = \lambda_{h,0}$. 
Moreover, assuming sufficient regularity of the data, there exists a positive constant $C$, independent of $h$ and $s_0$, such that
\begin{align}
&\|\bsi_{fh}\|_{\L^2(0,T;\bbX_f)} 
+ \|\partial_t\bsi^\rd_{fh}\|_{\L^2(0,T;\bbL^2(\Omega_f))}
+ \|\bu_{ph}\|_{\L^2(0,T;\bX_p)} 
+ \| \partial_t\bu_{ph}\|_{\L^2(0,T;\bL^2(\Omega_p))}
+ \|\bbeta_{ph}\|_{\L^\infty(0,T;\bV_p)} 
\nonumber \\
&\quad +\, \|\partial_t\bbeta_{ph}\|_{\L^\infty(0,T;\bL^2(\Omega_p))}
+ \sum^{n-1}_{j=1} \|(\bvarphi_h-\partial_t\,\bbeta_{ph})\cdot\bt_{f,j}\|_{\H^1(0,T;\L^2(\Gamma_{fp}))}
+ \|\partial_{tt}\bbeta_{ph}\|_{\L^\infty(0,T;\bL^2(\Omega_p))}
\nonumber \\[1ex]
&\quad +\, \|\bu_{fh}\|_{\H^1(0,T;\bV_f)}
+ \|\bu_{fh}\|_{\W^{1,\infty}(0,T;\bL^2(\Omega_f))}
+ \sqrt{s_0}\,\|p_{ph}\|_{\W^{1,\infty}(0,T;\W_p)}
+ \|p_{ph}\|_{\H^1(0,T;\W_p)}
\nonumber \\[1ex]
&\quad +\, \|\bgamma_{fh}\|_{\H^1(0,T;\bbQ_f)} 
+ \|\bvarphi_h\|_{\H^1(0,T;\bLambda_f)} 
+ \|\lambda_h\|_{\H^1(0,T;\Lambda_p)} 
\nonumber \\
&\leq\, C \sqrt{T}\,\Bigg( \|\f_f\|_{\H^1(0,T;\bL^2(\Omega_f))}  
+ \|\f_p\|_{\H^1(0,T;\bL^2(\Omega_p))}
+ \|q_p\|_{\H^1(0,T;\L^2(\Omega_p))}
+ \frac{1}{\sqrt{s_0}}\|q_p(0)\|_{\L^2(\Omega_p)}
\nonumber \\
& \quad +\, \|\bu_{f,0}\|_{\bL^2(\Omega_f)}
+ \|\bdiv( \be(\bu_{f,0}))\|_{\bL^{2}(\Omega_f)}
+ \|\bdiv(\bu_{f,0}\otimes \bu_{f,0})\|_{\bL^2(\Omega_f)} 
+ \sqrt{s_0}\,\|p_{p,0}\|_{\W_p}
\nonumber \\
& \quad +\, \|p_{p,0}\|_{\H^1(\Omega_p)}
+ \frac{1}{\sqrt{s_0}}\|\div(\bK\nabla p_{p,0})\|_{\bL^2(\Omega_p)}
+ \|\bbeta_{p,0}\|_{\bV_p} 
+ \|\bdiv(A^{-1}(\, \be(\bbeta_{p,0})))\|_{\bL^2(\Omega_p)} 
\nonumber \\
&\quad 
+\, \left(1+\frac{1}{\sqrt{s_0}}\right)\|\bu_{s,0}\|_{\bV_p}  
\Bigg) \,.
\label{eq:discrete-stability}
\end{align} 
\end{thm}
\begin{proof}
From the fact that $\bQ_h \subset \bQ$, $\bS_h \subset \bS,$ and $\div(\bX_{ph})=\W_{ph}, 
\bdiv(\bbX_{fh}) = \bV_{fh}$, considering $(\ubsi_{h,0},\ubu_{h,0})$ satisfying \eqref{eq:initial-discrete-data-system}, and employing the continuity properties of $\cK_{\bw_{fh}}, \cL_{\bzeta_{h}}$ (cf. \eqref{eq:continuity-cK-wf}, \eqref{eq:continuity-cL-zeta}) with $(\bu_{fh}(t),\bvarphi_{h}(t)):[0,T]\to \bW_{r_{1\ttd},r_{2\ttd}}$ (cf. \eqref{eq:Wr-discrete-definition}) and monotonicity properties of  $\cA, \cE_1, \cE_2$ and $\cC$ (cf. Lemma \ref{lem:coercivity-properties-A-E2}) and coercivity bounds in \eqref{eq:aep-coercivity-bound}, as well as discrete inf-sup conditions \eqref{eq:discrete inf-sup-vs}, \eqref{eq:discrete inf-sup-qp-xi}, and \eqref{eq:discrete inf-sup-vf-chif}, the proof is similar to the proofs of Theorems \ref{thm:unique soln} and \ref{thm: continuous stability}.
We note that the proof of Theorem \ref{thm:unique soln} works at the discrete level due to the choice of the discrete initial data as the elliptic projection of the continuous initial data (cf. \eqref{eq:system-discrete-sol0-1}).
Note also that the discrete version of the stability bounds \eqref{eq:bound-unsteady-state-solution-6} and \eqref{eq:bound-unsteady-state-solution-10} can be derived following the proof of Theorem \ref{thm: continuous stability}, but with the corresponding discrete initial data on the right-hand side. Thus, we need to bound
\begin{equation}\label{eq:bound-discrete-initial-solution-1}
\|\bu_{fh}(0)\|^2_{\bL^2(\Omega_f)} 
+ s_0\,\|p_{ph}(0)\|^2_{\W_p} 
+ \|\bbeta_{ph}(0)\|^2_{\bV_p} 
+ \| \partial_t\bbeta_{ph}(0)\|^2_{\bL^2(\Omega_p)}  
\end{equation}
and
\begin{equation}\label{eq:bound-discrete-initial-solution-2}
\| \partial_t\bu_{fh}(0)\|^2_{\bL^2(\Omega_f)} 
+ s_0\,\|\partial_t p_{ph}(0)\|^2_{\W_p} 
+ \|\partial_t\bbeta_{ph}(0)\|^2_{\bV_p}
+ \|\partial_{tt}\bbeta_{ph}(0)\|^2_{\bL^2(\Omega_p)} \,.
\end{equation}
in terms of the corresponding continuous initial data.
In particular, the bound for $\|\bbeta_{ph}(0)\|^2_{\bV_p}$ follows by combining \eqref{discrete etap0-bound-1} with \eqref{eq: discrete initial data bound-1}--\eqref{eqn:discrete-soln-1-bound}.
Noting that $\partial_t\bbeta_{ph}(0) = \bu_{sh,0}$, the remaining terms in \eqref{eq:bound-discrete-initial-solution-1} and the third one in \eqref{eq:bound-discrete-initial-solution-2} can be bounded also by \eqref{eq: discrete initial data bound-1}--\eqref{eqn:discrete-soln-1-bound},
whereas to bound $\|\partial_t\bu_{fh}(0)\|^2_{\bL^2(\Omega_f)}$ and  
$s_0\,\|\partial_t p_{ph}(0)\|^2_{\W_p}$ in \eqref{eq:bound-discrete-initial-solution-2}, we first consider \eqref{eq:NS-Biot-semiformulation-1} for the test function $(\bv_{fh}, w_{ph})$ at $t=0$, and use \eqref{eq:semi-discrete-weak-formulation-1b} and \eqref{eq:semi-discrete-weak-formulation-1g}, to get 
\begin{align}\label{eq: discrete initial 1}
&  \rho_f (\partial_t\,\bu_{fh}(0),\bv_{fh})_{\Omega_f} +  s_0 (\partial_t\,p_{ph}(0),w_{ph})_{\Omega_p} 
 \nonumber \\[1ex]
&\quad \,=\, (\rho_f\,\bu_{fh,0} - \rho_f\,\bu_{f,0} +  \bdiv(2\mu\be(\bu_{f,0}) - \rho_f(\bu_{f,0}\otimes \bu_{f,0})),\bv_{fh})_{\Omega_f}
+ (\f_{f}(0),\bv_{fh})_{\Omega_f} \nonumber \\[1ex]
&\qquad +\,(s_0\,p_{ph,0} - s_0\,p_{p,0} - \alpha_p \div(\bu_{s,0})
+ \frac{1}{\mu}\,\div(\bK\nabla p_{p,0}),w_{ph})_{\Omega_p} 
+ (q_{p}(0),w_{ph})_{\Omega_p} \,.
\end{align}
Similarly, to bound $\|\partial_{tt}\bbeta_{ph}(0)\|^2_{\bL^2(\Omega_p)}$, we consider \eqref{eq:NS-Biot-semiformulation-1} at $t=0$ for the test function $\bxi_{ph}$ and use \eqref{discrete etap0-bound} and \eqref{eq:semi-discrete-weak-formulation-1e}, to obtain
\begin{align}\label{eq: discrete initial 4}
&\rho_p(\partial_{tt}\bbeta_{ph}(0),\bxi_{ph})_{\Omega_p} 
=  (\rho_p\,\bu_{sh,0} - \rho_p\,\bu_{s,0} + \bdiv(A^{-1}(\be(\bbeta_{p,0})) - \alpha_p\,p_{p,0}\,\bI),\bxi_{ph})_{\Omega_p} + (\f_{p}(0),\bxi_{ph})_{\Omega_p} \,.
\end{align}
Thus, taking $(\bv_{fh}, w_{ph}) = (\partial_t\bu_{fh}(0), \partial_t p_{ph}(0))$ and $\bxi_{ph} = \partial_{tt}\bbeta_{ph}(0)$ in \eqref{eq: discrete initial 1} and \eqref{eq: discrete initial 4}, respectively, using Cauchy--Schwarz and Young's inequalities and some simple manipulations, we deduce 
\begin{align}\label{eq: discrete initial 2}
&  \|\partial_t\,\bu_{fh}(0)\|^2_{\bL^2(\Omega_f)} 
+  s_0 \|\partial_t\,p_{ph}(0)\|^2_{\L^2(\Omega_p)}  
+ \|\partial_{tt}\bbeta_{ph}(0)\|^2_{\bL^2(\Omega_p)} \nonumber \\[1ex]
&\quad \leq\, C\,\Big( 
\|\bu_{f,0}\|^2_{\bL^2(\Omega_f)} 
+ \|\bdiv(\be(\bu_{f,0}))\|^2_{\bL^{2}(\Omega_f)}
+ \|\bdiv(\bu_{f,0}\otimes\bu_{f,0})\|^2_{\bL^{2}(\Omega_f)}
+ \|p_{p,0}\|^2_{\L^2(\Omega_p)}
+ \frac{1}{s_0} \|\bu_{s,0}\|^2_{\bV_p} \nonumber \\[1ex]
&\quad +\, \frac{1}{s_0}\|\div(\bK\nabla p_{p,0})\|^2_{\bL^2(\Omega_p)}
+ \|\bu_{s,0}\|^2_{\bL^2(\Omega_p)} 
+ \|\bdiv(A^{-1}(\be(\bbeta_{p,0})))\|^2_{\bL^2(\Omega_p)} 
+ \|p_{p,0}\|^2_{\H^1(\Omega_p)}
+ \|\f_{f}(0)\|^2_{\bL^2(\Omega_f)}  \nonumber \\[1ex]
&\quad +\,\|\f_{p}(0)\|^2_{\bL^2(\Omega_p)}
+ \frac{1}{s_0}\|q_p(0)\|^2_{\L^2(\Omega_p)} 
+ \|\bu_{fh,0}\|^2_{\bL^2(\Omega_f)} 
+ \|\bu_{sh,0}\|^2_{\bL^2(\Omega_p)} 
+ \|p_{ph,0}\|^2_{\L^2(\Omega_p)} \Big)  \,,
\end{align}
where, the last three terms can be bounded by \eqref{eq: discrete initial data bound-1}--\eqref{eqn:discrete-soln-1-bound}.

Finally, \eqref{eq:discrete-stability} follows by combining the discrete versions of \eqref{eq:bound-unsteady-state-solution-6} and \eqref{eq:bound-unsteady-state-solution-10} with \eqref{eq: discrete initial 2}, using the Sobolev embedding of $\H^1(0,T)$ into $\L^\infty(0,T)$, applying Lemma~\ref{xing-lemma} with the non-negative functions $H=\big( \|\bbeta_{ph}\|^2_{\bV_p} + \|\partial_{tt}\bbeta_{ph}\|^2_{\bL^2(\Omega_p)}\big)^{1/2}$ and $B=\|\partial_{t}\,\f_p\|_{\bL^2(\Omega_p)}$, \eqref{eq:tau-d-H0div-inequality}--\eqref{eq:tau-H0div-Xf-inequality}, and the arguments described at the end of the proof of Theorem \ref{thm: continuous stability}.
\end{proof}


\subsection{Error analysis}

We proceed with establishing rates of convergence.
To that end, let us set $\V\in \big\{ \W_p, \bV_f, \bbQ_f, \bSigma_{e} \big\}$, 
$\Lambda\in \big\{ \bLambda_f, \Lambda_p \big\}$ and let
$\V_h, \Lambda_h$ be the discrete counterparts. Let $P_h^{\V}: \V\to \V_h$ and 
$P_h^{\Lambda}: \Lambda\to \Lambda_h$ be the $\L^2$-projection operators, satisfying
\begin{equation}\label{eq:projection1}
( u - P_h^{\V}(u), v_{h} )_{\Omega_\star} = 0 \quad \forall\,v_h\in \V_{h}\,,\quad
\langle \varphi - P_h^{\Lambda}(\varphi), \psi_h \rangle_{\Gamma_{fp}} = 0  \quad\forall\, \psi_h\in \Lambda_{h}\,,
\end{equation}
where $\star\in \{f,p\}$, $u\in \big\{ p_p, \bu_f, \bgamma_f, \bsi_e \big\}$, $\varphi\in \big\{ \bvarphi, \lambda
\big\}$, and $v_h, \psi_h$ are the corresponding discrete test
functions. We have the approximation properties \cite{ciarlet1978}:
\begin{equation}\label{eq:approx-property1}
\|u - P^{\V}_h(u)\|_{\L^p(\Omega_\star)} 
\,\leq\, C\,h^{s_{u} + 1}\, \|u\|_{\W^{s_{u} + 1,p}(\Omega_\star)}\,,\quad
\|\varphi - P^{\Lambda}_h(\varphi)\|_{\Lambda} 
\,\leq\, C\,h^{s_{\varphi} + 1/2}\,
\|\varphi\|_{\H^{s_{\varphi} + 1}(\Gamma_{fp})} \,,
\end{equation}
where $p \in \{2,4\}$ and $s_{u}\in \big\{ s_{p_p}, s_{\bu_f}, s_{\bgamma_f}, s_{\bsi_e} \big\}$, $s_{\varphi}\in \big\{ s_{\bvarphi}, s_{\lambda} \big\}$ are the degrees of polynomials in the
spaces $\V_h$ and $\Lambda_h$, respectively. 

Next, denote $\X\in \big\{ \bbX_f, \bX_p \big\}$, $\sigma\in
\big\{ \bsi_f, \bu_p \big\}\in \X$ and let $\X_h$ and $\tau_h$ be
their discrete counterparts. 
Let $I^{\X}_h : \X \cap \H^{1}(\Omega_{\star})\to \X_{h}$ be the mixed finite element
projection operator \cite{Brezzi-Fortin} satisfying
\begin{equation}\label{eq:projection2}
(\div(I^{\X}_h(\sigma)), w_h)_{\Omega_\star} = (\div(\sigma), w_h)_{\Omega_\star}\,,\quad 
\pil I^{\X}_h(\sigma)\bn_{\star}, \tau_h\bn_{\star} \pir_{\Gamma_{fp}} = \pil \sigma \bn_{\star}, \tau_{h}\bn_{\star} \pir_{\Gamma_{fp}} \,, 
\end{equation}
for all $w_h\in \W_h$ and $\tau_h\in \X_h$, and
\begin{align}
&\|\sigma - I^{\X}_h(\sigma)\|_{\L^2(\Omega_{\star})} \,\leq\, C\,h^{s_{\sigma} + 1} \| \sigma \|_{\H^{s_{\sigma} + 1}(\Omega_{\star})},  \nonumber \\
&\|\div(\sigma - I^{\X}_h(\sigma))\|_{\L^q(\Omega_{\star})} \,\leq\, C\,h^{s_{\sigma} + 1} \|\div(\sigma)\|_{\W^{s_{\sigma} + 1,q}(\Omega_{\star})},\label{eq:approx-property2}
\end{align}
where $q \in \{2,4/3\}$,  $w_h\in \big\{ \bv_{fh}, w_{ph} \big\}$, $\W_h\in
\big\{ \bV_f, \W_p \big\}$, and $s_{\sigma}\in \big\{
s_{\bsi_f}, s_{\bu_p} \big\}$ -- the degrees of polynomials
in the spaces $\X_h$.

Finally, let $S_h^{\bV_p}$ be the Scott--Zhang interpolation operators onto $\bV_{ph}$, satisfying \cite{sz1990}\,:
\begin{equation}\label{eq: approx property 3}
\|\bbeta_p - S^{\bV_p}_h(\bbeta_p)\|_{\bH^1(\Omega_p)} 
\,\leq\, C\,h^{s_{\bbeta_p}} \| \bbeta_p \|_{\bH^{s_{\bbeta_p} + 1}(\Omega_p)} \,.
\end{equation}

Now, let $(\bsi_f,\bu_p, \bbeta_p, \bu_f, p_p, \bgamma_f, \bvarphi, \lambda)$ and $(\bsi_{fh},\bu_{ph}, \bbeta_{ph}, \bu_{fh}, p_{ph}, \bgamma_{fh}, \bvarphi_{h}, \lambda_{h})$ be the solutions of \eqref{eq:continuous-weak-formulation-1} and \eqref{eq:NS-Biot-semiformulation-1}, respectively.   We
introduce the error terms as the differences of these two solutions and
decompose them into approximation and discretization errors
using the interpolation operators:
\begin{align}
& \be_{\sigma} \,:=\, \sigma - \sigma_{h} \,=\,
(\sigma - I^{\X}_h(\sigma)) + (I^{\X}_h(\sigma) - \sigma_{h})
\,:=\, \be^I_{\sigma} + \be^h_{\sigma}\,,
\quad \sigma\in \big\{ \bsi_f, \bu_p \big\}\,, \nonumber \\
& \be_{\varphi} \,:=\, \varphi - \varphi_h
\,=\, (\varphi - P^{\Lambda}_h(\varphi)) + (P^{\Lambda}_h(\varphi) - \varphi_h)
\,:=\, \be^I_{\varphi} + \be^h_{\varphi}\,,
\quad \varphi\in \big\{ \bvarphi, \lambda \big\}\,,\nonumber \\
&\be_{u} \,:=\, u - u_{h} \,=\, (u - P^{\V}_h(u)) + (P^{\V}_h(u) - u_{h})
\,:=\, \be^I_{u} + \be^h_{u}\,, \quad
u\in \big\{ p_p, \bu_f, \bgamma_f, \bsi_e \big\}\,,\nonumber \\
& \be_{\bbeta_p} \,:=\, \bbeta_p - \bbeta_{ph} \,=\, (\bbeta_p - S^{\bV_p}_h(\bbeta_p)) + (S^{\bV_p}_h(\bbeta_p) - \bbeta_{ph})
\,:=\, \be^I_{\bbeta_p} + \be^h_{\bbeta_p} \,.
\label{eq:error-decomposition}
\end{align}
Then, we set the global errors endowed with the above decomposition:
\begin{equation*}
\be_{\ubsi} := (\be_{\bsi_f}, \be_{\bu_p}),\qan
\be_{\ubu} := (\be_{\bbeta_p}, \be_{\bu_f}, \be_{p_p}, \be_{\bgamma_f}, \be_{\bvarphi},  \be_{\lambda})\,.
\end{equation*}
We form the error equation by subtracting the discrete equations \eqref{eq:NS-Biot-semiformulation-1} from the continuous one  \eqref{eq:continuous-weak-formulation-1}:
\begin{align}
&\rho_f (\partial_t\be_{\bu_f},\bv_{fh})_{\Omega_f}+ a_f(\be_{\bsi_f},\btau_{fh})+b_{\bn_f}(\btau_{fh},\be_{\bvarphi}) + b_f(\btau_{fh},\be_{\bu_f}) + b_\sk(\be_{\bgamma_f},\btau_{fh}) \nonumber\\ 
&\quad +\, \kappa_{\bu_{f}}(\bu_{f}, \btau_{fh}) - \kappa_{\bu_{fh}}(\bu_{fh}, \btau_{fh})-b_f(\be_{\bsi_f},\bv_{fh}) - b_\sk(\be_{\bsi_f},\bchi_{fh})  \, = \,0,\nonumber\\ 
& \rho_p(\partial_{tt}\be_{\bbeta_p},\bxi_{ph})_{\Omega_p} + a^e_p(\be_{\bbeta_p},\bxi_{ph})+ \alpha_p\,b_p(\bxi_{ph},\be_{p_p}) +c_{\BJS}(\partial_t\be_{\bbeta_p}, \be_{\bvarphi};\bxi_{ph}, \bpsi_{h})- c_{\Gamma}(\bxi_{ph},\bpsi_{h};\be_{\lambda}) \nonumber\\ 
&\quad -\, b_{\bn_f}(\be_{\bsi_f},\bpsi_{h})+l_{\bvarphi}(\bvarphi,\bpsi_{h}) - l_{\bvarphi_{h}}(\bvarphi_{h},\bpsi_{h})\, = \,0,\nonumber\\ 
& s_0 (\partial_t\be_{p_p},w_{ph})_{\Omega_p} +a^d_p(\be_{\bu_p},\bv_{ph}) +b_p(\bv_{ph},\be_{p_p})+b_{\bn_p}(\bv_{ph},\be_{\lambda})  - \alpha_p\,b_p(\partial_t\be_{\bbeta_p},w_{ph}) \nonumber\\ 
&\quad -\, b_p(\be_{\bu_p},w_{ph}) \,=\, 0,\nonumber\\ 
&c_{\Gamma}(\partial_t\be_{\bbeta_p},\be_{\bvarphi};\xi_{h})-b_{\bn_p}(\be_{\bu_p},\xi_{h})\, = \,0,  \label{eq:NS-Biot-errorformulation-1}
\end{align}
for all $(\btau_{fh}, \bv_{ph}, \bxi_{ph}, \bv_{fh}, w_{ph}, \bchi_{fh}, \bpsi_{h}, \xi_{h})\in \bbX_{fh}\times \bX_{ph}\times \bV_{ph}\times \bV_{fh}\times \W_{ph}\times \bbQ_{fh}\times \bLambda_{fh}\times \Lambda_{ph}$.

\begin{rem}
Let $\wh{\beta}:= \min\left\{ \wh{\beta}_1, \wh{\beta}_2, \wh{\beta}_3\right\}$, with $\wh{\beta}_i = \min\big\{\beta_i, \wt{\beta}_i\big\}, i\in \{1,2,3\}$,
where $\beta_i$ and $\wt{\beta}_i$ are the inf-sup constants from Lemmas \ref{lem:inf-sup-conditions} and \ref{lem: discrete inf-sup}, respectively. 
It follows that $(\bu_f(t),\bvarphi(t)), (\bu_{fh}(t),\bvarphi_{h}(t)) \in \bW_{\wh r_1,\wh r_2} \quad \forall\,t\in [0,T]$, with $\bW_{\wh r_1,\wh r_2}$ defined by
\begin{equation}\label{eq:wh-Wr-definition}
\bW_{\wh r_1,\wh r_2} := \Big\{ (\bw_{f},\bzeta)\in \bV_{f}\times \bLambda_{f} \,:\quad \| \bw_{f}\|_{\bV_f} \leq \wh{r}_{1},\quad \|\bzeta\|_{\bLambda_f} \leq\wh{r}_{2} \Big\}\,,
\end{equation}
where $\wh{r}_1 \in (0,\wh{r}_{1}^0]$, $\wh{r}_{2}\in (0,\wh{r}_{2}^0]$, and $\wh{r}_1^0, \wh{r}_{2}^0$ are defined as $\wh{r}_{1}^0 := \min\{r_1^0,r_{1\ttd}^0\} = 
\mu\,\wh{\beta}/(3\,\rho_f\,n^{1/2})$ and  
$\wh{r}_{2}^0 := \min\{r_2^0,r_{2\ttd}^0\} = \mu \wh{\beta}^2/(12C_{\cL})$.
\end{rem}

We now establish the main result of this section. We make an additional assumption on the smoothness of the solution.

\begin{assumption}\label{extra-reg}
Assume that the solutions of the continuous and semidiscrete problems \eqref{eq:continuous-weak-formulation-1} and \eqref{eq:NS-Biot-semiformulation-1}, respectively, satisfy
\begin{equation}\label{eq:extra-solution-assumption}
\partial_t\,\bu_f \in \L^{\infty}(0,T;\bV_f), \ \
\partial_t\,\bvarphi \in \L^{\infty}(0,T;\bLambda_f), \ \
\partial_t\,\bu_{fh} \in \L^{\infty}(0,T;\bV_{fh}), \ \
\partial_t\,\bvarphi_h \in \L^{\infty}(0,T;\bLambda_{fh}).
\end{equation}
\end{assumption}
The increased regularity in \eqref{eq:extra-solution-assumption} can be obtained by assuming that the data are sufficiently smooth in time and using arguments similar to the proofs of Theorems \ref{thm: continuous stability} and \ref{thm: well-posedness main result semi}. 

\begin{thm}\label{thm: error analysis}
Let the assumptions in Theorem \ref{thm: well-posedness main result semi} and Assumption~\ref{extra-reg} hold. Consider the solutions of the continuous and semidiscrete problems \eqref{eq:continuous-weak-formulation-1} and \eqref{eq:NS-Biot-semiformulation-1}, respectively, and assume sufficient regularity of the exact solution as specified in \eqref{eq:approx-property1} and \eqref{eq:approx-property2}. Then, there exists a positive constant $C$, depending on the solution regularity but independent of $h$, such that
\begin{align}\label{eq:errror-rate-of-convergence}
&\|\be_{\bsi_f}\|_{\L^2(0,T;\bbX_f)} 
+ \| \partial_t(\be_{\bsi_f})^{\rd}\|_{\L^2(0,T;\bbL^2(\Omega_f))} 
+ \|\be_{\bu_p}\|_{\L^2(0,T;\bX_p)} 
+ \| \partial_t\be_{\bu_p}\|_{\L^2(0,T;\bL^2(\Omega_p))}
+ \|\be_{\bbeta_p}\|_{\L^\infty(0,T;\bV_p)} \nonumber \\
&\quad +\, \|\partial_t\be_{\bbeta_p}\|_{\L^\infty(0,T;\bL^2(\Omega_p))} 
+ \sum^{n-1}_{j=1} \|( \be_{\bvarphi}-\partial_t\be_{\bbeta_p})\cdot\bt_{f,j}\|_{\H^1(0,T;\L^2(\Gamma_{fp}))}
+ \|\partial_{tt}\be_{\bbeta_p}\|_{\L^\infty(0,T;\bL^2(\Omega_p))}
\nonumber \\
&\quad +\, \|\be_{\bu_f}\|_{\H^1(0,T;\bV_f)}
+ \|\be_{\bu_f}\|_{\W^{1,\infty}(0,T;\bL^2(\Omega_f))}
+ \sqrt{s_0}\,\|\be_{p_p}\|_{\W^{1,\infty}(0,T;\W_p)} 
+ \|\be_{p_p}\|_{\H^1(0,T;\W_p)} 
\nonumber \\[1ex]
&\quad +\, \|\be_{\bgamma_f}\|_{\H^1(0,T;\bbQ_f)} 
+ \|\be_{\bvarphi}\|_{\H^1(0,T;\bLambda_f)} 
+ \|\be_{\lambda}\|_{\H^1(0,T;\Lambda_p)}
\nonumber \\[1ex]
&\leq\, C\,\sqrt{T}\,\left( h^{s_{\underline{\sigma}}+1} + h^{s_{\bbeta_p}} 
+ h^{s_{\underline{u}}+1} +  h^{s_{\underline{\varphi}}+  1/2} \right) \,,
\end{align}
where $s_{\underline{\sigma}}= \min\{ s_{\bsi_f}, s_{\bu_p}\}$, $s_{\underline{u}}= \min\{ s_{p_p}, s_{\bu_f}, s_{\bgamma_f}, s_{\bsi_e} \}$, and  $s_{\underline{\varphi}}= \min\{ s_{\varphi}, s_{\lambda}\}$.
\end{thm}
\begin{proof}
  We start by taking $(\btau_{fh}, \bv_{ph}, \bxi_{ph}, \bv_{fh}, w_{ph}, \bchi_{fh}, \bpsi_{h}, \xi_{h}) = (\be_{\bsi_f}^h, \be_{\bu_p}^h, \partial_t\be_{\bbeta_p}^h, \be_{\bu_f}^h, \be_{p_p}^h, \be_{\bgamma_f}^h, \be_{\bvarphi}^h, \be_{\lambda}^h)$ in \eqref{eq:NS-Biot-errorformulation-1}, to obtain
\begin{align}\label{eq: error equation 1}
&\ds\frac{1}{2}\,\partial_t\,\Big( \rho_f\,\|\be_{\bu_f}^h\|^2_{\bL^2(\Omega_f)} 
+ s_0\,\|\be_{p_p}^h\|^2_{\W_p} 
+ a^e_p(\be_{\bbeta_p}^h,\be_{\bbeta_p}^h) 
+ \rho_p\,\|\partial_{t}\be_{\bbeta_p}^h\|^2_{\bL^2(\Omega_p)} \Big) 
+ a_f(\be_{\bsi_f}^h,\be_{\bsi_f}^h) 
+ a^d_p(\be_{\bu_p}^h,\be_{\bu_p}^h) \nonumber \\[1ex]
&\ds\quad +\, c_{\BJS}(\partial_t\be_{\bbeta_p}^h, \be_{\bvarphi}^h;\partial_t\be_{\bbeta_p}^h, \be_{\bvarphi}^h)
+ \kappa_{\bu_{fh}}(\be_{\bu_{f}}^h, \be_{\bsi_f}^h)
+ \kappa_{\be_{\bu_{f}}^h}(\bu_{f}, \be_{\bsi_f}^h)
+ l_{\bvarphi_h}(\be_{\bvarphi}^h,\be_{\bvarphi}^h) 
+ l_{\be_{\bvarphi}^h}(\bvarphi,\be_{\bvarphi}^h)\nonumber \\[1ex]
&\ds = -\,  a^e_p(\be_{\bbeta_p}^I,\partial_t\be_{\bbeta_p}^h) 
-\rho_p(\partial_{tt}\be_{\bbeta_p}^I,\partial_{t}\be_{\bbeta_p}^h)_{\Omega_p}
- a_f(\be_{\bsi_f}^I,\be_{\bsi_f}^h) 
- a^d_p(\be_{\bu_p}^I,\be_{\bu_p}^h) 
- c_{\BJS}(\partial_t\be_{\bbeta_p}^I, \be_{\bvarphi}^I;\partial_t\be_{\bbeta_p}^h, \be_{\bvarphi}^h) \nonumber \\[1ex]
&\ds\quad- \kappa_{\bu_{fh}}(\be_{\bu_{f}}^I, \be_{\bsi_f}^h)
\, -\, \kappa_{\be_{\bu_{f}}^I}(\bu_{f}, \be_{\bsi_f}^h) 
- l_{\bvarphi_h}(\be_{\bvarphi}^I,\be_{\bvarphi}^h) 
- l_{\be_{\bvarphi}^I}(\bvarphi,\be_{\bvarphi}^h) 
- b_\sk(\be_{\bsi_f}^h,\be_{\bgamma_f}^I) 
+ b_\sk(\be_{\bsi_f}^I,\be_{\bgamma_f}^h) \nonumber \\[1ex]
&\ds\quad - b_{\bn_f}(\be_{\bsi_f}^h,\be_{\bvarphi}^I)
\, +\, b_{\bn_f}(\be_{\bsi_f}^I,\be_{\bvarphi}^h)
- \alpha_p\,b_p(\partial_t\be_{\bbeta_p}^h,\be_{p_p}^I) 
+ \alpha_p\,b_p(\partial_t\be_{\bbeta_p}^I,\be_{p_p}^h) 
- b_{\bn_p}(\be_{\bu_p}^h, \be_{\lambda}^I) 
+ b_{\bn_p}(\be_{\bu_p}^I, \be_{\lambda}^h) \nonumber \\[1ex]
&\ds \quad - c_{\Gamma}(\partial_t\be_{\bbeta_p}^I,\be_{\bvarphi}^I;\be_{\lambda}^h) 
+ c_{\Gamma}(\partial_t\be_{\bbeta_p}^h,\be_{\bvarphi}^h;\be_{\lambda}^I)\,,
\end{align}
where, the right-hand side of \eqref{eq: error equation 1} has been simplified, 
since the projection properties \eqref{eq:projection1} and 
\eqref{eq:projection2}, and the fact that 
$\div(\bX_{ph})=\W_{ph}$,
$\bdiv(\bbX_{fh}) = \bV_{fh}$ (cf. \eqref{eq: div-prop}), 
imply that the following terms are zero:
\begin{equation*}
s_0 (\partial_t\be_{p_p}^I,\be_{p_p}^h)_{\Omega_p}, \,\, \rho_f (\partial_t\be_{\bu_f}^I,\be_{\bu_f}^h)_{\Omega_f}, \,\,  b_f(\be_{\bsi_f}^h,\be_{\bu_f}^I), \,\, b_f(\be_{\bsi_f}^I,\be_{\bu_f}^h),\,\,
b_p(\be_{\bu_p}^h,\be_{p_p}^I), \,\, b_p(\be_{\bu_p}^I,\be_{p_p}^h)\,.
\end{equation*}
Next, we discuss how to deal with the $\kappa_{\bw_f}$ and $l_{\bzeta}$ terms on the left-hand side of \eqref{eq: error equation 1}.
First, in analogy with \eqref{eq:mono f bound1}, we use the non-negativity of $a_f$ (cf. \eqref{eq:coercivity-af}), together with the continuity of $\kappa_{\bw_f}$ and $l_{\bzeta}$ (cf. \eqref{eq:continuity-cK-wf}, \eqref{eq:continuity-cL-zeta}) and the fact that $(\bu_f(t),\bvarphi(t)),(\bu_{fh}(t),\bvarphi_{h}(t)):[0,T]\to \bW_{\wh r_1,\wh r_2}$ (cf. \eqref{eq:wh-Wr-definition}), to deduce that
\begin{align}\label{eq:error-equation-4a}
&\ds
a_f(\be_{\bsi_f}^h,\be_{\bsi_f}^h) 
+ \kappa_{\bu_{fh}}(\be_{\bu_{f}}^h, \be_{\bsi_f}^h)
+ \kappa_{\be_{\bu_{f}}^h}(\bu_{f}, \be_{\bsi_f}^h)
+ l_{\bvarphi_h}(\be_{\bvarphi}^h,\be_{\bvarphi}^h) 
+ l_{\be_{\bvarphi}^h}(\bvarphi,\be_{\bvarphi}^h) \, \nonumber \\[1ex]
&\ds\quad \ge
  \ds\dfrac{1}{2\mu}\|(\be_{\bsi_f}^h)^{\rd}\|^2_{\bbL^2(\Omega_f)}   
- \frac{\wh{\beta}}{3}\,\|\be^h_{\bu_{f}}\|_{\bV_f}\|(\be_{\bsi_f}^h)^{\rd}\|_{\bbL^2(\Omega_f)}  
- \frac{\mu\,\wh{\beta}^2}{6}\,\|\be^h_{\bvarphi}\|^2_{\bLambda_f}.
\end{align}
Next, similarly to \eqref{eq:inf-sup-B1}, by employing the $\btau_{fh}$ row of \eqref{eq:NS-Biot-errorformulation-1} together with the discrete inf-sup condition for $B_f$ (cf. \eqref{eq:discrete inf-sup-vf-chif}), the continuity properties of $\kappa_{\bw_f}$ and $l_{\bzeta}$ (cf. \eqref{eq:continuity-cK-wf}, \eqref{eq:continuity-cL-zeta}), and the fact that $(\bu_f(t),\bvarphi(t)),(\bu_{fh}(t),\bvarphi_{h}(t)):[0,T]\to \bW_{\wh r_1,\wh r_2}$ (cf. \eqref{eq:wh-Wr-definition}), we obtain
\begin{align}\label{eq:error-equation-4b}
&\ds \dfrac{2\,\wh{\beta}}{3}\,\|(\be^h_{\bu_f}, \be^h_{\bgamma_f},\be^h_{\bvarphi})\| 
\,\leq\,
\big(\wh{\beta} - \frac{\rho_f\,n^{1/2}}{2\,\mu}(\|\bu_f\|_{\bV_f}+\|\bu_{fh}\|_{\bV_f})\big)\,\|(\be^h_{\bu_f}, \be^h_{\bgamma_f},\be^h_{\bvarphi})\| \nonumber \\[1ex]
&\ds\quad \leq\, \frac{1}{2\,\mu} \|(\be^h_{\bsi_f})^\rd\|_{\bbL^2(\Omega_f)}
+ C\,\Big( \|(\be^I_{\bsi_f})^\rd\|_{\bbL^2(\Omega_f)} + \|(\be^I_{\bu_f},\be^I_{\bgamma_{f}},\be^I_{\bvarphi})\| \Big) \,. 
\end{align}
Thus, by applying \eqref{eq:error-equation-4b} to bound both $\|\be^h_{\bu_{f}}\|_{\bV_f}$ and $\|\be^h_{\bvarphi}\|_{\bLambda_f}$ in \eqref{eq:error-equation-4a} by $\|(\be_{\bsi_f}^h)^{\rd}\|_{\bbL^2(\Omega_f)}$, using Young's inequality and performing some algebraic computations, we conclude that
\begin{align}\label{eq:error-equation-4}
  &\ds   a_f(\be_{\bsi_f}^h,\be_{\bsi_f}^h) 
+ \kappa_{\bu_{fh}}(\be_{\bu_{f}}^h, \be_{\bsi_f}^h)
+ \kappa_{\be_{\bu_{f}}^h}(\bu_{f}, \be_{\bsi_f}^h)
+ l_{\bvarphi_h}(\be_{\bvarphi}^h,\be_{\bvarphi}^h) 
+ l_{\be_{\bvarphi}^h}(\bvarphi,\be_{\bvarphi}^h) 
\nonumber \\
& \ds \quad \ge 
  \dfrac{1}{32\,\mu}\|(\be_{\bsi_f}^h)^{\rd}\|^2_{\bbL^2(\Omega_f)}   
-\, C\,\Big( \|(\be^I_{\bsi_f})^\rd\|^2_{\bbL^2(\Omega_f)} + \|(\be^I_{\bu_f},\be^I_{\bgamma_{f}},\be^I_{\bvarphi})\|^2 \Big)\,.
\end{align}
In turn, testing \eqref{eq:NS-Biot-errorformulation-1} with $(\btau_{fh}, \bv_{ph}) =(\be_{\bsi_f}^h, \be_{\bu_p}^h)$ and using the discrete inf-sup conditions \eqref{eq:discrete inf-sup-qp-xi}--\eqref{eq:discrete inf-sup-vf-chif} in Lemma \ref{lem: discrete inf-sup} along with the continuity of $\kappa_{\bw_f}$ (cf. \eqref{eq:continuity-cK-wf}) and the fact that $(\bu_f(t),\bvarphi(t)),(\bu_{fh}(t),\bvarphi_{h}(t)):[0,T]\to \bW_{\wh r_1,\wh r_2}$ (cf. \eqref{eq:wh-Wr-definition}), we get
\begin{align}
&\|(\be_{\bu_{f}}^h, \be_{p_{p}}^h, \be_{\bgamma_{f}}^h, \be_{\bvarphi}^h, \be_{\lambda}^h)\|
\,\leq\, C\,\Big( \|(\be_{\bsi_f}^h)^\rd\|_{\bbL^2(\Omega_f)} + \|\be_{\bu_p}^h\|_{\bL^2(\Omega_p)} +  \|(\be_{\bsi_f}^I)^\rd\|_{\bbL^2(\Omega_f)} + \|\be_{\bu_p}^I\|_{\bL^2(\Omega_p)}\nonumber \\
& \qquad  +\, \|\be_{\bu_f}^I\|_{\bV_f} + \|\be_{\bgamma_f}^I\|_{\bbQ_f} + \|\be_{\bvarphi}^I\|_{\bLambda_f} + \|\be_{p_p}^I\|_{\W_p} + \|\be_{\lambda}^I\|_{\Lambda_p} \Big)\,.
\label{eq: error equation 3}
\end{align}
Next, all terms on the right-hand side of \eqref{eq: error equation 1} are bounded using the continuity of the bilinear forms (cf. Lemma~\ref{lem:cont} and \eqref{eq:aep-coercivity-bound}), using also that $(\bu_f(t),\bvarphi(t))$, $(\bu_{fh}(t),\bvarphi_{h}(t)):[0,T]\to \bW_{\wh r_1,\wh r_2}$ (cf. \eqref{eq:wh-Wr-definition}) to bound the $\kappa_{\bw_f}$ and $l_{\bzeta}$ terms. Thus,
integrating \eqref{eq: error equation 1} from $0$ to $t \in (0,T]$, and using \eqref{eq:aep-coercivity-bound}, the non-negativity bounds in Lemma \ref{lem:coercivity-properties-A-E2}, together with the Cauchy--Schwarz and Young’s inequalities, and \eqref{eq:error-equation-4}--\eqref{eq: error equation 3}, we obtain
\begin{align*}
&\ds \|\be_{\bu_f}^h(t)\|^2_{\bL^2(\Omega_f)} 
+ s_0 \|\be_{p_p}^h(t)\|^2_{\W_p} 
+ \|\be_{\bbeta_p}^h(t)\|^2_{\bV_p} 
+ \|\partial_{t}\be_{\bbeta_p}^h(t)\|^2_{\bL^2(\Omega_p)} 
+\, \int_0^t \Big(\|(\be_{\bsi_f}^h)^{\rd}\|^2_{\bbL^2(\Omega_f)}   + \|\be_{\bu_p}^h\|^2_{\bL^2(\Omega_p)} \nonumber \\[1ex]
&\ds\quad 
+ \sum^{n-1}_{j=1} \|( \be_{\bvarphi}^h-\partial_t\be_{\bbeta_p}^h)\cdot\bt_{f,j}\|^2_{\L^2(\Gamma_{fp})} 
+ \|(\be_{\bu_{f}}^h, \be_{p_{p}}^h, \be_{\bgamma_{f}}^h, \be_{\bvarphi}^h, \be_{\lambda}^h)\|^2\Big)\,ds
\nonumber \\[1ex]
&\ds\leq\, C\int_0^t \Big(\|\be_{\ubsi}^I\|^2
+ \|\be_{\ubu}^I\|^2 
+ \sum^{n-1}_{j=1} \|( \be_{\bvarphi}^I-\partial_t\be_{\bbeta_p}^I)\cdot\bt_{f,j}\|^2_{\L^2(\Gamma_{fp})}  
+ \,\|\partial_{t}\be_{\bbeta_p}^I\|^2_{\bV_p} + \,\|\partial_{tt}\be_{\bbeta_p}^I\|^2_{\bL^2(\Omega_p)} \Big)\,ds  
\nonumber \\[1ex]
&\ds\quad + \delta_1 \int_0^t \Big(  \sum^{n-1}_{j=1} \|( \be_{\bvarphi}^h-\partial_t\be_{\bbeta_p}^h)\cdot\bt_{f,j}\|^2_{\L^2(\Gamma_{fp})} 
+ \|\be_{p_{p}}^h\|^2_{\W_p} 
+ \|\be_{\bgamma_{f}}^h\|^2_{\bbQ_f} 
+ \|\be_{\bvarphi}^h\|^2_{\bLambda_f} 
+ \|\be_{\lambda}^h\|^2_{\Lambda_p}  \Big)\,ds  
\nonumber \\[1ex]
&\ds\quad + \delta_2 \int_0^t \Big( \|\be_{\bsi_f}^h\|^2_{\bbX_f}   + \|\be_{\bu_p}^h\|^2_{\bX_p}  \Big)\,ds + C\,\Bigg( \|\be_{\bu_f}^h(0)\|^2_{\bL^2(\Omega_f)}
+ s_0\,\|\be_{p_p}^h(0)\|^2_{\W_p} 
+ \|\be_{\bbeta_p}^h(0)\|^2_{\bV_p}
\nonumber \\[1ex]
&\ds\quad +\,\|\partial_{t}\be_{\bbeta_p}^h(0)\|^2_{\bL^2(\Omega_p)}
+ \int_0^t \big(\|\partial_{tt}\be_{\bbeta_p}^I\|_{\bL^2(\Omega_p)}   
+ \|\be_{\bbeta_p}^I\|_{\bV_p}  
+  \|\be_{p_{p}}^I\|_{\W_p}  
+ \|\be_{\lambda}^I\|_{\Lambda_p} \big)\|\partial_{t}\be_{\bbeta_p}^h\|_{\bV_p}\,ds   
\Bigg) \,,
\end{align*}
and taking $\delta_1$ small enough, we get
\begin{align}\label{eq: error equation 4a}
&\ds \|\be_{\bu_f}^h(t)\|^2_{\bL^2(\Omega_f)} 
+ s_0 \|\be_{p_p}^h(t)\|^2_{\W_p} 
+ \|\be_{\bbeta_p}^h(t)\|^2_{\bV_p} 
+ \|\partial_{t}\be_{\bbeta_p}^h(t)\|^2_{\bL^2(\Omega_p)} 
+\, \int_0^t \Big(\|(\be_{\bsi_f}^h)^{\rd}\|^2_{\bbL^2(\Omega_f)}   + \|\be_{\bu_p}^h\|^2_{\bL^2(\Omega_p)} \nonumber \\[1ex]
&\ds\quad 
+ \sum^{n-1}_{j=1} \|( \be_{\bvarphi}^h-\partial_t\be_{\bbeta_p}^h)\cdot\bt_{f,j}\|^2_{\L^2(\Gamma_{fp})} 
+ \|(\be_{\bu_{f}}^h, \be_{p_{p}}^h, \be_{\bgamma_{f}}^h, \be_{\bvarphi}^h, \be_{\lambda}^h)\|^2\Big)\,ds
\nonumber \\[1ex]
&\ds\leq\, C\int_0^t \Big(\|\be_{\ubsi}^I\|^2
+ \|\be_{\ubu}^I\|^2 
+ \sum^{n-1}_{j=1} \|( \be_{\bvarphi}^I-\partial_t\be_{\bbeta_p}^I)\cdot\bt_{f,j}\|^2_{\L^2(\Gamma_{fp})}  
+ \,\|\partial_{t}\be_{\bbeta_p}^I\|^2_{\bV_p} + \,\|\partial_{tt}\be_{\bbeta_p}^I\|^2_{\bL^2(\Omega_p)} \Big)\,ds  
\nonumber \\[1ex]
&\ds\quad + \delta_2 \int_0^t \Big( \|\be_{\bsi_f}^h\|^2_{\bbX_f}   + \|\be_{\bu_p}^h\|^2_{\bX_p}  \Big)\,ds \,+\, C\,\Bigg( \|\be_{\bu_f}^h(0)\|^2_{\bL^2(\Omega_f)}
+ s_0\,\|\be_{p_p}^h(0)\|^2_{\W_p} 
+ \|\be_{\bbeta_p}^h(0)\|^2_{\bV_p}
\nonumber \\[1ex]
&\ds\quad +\,\|\partial_{t}\be_{\bbeta_p}^h(0)\|^2_{\bL^2(\Omega_p)}
+ \int_0^t \big(\|\partial_{tt}\be_{\bbeta_p}^I\|_{\bL^2(\Omega_p)}   
+ \|\be_{\bbeta_p}^I\|_{\bV_p}  
+  \|\be_{p_{p}}^I\|_{\W_p}  
+ \|\be_{\lambda}^I\|_{\Lambda_p} \big)\|\partial_{t}\be_{\bbeta_p}^h\|_{\bV_p}\,ds   
\Bigg) \,.
\end{align}
We emphasize the term $\ds\delta_2 \int_0^t \Big( \|\be_{\bsi_f}^h\|^2_{\bbX_f}   + \|\be_{\bu_p}^h\|^2_{\bX_p} \Big)\,ds$ on the right-hand side above, which appears due to the terms $b_\sk(\be_{\bsi_f}^h,\be_{\bgamma_f}^I)$, $b_{\bn_f}(\be_{\bsi_f}^h,\be_{\bvarphi}^I)$, and $b_{\bn_p}(\be_{\bu_p}^h, \be_{\lambda}^I)$ on the right-hand side of \eqref{eq: error equation 1}. Thus, to complete the bound, we need to control $\|\bdiv(\be_{\bsi_f}^h)\|_{\bL^{4/3}(\Omega_f)}$ and $\|\div(\be^h_{\bu_p})\|_{\L^2(\Omega_p)}$. To this end, from the equations in \eqref{eq:NS-Biot-errorformulation-1} corresponding to test functions $\bv_{fh}$ and $w_{ph}$, using the projection properties \eqref{eq:projection1} and \eqref{eq:projection2}, we find that
\begin{equation*}
\begin{array}{c}
\ds b_f(\be^h_{\bsi_f}, \bv_{fh}) =  \rho_f (\partial_t\be_{\bu_f}^h,\bv_{fh})_{\Omega_f} \quad \forall\,\bv_{fh}\in \bV_{fh}\,, \\ [2ex]
\ds b_p(\be^h_{\bu_p}, w_{ph}) = (s_0\,\partial_t\,\be^h_{p_p}, w_{ph})_{\Omega_p}  - \alpha_p\,b_p(\partial_t\be_{\bbeta_p}^h,w_{ph})- \alpha_p\,b_p(\partial_t\be_{\bbeta_p}^I,w_{ph}) \quad \forall\, w_{ph}\in \W_{ph}\,,
\end{array}
\end{equation*}
which, together with the fact that $\bdiv(\bbX_f) = (\bV_f)'$ and $\div(\bX_p) = (\W_p)'$, implies that
\begin{align}
&\|\bdiv(\be_{\bsi_f}^h)\|_{\bL^{4/3}(\Omega_f)} 
\,\leq\, \rho_f\,|\Omega_f|^{1/4}\,\|\partial_t\be_{\bu_f}^h\|_{\bL^2(\Omega_f)}\,, \nonumber \\[1ex]
& \|\div(\be^h_{\bu_p})\|_{\L^2(\Omega_p)} 
\,\leq\, s_0\,\|\partial_t\,\be^h_{p_p} \|_{\W_p} 
+ \alpha_p\,n^{1/2}\,\|\partial_t\be_{\bbeta_p}^h\|_{\bV_p}
+ \alpha_p\,n^{1/2}\,\|\partial_t\be_{\bbeta_p}^I\|_{\bV_p}\,.
\label{eq:error-analysis7}
\end{align}

\medskip
\noindent{\bf{Bounds on time derivatives on the right-hand side of \eqref{eq:error-analysis7}.}}

\noindent In order to bound the time derivative terms $\,\|\partial_t\be_{\bu_f}^h\|_{\bL^2(\Omega_f)}, s_0\,\|\partial_t\,\be^h_{p_p} \|_{\W_p}$, and $\|\partial_t\be_{\bbeta_p}^h\|_{\bV_p}$ in \eqref{eq:error-analysis7}, we differentiate in time the whole system \eqref{eq:NS-Biot-errorformulation-1} and, testing with $(\btau_{fh}, \bv_{ph}, \bxi_{ph}, \bv_{fh}, w_{ph}, \bchi_{fh}, \bpsi_{h}, \xi_{h}) = (\partial_t\be_{\bsi_f}^h, \partial_t\be_{\bu_p}^h,
\partial_{tt}\be_{\bbeta_p}^h, \partial_t\be_{\bu_f}^h,$ 
$\partial_t\be_{p_p}^h, \partial_t\be_{\bgamma_f}^h, \partial_t\be_{\bvarphi}^h, \partial_t\be_{\lambda}^h)$, similarly to \eqref{eq: error equation 1}, we get 
\begin{align}\label{eq: error equation 5}
&\ds\frac{1}{2}\,\partial_t\,\Big( \rho_f\|\partial_t\be_{\bu_f}^h\|^2_{\bL^2(\Omega_f)} 
+ s_0\,\|\partial_t\be_{p_p}^h\|^2_{\W_p} 
+ a^e_p(\partial_t\be_{\bbeta_p}^h,\partial_t\be_{\bbeta_p}^h) 
+ \rho_p\,\|\partial_{tt}\be_{\bbeta_p}^h\|^2_{\bL^2(\Omega_p)} \Big) 
\nonumber \\[1ex]
&\ds\quad +\, a_f(\partial_t\be_{\bsi_f}^h,\partial_t\be_{\bsi_f}^h) 
+ a^d_p(\partial_t\be_{\bu_p}^h,\partial_t\be_{\bu_p}^h) 
+ c_{\BJS}(\partial_{tt}\be_{\bbeta_p}^h, \partial_t\be_{\bvarphi}^h;\partial_{tt}\be_{\bbeta_p}^h, \partial_t\be_{\bvarphi}^h)
+ \kappa_{\bu_{fh}}(\partial_t\be_{\bu_{f}}^h, \partial_t\be_{\bsi_f}^h)
\nonumber \\[1ex]
&\ds\quad +\, \kappa_{\partial_t\be_{\bu_{f}}^h}(\bu_{f}, \partial_t\be_{\bsi_f}^h)
+ l_{\bvarphi_h}(\partial_t\be_{\bvarphi}^h,\partial_t\be_{\bvarphi}^h) 
+ l_{\partial_t\be_{\bvarphi}^h}(\bvarphi,\partial_t\be_{\bvarphi}^h)
\nonumber \\[1ex]
&\ds = -\, a^e_p(\partial_t\be_{\bbeta_p}^I,\partial_{tt}\be_{\bbeta_p}^h) 
-\rho_p(\partial_{ttt}\be_{\bbeta_p}^I,\partial_{tt}\be_{\bbeta_p}^h)_{\Omega_p}
- a_f(\partial_t\be_{\bsi_f}^I,\partial_t\be_{\bsi_f}^h)
-  a^d_p(\partial_t\be_{\bu_p}^I,\partial_t\be_{\bu_p}^h) 
\nonumber \\[1ex]
&\ds\quad - c_{\BJS}(\partial_{tt}\be_{\bbeta_p}^I, \partial_t\be_{\bvarphi}^I;\partial_{tt}\be_{\bbeta_p}^h, \partial_t\be_{\bvarphi}^h) 
-\, \kappa_{\partial_t\bu_{fh}}(\be_{\bu_{f}}^h, \partial_t\be_{\bsi_f}^h)
- \kappa_{\be_{\bu_{f}}^h}(\partial_t\bu_{f}, \partial_t\be_{\bsi_f}^h)
-l_{\partial_t\bvarphi_h}(\be_{\bvarphi}^h,\partial_t\be_{\bvarphi}^h)
\nonumber \\[1ex]
&\ds\quad -l_{\be_{\bvarphi}^h}(\partial_t\bvarphi,\partial_t\be_{\bvarphi}^h)  
- \kappa_{\partial_t\bu_{fh}}(\be_{\bu_{f}}^I, \partial_t\be_{\bsi_f}^h)  
-\, \kappa_{\partial_t\be_{\bu_{f}}^I}(\bu_{f}, \partial_t\be_{\bsi_f}^h)  
- \kappa_{\bu_{fh}}(\partial_t\be_{\bu_{f}}^I, \partial_t\be_{\bsi_f}^h) 
- \kappa_{\be_{\bu_{f}}^I}(\partial_t\bu_{f}, \partial_t\be_{\bsi_f}^h)
\nonumber \\[1ex]
&\ds\quad- l_{\partial_t\bvarphi_h}(\be_{\bvarphi}^I,\partial_t\be_{\bvarphi}^h) 
- l_{\partial_t\be_{\bvarphi}^I}(\bvarphi,\partial_t\be_{\bvarphi}^h) 
-\, l_{\bvarphi_h}(\partial_t\be_{\bvarphi}^I,\partial_t\be_{\bvarphi}^h) 
- l_{\be_{\bvarphi}^I}(\partial_t\bvarphi,\partial_t\be_{\bvarphi}^h)
- b_\sk(\partial_t\be_{\bsi_f}^h,\partial_t\be_{\bgamma_f}^I) 
\nonumber \\[1ex]
&\ds\quad + b_\sk(\partial_t\be_{\bsi_f}^I,\partial_t\be_{\bgamma_f}^h)  
- b_{\bn_f}(\partial_t\be_{\bsi_f}^h,\partial_t\be_{\bvarphi}^I) 
+\, b_{\bn_f}(\partial_t\be_{\bsi_f}^I,\partial_t\be_{\bvarphi}^h)
- \alpha_p\,b_p(\partial_{tt}\be_{\bbeta_p}^h,\partial_t\be_{p_p}^I)
+ \alpha_p\,b_p(\partial_{tt}\be_{\bbeta_p}^I,\partial_t\be_{p_p}^h) 
\nonumber \\[1ex]
&\ds\quad - b_{\bn_p}(\partial_t\be_{\bu_p}^h, \partial_t\be_{\lambda}^I)
+\, b_{\bn_p}(\partial_t\be_{\bu_p}^I, \partial_t\be_{\lambda}^h) 
- c_{\Gamma}(\partial_{tt}\be_{\bbeta_p}^I,\partial_t\be_{\bvarphi}^I;\partial_t\be_{\lambda}^h)
+ c_{\Gamma}(\partial_{tt}\be_{\bbeta_p}^h,\partial_t\be_{\bvarphi}^h;\partial_t\be_{\lambda}^I) \,,
\end{align}
where, using again the projection properties \eqref{eq:projection1} and 
\eqref{eq:projection2}, and the fact that 
$\div(\bX_{ph})=\W_{ph}$,
\newline
$\bdiv(\bbX_{fh}) = \bV_{fh}$ (cf. \eqref{eq: div-prop}), we have dropped from \eqref{eq: error equation 5} the following terms:
\begin{equation*}
\begin{array}{c}
s_0 (\partial_{tt}\be_{p_p}^I,\partial_{t}\be_{p_p}^h)_{\Omega_p}, \,\, \rho_f (\partial_{tt}\be_{\bu_f}^I,\partial_{t}\be_{\bu_f}^h)_{\Omega_f}, \,\, b_f(\partial_{t}\be_{\bsi_f}^h,\partial_{t}\be_{\bu_f}^I), \,\, b_f(\partial_{t}\be_{\bsi_f}^I,\partial_{t}\be_{\bu_f}^h)\,,  \\[2ex]
b_p(\partial_{t}\be_{\bu_p}^h,\partial_{t}\be_{p_p}^I), \,\, b_p(\partial_{t}\be_{\bu_p}^I,\partial_{t}\be_{p_p}^h)\,.
\end{array}
\end{equation*}
For the terms $\kappa_{\bw_f}$ and $l_{\bzeta}$ on the left-hand side of \eqref{eq: error equation 5}, similarly to \eqref{eq:error-equation-4}, using that $(\bu_f(t),\bvarphi(t))$, $(\bu_{fh}(t),\bvarphi_{h}(t)):[0,T]\to \bW_{\wh r_1,\wh r_2}$, we obtain
\begin{align}\label{eq:k-l-lhs}
  &\ds  a_f(\dt \be_{\bsi_f}^h,\dt \be_{\bsi_f}^h) 
+ \kappa_{\bu_{fh}}(\dt\be_{\bu_{f}}^h, \dt\be_{\bsi_f}^h)
+ \kappa_{\dt\be_{\bu_{f}}^h}(\bu_{f}, \dt\be_{\bsi_f}^h)
+ l_{\bvarphi_h}(\dt\be_{\bvarphi}^h,\dt\be_{\bvarphi}^h) 
+ l_{\dt\be_{\bvarphi}^h}(\bvarphi,\dt\be_{\bvarphi}^h) 
\nonumber \\
& \ds \quad \ge 
 \dfrac{1}{32\,\mu}\|\dt(\be_{\bsi_f}^h)^{\rd}\|^2_{\bbL^2(\Omega_f)}  
-\, C\,\Big( \|\dt(\be^I_{\bsi_f})^\rd\|^2_{\bbL^2(\Omega_f)} + \|(\dt\be^I_{\bu_f},\dt\be^I_{\bgamma_{f}},\dt\be^I_{\bvarphi})\|^2 \nonumber \\
& \ds \qquad + \big(\|\partial_t \bu_{fh}\|^2_{\bV_f} +  \|\partial_t \bu_{f}\|^2_{\bV_f})( \|\be^h_{\bu_f}\|^2_{\bV_f} + \|\be^I_{\bu_f}\|^2_{\bV_f}\big) \Big)\,.
\end{align}
We next bound the 12 $\kappa_{\bw_f}$ and $l_{\bzeta}$ terms on the right-hand side of \eqref{eq: error equation 5}. Using that $(\bu_f(t),\bvarphi(t))$, $(\bu_{fh}(t),\bvarphi_{h}(t)):[0,T]\to \bW_{\wh r_1,\wh r_2}$,
for four of the terms we have
\begin{equation}\label{eq:k-l-rhs-1}
\begin{array}{l}
\ds \kappa_{\partial_t\be_{\bu_{f}}^I}(\bu_{f}, \partial_t\be_{\bsi_f}^h)  
+ \kappa_{\bu_{fh}}(\partial_t\be_{\bu_{f}}^I, \partial_t\be_{\bsi_f}^h)
+ l_{\bvarphi_h}(\partial_t\be_{\bvarphi}^I,\partial_t\be_{\bvarphi}^h)
+ l_{\partial_t\be_{\bvarphi}^I}(\bvarphi,\partial_t\be_{\bvarphi}^h) \\[2ex] 
\ds\quad \leq\, \delta_3\,\Big(\|\partial_t(\be_{\bsi_f}^h)^\rd\|^2_{\bbL^2(\Omega_f)} + \|\partial_t\be_{\bvarphi}^h\|^2_{\bLambda_f} \Big) 
+ C\,\Big(\|\partial_{t}\be_{\bu_f}^I\|^2_{\bV_f} + \|\partial_t\be_{\bvarphi}^I\|^2_{\bLambda_f}\Big) \,,
\end{array}
\end{equation}
which follows from the Cauchy--Schwarz and Young's inequalities, while the remaining eight terms can be bounded by
\begin{equation}\label{eq:k-l-rhs-2}
\begin{array}{l}
\ds
\delta_3\,\Big(\|\partial_t(\be_{\bsi_f}^h)^\rd\|^2_{\bbL^2(\Omega_f)} + \|\partial_t\be_{\bvarphi}^h\|^2_{\bLambda_f} \Big) 
+ C\,\Big\{ ( \|\partial_{t}\bu_{fh}\|^2_{\bV_f} + \|\partial_{t}\bu_{f}\|^2_{\bV_f})(\|\be_{\bu_f}^I\|^2_{\bV_f} + \|\be_{\bu_f}^h\|^2_{\bV_f} ) \\[2ex]
\ds\quad +\, ( \|\partial_t\bvarphi_{h}\|^2_{\bLambda_f} + \|\partial_t\bvarphi\|^2_{\bLambda_f})(\|\be_{\bvarphi}^I\|^2_{\bLambda_f} + \|\be_{\bvarphi}^h\|^2_{\bLambda_f} ) \Big\} \,.
\end{array}
\end{equation} 

In turn, from the rows of $\btau_{fh}$ and $\bv_{ph}$ in \eqref{eq:NS-Biot-errorformulation-1}, differentiating in time and using discrete inf-sup conditions \eqref{eq:discrete inf-sup-qp-xi}--\eqref{eq:discrete inf-sup-vf-chif} in Lemma \ref{lem: discrete inf-sup} for $(\partial_t\be_{\bu_{f}}^h, \partial_t\be_{p_{p}}^h, \partial_t\be_{\bgamma_{f}}^h, \partial_t\be_{\bvarphi}^h, \partial_t\be_{\lambda}^h)$ along with the continuity of $\kappa_{\bw_f}$ (cf. \eqref{eq:continuity-cK-wf}) and the fact that $(\bu_f(t),\bvarphi(t)),(\bu_{fh}(t),\bvarphi_{h}(t)):[0,T]\to \bW_{\wh r_1,\wh r_2}$ (cf. \eqref{eq:wh-Wr-definition}), we get
\begin{align}
&\|(\partial_t\be_{\bu_{f}}^h, \partial_t\be_{p_{p}}^h, \partial_t\be_{\bgamma_{f}}^h, \partial_t\be_{\bvarphi}^h, \partial_t\be_{\lambda}^h)\|
\,\leq\, C\,\Big( \|(\partial_t\be_{\bsi_f}^h)^\rd\|_{\bbL^2(\Omega_f)} 
+ \|\partial_t\be_{\bu_p}^h\|_{\bL^2(\Omega_p)} + \|(\partial_t\be_{\bsi_f}^I)^\rd\|_{\bbL^2(\Omega_f)}  \nonumber \\
& \quad + \|\partial_t\be_{\bu_p}^I\|_{\bL^2(\Omega_p)} + \big(\|\partial_{t}\bu_{f}\|_{\bV_f} + \, \|\partial_{t}\bu_{fh}\|_{\bV_f}\big)\big(\|\be_{\bu_f}^h\|_{\bV_f} + \|\be_{\bu_f}^I\|_{\bV_f}\big)
+ \|\partial_t\be_{\bu_f}^I\|_{\bV_f}  
\nonumber \\
& \quad 
+ \,\|\partial_t\be_{\bgamma_f}^I\|_{\bbQ_f} 
+ \|\partial_t\be_{\bvarphi}^I\|_{\bLambda_f} + \|\partial_t\be_{p_p}^I\|_{\W_p} + \|\partial_t\be_{\lambda}^I\|_{\Lambda_p} \Big)\,.
\label{eq: error equation 5a}
\end{align}	
Then integrating \eqref{eq: error equation 5} from 0 to t $\in (0,T]$, using \eqref{eq:k-l-lhs}--\eqref{eq: error equation 5a} and the identities
  \begin{align*}
& \int^t_0 a^e_p(\partial_t\be_{\bbeta_p}^I,\partial_{tt}\be_{\bbeta_p}^h)\,ds 
\,=\, a^e_p(\partial_t\be_{\bbeta_p}^I,\partial_{t}\be_{\bbeta_p}^h)\Big|_0^t 
- \int^t_0 a^e_p(\partial_{tt}\be_{\bbeta_p}^I,\partial_{t}\be_{\bbeta_p}^h)\,ds,\nonumber \\
& \int^t_0 \rho_p(\partial_{ttt}\be_{\bbeta_p}^I,\partial_{tt}\be_{\bbeta_p}^h)_{\Omega_p} \,ds \,=\, \rho_p(\partial_{ttt}\be_{\bbeta_p}^I,\partial_{t}\be_{\bbeta_p}^h)_{\Omega_p}\Big|_0^t - \int^t_0 \rho_p(\partial_{tttt}\be_{\bbeta_p}^I,\partial_{t}\be_{\bbeta_p}^h)_{\Omega_p}  \,ds,\nonumber \\
&  \int^t_0  b_\sk(\partial_t\be_{\bsi_f}^h,\partial_t\be_{\bgamma_f}^I)\,ds 
\,=\,  b_\sk(\be_{\bsi_f}^h,\partial_t\be_{\bgamma_f}^I)\Big|_0^t 
- \int^t_0  b_\sk(\be_{\bsi_f}^h,\partial_{tt}\be_{\bgamma_f}^I)\,ds,\nonumber \\
&  \int^t_0 b_{\bn_f}(\partial_t\be_{\bsi_f}^h,\partial_t\be_{\bvarphi}^I)\,ds 
\,=\, b_{\bn_f}(\be_{\bsi_f}^h,\partial_t\be_{\bvarphi}^I)\Big|_0^t 
- \int^t_0 b_{\bn_f}(\be_{\bsi_f}^h,\partial_{tt}\be_{\bvarphi}^I)\,ds,\nonumber \\
&  \int^t_0 \,b_p(\partial_{tt}\be_{\bbeta_p}^h,\partial_t\be_{p_p}^I) ds 
\,=\, \,b_p(\partial_{t}\be_{\bbeta_p}^h,\partial_t\be_{p_p}^I) \Big|_0^t 
- \int^t_0 \,b_p(\partial_{t}\be_{\bbeta_p}^h,\partial_{tt}\be_{p_p}^I) ds ,\nonumber \\
&  \int^t_0 b_{\bn_p}(\partial_t\be_{\bu_p}^h, \partial_t\be_{\lambda}^I)  ds 
\,=\, \,b_{\bn_p}(\be_{\bu_p}^h, \partial_t\be_{\lambda}^I)  \Big|_0^t 
- \int^t_0 \,b_{\bn_p}(\be_{\bu_p}^h, \partial_{tt}\be_{\lambda}^I)  ds,\nonumber \\
&  \int^t_0  c_{\Gamma}(\partial_{tt}\be_{\bbeta_p}^h,\partial_t\be_{\bvarphi}^h;\partial_t\be_{\lambda}^I)  ds 
\,=\,  c_{\Gamma}(\partial_{t}\be_{\bbeta_p}^h,\be_{\bvarphi}^h;\partial_t\be_{\lambda}^I)\Big|_0^t 
- \int^t_0  c_{\Gamma}(\partial_{t}\be_{\bbeta_p}^h,\be_{\bvarphi}^h;\partial_{tt}\be_{\lambda}^I) ds, 
\end{align*}
and applying the non-negativity and continuity bounds of the bilinear forms involved (cf. Lemmas \ref{lem:cont} and \ref{lem:coercivity-properties-A-E2}), \eqref{eq: error equation 5a}, Cauchy--Schwarz and Young's inequalities, we obtain
\begin{align}\label{eq: error equation 6}
&\ds \|\partial_t\be_{\bu_f}^h(t)\|^2_{\bL^2(\Omega_f)} 
+ s_0\,\|\partial_t\be_{p_p}^h(t)\|^2_{\W_p} 
+ \|\partial_t\be_{\bbeta_p}^h(t)\|^2_{\bV_p} 
+ \,\|\partial_{tt}\be_{\bbeta_p}^h(t)\|^2_{\bL^2(\Omega_p)}  
+\, \int^t_0 \Big(\|\partial_t(\be_{\bsi_f}^h)^\rd\|^2_{\bbL^2(\Omega_f)} 
\nonumber \\[1ex]
&\ds\quad + \|\partial_t\be_{\bu_p}^h\|^2_{\bL^2(\Omega_p)} 
+ \sum^{n-1}_{j=1} \|( \partial_t\be_{\bvarphi}^h-\partial_{tt}\be_{\bbeta_p}^h)\cdot\bt_{f,j}\|^2_{\L^2(\Gamma_{fp})} + \|(\partial_t\be_{\bu_{f}}^h, \partial_t\be_{p_{p}}^h, \partial_t\be_{\bgamma_{f}}^h, \partial_t\be_{\bvarphi}^h, \partial_t\be_{\lambda}^h)\|^2\Big)\,ds
\nonumber \\[1ex]
&\ds \leq C \Bigg(\|\partial_{ttt}\be_{\bbeta_p}^I(t)\|^2_{\bL^2(\Omega_p)} 
+ \|\partial_t\be_{\bbeta_p}^I(t)\|^2_{\bV_p} 
+ \|\partial_t\be_{p_p}^I(t)\|^2_{\W_p} 
+ \|\partial_t\be_{\bgamma_f}^I(t)\|^2_{\bbQ_f} 
+ \|\partial_t\be_{\bvarphi}^I(t)\|^2_{\bLambda_f}
+ \|\partial_t\be_{\lambda}^I(t)\|^2_{\Lambda_p}
\nonumber \\[1ex]
&\ds\quad + \int_0^t \Big(\|\partial_{t}\be_{\bsi_f}^I\|^2_{\bbX_f} 
+ \|\partial_{tt}\be_{p_p}^I\|^2_{\W_p} 
+ \|\partial_t(\be_{\bsi_f}^I)^\rd\|^2_{\bbL^2(\Omega_f)}
+ \|\partial_t\be_{\bu_p}^I\|^2_{\bX_p}
+\, \|\partial_{tt}\be_{\bbeta_p}^I\|^2_{\bV_p}
+ \|\partial_{t}\be_{\bvarphi}^I\|^2_{\bLambda_f}  
\nonumber \\[1ex]
&\ds\quad +\, \|\partial_{tt}\be_{\bvarphi}^I\|^2_{\bLambda_f}    
+ \sum^{n-1}_{j=1} \|( \partial_t\be_{\bvarphi}^I-\partial_{tt}\be_{\bbeta_p}^I)\cdot\bt_{f,j}\|^2_{\L^2(\Gamma_{fp})} 
+ \|\partial_{tt}\be_{\lambda}^I\|^2_{\Lambda_p}
+ \|\partial_{tt}\be_{\bgamma_f}^I\|^2_{\bbQ_f} 
+ \|\partial_{t}\be_{\bu_f}^I\|^2_{\bV_f} \nonumber \\[1ex]
&\ds\quad +\, \|\partial_{t}\be_{\bgamma_f}^I\|^2_{\bQ_f} + \|\partial_t\be_{p_p}^I\|^2_{\W_p} + \|\partial_t\be_{\lambda}^I\|^2_{\Lambda_p} \Big) ds  + \|\partial_{ttt}\be_{\bbeta_p}^I(0)\|^2_{\bL^2(\Omega_p)}  +\, \|\partial_t\be_{\bbeta_p}^I(0)\|^2_{\bV_p} 
+ \|\partial_t\be_{\bvarphi}^I(0)\|^2_{\bLambda_f}
\nonumber \\[1ex]
&\ds\quad
+\, \|\partial_t\be_{p_p}^I(0)\|^2_{\W_p}  
+ \|\partial_t\be_{\bgamma_f}^I(0)\|^2_{\bbQ_f} 
+ \|\partial_t\be_{\lambda}^I(0)\|^2_{\Lambda_p}\Bigg)  +\, \delta_3\,\Bigg( \int_0^t \Big(\|\partial_t(\be_{\bsi_f}^h)^\rd\|^2_{\bbL^2(\Omega_f)}
+ \|\partial_t\be_{\bu_p}^h\|^2_{\bL^2(\Omega_p)} 
\nonumber \\[1ex]
&\ds\quad 
+\, \sum^{n-1}_{j=1} \|(\partial_t\be_{\bvarphi}^h-\partial_{tt}\be_{\bbeta_p}^h)\cdot\bt_{f,j}\|^2_{\L^2(\Gamma_{fp})} 
+ \|\partial_{t}\be^h_{p_p}\|^2_{\W_p}  +\, \|\be_{\bsi_f}^h\|^2_{\bbX_f} 
+ \|\be_{\bu_p}^h\|^2_{\bX_p} 
+ \|\be_{\bvarphi}^h\|^2_{\bLambda_f}
+ \|\partial_{t}\be_{\bvarphi}^h\|^2_{\bLambda_f}   
\nonumber \\[1ex] 
&\ds\quad                 
+\, \|\partial_{t}\be_{\bgamma_f}^h\|^2_{\bbQ_f} 
+ \|\partial_{t}\be_{\lambda}^h\|^2_{\Lambda_p}\Big)\,ds 
+ \|\be_{\ubsi}^h(t)\|^2
+ \|\be_{\bvarphi}^h(t)\|^2_{\bLambda_f} +\, \|\partial_{t}\be_{\bbeta_p}^h(t)\|^2_{\bV_p} \Bigg) 
+ C \Bigg( \|\partial_t\be_{\bu_f}^h(0)\|^2_{\bL^2(\Omega_f)} 
\nonumber \\[1ex] 
&\ds\quad +\, \|\be_{\bvarphi}^h(0)\|^2_{\bLambda_f}     
+ s_0\|\partial_t\be_{p_p}^h(0)\|^2_{\W_p}
+ \|\be_{\ubsi}^h(0)\|^2
+ \|\partial_t\be_{\bbeta_p}^h(0)\|^2_{\bV_p} 
+ \, \|\partial_{tt}\be_{\bbeta_p}^h(0)\|^2_{\bL^2(\Omega_p)}
\nonumber \\[1ex]
&\ds\quad +\, \int_0^t \big(\|\partial_{tttt}\be_{\bbeta_p}^I\|_{\bL^2(\Omega_p)} 
+ \|\partial_{tt}\be_{\bbeta_p}^I\|_{\bV_p}  
+ \|\partial_{tt}\be^I_{p_p} \|_{\W_p} 
+ \|\partial_{tt}\be_{\lambda}^I\|_{\Lambda_p} \big)
\|\partial_{t}\be_{\bbeta_p}^h\|_{\bV_p}\,ds
\nonumber \\[1ex]
&\ds\quad 
+ \int_0^t (\|\partial_{t}\bu_{fh}\|^2_{\bV_f} 
+ \|\partial_{t}\bu_{f}\|^2_{\bV_f})(\|\be_{\bu_f}^I\|^2_{\bV_f} + \|\be_{\bu_f}^h\|^2_{\bV_f})\,ds 
\nonumber \\[1ex]
&\ds\quad +\, \int_0^t (\|\partial_t\bvarphi_{h}\|^2_{\bLambda_f} 
+ \|\partial_t\bvarphi\|^2_{\bLambda_f})(\|\be_{\bvarphi}^I\|^2_{\bLambda_f} + \|\be_{\bvarphi}^h\|^2_{\bLambda_f})\,ds \Bigg) \,.
\end{align}
To bound the last two terms in \eqref{eq: error equation 6}, we use the assumption \eqref{eq:extra-solution-assumption} to obtain
\begin{align}\label{eq:eh-uf-varphi-bound}
&\ds \int_0^t \left\{ \big( \|\partial_t \bu_f\|_{\bV_f}^2 + \|\partial_t \bu_{fh}\|_{\bV_f}^2 \big) 
\big( \|\be_{\bu_f}^I\|_{\bV_f}^2 + \|\be_{\bu_f}^h\|_{\bV_f}^2 \big) 
+ \big(\|\partial_t\bvarphi_{h}\|^2_{\bLambda_f}{+}\|\partial_t\bvarphi\|^2_{\bLambda_f}\big)
\big(\|\be_{\bvarphi}^I\|^2_{\bLambda_f} + \|\be_{\bvarphi}^h\|^2_{\bLambda_f} \big) \right\} ds \nonumber \\[3ex]
&\ds\qquad \leq\, C\,\int_0^t\left(  
\|\be_{\bu_f}^h\|_{\bV_f}^2
+ \|\be_{\bvarphi}^h\|_{\bLambda_f}^2
+ \|\be_{\bu_f}^I\|_{\bV_f}^2
+ \|\be_{\bvarphi}^I\|_{\bLambda_f}^2 \right)\,ds \,,
\end{align}
where the first two terms on the right-hand side of \eqref{eq:eh-uf-varphi-bound} are bounded by \eqref{eq: error equation 4a}.

Next, we focus on bounding the terms $\|\be_{\ubsi}^h(t)\|^2 := \|\be_{\bsi_f}^h(t)\|^2_{\bbX_f} + \|\be_{\bu_p}^h(t)\|^2_{\bX_p}$ and $\|\be_{\bvarphi}^h(t)\|^2_{\bLambda_f}$ in \eqref{eq: error equation 6}.
In fact, recalling that we have control of $\|(\be_{\bsi_f}^h)^\rd\|^2_{\L^2(0,t;\bbL^2(\Omega_f))}$ and $\|\partial_t(\be_{\bsi_f}^h)^\rd\|^2_{\L^2(0,t;\bbL^2(\Omega_f))}$ by \eqref{eq: error equation 4a} and \eqref{eq: error equation 6}, respectively, and using the Sobolev embedding of $\H^1(0,t)$ into $\L^\infty(0,t)$, we obtain
\begin{equation}\label{eq:sigmaf-d}
\|(\be_{\bsi_f}^h)^\rd(t)\|^2_{\bbL^2(\Omega_f)} 
\,\leq\, C\,\int_0^t \Big(\|(\be_{\bsi_f}^h)^\rd\|^2_{\bbL^2(\Omega_f)} 
+ \|\partial_t(\be_{\bsi_f}^h)^\rd\|^2_{\bbL^2(\Omega_f)}\Big)\,ds \,,
\end{equation}
which together with the first equation of \eqref{eq:error-analysis7} and using \eqref{eq:tau-d-H0div-inequality}--\eqref{eq:tau-H0div-Xf-inequality}, implies
\begin{equation}\label{eq: error equation 8}
\|\be_{\bsi_f}^h(t)\|^2_{\bbX_f} \le C\,\int_0^t  \Big( \|(\be_{\bsi_f}^h)^\rd\|^2_{\bbL^2(\Omega_f)} 
+ \|\partial_t(\be_{\bsi_f}^h)^\rd\|^2_{\bbL^2(\Omega_f)}
\Big)\,ds
+ \|\partial_t\be_{\bu_f}^h(t)\|^2_{\bL^2(\Omega_f)}.
\end{equation} 
Similarly, by \eqref{eq: error equation 4a} and \eqref{eq: error equation 6}, we have control on $\|\be_{\bu_p}^h\|^2_{\L^2(0,t;\bL^2(\Omega_p))}$ and $\|\partial_t\be_{\bu_p}^h\|^2_{\L^2(0,t;\bL^2(\Omega_p))}$, respectively. 
Thus, using again the Sobolev embedding of $\H^1(0,t)$ into $\L^\infty(0,t)$, we obtain
\begin{equation*}
\|\be_{\bu_p}^h(t)\|^2_{\bL^2(\Omega_p)}
\,\leq\, C\,\int_0^t \Big(\|\be_{\bu_p}^h\|^2_{\bL^2(\Omega_p)} 
+ \|\partial_t\be_{\bu_p}^h\|^2_{\bL^2(\Omega_p)}\Big)\,ds \,,
\end{equation*}
which, combined with the second equation of \eqref{eq:error-analysis7}, implies
\begin{equation}\label{eq: error equation 9}
\|\be_{\bu_p}^h(t)\|^2_{\bX_p}
\,\leq\, C\left(\int_0^t \Big(\|\be_{\bu_p}^h\|^2_{\bL^2(\Omega_p)} 
+ \|\partial_t\be_{\bu_p}^h\|^2_{\bL^2(\Omega_p)}\Big)\,ds
+ s_0\,\|\partial_t\,\be^h_{p_p} \|^2_{\W_p} 
+ \|\partial_t\be_{\bbeta_p}^h\|^2_{\bV_p}
+ \|\partial_t\be_{\bbeta_p}^I\|^2_{\bV_p}\, \right).
\end{equation}
To control $\|\be_{\bvarphi}^h(t)\|^2_{\bLambda_f}$ in \eqref{eq: error equation 6}, we can use \eqref{eq:error-equation-4b} and \eqref{eq:sigmaf-d} to deduce that
\begin{equation}\label{eq: error equation 7}
\|\be_{\bvarphi}^h(t)\|_{\bLambda_f}^2
\leq C \bigg( \int_0^t \Big(\|(\be_{\bsi_f}^h)^\rd\|^2_{\bbL^2(\Omega_f)} 
+ \|\partial_t(\be_{\bsi_f}^h)^\rd\|^2_{\bbL^2(\Omega_f)}\Big)\,ds
+  \|(\be_{\bsi_f}^I)^\rd(t)\|_{\bbL^2(\Omega_f)}^2 + \|(\be_{\bu_f}^I,\be_{\bgamma_f}^I,\be_{\bvarphi}^I)(t)\|^2 \bigg).
\end{equation}

Thus, we combine \eqref{eq: error equation 4a} and \eqref{eq: error equation 6}--\eqref{eq: error equation 7} and apply Lemma \ref{xing-lemma} in the context of the non-negative functions $H= \|\partial_t\bbeta_p\|_{\bV_p}$ and 
$B=
\|\be_{\bbeta_p}^I\|_{\bV_p}  
+ \|\partial_{tt}\be_{\bbeta_p}^I\|_{\bV_p} 
+ \|\partial_{tttt}\be_{\bbeta_p}^I\|_{\bL^2(\Omega_p)}   
+  \|\be_{p_{p}}^I\|_{\W_p}  
+ \|\partial_{tt}\be^I_{p_p} \|_{\W_p} 
+ \|\be_{\lambda}^I\|_{\Lambda_p} 
+ \|\partial_{tt}\be_{\lambda}^I\|_{\Lambda_p}$, with $R$ and $A$ representing the remaining terms, to control $\int^t_0 B(s)\,H(s)\,ds$, 
along with \eqref{eq:error-analysis7} and the Sobolev embedding \eqref{sobolev-L-infty-L2} to bound the terms $\|\bdiv(\be_{\bsi_f}^h)\|_{\bL^{4/3}(\Omega_f)}$ and $\|\div(\be^h_{\bu_p})\|_{\L^2(\Omega_p)}$ in $\L^2(0,T)$, and their corresponding right-hand side in $\L^\infty(0,T)$. 
Then, employing \eqref{eq:tau-d-H0div-inequality}--\eqref{eq:tau-H0div-Xf-inequality}, choosing $\delta_2$ and $\delta_3$ small enough, we obtain
\begin{align}
&\ds \|\be_{\bu_f}^h(t)\|^2_{\bL^2(\Omega_f)} 
+ s_0\,\|\be_{p_p}^h(t)\|^2_{\W_p} + \|\be_{\bbeta_p}^h(t)\|^2_{\bV_p} 
+ \|\partial_{t}\be_{\bbeta_p}^h(t)\|^2_{\bL^2(\Omega_p)} 
+ \|\partial_t\be_{\bu_f}^h(t)\|^2_{\bL^2(\Omega_f)}  
+ s_0\,\|\partial_t\be_{p_p}^h(t)\|^2_{\W_p} \nonumber \\[1ex]
&\ds\quad +\, \|\partial_t\be_{\bbeta_p}^h(t)\|^2_{\bV_p} 
+ \|\partial_{tt}\be_{\bbeta_p}^h(t)\|^2_{\bL^2(\Omega_p)} 
+ \int^t_0 \Big(\|\be_{\ubsi}^h\|^2  
+ \|\partial_t(\be_{\bsi_f}^h)^\rd\|^2_{\bbL^2(\Omega_f)}  
+ \|\partial_t\be_{\bu_p}^h\|^2_{\bL^2(\Omega_p)}
\nonumber \\[1ex]
&\ds\quad +\, \sum^{n-1}_{j=1} \|( \be_{\bvarphi}^h-\partial_t\be_{\bbeta_p}^h)\cdot\bt_{f,j}\|^2_{\L^2(\Gamma_{fp})} 
+ \sum^{n-1}_{j=1} \|( \partial_t\be_{\bvarphi}^h-\partial_{tt}\be_{\bbeta_p}^h)\cdot\bt_{f,j}\|^2_{\L^2(\Gamma_{fp})}
+ \|(\be_{\bu_{f}}^h, \be_{p_{p}}^h, \be_{\bgamma_{f}}^h, \be_{\bvarphi}^h, \be_{\lambda}^h)\|^2  
\nonumber \\[1ex]
&\ds\quad + \|(\partial_t\be_{\bu_{f}}^h, \partial_t\be_{p_{p}}^h, \partial_t\be_{\bgamma_{f}}^h, \partial_t\be_{\bvarphi}^h, \partial_t\be_{\lambda}^h)\|^2\Big)\,ds
\nonumber \\[1ex]
&\ds \leq\, C\,T\,\Bigg( \int_0^t \Big(\|\be_{\ubsi}^I\|^2
+ \|\be_{\ubu}^I\|^2 
+ \sum^{n-1}_{j=1} \|( \be_{\bvarphi}^I-\partial_t\be_{\bbeta_p}^I)\cdot\bt_{f,j}\|^2_{\L^2(\Gamma_{fp})} 
+ \|\partial_{t}\be_{\bbeta_p}^I\|^2_{\bV_p} 
+ \|\partial_{tt}\be_{\bbeta_p}^I\|^2_{\bV_p}
\nonumber \\[1ex]
&\ds\quad 
+\, \|\partial_{t}\be_{\bsi_f}^I\|^2_{\bbX_f} 
+ \|\partial_{tt}\be_{p_p}^I\|^2_{\W_p} 
+ \|\partial_t(\be_{\bsi_f}^I)^\rd\|^2_{\bbL^2(\Omega_f)} 
+ \|\partial_t\be_{\bu_p}^I\|^2_{\bX_p}
+ \|\partial_{t}\be_{\bvarphi}^I\|^2_{\bLambda_f}  
+ \|\partial_{tt}\be_{\bgamma_f}^I\|^2_{\bbQ_f} 
+ \|\partial_{tt}\be_{\bvarphi}^I\|^2_{\bLambda_f}  
\nonumber \\[1ex]
&\ds\quad 
+\, \sum^{n-1}_{j=1} \|( \partial_t\be_{\bvarphi}^I-\partial_{tt}\be_{\bbeta_p}^I)\cdot\bt_{f,j}\|^2_{\L^2(\Gamma_{fp})}   
+ \|\partial_{tt}\be_{\lambda}^I\|^2_{\Lambda_p} 
+ \|\partial_{tttt}\be_{\bbeta_p}^I\|^2_{\bL^2(\Omega_p)} 
+ \|\partial_{t}\be_{\bu_f}^I\|^2_{\bV_f}  + \|\partial_{t}\be_{\bgamma_f}^I\|^2_{\bQ_f} \nonumber \\[1ex]
&\ds\quad + \|\partial_t\be_{p_p}^I\|^2_{\W_p} + \|\partial_t\be_{\lambda}^I\|^2_{\Lambda_p} \Big)\,ds 
+ \|\partial_{ttt}\be_{\bbeta_p}^I(t)\|^2_{\bL^2(\Omega_p)} 
+ \|\partial_t\be_{\bbeta_p}^I(t)\|^2_{\bV_p} 
+ \|\partial_t\be_{p_p}^I(t)\|^2_{\W_p} 
+ \|\partial_t\be_{\bvarphi}^I(t)\|^2_{\bLambda_f}
\nonumber \\[1ex]
&\ds\quad 
+ \|\partial_t\be_{\bgamma_f}^I(t)\|^2_{\bbQ_f} 
+ \|\partial_t\be_{\lambda}^I(t)\|^2_{\Lambda_p} 
+ \|\partial_{ttt}\be_{\bbeta_p}^I(0)\|^2_{\bL^2(\Omega_p)} 
+ \|\partial_t\be_{\bbeta_p}^I(0)\|^2_{\bV_p} 
+ \|\partial_t\be_{p_p}^I(0)\|^2_{\W_p}  
+ \|\partial_t\be_{\bvarphi}^I(0)\|^2_{\bLambda_f}
\nonumber \\[1ex]
&\ds\quad 
+ \|\partial_t\be_{\bgamma_f}^I(0)\|^2_{\bbQ_f} 
+ \|\partial_t\be_{\lambda}^I(0)\|^2_{\Lambda_p}
+ \|\be_{\bu_f}^h(0)\|^2_{\bL^2(\Omega_f)} 
+ \|\be_{\bbeta_p}^h(0)\|^2_{\bV_p} 
+\, s_0\,\|\be^h_{p_p}(0)\|^2_{\W_p} 
+ \|\partial_t\be_{\bu_f}^h(0)\|^2_{\bL^2(\Omega_f)}
\nonumber \\[1ex]
&\ds\quad  
+ s_0\,\|\partial_t\be_{p_p}^h(0)\|^2_{\W_p}
+ \|\partial_t\be_{\bbeta_p}^h(0)\|^2_{\bV_p}
+ \,\|\partial_{tt}\be_{\bbeta_p}^h(0)\|^2_{\bL^2(\Omega_p)}
+ \|\be_{\ubsi}^h(0)\|^2 
+ \|\be_{\bvarphi}^h(0)\|^2_{\bLambda_f}  \Bigg)\,. 
\label{eq: error equation 10}
\end{align}

\medskip
\noindent{\bf{Bounds on initial data.}}

\noindent Finally, to bound the initial data terms in \eqref{eq: error equation 10}, we first recall from Theorems \ref{thm:unique soln} and
\ref{thm: well-posedness main result semi} that 
$(\bsi_f(0),\bu_p(0), \bbeta_p(0), \bu_f(0), p_p(0), \bgamma_f(0), \bvarphi(0), \lambda(0)) = (\bsi_{f,0},\bu_{p,0}, \bbeta_{p,0}, \bu_{f,0}, p_{p,0}, \bgamma_{f,0}, \bvarphi_{0}, \lambda_{0})$ 
and 
$(\bsi_{fh}(0),$ $\bu_{ph}(0),\bbeta_{ph}(0),
\bu_{fh}(0), p_{ph}(0), \bgamma_{fh}(0), \bvarphi_{h}(0), \lambda_{h}(0)) = (\bsi_{fh,0},\bu_{ph,0}, \bbeta_{ph,0}, \bu_{fh,0}, p_{ph,0}, \bgamma_{fh,0}, \bvarphi_{h,0}, \lambda_{h,0})$, respectively.
Recall also that discrete initial data satisfy \eqref{eq:system-discrete-sol0-1}. Then, in a way similar to \eqref{eqn:discrete-soln-1-bound}, we obtain
\begin{align}\label{eq: error initial 1}
& \|\be_{\bsi_f}^h(0)\|_{\bbX_f} + \|\be_{\bu_p}^h(0)\|_{\bX_p} + \|P_h^{\V}(\bsi_{e,0})-\widetilde\bsi_{eh,0}\|_{\bSigma_{e}} +\|\be_{\bu_f}^h(0)\|_{\bL^2(\Omega_f)} + \|\be^h_{p_p}(0)\|_{\W_p} + \|\be_{\bgamma_f}^h(0)\|_{\bbQ_f}  \nonumber \\
&\quad +\, \|\partial_t\be_{\bbeta_p}^h(0)\|_{\bV_p} +  \|\be_{\bvarphi}^h(0)\|_{\bLambda_f} + \|\be_{\lambda}^h(0)\|_{\Lambda_p} 
\,\leq\, C\,\Big( \|\be_{\bsi_f}^I(0)\|_{\bbX_f} + \|\be_{\bu_p}^I(0)\|_{\bX_p} +\|\be_{\bu_f}^I(0)\|_{\bV_f} \nonumber \\
&\quad +\, \|\partial_t\be_{\bbeta_p}^I(0)\|_{\bV_p}  +  \|\be_{p_p}^I(0)\|_{\W_p}  +  \|\be_{\bvarphi}^I(0)\|_{\bLambda_f} + \|\be_{\lambda}^I(0)\|_{\Lambda_p}
+ \|\bsi_{e}(0) - P_h^{\V}(\bsi_{e,0})\|_{\bSigma_{e}}\Big) 
\end{align}
and
\begin{align}\label{eq: error initial 2a}
& \|P_h^{\V}(\bsi_{e,0})-\bsi_{eh,0}\|_{\bSigma_{e}} + \|S^{\bV_p}_h(\bbeta_{p,0})-\widetilde\bbeta_{ph,0}\|_{\bV_p} 
\,\leq\, C\,\Big( \|\partial_t\be_{\bbeta_p}^h(0)\|_{\bV_p} +  \|\be_{\bvarphi}^h(0)\|_{\bLambda_f} + \|\be_{\lambda}^h(0)\|_{\Lambda_p} \nonumber \\
& \quad + \|\be^h_{p_p}(0)\|_{\W_p} +\, \|\bsi_{e}(0) - P_h^{\V}(\bsi_{e,0})\|_{\bSigma_{e}}
+ \|\partial_t\be_{\bbeta_p}^I(0)\|_{\bV_p} +  \|\bbeta_p(0)-S^{\bV_p}_h(\bbeta_{p,0})\|_{\bV_p} + \|\be_{\bvarphi}^I(0)\|_{\bLambda_f} \nonumber \\
& \quad + \|\be_{\lambda}^I(0)\|_{\Lambda_p} + \|\be^I_{p_p}(0)\|_{\W_p} \Big) \,.
\end{align}
In addition, to bound $\|\be_{\bbeta_p}^h(0)\|_{\bV_p}$, we first multiply the equation \eqref{eq:system-sol0-4} by $\be(\bxi_{ph})$ to obtain
\begin{equation}\label{etap0-cont}
a^e_p(\bbeta_{p,0},\bxi_{ph}) 
= - b_s(\bsi_{e,0},\bxi_{ph})\quad \forall\,\bxi_{ph} \in \bV_{ph} \,,
\end{equation}
and then, subtracting \eqref{discrete etap0-bound} from \eqref{etap0-cont}, using coercivity and continuity of the bilinear forms involved, we deduce
\begin{align}\label{eq: error initial 2b}
\|\be^h_{\bbeta_p}(0)\|_{\bV_p} \,\leq\, C\,\Big( \|\be^I_{\bbeta_p}(0)\|_{\bV_p} + \|P_h^{\V}(\bsi_{e,0})-\bsi_{eh,0}\|_{\bSigma_{e}} + \, \|\bsi_{e}(0) - P_h^{\V}(\bsi_{e,0})\|_{\bSigma_{e}} \Big)\,.
\end{align}
In turn, for the terms $\|\partial_t\be_{\bu_f}^h(0)\|^2_{\bL^2(\Omega_f)}$ and 
$s_0\,\|\partial_t\be_{p_p}^h(0)\|^2_{\W_p}$ in \eqref{eq: error equation 10}, we consider the error equation \eqref{eq:NS-Biot-errorformulation-1} at $t=0$, 
use \eqref{eq:semi-discrete-weak-formulation-1a}--\eqref{eq:semi-discrete-weak-formulation-1c} and \eqref{eq:semi-discrete-weak-formulation-1f}--\eqref{eq:semi-discrete-weak-formulation-1i} and take $(\bv_{fh}, w_{ph}) = (\partial_t\be_{\bu_f}^h(0), \partial_t\be_{p_p}^h(0))$, to get
\begin{align}\label{eq: error initial 3a}
&\rho_f\|\partial_t\be_{\bu_f}^h(0)\|^2_{\bL^2(\Omega_f)} 
+ s_0\,\|\partial_t\be_{p_p}^h(0)\|^2_{\W_p}  = 0 \,,
\end{align}
where, the right-hand side of \eqref{eq: error initial 3a} has been simplified, since the projection property \eqref{eq:projection1}, 
imply that the terms :
$s_0 (\partial_t\be_{p_p}^I(0),\partial_t\be_{p_p}^h(0))_{\Omega_p}$ and $\rho_f (\partial_t\be_{\bu_f}^I(0),\partial_t\be_{\bu_f}^h(0))_{\Omega_f}$ are zero.

Finally, to bound $\|\partial_{tt}\be_{\bbeta_p}^h(0)\|^2_{\bL^2(\Omega_p)}$, we consider the error equation \eqref{eq:NS-Biot-errorformulation-1} at $t=0$ for the test function $\bxi_{ph}$, such that
\begin{equation}\label{error-etap0-2}
\begin{array}{l}
\ds\rho_p(\partial_{tt}\be_{\bbeta_p}(0),\bxi_{ph})_{\Omega_p} \\[1ex] 
\ds\quad \,=\, - a^e_p(\be_{\bbeta_p}(0),\bxi_{ph})
- \alpha_p\,b_p(\bxi_{ph},\be_{p_p}(0))  
- c_{\BJS}(\partial_t\be_{\bbeta_p}(0), \be_{\bvarphi}(0);\bxi_{ph}, \0)
+ c_{\Gamma}(\bxi_{ph},\0;\be_{\lambda}(0)) \,, 
\end{array}
\end{equation}
where, using \eqref{discrete etap0-bound} and \eqref{etap0-cont}, we deduce that $a^e_p(\be_{\bbeta_p}(0),\bxi_{ph}) = -b_s(\be_{\bsi_e}(0),\bxi_{ph})$, which, together with \eqref{eq:semi-discrete-weak-formulation-1e}, implies that the right-hand side of \eqref{error-etap0-2} is $(\be_{\bu_{s}}(0),\bxi_{ph})_{\Omega_p}=(\partial_t\be_{\bbeta_{p}}(0),\bxi_{ph})_{\Omega_p}$. Then, using the decomposition of the error $\partial_{tt}\be_{\bbeta_p}(0) = \partial_{tt}\be^h_{\bbeta_p}(0) + \partial_{tt}\be^I_{\bbeta_p}(0)$, and applying Cauchy–Schwarz and Young's inequalities, we obtain
%
%
\begin{align}\label{eq: error initial 3b}
& \|\partial_{tt}\be_{\bbeta_p}^h(0)\|^2_{\bL^2(\Omega_p)}  
\leq C\,\Big( \|\partial_{t}\be_{\bbeta_p}^h(0)\|^2_{\bL^2(\Omega_p)}
+ \|\partial_{t}\be_{\bbeta_p}^I(0)\|^2_{\bL^2(\Omega_p)}
+ \|\partial_{tt}\be_{\bbeta_p}^I(0)\|^2_{\bL^2(\Omega_p)} \Big) \,.
\end{align}
Hence, combining \eqref{eq: error equation 10} with \eqref{eq: error initial 1}, \eqref{eq: error initial 2a}, \eqref{eq: error initial 2b}, \eqref{eq: error initial 3a} and \eqref{eq: error initial 3b}, making use of triangle inequality and the approximation properties \eqref{eq:approx-property1}, \eqref{eq:approx-property2}, and \eqref{eq: approx property 3}, we obtain \eqref{eq:errror-rate-of-convergence} and complete the proof.
\end{proof}


\section{Numerical results}\label{sec:numerical}

For the fully discrete scheme utilized in the numerical tests we employ the backward Euler method for the time discretization. 
Let $\Delta t$ be the time step, $T=N \Delta t$, $t_m = m\,\Delta t$, $m=0,\dots, N$. 
Let $d_t\, u^m := (\Delta t)^{-1}(u^m - u^{m-1})$ be the first order (backward) discrete 
time derivative and  $d_{tt} \, u^m := (\Delta t)^{-2}(u^m - 2u^{m-1} + u^{m-2})$ be the second order (backward) discrete 
time derivative, where $u^m := u(t_m)$. Then the fully discrete model reads: 
Given $(\bsi_{fh}^0,\bu_{ph}^0, \bbeta_{ph}^0, \bu_{fh}^0, p_{ph}^0, \bgamma_{fh}^0, \bvarphi_{h}^0, \lambda_{h}^0)=(\bsi_{fh,0},\bu_{ph,0}, \bbeta_{ph,0}, \bu_{fh,0}, p_{ph,0},$
$ \bgamma_{fh,0}, \bvarphi_{h,0}, \lambda_{h,0})$, find $(\bsi_{fh}^m,\bu_{ph}^m, \bbeta_{ph}^m, \bu_{fh}^m, p_{ph}^m, \bgamma_{fh}^m, \bvarphi_{h}^m, \lambda_{h}^m)\in\bbX_{fh}\times \bX_{ph}\times \bV_{ph}\times \bV_{fh}\times \W_{ph}\times \bbQ_{fh}\times \bLambda_{fh}\times \Lambda_{ph}$, $m=1, \dots, N$, such that 
\begin{align}
& \rho_f (d_t\,\bu_{fh}^m,\bv_{fh})_{\Omega_f}
+ a_f(\bsi_{fh}^m,\btau_{fh})
+ b_{\bn_f}(\btau_{fh},\bvarphi_{h}^m) 
+ b_f(\btau_{fh},\bu_{fh}^m) 
+ b_\sk(\bgamma_{fh}^m,\btau_{fh})
\nonumber\\ 
&\ds\quad +\, \kappa_{\bu_{fh}^m}(\bu_{fh}^m, \btau_{fh}) 
- b_f(\bsi_{fh}^m,\bv_{fh}) 
- b_\sk(\bsi_{fh}^m,\bchi_{fh})  
\,=\, (\f_{f}^m,\bv_{fh})_{\Omega_f}\,,  
\nonumber\\ 
& \rho_p(d_{tt}\bbeta_{ph}^m,\bxi_{ph})_{\Omega_p} 
+ a^e_p(\bbeta_{ph}^m,\bxi_{ph})
+ \alpha_p\,b_p(\bxi_{ph}^m,p_{ph}) 
+ c_{\BJS}(d_t\,\bbeta_{ph}^m, \bvarphi_{h}^m;\bxi_{ph}, \bpsi_{h}) 
\nonumber\\ 
&\ds\quad -\, c_{\Gamma}(\bxi_{ph},\bpsi_{h};\lambda_{h}^m) 
- b_{\bn_f}(\bsi_{fh}^m,\bpsi_{h})
+ l_{\bvarphi_{h}^m}(\bvarphi_{h}^m,\bpsi_{h}) 
\,=\, (\f_{p}^m,\bxi_{ph})_{\Omega_p}\,, 
\nonumber\\ 
& s_0\,(d_t\,p_{ph}^m,w_{ph})_{\Omega_p} 
+ a^d_p(\bu_{ph}^m,\bv_{ph}) 
+ b_p(\bv_{ph},p_{ph}^m)
+ b_{\bn_p}(\bv_{ph},\lambda_{h}^m) 
- \alpha_p\,b_p(d_t\,\bbeta_{ph}^m,w_{ph})
\nonumber\\ 
&\ds\quad -\, b_p(\bu_{ph}^m,w_{ph}) 
\,=\, (q_{p}^m,w_{ph})_{\Omega_p},  
\nonumber\\ 
& c_{\Gamma}(d_t\,\bbeta_{ph}^m,\bvarphi_{h}^m;\xi_{h})
- b_{\bn_p}(\bu_{ph}^m,\xi_{h})
\,=\,0,  
\label{eq: discrete formulation}
\end{align}
for all $(\btau_{fh}, \bv_{ph}, \bxi_{ph}, \bv_{fh}, w_{ph}, \bchi_{fh}, \bpsi_{h}, \xi_{h})\in \bbX_{fh}\times \bX_{ph}\times \bV_{ph}\times \bV_{fh}\times \W_{ph}\times \bbQ_{fh}\times \bLambda_{fh}\times \Lambda_{ph}$.
The fully discrete method results in the solution of a nonlinear algebraic system at each time step.

Next, we present numerical results that illustrate the behavior of the numerical method. We use the Newton--Rhapson method to solve this nonlinear algebraic system at each time step. 
The implementation is based on a {\tt FreeFEM} code \cite{Hecht} on triangular grids. 
For spatial discretization we use the following finite element spaces: $\bbBDM_1-\bP_0-\bbP_0$ for stress-velocity-vorticity in Navier--Stokes, 
$\bP_1$ for displacement in elasticity,
$\bBDM_1-\rP_0$ for Darcy velocity-pressure, and $\bP_1 - \rP_1$ for the the traces of fluid velocity and Darcy pressure. 

The examples considered in this section are described next.
Example 1 is used to corroborate the rates of convergence. 
In Example 2 we present a simulation of air flow through a filter. 


\subsection{Example 1: convergence test}

In this test we study the convergence for the space discretization using an analytical solution.
The domain is $\Omega = (-1,1)\times(0,1)$ with $\Omega_f = (0,1)\times (0,1)$, $\Omega_p = (0,1)\times (-1,0)$, and $\Gamma_{fp} = (0,1)\times\{0\}$; i.e.,
the upper half is associated with the Navier--Stokes flow, while the lower half represents the poroelastic medium governed by the Biot system, see Figure~\ref{fig:example1} (left). The analytical solution is given in Figure~\ref{fig:example1} (right). It satisfies the appropriate interface conditions along the interface $\Gamma_{fp}$.
\begin{figure}[htb!]
\begin{minipage}{.49\textwidth}
\centering\includegraphics[scale=1]{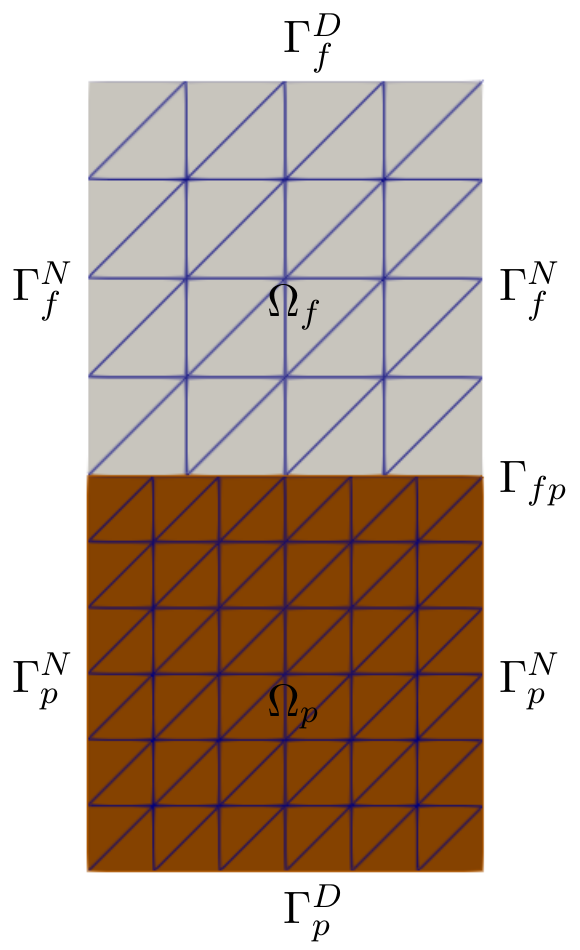}
\end{minipage}
\hfill
\begin{minipage}{.5\textwidth}

Solution in the Navier-Stokes region:

  \bigskip
$ \ds \bu_f = \pi \cos(\pi t)
\begin{pmatrix}
\ds -3x + \cos(y) \\[1ex] \ds y+1
\end{pmatrix}$

\smallskip
$ \ds p_f = \exp(t)\,\sin(\pi x)\cos\Big(\frac{\pi y}{2}\Big) + 2\pi \cos(\pi t)$

\bigskip
\bigskip
Solution in the Biot region:

\bigskip
$\ds p_p = \exp(t)\,\sin(\pi x)\cos\Big(\frac{\pi y}{2}\Big)$

$\ds \bu_p = -\frac{1}{\mu} \bK \nabla p_p $

$ \ds \bbeta_p = \sin(\pi t) \begin{pmatrix} \ds -3x+\cos(y) \\[1ex] \ds y+1 \end{pmatrix}$
\end{minipage}
\caption{[Example 1] Left: computational domain at the coarsest mesh level. Right: analytical solution.}
\label{fig:example1}
\end{figure}

The model parameters are
$$
\mu = 1, \quad \rho_f = 1,  \quad \rho_p = 1, \quad \lambda_p = 1, \quad \mu_p = 1, \quad s_0 = 1, \quad \bK = \bI, \quad \alpha_p = 1, \quad \alpha_{\BJS} = 1\,.
$$
The right-hand side functions $\f_f, \f_p$ and $q_p$ are computed from \eqref{eq:Navier-Stokes-1}--\eqref{eq:Biot-model} using the analytical solution. 
Note that, we do not take the analytical solution of fluid velocity to be divergence-free and then we define $q_f:= \div(\bu_f)$ and modify accordingly \eqref{eq: discrete formulation} to account for this data. 
The model problem is complemented with the appropriate boundary conditions, as shown in Figure~\ref{fig:example1} (left), and initial data.
Notice that the boundary conditions are not homogeneous and therefore the right-hand side of the resulting system must be modified accordingly.
The total simulation time for this test case is $T=0.01$ and the time step is $\Delta t=10^{-3}$. The time step is sufficiently small, so that the time discretization error does not affect the spatial convergence rates.
Table~\ref{table1-example1} shows the convergence history for a sequence of quasi-uniform mesh refinements with non-matching grids along the interface employing conforming spaces for the Lagrange multipliers \eqref{defn-Lambda-h}. The grids on the coarsest level are shown in Figure~\ref{fig:example1} (left). In the table, $h_f$ and $h_p$ denote the mesh sizes in $\Omega_f$ and $\Omega_p$, respectively, while the mesh sizes for their traces on $\Gamma_{fp}$ are $h_{tf}$ and $h_{tp}$. 
We note that the Navier--Stokes pressure at $t_m$ is recovered by post-processing, using the formula $\ds p_{fh}^m = -\frac{1}{n} \left( \tr(\bsi_{fh}^m)  + \rho_f\,\tr(\bu_{fh}^m\otimes\bu_{fh}^m) - 2\mu q_f^m \right)$ (cf. \eqref{eq:pseudostress-pressure-formulae}).
The results illustrate that at least the optimal spatial rate of convergence $\cO(h)$ established in Theorem~\ref{thm: error analysis} is attained for all subdomain variables.
The Lagrange multiplier variables, which are approximated in $\bP_1-\rP_1$, exhibit a rate of convergence $\cO(h^{3/2})$ in the $\H^{1/2}$-norm on $\Gamma_{fp}$, which is consistent with the order of approximation.
\begin{table}[ht]
\begin{center}
\small{			
\begin{tabular}{|c||cc|cc|cc|cc||c||cc|}
\hline
& \multicolumn{2}{|c|}{$\|\be_{\bsi_{f}}\|_{\ell^2(0,T;\bbX_f)}$}  
& \multicolumn{2}{|c|}{$\|\be_{\bu_f}\|_{\ell^2(0,T;\bV_f)}$} 
& \multicolumn{2}{|c|}{$\|\be_{\bgamma_f}\|_{\ell^2(0,T;\bbQ_f)}$}  
& \multicolumn{2}{|c||}{$\|\be_{p_f}\|_{\ell^2(0,T;\L^2(\Omega_f))}$} 
&
& \multicolumn{2}{|c|}{$\|\be_{\bvarphi}\|_{\ell^2(0,T;\bLambda_f)}$} \\
$h_f$  & error & rate & error & rate & error & rate & error & rate & $h_{tf}$ & & \\  \hline
0.354 & 7.5E-01 &   --  & 2.4E-02 &   --  & 6.3E-02 &   --  & 1.4E-01 &   --  & 0.500 & 5.0E-03 &   -- \\ 
0.177 & 2.4E-01 & 1.656 & 1.2E-02 & 0.999 & 3.3E-02 & 0.927 & 6.1E-02 & 1.164 & 0.250 & 1.5E-03 & 1.722 \\
0.088 & 8.7E-02 & 1.453 & 6.1E-03 & 1.000 & 1.7E-02 & 0.978 & 3.0E-02 & 1.025 & 0.125 & 5.7E-04 & 1.424 \\
0.044 & 3.7E-02 & 1.233 & 3.1E-03 & 1.000 & 8.4E-03 & 0.995 & 1.5E-02 & 1.004 & 0.063 & 2.1E-04 & 1.419 \\
0.022 & 1.8E-02 & 1.085 & 1.5E-03 & 1.000 & 4.2E-03 & 0.999 & 7.5E-03 & 1.001 & 0.031 & 7.4E-05 & 1.521 \\
0.011 & 8.6E-03 & 1.027 & 7.7E-04 & 1.000 & 2.1E-03 & 1.000 & 3.7E-03 & 1.000 & 0.016 & 2.4E-05 & 1.608 \\
\hline 
\end{tabular}
		
\medskip

\begin{tabular}{|c||cc|cc|cc||c||cc||c|}
\hline
& \multicolumn{2}{|c|}{$\|\be_{\bu_p}\|_{\ell^2(0,T;\bX_p)}$} 
& \multicolumn{2}{|c|}{$\|\be_{p_p}\|_{\ell^\infty(0,T;\W_p)}$}   
& \multicolumn{2}{|c||}{$\|\be_{\bbeta_p}\|_{\ell^\infty(0,T;\bV_p)}$}
&
& \multicolumn{2}{|c||}{$\|\be_{\lambda}\|_{\ell^2(0,T;\Lambda_p)}$}  & \\ 
$h_p$  & error  & rate & error & rate & error & rate & $h_{tp}$ & error & rate & iter  \\  \hline
0.236 & 5.5E-02 &   --  & 6.9E-02 &   --  & 1.3E-04 &   --  & 0.333 & 4.6E-03 &   --  & 2.2 \\
0.118 & 2.0E-02 & 1.454 & 3.5E-02 & 0.998 & 6.5E-05 & 1.001 & 0.167 & 1.6E-03 & 1.532 & 2.2 \\
0.059 & 7.9E-03 & 1.347 & 1.7E-02 & 0.999 & 3.2E-05 & 0.999 & 0.083 & 5.6E-04 & 1.515 & 2.2 \\
0.029 & 3.5E-03 & 1.167 & 8.6E-03 & 1.000 & 1.6E-05 & 0.998 & 0.042 & 2.0E-04 & 1.504 & 2.2 \\
0.015 & 1.7E-03 & 1.046 & 4.3E-03 & 1.000 & 8.1E-06 & 0.996 & 0.021 & 7.0E-05 & 1.501 & 2.2 \\
0.007 & 8.4E-04 & 1.008 & 2.2E-03 & 1.000 & 4.1E-06 & 0.995 & 0.010 & 2.5E-05 & 1.500 & 2.2 \\
\hline 
\end{tabular}
\caption{[Example 1] Mesh sizes, errors, rates of convergences and average Newton iterations for the fully discrete system $(\bbBDM_1-\bP_0-\bbP_0)-\bP_1-(\bBDM_1-\rP_0) - (\bP_1 - \rP_1)$ approximation for the Navier--Stokes--Biot model in non-matching grids.}\label{table1-example1}
}
\end{center}
\end{table}


\subsection{Example 2: Air flow through a filter}

The setting in this example is similar to the 
one presented in \cite{swgbh2020}, where the Navier--Stokes--Darcy model is considered. The domain is a two-dimensional rectangular channel with length $2.5$m and width $0.25$m, which on the bottom center is partially blocked by a square poroelastic filter of length and width $0.2$m, see Figure \ref{fig:filter} (top).
\begin{figure}
\includegraphics[width=\textwidth]{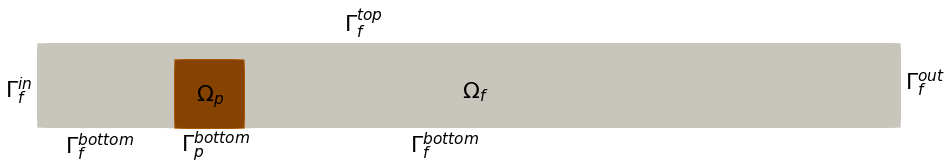}
\begin{equation*}
\begin{array}{c}
\ds \bT_f \bn_f = -p_{in} \bn_f \qon \Gamma_f^{in},\quad \bT_f \bn_f = -p_{out} \bn_f \qon \Gamma_f^{out} \\[0.5ex]
\ds p_{in}= p_{ref} + 2\times 10^{-6} \text{kPa},\quad p_{out} = p_{ref},\quad \bu_f = 0 \qon \Gamma_f^{top} \cup \Gamma_f^{bottom} \\[0.5ex]
\ds \bbeta_p = \0 \qan \bu_p \cdot \bn_p = 0 \qon \Gamma_p^{bottom}
\end{array}
\end{equation*}
\vspace{-0.5cm}
\caption{[Example 2] Top: computational domain and boundaries; channel $\Omega_{f}$ in gray, filter $\Omega_{p}$ in brown. Bottom: boundary conditions with $p_{ref}=100\,\text{kPa}$.}
\label{fig:filter}
\end{figure}
The model parameters are set as
\begin{equation*}
\begin{array}{c}
\mu=1.81 \times 10^{-8} \text{ kPa s}, \quad \rho_f =1.225 \times 10^{-3} \text{ Mg}/\text{m}^3, \quad s_0=7 \times 10^{-2} \text{ kPa}^{-1}, \\[1ex]
\bK= [ 0.505, -0.495; -0.495,0.505] \times 10^{-6} \text{ m}^2,  \quad \rho_p =1.601 \times 10^{-2} \text{ Mg}/\text{m}^3, \quad \alpha_{\BJS}=1.0, \quad \alpha_p = 1.0.
\end{array}
\end{equation*}
Note that $\mu$ and $\rho_f$ are parameters for air. The permeability tensor $\bK$ is obtained by rotating the identity tensor by a $45^\circ$ rotation angle in order to consider the effect of material anisotropy on the flow. We further consider a stiff material in the poroelastic region with parameters:
$\lambda_p=1{\times}10^4\,\text{kPa}$ and $\mu_p = 1{\times}10^5\,\text{kPa}$. 
The top and bottom of the domain are rigid, impermeable walls. The flow is driven by a pressure difference $\Delta p = 2\times 10^{-6}\,\text{kPa}$ between the left and right boundary, see Figure \ref{fig:filter} (bottom) for the boundary conditions. The body force terms $\f_f$ and $\f_p$ and external source $q_p$ are set to zero. For the initial conditions, we consider
$$ p_{p,0}=100  \ \text{kPa}, \quad \bbeta_{p,0} = \0  \ \text{m},  \quad \bu_{s,0} = \0 \ \text{m/s},\quad \bu_{f,0} = \0  \ \text{m/s}. $$
The computational matching grid along $\Gamma_{fp}$ has a characteristic parameter $h=\max\{ h_f, h_p \} = 0.018$. The total simulation time is $T=400$s with $\Delta t=1$s.

The computed magnitude of the velocities and pressures are displayed in Figures \ref{fig:filter-velocity} and \ref{fig:filter-pressure}, respectively. We observe high velocity in the narrow open channel above the filter. Vortices develop behind the obstacle, which travel with the fluid and are smoothed out at later times. A sharp pressure gradient is observed in the region above the filter, as well as within the filter, where the permeability anisotropy affects both the pressure and velocity fields. This example illustrates the ability of the mixed method to produce oscillation-free solution in a regime of challenging physical parameters, including small viscosity and permeability and large Lam\'e coefficients. 
\begin{figure}[ht]
\includegraphics[width=\textwidth]{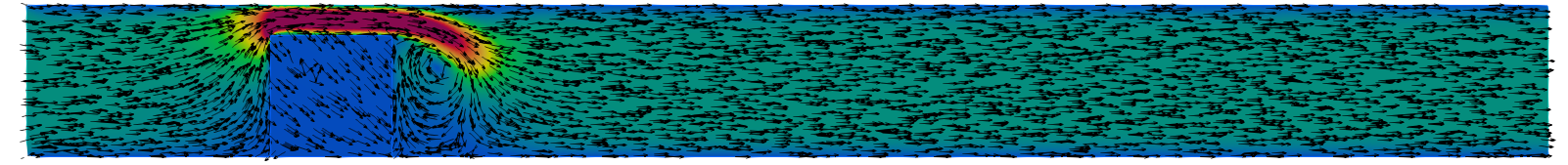}
\includegraphics[width=\textwidth]{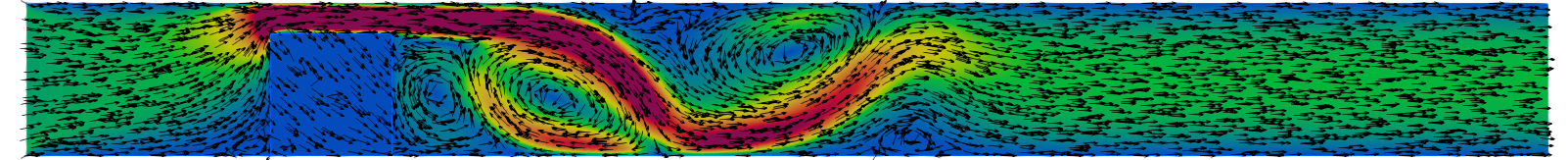}
\includegraphics[width=\textwidth]{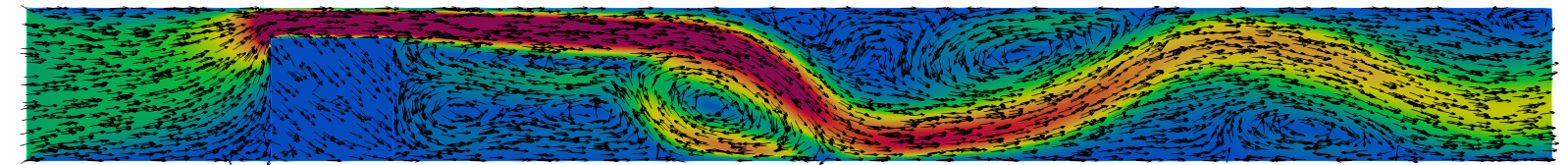}
\includegraphics[width=\textwidth]{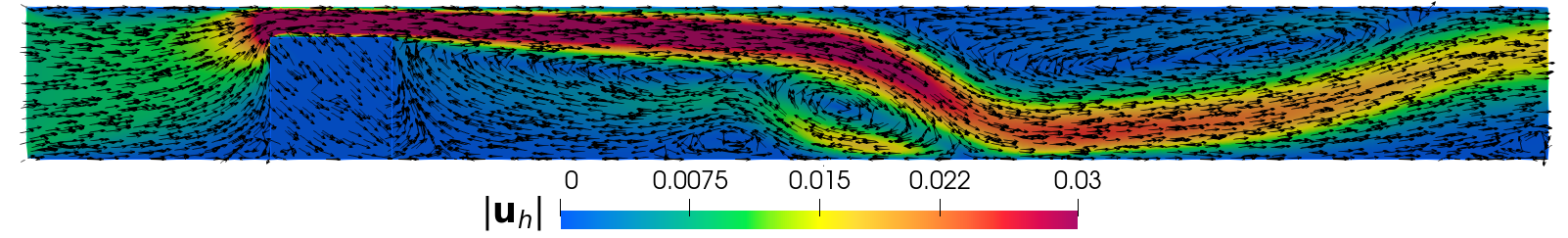}
\vspace{-0.4cm}
\caption{[Example 2] Computed velocities $\bu_{fh}$ and $\bu_{ph}$ (arrows not scaled) and their magnitudes at times $t\in \{ 20, 80, 200, 400 \}$ (from top to bottom).} \label{fig:filter-velocity}
\end{figure}

\begin{figure}[ht]
\includegraphics[width=\textwidth]{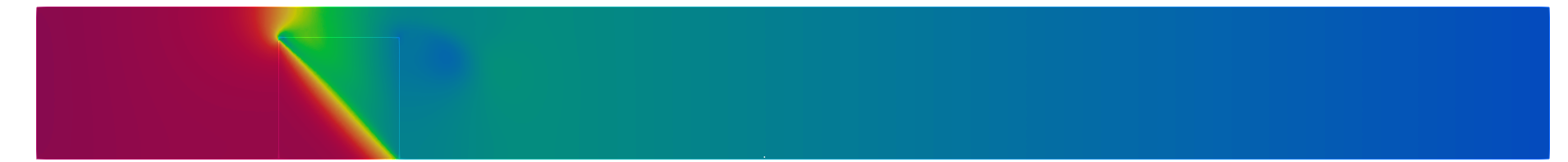}
\includegraphics[width=\textwidth]{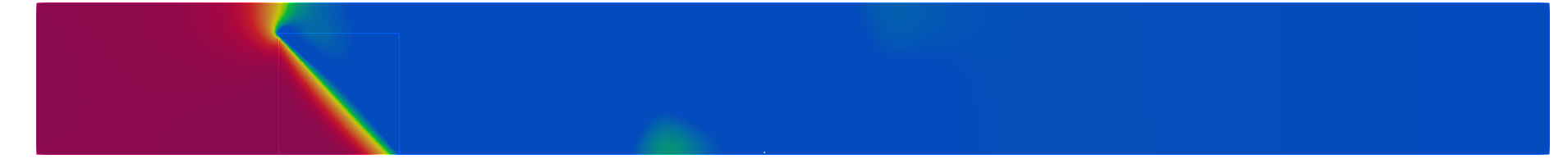}
\includegraphics[width=\textwidth]{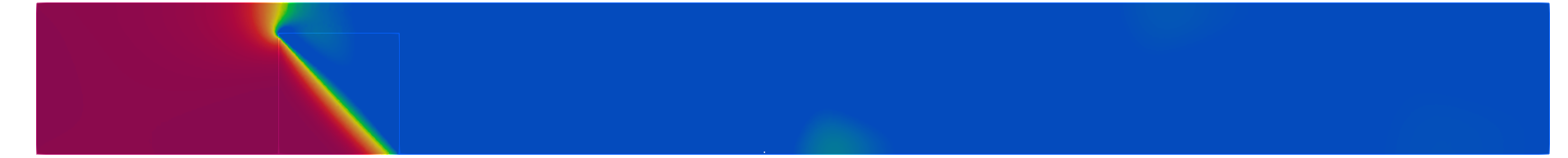}	
\includegraphics[width=\textwidth]{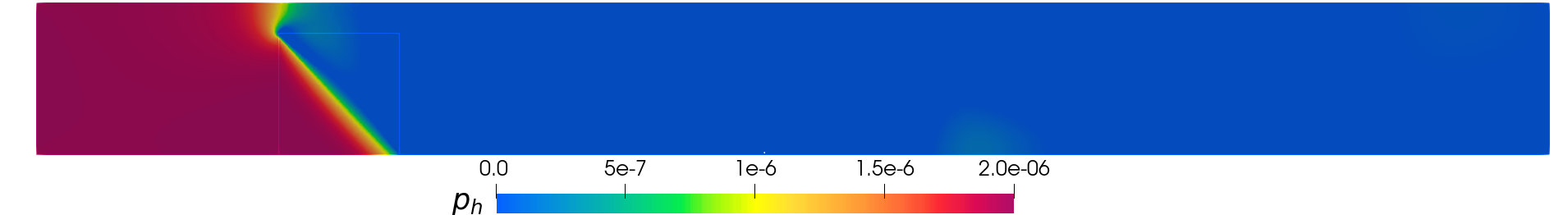}
\vspace{-0.4cm}
\caption{[Example 2] Computed pressures $p_{fh} - p_{ref}$ and $p_{ph} - p_{ref}$ at times $t\in \{ 20, 80, 200, 400 \}$ (from top to bottom).} \label{fig:filter-pressure}
\end{figure}


\section{Conclusions}\label{sec:concl}

We presented a new formulation for the fully dynamic Navier--Stokes--Biot problem, where the variables are pseudostress-velocity-vorticity for Navier--Stokes, velocity-pressure for Darcy, and displacement for elasticity. At the discrete level this formulation provides local fluid momentum conservation and local mass conservation for the poroelastic flow with continuous normal fluid stress and Darcy velocity. We established well-posedness of the weak formulation in a Banach space framework, which avoids the need for stabilization and/or augmentation terms. To handle the nonlinear advective term in the Navier--Stokes equations, under a small data assumption, we utilized results from nonlinear semigroup theory for monotone operators and the Banach fixed-point theory. We further studied the numerical approximation of the model using $\bH(\div)$-conforming finite element spaces for the Navier--Stokes and Darcy flows and $\bH^1$-conforming space for the displacement. We developed well posedness, stability, and error analysis for the semidiscrete formulation, establishing optimal rates of convergence. Numerical experiments were presented to verify the convergence rates and illustrate the ability of the method to simulate a realistic flow application of air flow around a filter
with flow and elasticity parameters in a challenging physical regime.

\bibliographystyle{abbrv}
\bibliography{caucao-dalal-yotov}

\end{document}